\documentclass[a4paper, 10pt, reqno]{amsart}

\usepackage{comment}

\usepackage{amsmath}
\usepackage{graphicx}
\usepackage{rotating}
\usepackage{caption}
\usepackage{fancyhdr}
\usepackage{multirow}
\usepackage{amsmath,amsthm,amsfonts,amssymb,amscd}
\usepackage{hyperref}
\usepackage[nameinlink,noabbrev,capitalise]{cleveref}

\usepackage{algorithm}
\usepackage{algpseudocode}  

\usepackage{float}
\usepackage{epsfig}
\usepackage{subcaption}
\usepackage[utf8]{inputenc}
\usepackage[T1]{fontenc}

\usepackage{xcolor}
\usepackage{mdwlist}
\usepackage{amsaddr}

\usepackage{bbm}
\usepackage{enumitem}

\newtheorem{theorem}{Theorem}[section]
\newtheorem{lemma}[theorem]{Lemma}
\newtheorem{corollary}[theorem]{Corollary}
\newtheorem{definition}[theorem]{Definition}
\newtheorem{proposition}[theorem]{Proposition}
\newtheorem{example}[theorem]{Example}
\newtheorem{remark}[theorem]{Remark}

\crefname{theorem}{Theorem}{Theorems}
\Crefname{theorem}{Theorem}{Theorems}
\crefname{lemma}{Lemma}{Lemmas}
\Crefname{lemma}{Lemma}{Lemmas}
\crefname{corollary}{Corollary}{Corollaries}
\Crefname{corollary}{Corollary}{Corollaries}
\crefname{definition}{Definition}{Definitions}
\Crefname{definition}{Definition}{Definitions}
\crefname{proposition}{Proposition}{Propositions}
\Crefname{proposition}{Proposition}{Propositions}
\crefname{example}{Example}{Examples}
\Crefname{example}{Example}{Examples}
\crefname{remark}{Remark}{Remarks}
\Crefname{remark}{Remark}{Remarks}
\crefname{claim}{Claim}{Claims}
\Crefname{claim}{Claim}{Claims}
\crefname{problem}{Problem}{Problems}
\Crefname{problem}{Problem}{Problems}
\crefname{algorithm}{Algorithm}{Algorithms}
\Crefname{algorithm}{Algorithm}{Algorithms}

\AddToHook{env/lemma/begin}{\crefalias{theorem}{lemma}}
\AddToHook{env/corollary/begin}{\crefalias{theorem}{corollary}}
\AddToHook{env/definition/begin}{\crefalias{theorem}{definition}}
\AddToHook{env/proposition/begin}{\crefalias{theorem}{proposition}}
\AddToHook{env/example/begin}{\crefalias{theorem}{example}}
\AddToHook{env/remark/begin}{\crefalias{theorem}{remark}}
\AddToHook{env/claim/begin}{\crefalias{theorem}{claim}}
\AddToHook{env/problem/begin}{\crefalias{theorem}{problem}}

\newcommand{\Rd}{\mathbb R^d}
\newcommand{\R}{\mathbb R}
\newcommand{\N}{\mathbb N}

\newcommand{\cP}{\Pcal}

\newcommand{\rmd}{\mathrm{d}}

\DeclareMathOperator{\supp}{supp}

\DeclareMathOperator*{\argmax}{arg\,max}
\DeclareMathOperator*{\argmin}{arg\,min}
\newcommand{\Cpl}{\mathrm{Cpl}}
\newcommand{\cpl}{\Cpl}
\newcommand{\MCov}{{\rm MCov}}
\newcommand{\Ecal}{\mathcal{E}}
\newcommand{\Tcal}{\mathcal{T}}
\newcommand{\Wcal}{\mathcal{W}}
\newcommand{\Pcal}{\mathcal{P}}

\newcommand{\tr}{\mathrm{tr}}

\newcounter{aspt}
\newcommand{\aspttag}{}

\crefname{aspt}{}{}
\Crefname{aspt}{}{}

\newcommand{\asptitem}[2]{
  \refstepcounter{aspt}
  \renewcommand{\aspttag}{#2}
  \item[{\rm (#1)}]
}

\usepackage[
  backend=biber,
  bibencoding=utf8,
  style=numeric-comp,
  url=false,
  doi=false
]{biblatex}
\begin{document}
\pagestyle{plain}

\title{A weak transport approach to the Schrödinger--Bass bridge}
\author{Manuel Hasenbichler$^1$, Gudmund Pammer$^2$, Stefan Thonhauser$^3$}

\address{
$^1$Institute of Statistics, Graz University of Technology\\
\texttt{manuel.hasenbichler@tugraz.at}\smallskip\\
$^2$Institute of Statistics, Graz University of Technology\\
 \texttt{gudmund.pammer@tugraz.at}\smallskip\\
$^3$Institute of Statistics, Graz University of Technology\\
\texttt{stefan.thonhauser@tugraz.at}
}

\begin{abstract}
We study the Schrödinger--Bass problem\footnotemark[1], a one-parameter family of semimartingale optimal transport problems indexed by $\beta>0$, whose limiting regimes interpolate between the classical Schrödinger bridge, the Brenier--Strassen problem, and, after rescaling, the martingale Benamou--Brenier (Bass) problem.

For each $\beta>0$, we prove that the dynamic Schrödinger--Bass problem is equivalent to a static weak optimal transport (WOT) problem with explicit cost $C_{\mathrm{SB}}^\beta$.
This yields primal and dual attainment, as well as a structural characterization of the optimal semimartingales.
The cost $C_{\mathrm{SB}}^\beta$ is constructed via infimal convolution and deconvolution of the Schrödinger cost with the Wasserstein distance.
In a broader setting, we show that such infimal convolutions preserve the WOT structure and inherit continuity, coercivity, and stability of both values and optimizers with respect to the marginals.

Building on this formulation, we propose a dual ascent algorithm for numerical computation.
We establish monotone improvement of the dual objective and convergence of the iteration to the unique optimizer under suitable integrability assumptions.
\end{abstract}

\maketitle

\footnotetext[1]{The authors first learned of the Schrödinger—Bass problem through a presentation by Huyên Pham. The present work was carried out independently of \textcite{henrylabordere2026bridgingschrodingerbasssemimartingale} and approaches the problem from a different perspective using weak transport techniques.}

\section{Introduction}\label{sec:introduction} 
Semimartingale transport extends classical optimal transport by allowing mass to be transported through stochastic dynamics rather than along deterministic trajectories. This leads, on the one hand, to a richer class of transport mechanisms and, on the other, to a variational framework for constructing semimartingales with prescribed structural properties.
Foundational contributions to the subject include the seminal work of Mikami and Thieullen~\cite{MiTh6,MiTh8}, who treated semimartingale transport with controlled drift and fixed diffusion, and the extension of \textcite{TanTouzi2013}, allowing simultaneous control of drift and diffusion. 
It also includes martingale variants, in which the drift is fixed to zero and only the diffusion coefficient is optimized, beginning with the works of \textcite{BeiglHenryPenkner2013,galichon2014stochastic,beiglbock2016problem}. 
It has found important applications in mathematical finance~\cite{conze2021bass,MR4941918,GuoLoeper2021,JoLoOb26}, and is closely connected to functional inequalities~\cite{ElLeSh20} and stochastic localization techniques in convex geometry~\cite{LeVe24,MiSh24}.

\subsection{The Schr\"odinger--Bass problem}\label{subsec:main.the_SB_problem}
For $\beta > 0$, \textcite{alouadi2026lightsbbmbridgingschrodingerbass} introduced the \emph{Schr\"odinger--Bass problem}
\begin{equation}\label{eq:main.SB.dynamic}\tag{SB$\beta$}
    V_{\mathrm{SB}}^\beta(\mu,\nu)
    :=
    \inf_{\substack{X_0\sim \mu,\, X_1\sim \nu\\ \rmd X_t=a_t \rmd t+b_t\rmd B_t}}
    \mathbb E\!\left[
        \int_0^1
        \left(
            \frac12 |a_t|^2
            +
            \frac{\beta}{2}|b_t-I_d|_{\mathrm{HS}}^2
        \right)\,dt
    \right]
\end{equation}
where $B = (B_t)_{t \in [0,1]}$ is standard Brownian motion, and $a = (a_t)_{t \in [0,1]} \subset \Rd$ and $b = (b_t)_{t \in [0,1]} \subset \R^{d\times d}$ are progressively measurable processes.

As $\beta\uparrow\infty$, deviations of the diffusion coefficient from the identity are increasingly penalized, and the corresponding limiting problem is the \emph{Schr\"odinger problem}
\begin{equation}\label{eq:schroedinger_problem}\tag{SB$\infty$}
    V_{\rm EOT}(\mu,\nu)
    :=
    \inf_{\substack{X_0\sim\mu,\ X_1\sim\nu\\[1pt]\rmd X_t = a_t\rmd t + \rmd B_t}}
    \mathbb E\!\left[\int_0^1 \frac12 |a_t|^2\,\rmd t\right].
\end{equation}

On the other hand, if one rescales by $\beta^{-1}$ and lets $\beta\downarrow0$, any nonzero drift becomes prohibitively expensive, and the limiting problem is the \emph{martingale Benamou--Brenier problem}, or \emph{Bass problem}, of \textcite{BaBeiHueKae2020}:
\begin{equation}\label{eq:bass_problem}\tag{mBB}
    V_{\rm mBB}(\mu,\nu)
    :=
    \inf_{\substack{X_0\sim \mu,\, X_1\sim \nu\\ \rmd X_t=b_t\rmd B_t}}
    \mathbb E\!\left[\int_0^1 \frac12|b_t-I_d|_{\rm HS}^2\,\rmd t\right].
\end{equation}
In this way, the Schr\"odinger--Bass problem~\eqref{eq:main.SB.dynamic} interpolates between these two classical semimartingale transport problems.

The two limiting problems share a striking structural feature: both admit canonical static formulations as \emph{weak optimal transport} (WOT) problems in the sense of \textcite{gozlan2017kantorovich}, and in both cases these formulations can be leveraged to construct the optimal semimartingales.
Writing $H$ for the relative entropy and $\gamma_x$ for the Gaussian law with unit variance and mean $x$, the Schr\"odinger problem~\eqref{eq:schroedinger_problem} satisfies
\[
    V_{\rm EOT}(\mu,\nu)
    =
    \inf_{\pi\in\Cpl(\mu,\nu)}
    \int H(\pi_x\,|\,\gamma_x)\,\mu(\rmd x),
\]
where $\Cpl(\mu,\nu)$ is the set of couplings with marginals $\mu$ and $\nu$, and $(\pi_x)_x$ denotes the disintegration of $\pi$ with respect to $\mu$; see
\textcite{Foellmer1985,Leonard2014}.
Likewise, denoting $\Wcal_2^2$ the Wasserstein-$2$ distance, the Bass problem~\eqref{eq:bass_problem} admits the static formulation
\[
    V_{\rm mBB}(\mu,\nu)
    =
    \inf_{\pi\in\Cpl_M(\mu,\nu)}
    \int \frac12 \Wcal_2^2(\pi_x,\gamma_x)\,\mu(\rmd x),
\]
as shown by \textcite{BaBeiHueKae2020}. Here $\Cpl_M(\mu,\nu)$ denotes the set of martingale couplings of $\mu$ and $\nu$.

In both cases, the corresponding primal optimizers uniquely determine the laws of the optimal semimartingales. 
Moreover, the rich duality theory for weak optimal transport (see, e.g., \textcite{backhoff2019existence,beiglböck2025fundamentaltheoremweakoptimal}) yields corresponding dual formulations from whose optimizers the controls of the optimal semimartingales may be recovered.
For the Schr\"odinger problem~\eqref{eq:schroedinger_problem}, we refer for instance to \textcite{MiTh6}; for the Bass problem~\eqref{eq:bass_problem}, see \textcite{backhoff2023existence}.

\subsection{Main results}\label{subsec:main.results}
Our main result is that the Schr\"odinger--Bass problem~\eqref{eq:main.SB.dynamic} admits three further equivalent formulations: a standard weak transport formulation~\eqref{eq:main.SB.wot}, a variational formulation~\eqref{eq:main.SB.var}, and an explicit dual formulation~\eqref{eq:main.SB.dual}.
More precisely, for every $\beta>0$ and $\mu,\nu\in\Pcal_2(\R^d)$, it can be written as a static standard weak transport problem:
\begin{equation}\label{eq:main.SB.wot}\tag{SB$\beta$w}
    V_{\rm SB}^\beta(\mu,\nu)
    =
    \min_{\pi\in\Cpl(\mu,\nu)}
    \int C_{\rm SB}^\beta(x,\pi_x)\,\mu(\rmd x).
\end{equation}
Here $C_{\rm SB}^\beta\colon \R^d\times\Pcal_2(\R^d)\to\R$ is an explicit, continuous weak transport cost satisfying a quadratic growth bound. 
In addition, the problem admits the variational formulation
\begin{equation}\label{eq:main.SB.var}\tag{SB$\beta$v}
    V_{\rm SB}^\beta(\mu,\nu)
    =
    \max_{\alpha \in \Pcal_2(\R^d)}
    \left\{
        -\frac{\beta}{2}\Wcal_2^2(\mu,\alpha)
        +
        \min_{\rho \in \Pcal_2(\R^d)}
        \left\{
            V_{\rm EOT}(\alpha,\rho)
            +
            \frac{\beta}{2}\Wcal_2^2(\rho,\nu)
        \right\}
    \right\},
\end{equation}
and an explicit dual representation for every $\beta > 0$. Writing
\[
    q_\beta(x):=\frac{\beta}{2}|x|^2,
    \qquad
    \mathcal T^\beta_t[f]
    :=
    -\log\bigl(\exp(-q_\beta\Box(-f))*\gamma_{0;t}\bigr),
\]
where $\gamma_{y;t}$ is the Gaussian density with mean $y$ and covariance $t I_d$, this dual problem takes the form
\begin{equation}\label{eq:main.SB.dual}\tag{SB$\beta$d}
    V_{\rm SB}^\beta(\mu,\nu)
    =
    \max_{\substack{f\in L^1(\nu),\\ \text{$\beta$-semiconcave}}}
    \left\{
        \int f\,\rmd\nu
        -
        \int q_\beta\Box\bigl(-\mathcal T^\beta_1[f]\bigr)\,\rmd\mu
    \right\}.
\end{equation}
Here $q_\beta\Box\bigl(-\mathcal T^\beta_1[f]\bigr)$ denotes the infimal convolution of $q_\beta$ and $-\mathcal T^\beta_1[f]$. 
These formulations are equivalent and uniquely attained in the relevant sense; we refer to \Cref{thm:SB} for the precise statement.

\subsection{The Schrödinger--Bass system}\label{subsec:main.the_SB_system}
An important consequence of \cref{thm:SB} is that the optimizers of the variational and dual formulations determine the optimal semimartingale ${X=(X_t)_{t\in[0,1]}}$ attaining \eqref{eq:main.SB.dynamic}. Let $\alpha$, $\rho$ be the optimizers of the variational problem \eqref{eq:main.SB.var}, and let $f$ denote the optimizer of the dual problem~\eqref{eq:main.SB.dual}. 

We first relate $f$ to the dynamic optimizer of the entropic transport problem $V_{\rm EOT}(\alpha,\rho)$. To this end, set $g_1:=\exp\bigl(-q_\beta\Box(-f)\bigr)$ and $g_t:=g_1*\gamma_{0;1-t}$ for $t \in [0,1]$ so that $z\mapsto g_1(z)/g_0(y)$ is a Gibbs density with respect to $\gamma_{y}:= \gamma_{y;1}$. Then, by \cref{thm:SB}, $V_{\rm EOT}(\alpha,\rho)$ is attained by the F\"ollmer process
\[
    \rmd Y_t = \nabla \log g_t(Y_t)\,\rmd t + \rmd B_t, \quad Y_0 \sim \alpha.
\]

A further consequence of \cref{thm:SB} is that $X_t := Y_t + \frac1\beta \nabla \log g_t(Y_t)$ is the \emph{Schrödinger--Bass bridge}, that is, the optimal semimartingale attaining \eqref{eq:main.SB.dynamic}. 
By It\^{o}'s formula, $X$ satisfies
\begin{equation}\label{eq:main.SB.bridge.withY}
    \rmd X_t = \nabla \log g_t(Y_t)\,\rmd t + \bigl(I_d+\tfrac{1}{\beta}\nabla^2 \log g_t(Y_t)\bigr)\,\rmd B_t, \quad X_0 = \nabla v^*_0(Y_0).
\end{equation}
Equivalently, the transformation from $Y$ to $X$ can be expressed in terms of convex potentials. For $t\in[0,1)$, set 
\[
    v_t := q_1 - \tfrac{1}{\beta} q_\beta \Box \bigl(-\Tcal^\beta_{1-t}[f]\bigr),
    \qquad 
    v_1:=q_1-\tfrac{1}{\beta}f,
\]
where the latter is understood by continuity.
Their convex conjugates satisfy $v^*_t = q_1 - \frac1\beta \Tcal^\beta_{1-t}[f]$, and hence $\nabla (\beta v_t^* - q_\beta ) = \nabla \log g_t$. 
Consequently, $X_t = \nabla v_t^*(Y_t)$ for $t \in [0,1]$, while the Fenchel--Legendre duality yields $Y_t=\nabla v_t(X_t)$ for $t\in[0,1)$. 
Thus \eqref{eq:main.SB.bridge.withY} may be written as
\begin{equation}\label{eq:main.SB.bridge} \tag{SB$\beta$b}
    \rmd X_t = \beta \bigl(X_t - \nabla v_t(X_t)\bigr)\,\rmd t + \bigl(\nabla^2 v_t(X_t)\bigr)^{-1}\,\rmd B_t, \quad X_0 \sim \mu.
\end{equation}

At the terminal time, $v_1$ and $v_1^*$ are the dual potentials attaining the \emph{maximal covariance} between $\rho$ and $\nu$:
\[
    \MCov(\rho,\nu) := \int q_1\,\rmd\rho + \int q_1\,\rmd\nu - \Wcal_2^2(\rho,\nu) = \inf_{\psi \text{ cvx}} \left\{\int \psi^*\,\rmd \rho + \int \psi \,\rmd \nu\right\}. 
\]
In particular, because $\rho$ is absolutely continuous with respect to the Lebesgue measure on $\Rd$, the Brenier map $\nabla v_1^*$ transports $\rho$ to $\nu$, that is, ${(\nabla v_1^*)_\# \rho = \nu}$. The reverse relation, however, need not hold: if $\nu$ charges sets of Lebesgue measure zero, then $v_1$ need not induce a Monge map from $\nu$ to $\rho$. Accordingly, the identity $Y_t=\nabla v_t(X_t)$ holds for $t\in[0,1)$ but may fail at $t=1$.

\begin{figure}[t]
    \centering

    \includegraphics[width=0.475\linewidth]{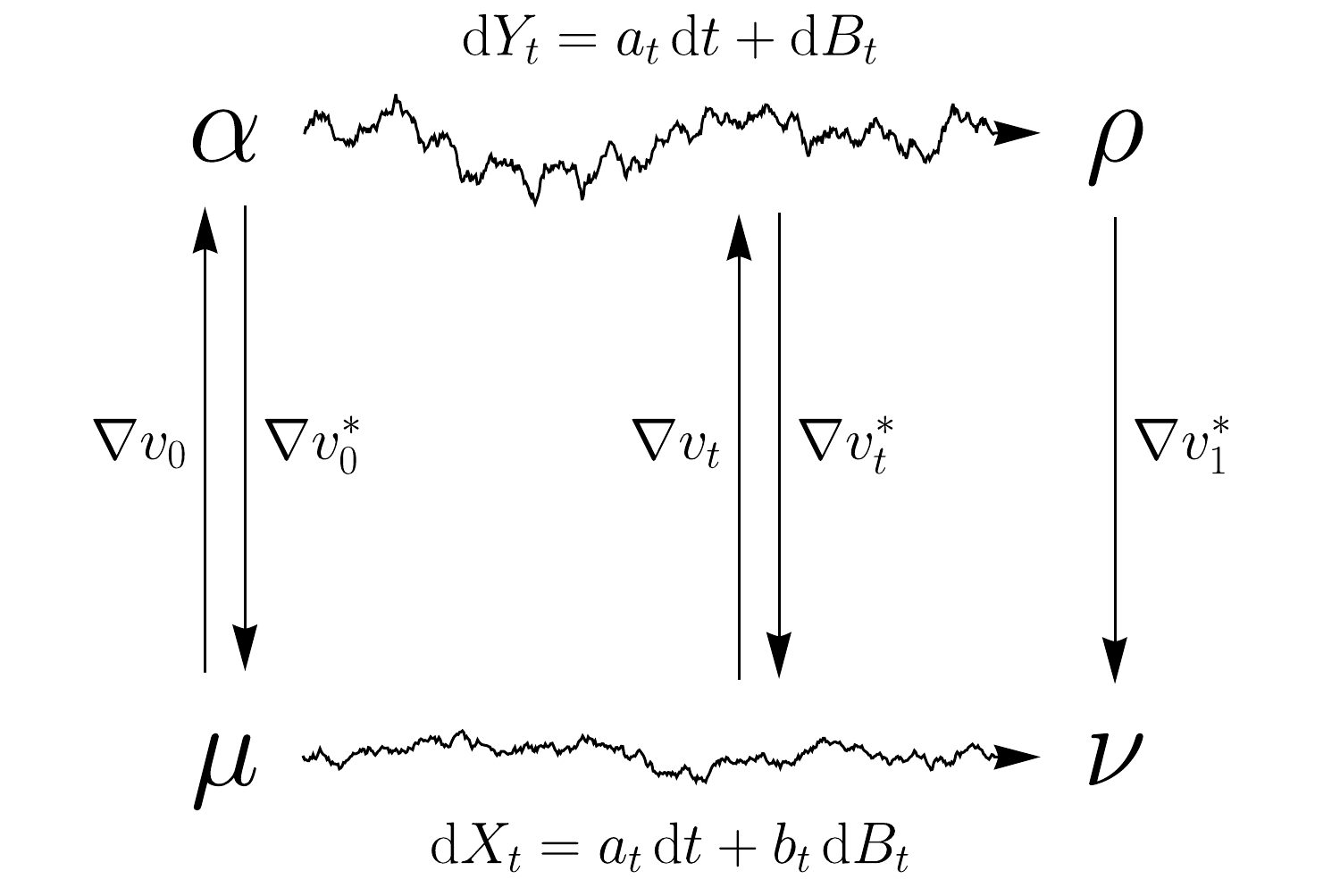}
    
    \caption{
        Schematic illustration of the Schrödinger--Bass system.
        The drift and diffusion coefficients are given by ${a_t := \nabla \log g_t(Y_t)}$ and ${b_t := I_d + \frac1\beta \nabla^2 \log g_t(Y_t)}$, respectively.
    }
    \label{fig:SB_system}
\end{figure}

We emphasize that, conditionally on $X_0=x$, there are two equivalent ways to generate $X_1$:
One may either start at $x$ and evolve the Schrödinger--Bass bridge $X$ according to \eqref{eq:main.SB.bridge}, or start the F\"ollmer process $Y$ from $\nabla v_0(x)$ and then apply the map $\nabla v_1^*$ to $Y_1$.
We refer to this relationship as the \emph{Schr\"odinger--Bass system}; see \cref{fig:SB_system} for a schematic illustration. 
Moreover, \Cref{thm:SB} shows that this system is in fact uniquely determined: If $\alpha,\rho\in\cP_2(\R^d)$ and $v_0,v_1:\R^d\to\R$ are convex functions satisfying the Schr\"odinger--Bass system, then $v_0$ and $v_1$ are unique up to additive constants, while $\alpha$ and $\rho$ are unique in $\cP_2(\R^d)$.
In particular, the dual optimizer in \eqref{eq:main.SB.dual} is recovered as $q_\beta-\beta v_1$.

\subsection{Schrödinger--Bass bridges between Gaussians: an illustration}\label{subsec:main.SB_btw_Gaussians}
If $\mu$ and $\nu$ are Gaussian, then the Schrödinger--Bass bridge can be determined analytically from the characterization provided by the Schrödinger--Bass system; see \cref{ex:SB.bridge.Gaussians} for the derivation. Even for fixed Gaussian marginals, varying $\beta$ produces bridges with distinct dynamics on $[0,1]$, as illustrated by the following example.

\begin{figure}[t]
    \centering

    \newcommand{\basePanelFraction}{0.34}
    \newcommand{\fullPanelWidth}{400} 
    \newcommand{\trimmedPanelWidth}{352}
    \newcommand{\basePanelSize}{\fpeval{\basePanelFraction}\textwidth}
    \newcommand{\trimmedPanelSize}{\fpeval{ \basePanelFraction * \trimmedPanelWidth / \fullPanelWidth }\textwidth}

    \begin{subfigure}[b]{\basePanelSize}
        \centering
        \includegraphics[width=\textwidth]{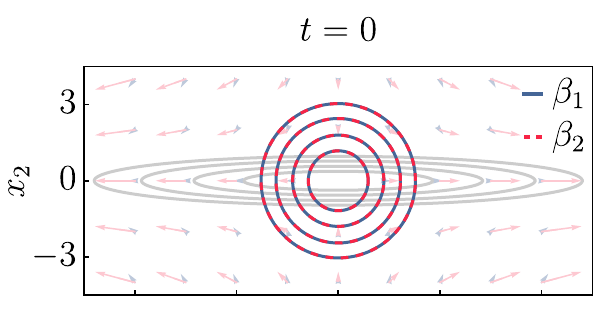}
    \end{subfigure}
    \begin{subfigure}[b]{\trimmedPanelSize}
        \centering
        \includegraphics[width=\textwidth]{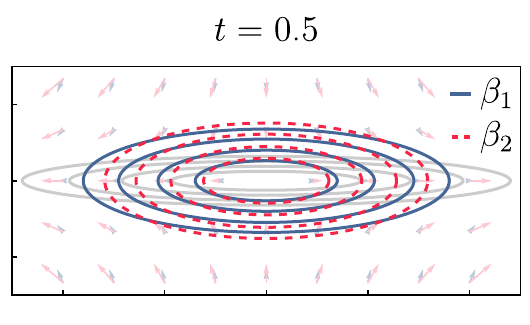}
    \end{subfigure}
    \begin{subfigure}[b]{\trimmedPanelSize}
        \centering
        \includegraphics[width=\textwidth]{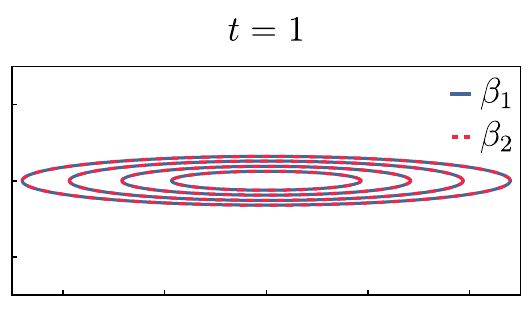}
    \end{subfigure}

    \vspace{-0.85em}

    \begin{subfigure}[b]{\basePanelSize}
        \centering
        \includegraphics[width=\textwidth]{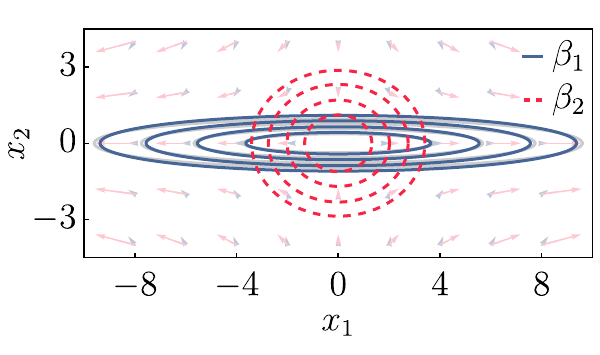}
    \end{subfigure}
    \begin{subfigure}[b]{\trimmedPanelSize}
        \centering
        \includegraphics[width=\textwidth]{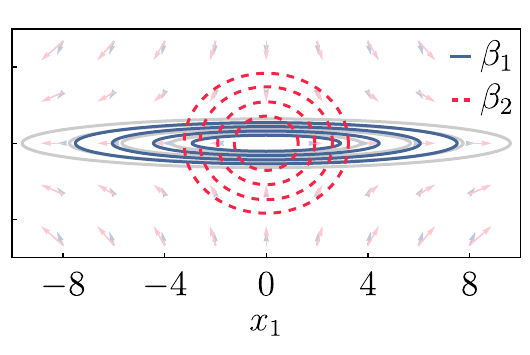}
    \end{subfigure}
    \begin{subfigure}[b]{\trimmedPanelSize}
        \centering
        \includegraphics[width=\textwidth]{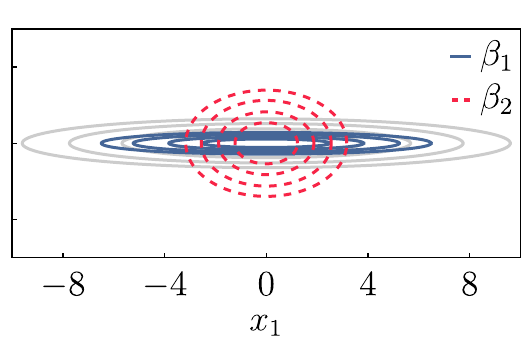}
    \end{subfigure}

    \vspace{-0.5em}
    
    \caption{Schrödinger--Bass bridges from $\mathcal N(0, I_2)$ to $\mathcal N(0, \operatorname{diag}(10,0.1))$ for ${\beta_1 = 0.4}$ and ${\beta_2 = 15}$.
    Top row: probability contours of the marginal law $\operatorname{Law}(X_t^\beta)$. Bottom row: ellipses associated with the diffusion covariance $b^\beta(t)b^\beta(t)^T$. The arrows indicate the drift field $x\mapsto a^\beta(t)^Tx$. The columns correspond to $t=0,\frac12,1$, and the gray ellipses to the target covariance. The contours enclose probabilities $0.5,0.8,0.95$, and $0.99$.}
    \label{fig:SB.Gaussian.panels}
\end{figure}

Let $\mu=\mathcal N(0,I_2)$ and $\nu=\mathcal N\left(0,\operatorname{diag}(\sigma_1^2,\sigma_2^2)\right)$ with $\sigma_1^2 = 10$ and $\sigma_2^2  = 0.1$.
In this case, the problem separates along the coordinate directions, and the Schrödinger--Bass bridge satisfies
\[
    \rmd X^\beta_t  = a^\beta(t)^TX^\beta_t\,\rmd t + b^\beta(t)\,\rmd B_t,
\]
where $a^\beta(t) := \operatorname{diag}\bigl(a^\beta_1(t), a^\beta_2(t)\bigr)$, $b^\beta(t) := \operatorname{diag}\bigl(b_1^\beta(t),b_2^\beta(t)\bigr)$ and, for $i\in\{1,2\}$,
\[
    a_i^\beta(t) = \frac{r_i^\beta}{1-(1-t)r_i^\beta},
    \qquad
    b_i^\beta(t) = \frac{\beta\bigl(1-(1-t)r_i^\beta\bigr)} {\beta\bigl(1-(1-t)r_i^\beta\bigr)-r_i^\beta}.
\]
Here $r_i^\beta$ is the root satisfying $r_i^\beta<\frac{\beta}{\beta+1}$ of
\[
    \sigma_i^2 = \frac{1}{(1-r_i^\beta)^2} + \frac{\beta^2}{(\beta-r_i^\beta)\bigl(\beta-(\beta+1)r_i^\beta\bigr)}.
\]
These bridges are illustrated in \cref{fig:SB.Gaussian.panels} for two different choices of $\beta$.

\subsection{The Schrödinger--Bass algorithm}\label{subsec:main.the_SB_algorithm}
In general, Schrödinger--Bass bridges cannot be determined analytically, and we therefore rely on a numerical procedure for their computation.
In the limiting regimes, efficient algorithms are available for the computation of the corresponding dual optimizers. For the Schrödinger problem~\eqref{eq:schroedinger_problem}, the \emph{Sinkhorn algorithm} is a standard and widely used scheme for computing the Schrödinger potentials; see \textcite{Leonard2014,MaGe20}. For the Bass problem~\eqref{eq:bass_problem}, the \emph{Martingale Sinkhorn algorithm} plays the analogous role for the computation of the \emph{Bass potential}; see \textcite{conze2021bass,MR4941918,hasenbichler2025martingalesinkhornalgorithm}.

Likewise, the Schrödinger--Bass system leads to an alternating scheme for the computation of the dual optimizer in~\eqref{eq:main.SB.dual}. Starting from an arbitrary $\beta$-semiconcave function $f_0\in L^1(\nu)$, for instance $f_0=q_\beta$, one iteratively obtains a sequence of $\beta$-semiconcave, $\nu$-integrable functions $(f_i)_{i\in\N}$ by following the Schrödinger--Bass system; see \cref{fig:SB_system}. More precisely, we propose the following scheme.

\begin{algorithm}[h]
    \caption{The Schrödinger--Bass algorithm}
    \label{alg:SB}
    \begin{algorithmic}[1]
        \Require $\mu, \nu \in \Pcal_2(\R^d)$, $\beta > 0$, and a $\beta$-semiconcave function $f_0 \in L^1(\nu)$
        
        \State $i \gets 1$
        
        \Repeat
            \State $u_i \gets q_1 - \frac{1}{\beta} q_\beta \Box (-\Tcal^\beta_1[f_{i-1}])$
            
            \State $\alpha_i \gets (\nabla u_i)_\#\mu$
            
            \State $\displaystyle 
                f_i \in \argmax_{\substack{f \in L^1(\nu), \\ \beta\text{-semiconcave}}}
                \left\{
                    \int f \,\rmd\nu + \int \Tcal^\beta_1[f] \,\rmd\alpha_i
                \right\}
            $
            
            \State $i \gets i+1$
        \Until{convergence}
    \end{algorithmic}
\end{algorithm}

A key property of \cref{alg:SB} is that it increases the dual value
\[
    \mathcal D_\beta[f]
    :=
    \int f\,\rmd \nu - \int q_\beta \Box(-\Tcal^\beta_1[f])\,\rmd\mu,
    \qquad
    f \in L^1(\nu),\ \beta\text{-semiconcave},
\]
at every iteration. This monotonicity is a consequence of the variational formulation~\eqref{eq:main.SB.var}, since the algorithm alternates between the two optimization problems arising therein; see \cref{lem:strict_ascent}. In particular, under suitable conditions, \cref{thm:convergence_SB} shows that, after normalization, the iterates $(f_i)_{i\in\N}$ obtained by \cref{alg:SB} converge to the dual optimizer of \eqref{eq:main.SB.dual} in the appropriate sense. Consequently, \cref{thm:convergence_SB} yields a constructive proof of the existence of optimizers for each of the four formulations of the Schrödinger--Bass problem, namely \eqref{eq:main.SB.dynamic}, \eqref{eq:main.SB.wot}, \eqref{eq:main.SB.var}, and \eqref{eq:main.SB.dual}.

From a computational point of view, the Schrödinger--Bass algorithm also suggests a natural route to numerical implementation. Indeed, analogously to the implementation of the Martingale Sinkhorn algorithm developed by \textcite{hasenbichler2025martingalesinkhornalgorithm}, one may parametrize the dual functions $f$, $u$, and $v^*$ and replace the exact updates in \cref{alg:SB} by approximate solutions of the two optimization problems. Therefore the weak transport perspective on the Schrödinger--Bass problem~\eqref{eq:main.SB.dynamic} naturally leads to an iterative scheme that can be implemented using standard tools from computational optimal transport. We do not pursue this direction here, since the corresponding implementation follows the same paradigm as for the Martingale Sinkhorn algorithm in the Bass case; see \textcite[Section~4]{hasenbichler2025martingalesinkhornalgorithm}.

\subsection{Related literature}\label{subsec:main.literature}
The Schr\"odinger--Bass problem~\eqref{eq:SB.dynamic} is a semimartingale transport problem in the sense of \textcite{TanTouzi2013}, with running cost
\[
    l(a,b):=\frac12 |a|^2+\frac{\beta}{2}|b-I_d|_{\mathrm{HS}}^2,
    \qquad
    (a,b)\in \R^d\times \R^{d\times d}.
\]
Since $l$ is independent of both the path and the time variable, the problem is of Markovian type. This suggests a PDE characterization of the static dual problem \eqref{eq:main.SB.dual}.
Indeed, \eqref{eq:main.SB.dual} may be rewritten as
\[
    -V_{\mathrm{SB}}^\beta(\mu,\nu)= \min_{ \substack{ \psi \in L^1(\nu), \\ \beta\text{-semiconvex}}}\left\{\int \psi_0\,\rmd\mu - \int \psi\,\rmd\nu\right\},
\]
where $\psi_0 := q_\beta \Box \bigl(-\Tcal^\beta_1[\psi]\bigr)$ is the negative $C_{\rm SB}^\beta$-transform of $\psi$; see \cref{rem:SB.c_transform}.
At the same time, by the dynamic definition of $C_{\rm SB}^\beta$ in \eqref{eq:SB.cost.dynamic}, this transform admits the control representation
\[
    \psi_0(x)=\inf_{\substack{X_0=x,\\ \rmd X_s=a_s\rmd s+b_s\rmd B_s}} \mathbb E\!\left[\int_0^1 l(a_s,b_s)\,\rmd s+\psi(X_1) \right].
\]
More generally, for $t\in[0,1]$, define
\[
    \psi_t(x):=\inf_{\substack{X_t=x,\\ \rmd X_s=a_s\rmd s+b_s\rmd B_s}} \mathbb E \!\left[\int_t^1 l(a_s,b_s)\,\rmd s + \psi(X_1) \right].
\]
Exactly as for $\psi_0$, the function $\psi_t$ admits the static representation $\psi_t = q_\beta \Box \bigl(-\Tcal^\beta_{1-t}[\psi]\bigr)$. 
In particular, $\psi_1=\psi$. The associated HJB equation is then
\begin{align}\label{eq:HJB}
    \partial_t \psi_t(x)+\inf_{(a,b)\in \R^d\times \R^{d\times d}}
    \left\{
    a\cdot \nabla_x\psi_t(x)+\frac12 \operatorname{Tr}\bigl(bb^\top \nabla_x^2 \psi_t(x)\bigr)+\frac12 |a|^2+\frac{\beta}{2}|b-I_d|_{\mathrm{HS}}^2
    \right\}=0
\end{align}
for $(t,x)\in(0,1)\times\R^d$, with terminal condition $\psi_1 = \psi$.

We remark that this semimartingale problem is not covered by the framework of \textcite{TanTouzi2013}, since the present running cost does not satisfy the coercivity assumptions imposed there. In the Schr\"odinger--Bass setting, however, the explicit static representation of $C_{\mathrm{SB}}^\beta$, together with the weak transport analysis carried out in the following sections, yields the corresponding static duality relation and existence of primal and dual optimizers for every $\beta > 0$. In particular, the family $(\psi_t)_{t\in[0,1]}$ is the value function of the associated Markovian control problem and solves the HJB equation~\eqref{eq:HJB}.

\subsection{Organization of the paper}\label{subsec:main.organization}
The paper is organized as follows: 
\Cref{sec:infimal_convolution_WOT} develops the notions of \emph{infimal convolution} and \emph{deconvolution} for weak optimal transport problems. These provide the weak optimal transport tools applied in \Cref{sec:the_SB_problem} to establish our main results on the Schrödinger--Bass problem~\eqref{eq:main.SB.dynamic}.
\Cref{sec:convergence_SB_algorithm} proves the convergence of \cref{alg:SB} under suitable conditions. 
\Cref{sec:SB_limits} shows that the Schrödinger~\eqref{eq:schroedinger_problem} and Bass~\eqref{eq:bass_problem} problems can be recovered in the limits $\beta \uparrow \infty$ and $\beta \downarrow 0$, respectively. 
Auxiliary results are collected in \Cref{sec:aux_results}.

\subsection{Notation}\label{subsec:main.notation}

\begin{itemize}[leftmargin=*,itemsep=0.25em,topsep=0.25em]
    \item Every Polish space $\mathcal X$ is equipped with a fixed complete
    separable metric $d_{\mathcal X}$ inducing its topology. Products carry
    the product topology. We write $\Pcal(\mathcal X)$ for the Borel
    probability measures on $\mathcal X$ and, for $p\in[1,\infty)$,
    \[
        \Pcal_p(\mathcal X)
        :=
        \left\{
            \mu\in\Pcal(\mathcal X):
            \int d_{\mathcal X}(x,x_0)^p\,\mu(\rmd x)<\infty
            \text{ for some }x_0\in\mathcal X
        \right\}.
    \]
    Unless specified otherwise, $\Pcal(\mathcal X)$ carries the weak topology $\tau_w$, and $\Pcal_p(\mathcal X)$ the topology induced by $\Wcal_p$.
    Other topologies are indicated by $(\Pcal(\mathcal X),\tau)$, $(\Pcal_p(\mathcal X),\tau)$.

    \item For $\mu\in\Pcal(\mathcal X)$ and $\nu\in\Pcal(\mathcal Y)$, set
    \[
        \Cpl(\mu,\nu)
        :=
        \left\{
            \pi\in\Pcal(\mathcal X\times\mathcal Y):
            \operatorname{pr}^{\mathcal X}_\#\pi=\mu,\ 
            \operatorname{pr}^{\mathcal Y}_\#\pi=\nu
        \right\},
    \]
    where $\operatorname{pr}^{\mathcal X}$ and $\operatorname{pr}^{\mathcal Y}$ are the coordinate
    projections. For $\mu,\nu\in\Pcal_p(\mathcal X)$,
    \[
        \Wcal_p^p(\mu,\nu)
        :=
        \inf_{\pi\in\Cpl(\mu,\nu)}
        \int d_{\mathcal X}(x,y)^p\,\pi(\rmd x,\rmd y).
    \]
    If $\pi\in\Cpl(\mu,\nu)$, $(\pi_x)_{x\in\mathcal X}$ denotes a disintegration with respect to its first marginal. 
    For a probability kernel $(\kappa_x)_{x\in\mathcal X}$, write
    \[
        \mu\otimes\kappa_\bullet(\rmd x,\rmd y) :=\mu(\rmd x)\kappa_x(\rmd y).
    \]

    \item For $\mu,\nu \in \Pcal_1(\Rd)$, let
    \[
        \bar\mu:=\int_{\Rd}x\,\mu(\rmd x),
        \qquad
        \Cpl_M(\mu,\nu)
        :=
        \left\{
            \pi\in\Cpl(\mu,\nu):
            \bar\pi_x=x\quad\mu\text{-a.e.}
        \right\}.
    \]
    We write $\supp(\mu)$ for the support of $\mu$.
    For $P\in\Pcal(\Pcal(\mathcal Y))$, its intensity $I(P)\in\Pcal(\mathcal Y)$
    is defined by
    \[
        I(P)(f)
        :=
        \int_{\Pcal(\mathcal Y)} \int_{\mathcal Y}f\,\rmd\rho\,P(\rmd\rho), \qquad f\in C_b(\mathcal Y).
    \]

    \item For $x\in\Rd$ and $\sigma^2>0$, $\gamma_{x;\sigma^2}$ denotes both
    the Gaussian law with mean $x$ and covariance $\sigma^2I_d$ and its
    density. Set $\gamma_x:=\gamma_{x;1}$ and $\gamma:=\gamma_0$.

    \item For $p\in[1,\infty)$, set
    \[
        L_{b,p}(\mathcal X)
        :=
        \left\{
            f:\mathcal X\to\R: \
            -K\bigl(1+d_{\mathcal X}(\cdot,x_0)^p\bigr)\le f\le K \
            \text{ for some }K>0,\ x_0\in\mathcal X
        \right\},
    \]
    and let $C_{b,p}(\mathcal X)$ be its subset of continuous functions.

    \item For $\mu,\nu\in\Pcal(\mathcal X)$, define
    \[
        H(\mu\mid\nu)
        :=
        \begin{cases}
            \displaystyle
            \int\log\!\left(\frac{\rmd\mu}{\rmd\nu}\right)\,\rmd\mu,
            & \mu\ll\nu,\\[0.4em]
            +\infty, & \text{otherwise}.
        \end{cases}
    \]
    For $\beta>0$, $x\in\Rd$, and $\mu,\nu\in\Pcal_2(\Rd)$, set
    \[
        q_\beta(x):=\frac{\beta}{2}|x|^2,
        \qquad
        W_\beta(\mu,\nu):=\frac{\beta}{2}\Wcal_2^2(\mu,\nu).
    \]

    \item For convex $\psi:\Rd\to\R\cup\{+\infty\}$, let
    \[
        \operatorname{dom}\psi:=\{\psi<+\infty\},
        \qquad
        \psi^*(y):=\sup_{x\in\Rd}\{x\cdot y-\psi(x)\}.
    \]
    Such $\psi$ is proper if $\psi>-\infty$ and $\operatorname{dom}\psi\neq\emptyset$. A concave function $\varphi$ is proper if $-\varphi$ is proper.
    
    For $f,g:\Rd\to\R\cup\{+\infty\}$,
    \[
        (f\Box g)(x):=\inf_{y\in\Rd}\{f(x-y)+g(y)\},
    \]
    while, for integrable $f,g$,
    \[
        f*g(x):=\int_{\Rd}f(x-y)g(y)\,\rmd y.
    \]
    
    \item $\operatorname{ri}(S)$ and $\operatorname{co}(S)$ denote the relative interior and convex hull of $S\subset\Rd$.
\end{itemize}

\section{Infimal convolution of weak transport problems}\label{sec:infimal_convolution_WOT}
At the core of our approach to the Schrödinger--Bass problem lies the minimization problem appearing in its variational formulation~\eqref{eq:main.SB.var},
\[
    \min_{\rho \in \cP_2(\Rd)} \left\{ V_{\rm EOT}(\alpha, \rho) + \frac{\beta}{2}\Wcal_2^2(\rho,\nu)\right\}.
\]
More generally, let $\mathcal X, \mathcal Y, \mathcal Z$ be Polish spaces, let $p \in [1,\infty)$, and let 
\[
    V: \cP_p(\mathcal X)\times \cP_p(\mathcal Y) \to \R\cup\{+\infty\},
    \qquad 
    W: \cP_p(\mathcal Y)\times \cP_p(\mathcal Z) \to \R\cup\{+\infty\}
\]
be two weak transport problems.
For $\mu \in \cP_p(\mathcal X)$, $\nu \in \cP_p(\mathcal Z)$ we call
\[
    V\Box W(\mu,\nu)
    :=
    \inf_{\rho \in \cP_p(\mathcal Y)}\bigl\{V(\mu,\rho) + W(\rho,\nu)\bigr\}
\]
the \emph{infimal convolution} of $V$ and $W$.
Under suitable assumptions, we first show that $V\Box W$ is again a weak transport problem and identify its $C$-transform, thereby obtaining an explicit dual formulation; see \cref{prop:properties_inf_conv,thm:fundamental_inf_conv}. At the end of this section, these results are used to derive the dual formulation~\eqref{eq:main.SB.dual} of the Schrödinger--Bass problem.

For the convergence analysis of \cref{alg:SB} in \cref{sec:convergence_SB_algorithm}, we further require stability of values and minimizers under perturbations of the marginals. 
Under the assumptions of \cref{thm:stability}, $V \Box W$ is continuous, and the corresponding minimizers $\rho$ depend continuously on the marginals.

We also note that similar variational structures arise in other areas. In distributionally robust optimization, the results of \textcite{blanchet2019quantifying,gao2023distributionally} lead to problems of the form
\[
    \bar I := \inf_{\lambda \geq 0} \left\{ \lambda \theta +\sup_\rho \left\{\int f \,\rmd \rho - \lambda W(\rho,\nu) \right\}\right\},
\]
where $\theta > 0$, $f \in L^1(\nu)$ is upper semicontinuous and $W$ is an optimal transport problem. Set ${V(\mu,\rho):=-\int f\,\rmd\rho}$. Then the inner problem is, up to sign, an infimal convolution of weak transport problems. 
Moreover,
\[
    - \bar I = \sup_{\alpha \in \cP_2(\R^+)} \bigl\{-\MCov(\delta_\theta,\alpha) + V\Box W(\alpha,\nu)\bigr\},
\]
which is an instance of the \emph{deconvolution} studied in \cref{subsec:deconvolution}.

Infimal convolutions also underlie variational discretizations of Wasserstein gradient flows.
Given an energy functional $\mathcal E: \cP_2(\mathcal X) \to \R\cup\{+\infty\}$, the Jordan-Kinderlehrer-Otto (JKO) scheme~\cite{jordan1998variational} recursively defines, for $h > 0$,
\[
    \rho_h^{k+1} \in \argmin_{\rho \in \cP_2(\mathcal X)}\left\{\mathcal{E}(\rho) + \frac{1}{2h}\Wcal_2^2(\rho, \rho^k_h)\right\},
\]
so that each step is an infimal convolution of $\mathcal E$ with $\frac{1}{2h}\Wcal_2^2$.
This construction is the Wasserstein analogue of the implicit Euler, or minimizing-movement, scheme and forms the basis of the general theory of gradient flows in Wasserstein and metric spaces; for a comprehensive treatment, see \textcite{ambrosio2005gradient}.

\subsection{Standard weak transport costs and structural assumptions}
We begin by recalling a standard class of cost functions for weak transport problems.

\begin{definition}[Standard weak transport costs]\label{def:std_WOT_cost}
A function $C : \mathcal X \times \Pcal_p(\mathcal Y) \to \R \cup \{+\infty\}$ is called a standard weak transport cost function if it satisfies the following properties:
\begin{enumerate}[label=(\roman*)]
    \item $C$ is lower semicontinuous;
    \item $C$ is bounded from below;
    \item for each $x \in \mathcal X$, the map $\rho \mapsto C(x,\rho)$ is convex;
    \item $C$ is proper, i.e., the effective domain of $C$,
    \[
        \operatorname{dom}(C) := \bigl\{ (x,\rho) \in \mathcal X \times \mathcal P_p(\mathcal Y) : C(x,\rho) < \infty \bigr\},
    \]
    is non-empty.
\end{enumerate}

Moreover, we say that $C$ admits a $p$-growth bound, if there exist $c>0$, $x_0 \in \mathcal{X}$, $y_0 \in \mathcal Y$ such that
\begin{equation}\label{def:p-growth.bound}
    C(x,\rho) \leq c \left( 1 + d_\mathcal{X}(x_0,x)^p + \int d_\mathcal{Y}(y_0,y)^p \,\rho(\rmd y)\right), \qquad \forall \, (x,\rho) \in \mathcal X\times\cP_p(\mathcal Y).
\end{equation}
\end{definition}

Given a standard weak transport cost $C$, the associated standard weak transport problem is
\[
    \inf_{\pi \in \cpl(\mu,\rho)}
    \int C(x,\pi_x)\,\mu(\rmd x),
    \qquad
    \mu \in \Pcal_p(\mathcal X),\ \rho \in \Pcal_p(\mathcal Y),
\]
where $(\pi_x)_{x \in \mathcal X}$ denotes a regular disintegration of $\pi$ with respect to its first marginal. 
For a function $f:\mathcal Y \to \R$, we denote by
\[
    f^C(x) := \inf_{\substack{\rho \in \cP_p(\mathcal Y), \\ f \in L^1(\rho)}} \left\{ C(x,\rho) - \int f \,\rmd\rho \right\}
\]
its corresponding $C$-transform.
In the following, unless explicitly stated otherwise, $V$ and $W$ denote weak transport problems with cost functions
\[
    C_V : \mathcal X \times \Pcal_p(\mathcal Y) \to \R \cup \{+\infty\},
    \qquad
    C_W : \mathcal Y \times \Pcal_p(\mathcal Z) \to \R \cup \{+\infty\}.
\]

We first address the question under which conditions $V\Box W$ is again a standard weak transport problem, and which regularity properties the induced cost function inherits.
A principal issue is compactness in the minimization over $\rho$. 
For example, when studying the variational formulation of the Schrödinger--Bass problem~\eqref{eq:main.SB.var}, we have $\mathcal Y=\mathcal Z=\Rd$ and $W=\frac\beta2\Wcal_2^2$. Bounds on $W$ then give bounds on the second moments of $\rho$. 
On noncompact spaces, however, this is not enough for relative compactness in $(\cP_p(\mathcal Y),\Wcal_p)$. Indeed, convergence in $\Wcal_p$ is equivalent to weak convergence together with convergence of the $p$-th moments. 
The same bounds do give relative compactness in weaker Wasserstein topologies, for instance in $\Wcal_r$ for $r<p$. This motivates the following coercivity
assumption for $C_V+W$.

\begin{definition}[Coercivity and continuity assumptions]\label{def:coerc_cont_aspts}
    Let $C_V:\mathcal X\times\cP_p(\mathcal Y)\to\R\cup\{+\infty\}$ be a standard weak transport cost and let $W:\cP_p(\mathcal Y)\times\cP_p(\mathcal Z)\to\R\cup\{+\infty\}$ be a standard weak transport problem.
    
    \begin{enumerate}
        \asptitem{Crc}{Crc} \label{aspt:coercivity} \textbf{Coercivity assumption.}
        We say that $(C_V,W)$ satisfies the coercivity assumption for $r \in [1,p]$ if for each compact $K \subseteq \mathcal X \times \cP_p(\mathcal Z)$, the map
        \[
            \rho \longmapsto \inf_{(x,\eta)\in K}\bigl\{C_V(x,\rho)+W(\rho,\eta)\bigr\}
        \]
        has compact sublevel sets in $(\cP_p(\mathcal Y),\Wcal_r)$. 
        
        \asptitem{Cnt}{Cnt} \label{aspt:continuity} \textbf{Continuity assumption.}
        We say that $(C_V,W)$ satisfies the continuity assumption if, for every $\rho$ with $(x,\rho) \in {\rm dom}(C_V)$ for some $x \in \mathcal X$, the maps $x \mapsto C_V(x,\rho)$ and $\eta \mapsto W(\rho,\eta)$ are continuous on $\mathcal X$ and on $\cP_p(\mathcal Z)$, respectively.
    \end{enumerate}
\end{definition}

Since $W$ is a standard weak transport problem, $W$ is lower semicontinuous on $(\mathcal{P}_p(\mathcal Y),\tau_w)\times \cP_p(\mathcal Z)$, and hence also on $(\mathcal{P}_p(\mathcal Y),\Wcal_r)\times \cP_p(\mathcal Z)$ for all $r \in [1,p]$; see \cite[Theorem 2.9]{backhoff2019existence}. Therefore the map 
\[
    (x,\rho,\eta) \mapsto C_V(x,\rho) + W(\rho,\eta)
\]
is lower semicontinuous on $\mathcal X \times \mathcal{P}_p(\mathcal Y) \times \cP_p(\mathcal Z)$.
In general, however, $C_V$ need not be lower semicontinuous on $\mathcal X \times (\mathcal{P}_p(\mathcal Y),\Wcal_r)$, and so $C_V+W$ may not be lower semicontinuous on $\mathcal X \times (\mathcal{P}_p(\mathcal Y),\Wcal_r) \times \cP_p(\mathcal Z)$. The following lemma shows that this lower semicontinuity nevertheless follows from \Cref{aspt:coercivity}.

\begin{lemma}\label{lem:coercivity_gives_lsc}
    Let $(C_V,W)$ satisfy the coercivity assumption~\Cref{aspt:coercivity} for some $r \in [1,p]$. Then
    \[
        (x,\rho,\eta) \longmapsto C_V(x,\rho) + W(\rho,\eta)
    \]
    is lower semicontinuous on $\mathcal X \times (\mathcal{P}_p(\mathcal Y), \Wcal_r) \times \cP_p(\mathcal Z)$. Moreover,
    \[
        (\mu,\rho,\nu) \longmapsto V(\mu,\rho) + W(\rho,\nu)
    \]
    is lower semicontinuous on $(\cP_p(\mathcal X), \tau_w) \times (\mathcal{P}_p(\mathcal Y), \Wcal_r) \times \cP_p(\mathcal Z)$.
\end{lemma}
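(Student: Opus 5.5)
Both claims will be proved with the same device: use the coercivity assumption to upgrade $\Wcal_r$-convergence to $\Wcal_p$-convergence along any sequence on which the functional stays bounded, then invoke the known lower semicontinuity in the $\Wcal_p$ (indeed weak) topology.

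\emph{First claim.} Take $(x_n,\rho_n,\eta_n)\to(x,\rho,\eta)$ in $\mathcal X\times(\cP_p(\mathcal Y),\Wcal_r)\times\cP_p(\mathcal Z)$. Passing to a subsequence, assume $C_V(x_n,\rho_n)+W(\rho_n,\eta_n)\to\liminf=:L$, and that $L<\infty$ (otherwise there is nothing to prove).
\begin{enumerate}
\item The set $K:=\{(x_n,\eta_n):n\in\N\}\cup\{(x,\eta)\}$ is compact in $\mathcal X\times\cP_p(\mathcal Z)$.
\item Each $\rho_n$ lies in the sublevel set $\{\rho:\inf_{(x',\eta')\in K}\{C_V(x',\rho)+W(\rho,\eta')\}\le L+1\}$ for $n$ large. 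By \Cref{aspt:coercivity} this set is $\Wcal_r$-compact.
\item Hence every $\Wcal_r$-limit point of $(\rho_n)$ lies in it; the only limit point is $\rho$.
\end{enumerate}
The crucial point is to obtain relative compactness of $(\rho_n)$ in $\Wcal_p$, not merely in $\Wcal_r$. The coercivity assumption is phrased in $\Wcal_r$, so I cannot simply reuse the sublevel set above. Instead I apply the assumption with $r=p$ if it is available; if not, I use the following argument.

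The functional $F_K(\rho)=\inf_K\{C_V+W\}$ is lower semicontinuous under $\tau_w$: this follows from lower semicontinuity of $C_V+W$ on $\mathcal X\times(\cP_p,\tau_w)\times\cP_p$ via \cite[Theorem 2.9]{backhoff2019existence} and compactness of $K$. It is also lower semicontinuous along $\Wcal_r$-convergent sequences.

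The sublevel set being $\Wcal_r$-compact does not give uniform $p$-integrability, however. So I do not aim for $\Wcal_p$-convergence. Instead I aim to show directly:
\[
\liminf_n \bigl(C_V(x_n,\rho_n)+W(\rho_n,\eta_n)\bigr)\ \ge\ C_V(x,\rho)+W(\rho,\eta),
\]
using lower semicontinuity of $C_V+W$ with respect to $\tau_w$ in the $\rho$-argument. The first-claim argument therefore rests on one precondition: the cited lower semicontinuity result for standard weak transport costs holds for weakly convergent sequences within $\cP_p$ under the lower bound on $C_V$. This is the main obstacle, namely that lower semicontinuity of $C_V$ in $\rho$ with respect to $\tau_w$ rather than $\Wcal_p$ is not automatic. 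If it fails, I would read the coercivity hypothesis as making the relevant sublevel sets the only place where convergence occurs. In that case I would argue by contradiction via the compactness of $\{\rho_n\}\cup\{\rho\}$ in the coercive topology, and use that $C_V$ restricted to a $\Wcal_r$-compact sublevel set, on which $C_V$ is lower semicontinuous in the stronger topology, inherits lower semicontinuity. The reason is that on a compact set where $\Wcal_r$ and $\Wcal_p$ closures agree, the topologies coincide.

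\emph{Second claim.} I would use the first claim together with the standard representation of $V$ and a standard extension argument.
\begin{enumerate}
\item Take $(\mu_n,\rho_n,\nu_n)\to(\mu,\rho,\nu)$ with bounded values.
\item Pick optimal $\pi^n\in\Cpl(\mu_n,\rho_n)$, and lift each to $P^n:=(x\mapsto(x,\pi^n_x))_\#\mu_n\in\cP(\mathcal X\times\cP_p(\mathcal Y))$.
\item The family $(P^n)$ is tight. Indeed, $\mu_n$ is tight, and the intensities $I(\cdot)$ of the second marginals equal $\rho_n$, which are $\Wcal_r$-relatively compact; tightness of intensity measures gives tightness on $\cP(\mathcal Y)$ with the weak topology.
\item Extract a limit $P$.
\item Write $V(\mu_n,\rho_n)+W(\rho_n,\nu_n)=\int C_V\,\rmd P^n+W(I(P^n_2),\nu_n)$.
\item Apply the portmanteau theorem to the lower semicontinuous, lower-bounded integrand $C_V$, using the first claim for lower semicontinuity in the weaker topology, together with convexity of $C_V$ in $\rho$ (Jensen) to pass back to a coupling of $(\mu,\rho)$.
\end{enumerate}
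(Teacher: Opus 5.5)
Both parts of your proposal have a genuine gap.

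\textbf{First claim.} You never actually establish the $\liminf$ inequality.

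Your primary route needs lower semicontinuity of $C_V+W$ in $\rho$ for $\tau_w$. However, \cite[Theorem 2.9]{backhoff2019existence} gives this only for the transport value $W$. The cost $C_V$ is assumed lsc only for $\Wcal_p$, and, as noted before the statement, it need not even be $\Wcal_r$-lsc.

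Your fallback is false. A $\Wcal_r$-compact set need not be $\Wcal_p$-compact, and the ``compact $\Rightarrow$ topologies coincide'' argument needs compactness in the \emph{finer} topology. For example, on $\mathcal Y=\R$ with $r<p$, let $\rho_n:=(1-\tfrac1n)\delta_0+\tfrac1n\delta_{n^{1/p}}$. Then $\Wcal_r^r(\rho_n,\delta_0)=n^{r/p-1}\to0$, but $\Wcal_p^p(\rho_n,\delta_0)=1$. This is exactly the situation in the paper's application, where sublevel sets are only $\Wcal_r$-compact for $r<2$.

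Your remark that $F_K:=\inf_K\{C_V+W\}$ is $\Wcal_r$-lsc is the right ingredient: its sublevel sets are compact, hence closed. But with one fixed $K$ it only gives $F_K(\rho)\le L$, which bounds the value at \emph{some} point of $K$, not at $(x,\eta)$.

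The missing idea is to localize to tails. Set $K_n:=\{(x,\eta)\}\cup\{(x_k,\eta_k):k\ge n\}$ and $h_n:=F_{K_n}$.
\begin{enumerate}
\item Since $h_n$ is $\Wcal_r$-lsc, $h_n(\rho)\le\liminf_k h_n(\rho_k)\le L$ for every $n$.
\item We have $h_n(\rho)=\min\bigl\{C_V(x,\rho)+W(\rho,\eta),\,\inf_{k\ge n}\{C_V(x_k,\rho)+W(\rho,\eta_k)\}\bigr\}$.
\item As $n\to\infty$, $h_n(\rho)$ increases to $C_V(x,\rho)+W(\rho,\eta)$. This holds because, at the \emph{fixed} $\rho\in\cP_p(\mathcal Y)$, the map $(x',\eta')\mapsto C_V(x',\rho)+W(\rho,\eta')$ is lsc.
\end{enumerate}
No upgrade to $\Wcal_p$-compactness is needed.

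\textbf{Second claim.} You lift only $V$, keep $W(I(P^n_2),\nu_n)$ outside the integral, and apply portmanteau to $\int C_V\,\rmd P^n$ ``using the first claim''. But the first claim gives lower semicontinuity only of the sum $(x,p,q)\mapsto C_V(x,p)+W(p,q)$, not of $C_V$ alone in $\Wcal_r$ or $\tau_w$. The latter genuinely fails. For instance, $C_V(x,\rho):=\max\{0,1-\int|y|^p\,\rho(\rmd y)\}$ is a continuous standard weak transport cost, yet $C_V(x,\rho_n)=0<1=C_V(x,\delta_0)$ for the $\rho_n$ above. So the $\liminf$ of the two separated terms cannot be controlled this way.

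The fix is to lift $W$ as well. Let $\Lambda(\mu,\rho,\nu)$ be the set of laws $P$ on $\mathcal X\times\cP_p(\mathcal Y)\times\cP_p(\mathcal Z)$ with first marginal $\mu$ and with the other two marginals having intensities $\rho$ and $\nu$. One shows
\[
V(\mu,\rho)+W(\rho,\nu)=\inf_{P\in\Lambda(\mu,\rho,\nu)}\int \bigl(C_V(x,p)+W(p,q)\bigr)\,P(\rmd x,\rmd p,\rmd q).
\]
\begin{itemize}
\item The inequality ``$\ge$'' follows from Jensen, using convexity of $C_V(x,\cdot)$ together with joint convexity (\Cref{lem:convexity_WOT}) and lower semicontinuity of $W$.
\item The inequality ``$\le$'' follows by gluing optimal couplings via $\hat\pi_x:=\int\pi'_y\,\pi_x(\rmd y)$ and checking that $W(\rho,\nu)=\int W(\pi_x,\hat\pi_x)\,\mu(\rmd x)$.
\end{itemize}
Then take optimal $P^k$. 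Tightness follows from the marginals as you describe, and lower semicontinuity of the lifted integrand is exactly the first claim.
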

\begin{proof}
    We defer the proof of this lemma to the appendix; see \cref{lem:appendix:coercivity_gives_lsc}.
\end{proof}

\subsection{Structural properties and duality}\label{subsec:struc_props_and_duality}
We now turn to the first structural results for infimal convolutions and their dual formulation. 
The following result establishes that, under the coercivity assumption~\Cref{aspt:coercivity}, the infimal convolution of two weak transport problems is again a weak transport problem, with a naturally induced cost function obtained by pointwise infimal convolution. 
It also gives the corresponding dual representation.

\begin{proposition}[Properties of the infimal convolution]\label{prop:properties_inf_conv}
    Let $V$, $W$ be proper standard weak transport problems and assume that $(C_V,W)$ satisfies \Cref{aspt:coercivity} for some $r \in [1,p]$.
    Define ${C_{V\Box W}:\mathcal X\times\cP_p(\mathcal Z)\to\R\cup\{+\infty\}}$ by
    \begin{equation}\label{eq:CVBoxW}
        C_{V \Box W}(x,\eta) := \inf_{\rho \in \cP_p(\mathcal Y)} \bigl\{ C_V(x,\rho) + W(\rho,\eta) \bigr\}.
    \end{equation}
    Then $C_{V\Box W}$ is a standard weak transport cost, and the infimum in \eqref{eq:CVBoxW} is attained for every $(x,\eta)\in \mathcal X\times\cP_p(\mathcal Z)$. 
    If, in addition, $(C_V,W)$ satisfies the continuity assumption~\Cref{aspt:continuity}, then $C_{V\Box W}$ is continuous on $\mathcal X\times\cP_p(\mathcal Z)$.
    
    Moreover, for every $\mu \in \cP_p(\mathcal X)$ and
    $\nu \in \cP_p(\mathcal Z)$,
    \begin{equation} \label{eq:VBoxW_primal}
        V \Box W(\mu,\nu)
        =
        \inf_{\pi \in \cpl(\mu,\nu)} \int_{\mathcal X} C_{V \Box W}(x,\pi_x)\, \mu(\rmd x).
    \end{equation}
    Finally, for every $f\in L_{b,p}(\mathcal Z)$,
    \begin{equation} \label{eq:VBoxW_conjugate}
        f^{C_{V\Box W}}
        \ge 
        \bigl(-f^{C_W}\bigr)^{C_V},
    \end{equation}
    and
    \begin{equation} \label{eq:VBoxW_dual}
        V \Box W(\mu,\nu)
        =
        \sup_{f \in L_{b,p}(\mathcal Z)} \left\{ \int_{\mathcal Z} f(z)\,\nu(\rmd z) + \int_{\mathcal X} \bigl(-f^{C_W}\bigr)^{C_V}(x)\,\mu(\rmd x) \right\}.
    \end{equation}
\end{proposition}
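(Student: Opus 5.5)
The proof splits into four parts: properties of $C_{V\Box W}$, the primal identity \eqref{eq:VBoxW_primal}, the transform inequality \eqref{eq:VBoxW_conjugate}, and the duality \eqref{eq:VBoxW_dual}.

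\textbf{Properties of $C_{V\Box W}$ and attainment.} Fix $(x,\eta)$ and apply \Cref{aspt:coercivity} with the compact set $K=\{(x,\eta)\}$. Then $\rho\mapsto C_V(x,\rho)+W(\rho,\eta)$ has compact sublevel sets in $(\cP_p(\mathcal Y),\Wcal_r)$. By \cref{lem:coercivity_gives_lsc} this map is lower semicontinuous in that topology, so the infimum is attained.

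For lower semicontinuity of $C_{V\Box W}$, take $(x_n,\eta_n)\to(x,\eta)$ with bounded values and minimizers $\rho_n$. Since $K=\{(x_n,\eta_n)\}_n\cup\{(x,\eta)\}$ is compact, the $\rho_n$ lie in a compact sublevel set. Extract a $\Wcal_r$-convergent subsequence and use the joint lower semicontinuity from \cref{lem:coercivity_gives_lsc}.

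The remaining structural properties are direct. Boundedness from below is inherited from $C_V$ and $W$. Properness follows from properness of $V$ and $W$, taking any $\rho$ with $C_V(x,\rho)<\infty$ and $W(\rho,\eta)<\infty$ (if needed, using $\eta=\delta$-type or product choices). Convexity in $\eta$ follows from joint convexity of $(\rho,\eta)\mapsto C_V(x,\rho)+W(\rho,\eta)$: $C_V$ is convex in $\rho$, and $W$, as a weak transport value, is jointly convex under mixing of couplings. Infimal projection of a jointly convex function is convex.

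Under \Cref{aspt:continuity}, $C_{V\Box W}$ is an infimum of continuous functions of $(x,\eta)$ over $\rho$ in the domain, hence upper semicontinuous. Combined with lower semicontinuity, it is continuous.

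\textbf{Primal identity \eqref{eq:VBoxW_primal}, inequality "$\le$".} Given $\pi\in\Cpl(\mu,\nu)$, select measurably (for instance via a measurable selection theorem on the lsc minimization) minimizers $\rho^x$ of $C_V(x,\cdot)+W(\cdot,\pi_x)$. Also select near-optimal couplings $\kappa^x\in\Cpl(\rho^x,\pi_x)$ for $W$. Set $\rho:=\int\rho^x\,\mu(\rmd x)$. The kernel $x\mapsto\rho^x$ gives a coupling in $\Cpl(\mu,\rho)$ with cost $\int C_V(x,\rho^x)\,\rmd\mu$. Gluing the $\kappa^x$ gives a coupling of $\rho$ and $\nu$. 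Convexity of $C_W(y,\cdot)$ and Jensen's inequality for disintegrations then show $W(\rho,\nu)\le\int W(\rho^x,\pi_x)\,\mu(\rmd x)$. Hence $V\Box W(\mu,\nu)\le\int C_{V\Box W}(x,\pi_x)\,\rmd\mu$.

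\textbf{Primal identity, inequality "$\ge$".} Given $\rho$ and couplings $\pi^1\in\Cpl(\mu,\rho)$, $\pi^2\in\Cpl(\rho,\nu)$, glue them. Define $\pi_x:=\int\pi^2_y\,\pi^1_x(\rmd y)$. Then
\[
C_V(x,\pi^1_x)+\int C_W(y,\pi^2_y)\,\pi^1_x(\rmd y)\ \ge\ C_V(x,\pi^1_x)+W(\pi^1_x,\pi_x)\ \ge\ C_{V\Box W}(x,\pi_x).
\]
Integrating in $\mu$ gives the reverse inequality.

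\textbf{Transform inequality \eqref{eq:VBoxW_conjugate}.} For any $\rho,\eta$ and any coupling $\kappa\in\Cpl(\rho,\eta)$, use the definition of the $C_W$-transform pointwise, namely $C_W(y,\kappa_y)-\int f\,\rmd\kappa_y\ge f^{C_W}(y)$. Integrating in $\rho$ gives $W(\rho,\eta)-\int f\,\rmd\eta\ge\int f^{C_W}\,\rmd\rho$. Adding $C_V(x,\rho)$ and taking the infimum over $\rho$ and $\eta$ yields the claim. Integrability issues for $f\in L_{b,p}$ are harmless since $f\le K$.

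\textbf{Duality \eqref{eq:VBoxW_dual}.} Weak duality follows from \eqref{eq:VBoxW_conjugate} together with the standard inequality $\int f\,\rmd\nu+\int f^{C}\,\rmd\mu\le$ primal. Strong duality follows from the weak transport duality of \cite{backhoff2019existence} applied to the standard cost $C_{V\Box W}$:
\[
V\Box W(\mu,\nu)=\sup_{f\in C_{b,p}}\Bigl\{\int f\,\rmd\nu+\int f^{C_{V\Box W}}\,\rmd\mu\Bigr\}.
\]
Combining this with \eqref{eq:VBoxW_conjugate} gives only "$\ge$". The missing step is equality of the dual values, i.e. that replacing $f^{C_{V\Box W}}$ by $(-f^{C_W})^{C_V}$ does not lose value.

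This is the main obstacle. The approach is to bound the right-hand side of \eqref{eq:VBoxW_dual} from below by a sup–inf argument. By duality for $W$,
\[
V\Box W(\mu,\nu)=\inf_\rho\sup_f\Bigl\{V(\mu,\rho)+\int f\,\rmd\nu+\int f^{C_W}\,\rmd\rho\Bigr\}.
\]
The objective is convex and lsc in $\rho$ (in $\Wcal_r$, by \cref{lem:coercivity_gives_lsc}) and concave in $f$. Coercivity gives compact sublevel sets, so a minimax theorem allows exchanging $\inf_\rho$ and $\sup_f$. Then $\inf_\rho\{V(\mu,\rho)+\int f^{C_W}\,\rmd\rho\}$ is identified with $\int(-f^{C_W})^{C_V}\,\rmd\mu$ by the conjugate representation of $V(\mu,\cdot)$ as a weak transport problem. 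This needs care since $f^{C_W}$ is only measurable with growth bounds, so $-f^{C_W}$ should be treated as a general test function via the $C_V$-transform defined earlier.

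The delicate points are the compactness required for the minimax exchange and the measurability and integrability of $f^{C_W}$.
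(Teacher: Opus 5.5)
Your arguments for the structural properties of $C_{V\Box W}$, for the inequality $V\Box W(\mu,\nu)\ge\inf_\pi\int C_{V\Box W}(x,\pi_x)\,\mu(\rmd x)$ by gluing, and for \eqref{eq:VBoxW_conjugate} are fine and essentially match the paper. The first gap is in the converse direction of \eqref{eq:VBoxW_primal}. The glued intermediate marginal $\rho=\int\rho^x\,\mu(\rmd x)$ must lie in $\cP_p(\mathcal Y)$, because $V\Box W$ is an infimum over $\cP_p(\mathcal Y)$ and $V(\mu,\cdot)$, $W(\cdot,\nu)$ are only defined there. Each $\rho^x$ has a finite $p$-th moment, but nothing in the hypotheses controls $\int\!\int d_{\mathcal Y}(y_0,y)^p\,\rho^x(\rmd y)\,\mu(\rmd x)$. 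The paper handles exactly this by first proving the identity for $W^\delta(\rho,\nu):=W(\rho,\nu)+\delta\int d_{\mathcal Y}(y_0,y)^p\,\rho(\rmd y)$. There, finiteness of $\int C_{V\Box W^\delta}(x,\hat\pi_x)\,\mu(\rmd x)$, combined with a universally measurable $\varepsilon$-selector, forces the mixture to have finite $p$-th moment. The $\varepsilon$-selector also sidesteps the measurable choice of exact minimizers that you assumed without justification. The paper then lets $\delta\downarrow0$ using $C_{V\Box W^\delta}\downarrow C_{V\Box W}$.

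The second, more serious gap is the minimax step for \eqref{eq:VBoxW_dual}: its hypotheses are not supplied by the setting. \Cref{lem:coercivity_gives_lsc} gives $\Wcal_r$-lower semicontinuity of $\rho\mapsto V(\mu,\rho)+W(\rho,\nu)$, not of $\rho\mapsto V(\mu,\rho)+\int f^{C_W}\,\rmd\rho$. Indeed, $V(\mu,\cdot)$ alone need not be $\Wcal_r$-lsc, and $f^{C_W}$ is in general only measurable and bounded below. Likewise, (Crc) concerns the pointwise problem for $(x,\eta)$ ranging over compacts. It does not yield compact sublevel sets of the marginal-level functional in $(\cP_p(\mathcal Y),\Wcal_r)$. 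Even under a $p$-growth bound the paper only obtains tightness (\cref{lem:VBoxW.tightness.min.sequ}), and compactness requires the additional $\Wcal_r$-stability of \cref{cor:attainment.infimal.convolution}. Your final identification of $\inf_\rho\{V(\mu,\rho)+\int f^{C_W}\,\rmd\rho\}$ with $\int(-f^{C_W})^{C_V}\,\rmd\mu$ would also run into the same moment issue as above.

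The missing idea is that no minimax is needed: you prove the reverse of \eqref{eq:VBoxW_conjugate} pointwise in $x$. Set $f_\delta:=f-\delta\,d_{\mathcal Z}(z_0,\cdot)^p$ and fix $\rho$ with $\int f_\delta^{C_W}\,\rmd\rho<\infty$. Choose a universally measurable $\varepsilon$-optimal kernel $y\mapsto\varphi^\varepsilon(y)$ for $f_\delta^{C_W}(y)$ and glue to get $\rho\otimes\varphi^\varepsilon_\bullet$. The $-\delta d^p$ term guarantees that the resulting second marginal lies in $\cP_p(\mathcal Z)$, and this gives $f_\delta^{C_{V\Box W}}=(-f_\delta^{C_W})^{C_V}$. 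Now $f_\delta^{C_{V\Box W}}$ is the transform of $f$ for the standard cost $C_{V\Box W}(x,\eta)+\delta\int d_{\mathcal Z}(z_0,\cdot)^p\,\rmd\eta$, whose WOT value is $V\Box W(\mu,\nu)+\delta\int d_{\mathcal Z}(z_0,\cdot)^p\,\rmd\nu$. Standard WOT duality for this shifted cost therefore shows that the dual objective in \eqref{eq:VBoxW_dual}, evaluated along the functions $f_\delta$, already reaches $V\Box W(\mu,\nu)$.
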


\begin{proof}   
    \emph{Part 1: $C_{V\Box W}$ is a standard weak transport cost.} \\
    Boundedness from below and convexity are inherited from $C_V$ and $W$, and so it remains to show lower semicontinuity. Let $(x_k,\eta_k) \to (x^\circ,\eta^\circ)$ in $\mathcal X \times \cP_p(\mathcal Z)$. 
    If $C_{V\Box W}(x_k,\eta_k) \to \infty$ there is nothing to prove. Passing to a subsequence, we hence assume that $C_{V\Box W}(x_k,\eta_k) \to c \in \R$.
    For each $k\in\N$, choose $\rho_k\in\cP_p(\mathcal Y)$ such that
    \[
        C_V(x_k,\rho_k)+W(\rho_k,\eta_k) \le C_{V\Box W}(x_k,\eta_k)+\frac1k,
    \]
    and set $K:=\{(x^\circ,\eta^\circ)\}\cup\{(x_k,\eta_k):k\in\N\}$. By \Cref{aspt:coercivity}, the set
    \[
        \Bigl\{\rho\in\cP_p(\mathcal Y):\inf_{(x,\eta)\in K}\{C_V(x,\rho)+W(\rho,\eta)\}\le c+1\Bigr\}
    \]
    is compact in $(\cP_p(\mathcal Y),\Wcal_r)$. Therefore, along a subsequence, we have $\rho_k \to \rho^\circ$ in $(\cP_p(\mathcal Y), \Wcal_r)$. By \Cref{lem:coercivity_gives_lsc},
    \begin{equation}\label{eq:prop:VBoxW_is_lsc}
        C_{V\Box W}(x^\circ,\eta^\circ)
        \le 
        C_V(x^\circ,\rho^\circ)+W(\rho^\circ,\eta^\circ) 
        \le 
        \liminf_{k\to\infty} \{C_V(x_k,\rho_k)+W(\rho_k,\eta_k)\} = c.
    \end{equation}
    Thus $C_{V\Box W}$ is lower semicontinuous and the same argument yields also attainment of the infimum in \eqref{eq:CVBoxW}.
    
    If, in addition, the pair $(C_V,W)$ satisfies \Cref{aspt:continuity}, then
    \[
        \limsup_{k \to \infty} C_{V \Box W}(x_k,\eta_k) \le \limsup_{k \to \infty} C_V(x_k,\rho^\circ) + W(\rho^\circ,\eta_k)= C_V(x^\circ,\rho^\circ) + W(\rho^\circ,\eta^\circ).
    \]
    Together with \eqref{eq:prop:VBoxW_is_lsc}, this proves that $C_{V \Box W}$ is continuous on $\mathcal{X}\times\Pcal_p(\mathcal{Z})$.

    \emph{Part 2: primal representation.} \\
    Fix $\delta>0$ and $y_0\in\mathcal Y$, and define
    \[
        W^\delta(\rho,\nu):=W(\rho,\nu)+\delta\int_\mathcal{Y} d_{\mathcal Y}(y_0,y)^p\,\rho(\rmd y),
        \qquad
        C_{W^\delta}(y,\eta):=C_W(y,\eta)+\delta\,d_{\mathcal Y}(y_0,y)^p.
    \]
    We prove \eqref{eq:VBoxW_primal} first with $W$ replaced by $W^\delta$. By definition and disintegration,
    \[
        V\Box W^\delta(\mu,\nu)
        =
        \inf_{\rho\in\cP_p(\mathcal Y)}
        \inf_{\pi\in\cpl(\mu,\rho),\,\pi'\in\cpl(\rho,\nu)}
        \int_{\mathcal X}
        \left(
        C_V(x,\pi_x)+\int_{\mathcal Y} C_{W^\delta}(y,\pi'_y)\,\pi_x(\rmd y)
        \right)\mu(\rmd x).
    \]
    For $\pi$ and $\pi'$ admissible, set $\hat\pi_x:=\int_{\mathcal Y}\pi'_y\,\pi_x(\rmd y) \in \mathcal{P}(\mathcal{Z})$ such that $\hat\pi:=\mu\otimes\hat\pi_\bullet\in\cpl(\mu,\nu)$.
    Then
    \[
        \int_{\mathcal Y} C_{W^\delta}(y,\pi'_y)\,\pi_x(\rmd y) \ge W^\delta(\pi_x,\hat\pi_x),
    \]
    and therefore
    \[ 
        V\Box W^\delta(\mu,\nu) 
        \ge 
        \inf_{\hat\pi\in\cpl(\mu,\nu)}\int_{\mathcal X} 
        \inf_{\rho \in \cP_p(\mathcal Y)}\left\{
        C_V(x,\rho)+W^\delta(\rho,\hat{\pi}_x)
        \right\}\,\mu(\rmd x) 
        = 
        \inf_{\hat\pi\in\cpl(\mu,\nu)}
        \int_{\mathcal X} C_{V\Box W^\delta}(x,\hat\pi_x)\,\mu(\rmd x). 
    \]
    
    Conversely, fix $\varepsilon>0$. The claim is immediate unless there exists
    $\hat\pi\in\cpl(\mu,\nu)$ such that
    \[
        \int_{\mathcal X} C_{V\Box W^\delta}(x,\hat\pi_x)\,\mu(\rmd x)<\infty.
    \]
    We fix such a $\hat\pi$ and choose a universally measurable $\varepsilon$-selector
    $K:\mathcal X\times\cP_p(\mathcal Z)\to\cP_p(\mathcal Y)$ for $C_{V\Box W^\delta}$ (see e.g.~\cite[Proposition 7.50]{BertsekasShreve1978}),
    that is,
    \begin{equation}\label{prop:VboxW:proof:primal_repr:eps_selector}
        \begin{aligned}
            C_{V\Box W^\delta}(x,\eta)+\varepsilon
            &\ge
            C_V(x,K(x,\eta))+W^\delta(K(x,\eta),\eta) \\
            &= C_V(x,K(x,\eta))+W(K(x,\eta),\eta)+\delta\,\int_\mathcal{Y}d_\mathcal{Y}(y_0,y)^p\,K(x,\eta)(\rmd y)
        \end{aligned}
    \end{equation}
    for all $(x,\eta)\in\mathcal X\times\cP_p(\mathcal Z)$.
    Set $\pi_x:=K(x,\hat\pi_x)$, $\pi:=\mu\otimes\pi_\bullet$ and $\rho:=\int \pi_x\,\mu(\rmd x)$.
    Since $\int_\mathcal{X} C_{V\Box W^\delta}(x,\hat\pi_x)\,\mu(\rmd x)$ is finite, \eqref{prop:VboxW:proof:primal_repr:eps_selector} implies that $\rho\in\cP_p(\mathcal Y)$. Moreover, we have
    \[
    \int_{\mathcal X} C_{V\Box W^\delta}(x,\hat\pi_x)\,\mu(\rmd x)+\varepsilon
    \ge
    \int_{\mathcal X}\bigl(C_V(x,\pi_x)+W^\delta(\pi_x,\hat\pi_x)\bigr)\,\mu(\rmd x).
    \]
    Since $\nu=\int \hat\pi_x\,\mu(\rmd x)$, convexity (see \Cref{lem:convexity_WOT}) and lower semicontinuity of $(\rho,\nu)\mapsto W^\delta(\rho,\nu)$ give
    \[
    W^\delta(\rho,\nu)\le \int_{\mathcal X} W^\delta(\pi_x,\hat\pi_x)\,\mu(\rmd x).
    \]
    It follows that
    \[
    \int_{\mathcal X} C_{V\Box W^\delta}(x,\hat\pi_x)\,\mu(\rmd x)+\varepsilon
    \ge
    \int_{\mathcal X} C_V(x, \pi_x)\,\mu(\rmd x)+W^\delta(\rho,\nu)
    \ge
    V\Box W^\delta(\mu,\nu).
    \]
    Taking the infimum over $\hat\pi\in\cpl(\mu,\nu)$ and letting $\varepsilon\downarrow0$, we obtain
    \[
    V\Box W^\delta(\mu,\nu)
    =
    \inf_{\pi\in\cpl(\mu,\nu)}
    \int_{\mathcal X} C_{V\Box W^\delta}(x,\pi_x)\,\mu(\rmd x)
    \]
    for all $\delta > 0$.
    Finally, we have $W^\delta\downarrow W$ pointwise as $\delta\downarrow0$, and therefore $V\Box W^\delta\downarrow V\Box W$ and $C_{V\Box W^\delta}\downarrow C_{V\Box W}$ pointwise. 
    Hence, for every $\pi\in\cpl(\mu,\nu)$,
    \[
        \int_{\mathcal X} C_{V\Box W^\delta}(x,\pi_x)\,\mu(\rmd x)
        \mathrel{\Big\downarrow}
        \int_{\mathcal X} C_{V\Box W}(x,\pi_x)\,\mu(\rmd x),
    \]
    and taking infima over $\pi\in\cpl(\mu,\nu)$ yields \eqref{eq:VBoxW_primal}.
    
    \emph{Part 3: conjugacy and duality.} \\
    Let $f \in L_{b,p}(\mathcal Z)$. Then
    \begin{align*}
        f^{C_{V\Box W}}(x)
        &=
        \inf_{\rho\in\cP_p(\mathcal Y),\,\eta\in\cP_p(\mathcal Z)}
        \left\{C_V(x,\rho)+W(\rho,\eta)-\int f \,\rmd\eta\right\}\\
        &=
        \inf_{\rho\in\cP_p(\mathcal Y)}
        \left\{
        C_V(x,\rho)
        +
        \inf_{\pi'\in\Cpl_p(\rho,*)}
        \iint_{\mathcal{Y}\times\mathcal{Z}} \bigl(C_W(y,\pi'_y)-f(z)\bigr)\,\pi'(\rmd y,\rmd z)
        \right\},
    \end{align*}
    where $\Cpl_p(\rho,*) := \{\pi\in\Cpl(\rho,\eta):\eta\in\cP_p(\mathcal Z)\}$. Fix $\rho\in\cP_p(\mathcal Y)$. For every $\pi'\in\Cpl_p(\rho,*)$, we have
    \[
        f^{C_W}(y) =
        \inf_{\eta\in\cP_p(\mathcal Z)}
        \left\{C_W(y,\eta)-\int_\mathcal{Z} f\,\rmd\eta\right\}
        \le
        C_W(y,\pi'_y)-\int_\mathcal{Z} f\,\rmd\pi'_y,
    \]
    for every $y \in \mathcal Y$.
    As $f^{C_W}$ is bounded from below, we can integrate both sides and get
    \begin{equation}\label{prop:VboxW:proof:c-conjugate:easy-dir}
        \int_\mathcal{Y} f^{C_W}\,\rmd\rho
        \le
        \inf_{\pi'\in\Cpl_p(\rho,*)}
        \iint_{\mathcal{Y}\times\mathcal{Z}} \bigl(C_W(y,\pi'_y)-f(z)\bigr)\,\pi'(\rmd y,\rmd z).
    \end{equation}
    Consequently, $f^{C_{V\Box W}} \ge (-f^{C_W})^{C_V}$.

    For $\delta > 0$ set $f_\delta := f - \delta\,d_\mathcal{Z}(z_0,\cdot)^p \in L_{b,p}(\mathcal{Z})$ for some $z_0 \in \mathcal Z$. We claim that, for all $\rho \in \cP_p(\mathcal{Y})$, 
    \[
        \int_\mathcal{Y} f_\delta^{C_W}\,\rmd\rho
        =
        \inf_{\pi'\in\Cpl_p(\rho,*)}
        \iint_{\mathcal{Y}\times\mathcal{Z}} \bigl(C_W(y,\pi'_y)-f_\delta(z)\bigr)\,\pi'(\rmd y,\rmd z),
    \]
    and hence $f_\delta^{C_{V\Box W}} = (-f_\delta^{C_W})^{C_V}$ for all $\delta > 0$.
    
    If $\int_\mathcal{Y} f_\delta^{C_W}\,\rmd\rho=+\infty$ we have equality in \eqref{prop:VboxW:proof:c-conjugate:easy-dir}, and so we assume that ${\int_\mathcal{Y} f_\delta^{C_W}\,\rmd\rho<\infty}$.
    Since $f_\delta$ is bounded from above, the map $(y,\eta)\mapsto C_W(y,\eta)-\int_\mathcal{Z} f_\delta\,\rmd\eta$ is Borel measurable on $\mathcal Y\times\cP_p(\mathcal Z)$. 
    Measurable selection therefore yields, for every $\varepsilon>0$, a universally measurable map ${\varphi^\varepsilon\colon \mathcal Y\to\cP_p(\mathcal Z)}$ such that, for all $y \in \mathcal{Y}$,
    \begin{equation}\label{prop:VboxW:proof:c-transform:eps_selector}
        C_W\bigl(y,\varphi^\varepsilon(y)\bigr)
        -\int_\mathcal{Z} f_\delta(z)\,\varphi^\varepsilon(y,\rmd z)
        \le
        f_\delta^{C_W}(y)+\varepsilon.
    \end{equation}
    Since $f_\delta$ is bounded from above $\int_\mathcal{Y} f_\delta^{C_W}\,\rmd\rho<\infty$, this yields
    \[
        \delta\int_\mathcal{Y}\!\int_\mathcal{Z} d_{\mathcal Z}(z_0,z)^p\,\varphi^\varepsilon(y,\rmd z)\,\rho(\rmd y)<\infty.
    \]
    Hence, for $\pi^\varepsilon:=\rho\otimes\varphi^\varepsilon_\bullet$, we have $\pi^\varepsilon\in\Cpl_p(\rho,*)$, and
    \begin{align*}
        \iint_{\mathcal{Y}\times\mathcal{Z}} \bigl(C_W(y,\pi^\varepsilon_y)-f_\delta(z)\bigr)\,\pi^\varepsilon(\rmd y,\rmd z)
        &=
        \int_\mathcal{Y} \left(C_W\bigl(y,\varphi^\varepsilon(y)\bigr)-\int_\mathcal{Z} f_\delta(z)\,\varphi^\varepsilon(y,\rmd z)\right)\rho(\rmd y)\\
        &\le
        \int_\mathcal{Y} f_\delta^{C_W}\,\rmd\rho+\varepsilon.
    \end{align*}
    Taking the infimum over $\pi'\in\Cpl_p(\rho,*)$ and then letting $\varepsilon\downarrow0$, we obtain the reverse inequality in \eqref{prop:VboxW:proof:c-conjugate:easy-dir}. Consequently, for all $\delta > 0$,
    \[
        f_\delta^{C_{V\Box W}}(x)
        =
        \inf_{\rho\in\cP_p(\mathcal Y)}
        \left\{
        C_V(x,\rho)+\int_\mathcal{Y} f_\delta^{C_W}(y)\,\rho(\rmd y)
        \right\}
        =
        (-f_\delta^{C_W})^{C_V}(x).
    \]
    This shows \eqref{eq:VBoxW_conjugate}. 

    For $\delta > 0$, define
    \[
        W_\delta(\rho,\nu) := W(\rho,\nu) + \delta \int h\,\rmd\nu,
        \qquad
        C_{W_\delta}(y,\eta) := C_W(y,\eta) + \delta \int h\,\rmd\eta. 
    \]
    Then $f_\delta^{C_W}= f^{C_{W_\delta}}$, and strong duality (see \cite[Theorem 3.1]{backhoff2019existence}) gives
    \begin{align*}
        V\Box W_{\delta}(\mu,\nu) &= \sup_{f \in L_{b,p}(\mathcal Z)} \left\{\int f\,\rmd \nu + \int (-f_\delta^{C_{W}})^{C_V}\,\rmd\mu\right\},
        \\
        V\Box W (\mu,\nu) &= \sup_{f \in L_{b,p}(\mathcal Z)} \left\{\int f\,\rmd \nu + \int f^{C_{V \Box W}}\,\rmd\mu\right\}
        \\
        &\ge \sup_{f \in L_{b,p}(\mathcal Z)} \left\{\int f\,\rmd \nu + \int (-f^{C_W})^{C_V}\,\rmd\mu\right\}.
    \end{align*}
    To show the converse inequality, let $(\delta_n)_{n\in\N}$ with $\delta_n \downarrow 0$ and $f_{n} \in L_{b,p}(\mathcal Z)$ such that, for all $n \in \N$,
    \[
        V\Box W_{\delta_n}(\mu,\nu) \le \frac1n + \int f_{\delta_n,n}\,\rmd\nu + \int (-f_{\delta_n,n}^{C_{W}})^{C_V}\,\rmd\mu,
    \]
    where $f_{\delta_n,n} := f_n - \delta_n\,d_\mathcal{Z}(z_0,\cdot)^p \in L_{b,p}(\mathcal{Z})$.
    By the monotonicity of the infimum, $V\Box W_{\delta_n}(\mu,\nu) \downarrow V\Box W(\mu,\nu)$ and so
    \[
        V\Box W(\mu,\nu) \le \lim_{n \to \infty} \left(\int f_{\delta_n,n}\,\rmd\nu + \int (-f_{\delta_n,n}^{C_{W}})^{C_V}\,\rmd\mu\right).
    \]
    Therefore, \eqref{eq:VBoxW_dual} holds.
\end{proof}

\begin{remark}\label{rem:properties_inf_conv:additional_remarks}\leavevmode
    \begin{enumerate}
        \item In the setting of \Cref{prop:properties_inf_conv}, fix $(x,\eta) \in \mathcal X \times \cP_p(\mathcal Z)$ and assume that the map
        \[
            \rho \longmapsto C_V(x,\rho) + W(\rho,\eta)
        \]
        is strictly convex on $\cP_p(\mathcal Y)$. Then the minimizer attaining $C_{V \Box W}(x,\eta)$ is unique.

        \item The dual representation \eqref{eq:VBoxW_dual} may equivalently be written with $C_{b,p}(\mathcal Z)$ in place of $L_{b,p}(\mathcal Z)$, by the standard weak transport duality applied to the standard weak transport cost $C_{V\Box W}$.

        \item As is clear from the proof of \Cref{prop:properties_inf_conv}, convexity of $\rho \mapsto C_V(x,\rho)$ is not used in the derivation of \eqref{eq:VBoxW_primal}--\eqref{eq:VBoxW_dual}. Hence these statements remain valid under assumptions on $C_V$ weaker than those of a standard weak transport cost, even though $V\Box W$ need not itself be a standard weak transport problem.
    \end{enumerate}
\end{remark}

\Cref{prop:properties_inf_conv} identifies $V\Box W$ as a standard weak optimal transport problem with cost $C_{V\Box W}$. The following lemma gives a compactness property of minimizing sequences in $\cP_p(\mathcal Y)$ for $V\Box W$ which will be used in the stability analysis in \cref{subsec:stability}. We emphasize that tightness alone does not imply attainment of the infimum in $\cP_p(\mathcal Y)$.

\begin{lemma}[Tightness of minimizing sequences]\label{lem:VBoxW.tightness.min.sequ}
    In the setting of \Cref{prop:properties_inf_conv}, assume in addition that $C_{V\Box W}$ admits a $p$-growth bound. Then every minimizing sequence $(\rho_k)_{k \in \N} \subset \cP_p(\mathcal Y)$ for
    \[
        V \Box W(\mu,\nu) = \inf_{\rho \in \cP_p(\mathcal Y)} \left\{V(\mu,\rho)+ W(\rho,\nu)\right\}
    \]
    is tight.
\end{lemma}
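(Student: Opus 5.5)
Since $C_{V\Box W}$ satisfies the $p$-growth bound, the value $V\Box W(\mu,\nu)$ is finite: by \eqref{eq:VBoxW_primal} with the product coupling $\mu\otimes\nu$,
\[
    V\Box W(\mu,\nu)\le c\Bigl(1+\int d_{\mathcal X}(x_0,x)^p\,\mu(\rmd x)+\int d_{\mathcal Z}(z_0,z)^p\,\nu(\rmd z)\Bigr)<\infty .
\]
Hence along a minimizing sequence $(\rho_k)$ we may assume $V(\mu,\rho_k)+W(\rho_k,\nu)\le c_0$ for all $k$. I will derive tightness from \Cref{aspt:coercivity}, in three steps: disintegrate, localize $x$ to a compact set, and then control the remaining mass.

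\emph{Disintegration.} Choose $\pi^k\in\Cpl(\mu,\rho_k)$ that is $1/k$-optimal for $V(\mu,\rho_k)$ and $\pi'^k\in\Cpl(\rho_k,\nu)$ that is $1/k$-optimal for $W(\rho_k,\nu)$. As in Part 2 of the proof of \Cref{prop:properties_inf_conv}, set $\hat\pi^k_x:=\int\pi'^k_y\,\pi^k_x(\rmd y)$. Using convexity of $W$ (\Cref{lem:convexity_WOT}) in the reverse direction, namely $\int C_W(y,\pi'^k_y)\pi^k_x(\rmd y)\ge W(\pi^k_x,\hat\pi^k_x)$, we obtain
\[
    \int_{\mathcal X}\bigl(C_V(x,\pi^k_x)+W(\pi^k_x,\hat\pi^k_x)\bigr)\,\mu(\rmd x)\le c_0+2/k .
\]
The integrand is bounded below by a constant $-b$, since $C_V$ and $W$ are.

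\emph{Localization.} Fix $\varepsilon>0$. The family $\{\hat\pi^k\}\subset\Cpl(\mu,\nu)$ is tight with $p$-uniformly integrable marginals, so its image under $x\mapsto(x,\hat\pi^k_x)$ is controlled in $\cP_p(\mathcal X\times\cP_p(\mathcal Z))$ through the intensity. I would use that $\{\mathrm{Law}_\mu(x,\hat\pi^k_x)\}_k$ is tight in $\mathcal X\times\cP_p(\mathcal Z)$, since its intensity is $\nu$ with fixed $p$-moments. This gives a compact $K_\varepsilon\subset\mathcal X\times\cP_p(\mathcal Z)$ with $\mu\{x:(x,\hat\pi^k_x)\notin K_\varepsilon\}\le\varepsilon$ for all $k$. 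By Markov's inequality applied to the nonnegative quantity $C_V+W+b$, the set $A_k$ of those $x$ with $(x,\hat\pi^k_x)\in K_\varepsilon$ and $C_V(x,\pi^k_x)+W(\pi^k_x,\hat\pi^k_x)\le M$, where $M:=(c_0+2+b)/\varepsilon$, satisfies $\mu(A_k)\ge1-2\varepsilon$. For $x\in A_k$ the measure $\pi^k_x$ lies in the sublevel set $\{\rho:\inf_{K_\varepsilon}(C_V+W)\le M\}$, which is compact in $\Wcal_r$ by \Cref{aspt:coercivity} and hence tight. So there is a compact $L_\varepsilon\subset\mathcal Y$ with $\pi^k_x(L_\varepsilon^c)\le\varepsilon$ for all $x\in A_k$. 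Consequently
\[
    \rho_k(L_\varepsilon^c)=\int\pi^k_x(L_\varepsilon^c)\,\mu(\rmd x)\le\varepsilon+2\varepsilon
\]
uniformly in $k$, which is tightness.

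\emph{Main obstacle.} The hardest step is tightness of $\{\mathrm{Law}_\mu(x,\hat\pi^k_x)\}$ in $\mathcal X\times\cP_p(\mathcal Z)$ with the $\Wcal_p$ topology, because weak tightness of kernels alone does not give $\Wcal_p$-compactness. I would handle it by combining the uniform $p$-integrability of the fixed measure $\nu$ with the standard fact that $P\mapsto I(P)$ has $\Wcal_p$-tight preimages of $\Wcal_p$-compact sets in $\cP_p(\cP_p(\mathcal Z))$ (as in \cite{backhoff2019existence}). I would also check that the $p$-growth bound is really needed only for finiteness of the value, and possibly to dominate the integrand on the complement of $A_k$.
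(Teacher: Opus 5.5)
Your proof is correct, and it shares the paper's skeleton. You compose couplings for $V(\mu,\rho_k)$ and $W(\rho_k,\nu)$. You use \cite[Lemma 2.4]{backhoff2019existence} to confine $(x,\hat\pi^k_x)$ to a compact subset of $\mathcal X\times\cP_p(\mathcal Z)$ up to small $\mu$-mass, which is exactly the paper's step. You then invoke \Cref{aspt:coercivity} to localize the first-step kernels.

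The difference lies in how you obtain a uniform level for the sublevel set. The paper takes exact optimizers $\xi^k,\chi^k$ and shows that the nonnegative gap $C_V(x,\xi^k_x)+W(\xi^k_x,\pi^k_x)-C_{V\Box W}(x,\pi^k_x)$ tends to $0$ in $L^1(\mu)$. It then needs the $p$-growth bound to make $\sup_{K_1\times K_3}C_{V\Box W}$ finite, since lower semicontinuity alone would not bound $C_{V\Box W}$ on a compact set. It finishes by proving tightness of the lifted laws $\zeta^k$ and applying \cite[Lemma 2.3]{backhoff2019existence} to their intensities. You instead apply Markov's inequality directly to the nonnegative function $C_V(x,\pi^k_x)+W(\pi^k_x,\hat\pi^k_x)+b$, whose $\mu$-integral is bounded uniformly in $k$, and you estimate $\rho_k(L_\varepsilon^c)$ by hand.

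Your route is shorter and works with $1/k$-optimal couplings. It also shows that the growth bound is used only to guarantee $V\Box W(\mu,\nu)<\infty$. That finiteness cannot be dropped, since otherwise every sequence would be minimizing. The paper's gap formulation records additional information: for most $x$, $\xi^k_x$ is nearly optimal in the definition of $C_{V\Box W}(x,\pi^k_x)$. This is the form in which the argument is reused in the proof of \Cref{thm:stability}.
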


\begin{proof}
    Let $(\rho_k)_{k \in \N} \subset \mathcal \cP_p(\mathcal Y)$ be such that $V(\mu,\rho_k) + W(\rho_k,\nu) \to V\Box W(\mu,\nu)$.
    To show that this sequence is tight, we define measures $(\zeta^k)_{k \in \N}$ in ${\cP\bigl(\mathcal X \times (\cP_p(\mathcal Y), \Wcal_r)\times \cP_p(\mathcal Z)\bigr)}$ as follows: Fix $k \in \N$, and let $\xi^k \in \Cpl(\mu,\rho_k)$ and $\chi^k \in \Cpl(\rho_k,\nu)$ be optimal for $V(\mu,\rho_k)$ and $W(\rho_k,\nu)$, respectively. Set 
    \[
        \Gamma^k(\rmd x, \rmd y, \rmd z) := \xi^k(\rmd x, \rmd y)\,\chi^k_y(\rmd z), 
        \qquad
        \pi^k := (\operatorname{pr}^{\mathcal X},\operatorname{pr}^{\mathcal Z})_\#\Gamma^k \in \Cpl(\mu,\nu).
    \]
    Then
    \begin{equation}\label{ineq:proof:VBoxW.tightness.min.sequ}
        \begin{aligned}
            V\Box W(\mu,\nu)
            &\le
            \int C_{V\Box W}(x,\pi_x^k)\,\mu(\rmd x)
            \le
            \int C_V(x,\xi_x^k) + W(\xi_x^k,\pi^k_x)\,\mu(\rmd x) 
            \\
            &\le
            \int \left(C_V(x,\xi_x^k) + \int C_W(y,\chi^k_y)\,\xi_x^k(\rmd y)\right)\,\mu(\rmd x)
            = V(\mu,\rho_k) + W(\rho_k,\nu).
        \end{aligned}
    \end{equation}
    For $\mu$-a.e.\ $x$, $C_{V\Box W}(x,\pi^k_x) \le C_V(x,\xi_x^k) + W(\xi_x^k,\pi^k_x)$, and so \eqref{ineq:proof:VBoxW.tightness.min.sequ} gives
    \[
        0 = \lim_{k \to \infty} C_V(\cdot,\xi_\cdot^k) + W(\xi_\cdot^k,\pi^k_\cdot) - C_{V\Box W}(\cdot,\pi^k_\cdot)\quad \text{ in } L^1(\mu).
    \]
    Finally, set
    \[
        \zeta^k := \bigl(x \mapsto (x, \xi^k_x,\pi^k_x)\bigr)_\#\mu,
    \]
    and note that $\rho_k = \int \operatorname{pr}^\mathcal{Y}_\#\zeta^k_x \,\mu(\rmd x)$, $\nu = \int \operatorname{pr}^\mathcal{Z}_\#\zeta^k_x \,\mu(\rmd x)$.
    
    We claim that $(\zeta^k)_{k \in \N}$ is tight. Fix $\varepsilon > 0$. By \cite[Lemma 2.4]{backhoff2019existence}, the sequence $\bigl((x \mapsto \pi_x^k)_\#\mu\bigr)_{k \in \N}$ is precompact in $\cP_p(\cP_p(\mathcal Z))$ since $\{\nu\}$ is precompact in $\cP_p(\mathcal Z)$. 
    Thus there exist compact sets $K_1 \subseteq \mathcal X$ and $K_3 \subseteq \cP_p(\mathcal Z)$ such that
    \[
        \inf_{k \in \N} \zeta^k\bigl(K_1 \times \cP_p(\mathcal Y) \times K_3\bigr) \ge 1 - \frac\varepsilon2.
    \]
    Moreover, there exists $\delta > 0$ such that, for every $k \in \N$,
    \[
        \mu\!\left(\left\{x : C_V(x,\xi_x^k) + W(\xi_x^k,\pi^k_x) - C_{V\Box W}(x,\pi^k_x) > \delta\right\} \right) \le \frac{\varepsilon}{2}.
    \]
    Since $C_{V\Box W}$ admits a $p$-growth bound, the set
    \[
        K_2 := \left\{ \rho \in \cP_p(\mathcal Y): \inf_{(x,\eta) \in K_1 \times K_3} \bigl\{ C_V(x,\rho) + W(\rho,\eta) \bigr\} \le \sup_{(x,\eta) \in K_1 \times K_3} C_{V\Box W}(x,\eta) + \delta \right\}
    \]
    is compact in $(\cP_p(\mathcal Y),\Wcal_r)$ by \Cref{aspt:coercivity}. In particular, on the set
    \[
        \left\{x \in K_1, \ \pi_x^k \in K_3, \ C_V(x,\xi_x^k) + W(\xi_x^k,\pi^k_x) - C_{V\Box W}(x,\pi^k_x) \le \delta\right\}
    \]
    we have $\xi_x^k\in K_2$. Consequently,
    \[
        \inf_{k \in \N} \zeta^k(K_1 \times K_2 \times K_3) \ge 1 - \epsilon,
    \]
    and so $(\zeta^k)_{k\in\N}$ is tight. It follows from \cite[Lemma 2.3]{backhoff2019existence} that $(\rho_k)_{k\in\N}$ is tight.
\end{proof}

The preceding results give tractable conditions under which the infimal convolution of weak transport problems is again a standard weak transport problem and its induced cost is continuous. 
A key point is that the associated $C$-transform is obtained by composing the $C$-transforms of the two underlying problems. This makes the infimal convolution particularly useful in the sequel. 
We record these consequences in the form in which they will be used to study the variational formulation of the Schrödinger--Bass problem~\eqref{eq:main.SB.var}.

\begin{theorem}[Fundamental theorem of the infimal convolution]\label{thm:fundamental_inf_conv}
    In the setting of \Cref{prop:properties_inf_conv}, assume in addition that $C_{V\Box W}$ is continuous and admits a $p$-growth bound.
    Then, the following hold for every $\mu \in \cP_p(\mathcal X)$ and $\nu\in\cP_p(\mathcal{Z})$:
    \begin{enumerate}[label = (\roman*)]
        \item \label{it:thm.fundamental.1} \textbf{Primal attainment.} The infimal convolution $V \Box W$ is a standard WOT problem, and the infimum is attained, that is,
        \[
            V\Box W(\mu,\nu) = \min_{\pi \in \cpl(\mu,\nu)} \int_\mathcal{X} C_{V \Box W}(x,\pi_x) \,\mu(\rmd x).
        \]
        
        \item \label{it:thm.fundamental.2} \textbf{Strong duality.} $V \Box W$ admits the dual representation
        \begin{align*}
             V \Box W(\mu,\nu)
             &=\sup_{f \in C_{b,p}(\mathcal Z)} \left\{\int_\mathcal{Z} f \, \rmd\nu + \int_\mathcal{X} (-f^{C_W})^{C_V} \,\rmd\mu \right\}.
        \end{align*}
    \end{enumerate}
    If, in addition, $W(\rho,\nu) < +\infty$ for all $\rho \in \cP_p(\mathcal Y)$, then:
    \begin{enumerate}[label = (\roman*),resume*]
        \item \label{it:thm.fundamental.3} \textbf{Dual attainment.} The dual problem is attained, that is,
        \begin{align*}
             V \Box W(\mu,\nu)
             &= \max_{f \in L^1(\nu)} \left\{\int_\mathcal{Z} f \, \rmd\nu + \int_\mathcal{X} (-f^{C_W})^{C_V} \, \rmd\mu\right\}.
        \end{align*}
        
        \item \label{it:thm.fundamental.4} \textbf{Complementary slackness.} Let $\rho^\circ \in \cP_p(\mathcal Y)$ and $f^\circ \in L^1(\nu)$. Then $\rho^\circ$ is optimal for
        \[
            \inf_{\rho \in \cP_p(\mathcal Y)} \bigl\{V(\mu,\rho)+W(\rho,\nu)\bigr\}
        \]
        and $f^\circ$ is optimal for the dual problem in \ref{it:thm.fundamental.3} if and only if
        \begin{equation}\label{eq:thm.fundamental.complementary.slackness}
            \begin{aligned}
                V(\mu,\rho^\circ) &= \int_\mathcal{Y} -(f^\circ)^{C_W} \,\rmd\rho^\circ + \int_\mathcal{X} (-(f^\circ)^{C_W})^{C_V} \, \rmd\mu, \\
                W(\rho^\circ,\nu) &= \int_\mathcal{Y} (f^\circ)^{C_W} \, \rmd\rho^\circ + \int_\mathcal{Z} f^\circ \,\rmd\nu.
            \end{aligned}
        \end{equation}
    \end{enumerate}
\end{theorem}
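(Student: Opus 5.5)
The plan is to reduce everything to the general weak optimal transport theory of \textcite{backhoff2019existence} applied to the single cost $C_{V\Box W}$, and then translate back through \Cref{prop:properties_inf_conv}.

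\emph{Part (i).} By \Cref{prop:properties_inf_conv}, $C_{V\Box W}$ is a standard weak transport cost and \eqref{eq:VBoxW_primal} holds. Since $C_{V\Box W}$ is lower semicontinuous, bounded below and convex in its second argument, \cite[Theorem 1.2]{backhoff2019existence} (existence for standard WOT) yields a minimizer $\pi\in\Cpl(\mu,\nu)$.

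\emph{Part (ii).} Strong duality for $C_{V\Box W}$ over $C_{b,p}(\mathcal Z)$ holds by \cite[Theorem 3.1]{backhoff2019existence}, as in \cref{rem:properties_inf_conv:additional_remarks}. Combining this with \eqref{eq:VBoxW_conjugate} gives the lower bound by the sup with $(-f^{C_W})^{C_V}$. For the reverse inequality, use weak duality: for $\rho$ and $\pi\in\Cpl(\mu,\rho)$,
\[
V(\mu,\rho)\ge\int(-f^{C_W})^{C_V}\,\rmd\mu-\int f^{C_W}\,\rmd\rho
\quad\text{and}\quad
W(\rho,\nu)\ge\int f^{C_W}\,\rmd\rho+\int f\,\rmd\nu .
\]
It remains to reduce from $L_{b,p}$ to $C_{b,p}$. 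The cleanest route is to show that the $L_{b,p}$-supremum in \eqref{eq:VBoxW_dual} is sandwiched between the $C_{b,p}$-value of the $C_{V\Box W}$-dual and the primal value, and that these coincide by continuity of $C_{V\Box W}$ together with the $p$-growth bound. If needed, approximate each $f\in L_{b,p}$ from above by continuous functions, using the monotonicity of transforms, to pass to $C_{b,p}$.

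\emph{Parts (iii) and (iv).} These are the harder steps. For dual attainment, use the $p$-growth bound and continuity of $C_{V\Box W}$ to invoke the dual attainment result for WOT (e.g.\ \cite{beiglböck2025fundamentaltheoremweakoptimal}). This produces $f^\circ\in L^1(\nu)$ with
\[
V\Box W(\mu,\nu)=\int f^\circ\,\rmd\nu+\int (f^\circ)^{C_{V\Box W}}\,\rmd\mu .
\]
The key difficulty is the inequality \eqref{eq:VBoxW_conjugate}, which is only an inequality for general $f$. I need equality, or at least $(f^\circ)^{C_{V\Box W}}=(-(f^\circ)^{C_W})^{C_V}$ $\mu$-a.e., so that $f^\circ$ is optimal in the composed dual. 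This is where the assumption $W(\rho,\nu)<\infty$ for all $\rho$ enters: it ensures $(f^\circ)^{C_W}$ is finite and $\rho$-integrable wherever needed, so the measurable-selection argument of Part 3 of the proof of \Cref{prop:properties_inf_conv} runs without the $\delta$-penalization.

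Complementary slackness then follows from the chain
\[
V\Box W(\mu,\nu)\le V(\mu,\rho^\circ)+W(\rho^\circ,\nu)
\quad\text{and}\quad
V(\mu,\rho^\circ)+W(\rho^\circ,\nu)\ge \int f^\circ\,\rmd\nu+\int(-(f^\circ)^{C_W})^{C_V}\,\rmd\mu ,
\]
where the second inequality is the sum of the two weak-duality inequalities above. Optimality of both $\rho^\circ$ and $f^\circ$ is equivalent to equality throughout, which in turn is equivalent to equality in each of the two inequalities, i.e.\ \eqref{eq:thm.fundamental.complementary.slackness}. The main obstacle will be the integrability bookkeeping: showing that $(f^\circ)^{C_W}\in L^1(\rho^\circ)$ so that the two inequalities can be separated.
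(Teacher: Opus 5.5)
Your architecture matches the paper's: existence and duality for the single cost $C_{V\Box W}$, then dual attainment from the fundamental theorem of weak transport. After that come the identification of $(f^\circ)^{C_{V\Box W}}$ with $(-(f^\circ)^{C_W})^{C_V}$, and complementary slackness by splitting the two weak-duality inequalities. Parts (i) and (iv) are fine.

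\textbf{The gap in (iii).} The problem is the mechanism you propose for that identification. In Part~3 of the proof of \Cref{prop:properties_inf_conv}, the penalty $-\delta\, d_{\mathcal Z}(z_0,\cdot)^p$ is not there to make $f^{C_W}$ integrable. It is what makes the $\varepsilon$-selector $\varphi^\varepsilon$ admissible, through the bound $\delta\iint d_{\mathcal Z}(z_0,z)^p\,\varphi^\varepsilon(y,\rmd z)\,\rho(\rmd y)<\infty$, which puts $\rho\otimes\varphi^\varepsilon_\bullet$ into $\Cpl_p(\rho,*)$. The hypothesis $W(\rho,\nu)<\infty$ only gives $\int (f^\circ)^{C_W}\,\rmd\rho\le W(\rho,\nu)-\int f^\circ\,\rmd\nu<\infty$. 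It says nothing about the $p$-th moments of near-minimizers of $\eta\mapsto C_W(y,\eta)-\int f^\circ\,\rmd\eta$. So the mixture of the selected measures can leave $\cP_p(\mathcal Z)$, and the selection argument does not run without the penalty.

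The paper keeps the penalty and removes it by a monotone limit. Set $f_\delta:=f^\circ-\delta\, d_{\mathcal Z}(z_0,\cdot)^p$; then $(f^\circ)^{C_{V\Box W_\delta}}=(-f_\delta^{C_W})^{C_V}$, and the left side decreases to $(f^\circ)^{C_{V\Box W}}$ as $\delta\downarrow0$. The hypothesis $W(\rho,\nu)<\infty$ is used exactly to make $\int f_\delta^{C_W}\,\rmd\rho$ finite. Monotone convergence then gives $\int f_\delta^{C_W}\,\rmd\rho\downarrow\int (f^\circ)^{C_W}\,\rmd\rho$ for every $\rho$, hence $(-f_\delta^{C_W})^{C_V}\downarrow(-(f^\circ)^{C_W})^{C_V}$ pointwise. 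The $p$-growth bound supplies the integrable majorant $C_{V\Box W}(\cdot,\nu)-\int f^\circ\,\rmd\nu$ needed to pass to the $\mu$-integrals.

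\textbf{A related slip in (ii).} Your ``reverse inequality'' is the easy direction again. Adding your two weak-duality bounds gives $V(\mu,\rho)+W(\rho,\nu)\ge\int f\,\rmd\nu+\int(-f^{C_W})^{C_V}\,\rmd\mu$, which is what \eqref{eq:VBoxW_conjugate} already gave. Approximating $f\in L_{b,p}(\mathcal Z)$ from above by continuous functions does not supply the nontrivial direction with $C_{b,p}(\mathcal Z)$ test functions. Such $f$ need not be upper semicontinuous, and for $g\ge f$ the term $(-g^{C_W})^{C_V}$ is smaller, so monotonicity cannot control the limit.

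The paper takes (ii) from \Cref{prop:properties_inf_conv}, and the $C_{b,p}$ version follows from the same penalization. For $f\in C_{b,p}(\mathcal Z)$, also $f_\delta=f-\delta\, d_{\mathcal Z}(z_0,\cdot)^p\in C_{b,p}(\mathcal Z)$. It satisfies $f_\delta^{C_{V\Box W}}=(-f_\delta^{C_W})^{C_V}$ and $f_\delta^{C_{V\Box W}}\ge f^{C_{V\Box W}}$. Hence the composed objective at $f_\delta$ is at least the $C_{V\Box W}$-objective at $f$ minus $\delta\int d_{\mathcal Z}(z_0,\cdot)^p\,\rmd\nu$. Applying \cite[Theorem 3.1]{backhoff2019existence} to $C_{V\Box W}$ concludes.
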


\begin{proof}
    Since $V \Box W$ admits a standard weak transport cost $C_{V\Box W}$, primal attainment follows from \cite[Theorem 2.9]{backhoff2019existence}.
    By \Cref{prop:properties_inf_conv}, the problem $V \Box W$ admits the dual representation stated in \ref{it:thm.fundamental.2}. 
    
    To show \ref{it:thm.fundamental.3}, assume in addition $W(\rho,\nu) < \infty$ for all $\rho \in \cP_p(\mathcal Y)$.
    By \cite[Theorem 1.2]{beiglböck2025fundamentaltheoremweakoptimal}, there exists $f\in L^1(\nu)$ such that 
    \[ 
        V\Box W(\mu,\nu) = \int f\,\rmd\nu + \int f^{C_{V\Box W}}\,\rmd\mu . 
    \] 
    For all $x\in\mathcal X$, 
    \[ 
        f^{C_{V\Box W}}(x) \le C_{V\Box W}(x,\nu) - \int f\,\rmd\nu,
    \] 
    and since $C_{V\Box W}$ satisfies a $p$-growth bound, $f^{C_{V\Box W}}\in[-\infty,\infty)$.
    Fix $z_0\in\mathcal Z$. For $\delta>0$, define 
    \[
        W_\delta(\rho,\eta) := W(\rho,\eta) + \delta\int d_\mathcal Z(z_0,z)^p\,\eta(\rmd z), 
        \qquad 
        C_{W_\delta}(y,\eta) := C_W(y,\eta) + \delta\int d_\mathcal Z(z_0,z)^p\,\eta(\rmd z). 
    \] 
    As in the proof of \Cref{prop:properties_inf_conv}, we have $f^{C_{V\Box W_\delta}} = (-f^{C_{W_\delta}})^{C_V}$.
    Moreover, with $f_\delta := f-\delta \, d_\mathcal Z(z_0,\cdot)^p$ we have $f_\delta^{C_W}=f^{C_{W_\delta}}$, and hence 
    \[ 
        f^{C_{V\Box W_\delta}} = (-f_\delta^{C_W})^{C_V}. 
    \]
    Therefore, for every $\delta>0$, 
    \begin{align*}
        V\Box W_\delta(\mu,\nu) 
        &= 
        \max_{g\in L^1(\nu)} \left\{ \int g\,\rmd\nu + \int g^{C_{V\Box W_\delta}}\,\rmd\mu \right\} 
        \ge 
        \int f\,\rmd\nu + \int (-f_\delta^{C_W})^{C_V}\,\rmd\mu .
    \end{align*}
    By monotonicity of the infimum, as $\delta\downarrow0$, 
    \[ 
        V\Box W_\delta \mathrel{\Big\downarrow} V\Box W, \qquad C_{V\Box W_\delta} \mathrel{\Big\downarrow} C_{V\Box W}, \qquad f^{C_{V\Box W_\delta}} \mathrel{\Big\downarrow} f^{C_{V\Box W}},
    \] 
    and hence
    \begin{equation}\label{eq:proof:fundamental_inf_conv:dual.sandwich}
        \lim_{\delta \downarrow 0} V\Box W_\delta(\mu,\nu) 
        \ge 
        \int f\,\rmd\nu + \lim_{\delta \downarrow 0} \int (-f_\delta^{C_W})^{C_V}\,\rmd\mu 
        = 
        \int f\,\rmd\nu + \lim_{\delta \downarrow 0} \int f^{C_{V\Box W_\delta}}\,\rmd\mu 
        = 
        V\Box W(\mu,\nu).
    \end{equation} 
    For every $\rho\in\cP_p(\mathcal Y)$ and every $\delta>0$, 
    \[ 
        \int f_\delta^{C_W}\,\rmd\rho \le W(\rho,\nu) - \int f_\delta\,\rmd\nu < +\infty. 
    \] 
    Thus, by monotone convergence, $\int f_\delta^{C_W}\,\rmd\rho \downarrow \int f^{C_W}\,\rmd\rho$ as $\delta\downarrow0$, and monotonicity of the infimum gives $(-f_\delta^{C_W})^{C_V} \downarrow (-f^{C_W})^{C_V}$. Finally, 
    \[
        (-f_\delta^{C_W})^{C_V}(x) 
        \le 
        \inf_{\rho\in\cP_p(\mathcal Y)} \left\{ C_V(x,\rho) + W(\rho,\nu) - \int f\,\rmd\nu \right\} 
        = 
        C_{V\Box W}(x,\nu) - \int f\,\rmd\nu,
    \] 
    and because $C_{V\Box W}(\cdot,\nu)\in L^1(\mu)$, monotone convergence yields 
    \[ 
        \int (-f_\delta^{C_W})^{C_V}\,\rmd\mu \mathrel{\Big\downarrow} \int (-f^{C_W})^{C_V}\,\rmd\mu . 
    \] 
    Combined with \eqref{eq:proof:fundamental_inf_conv:dual.sandwich}, this proves dual attainment as claimed in \ref{it:thm.fundamental.3}.
    
    It remains to prove \ref{it:thm.fundamental.4}.
    First, assume that $\rho^\circ \in \cP_p(\mathcal Y)$ attains $\inf \bigl\{V(\mu,\rho)+W(\rho,\nu): \rho \in \cP_p(\mathcal Y)\bigr\}$, and that $f^\circ \in L^1(\nu)$ attains the dual problem. Then
    \begin{equation}
        \label{eq:thm.fundamental.aux2}
        V(\mu,\rho^\circ) + W(\rho^\circ,\nu)
        =
        \int (-(f^\circ)^{C_W})^{C_V} \,\rmd \mu + \int f^\circ \,\rmd \nu.
    \end{equation}
    At the same time, we have by duality that
    \begin{equation}
        \label{eq:thm.fundamental.auxV}
        V(\mu,\rho^\circ) \ge \int -(f^\circ)^{C_W} \,\rmd \rho^\circ + \int (-(f^\circ)^{C_W})^{C_V} \,\rmd \mu,
    \end{equation}
    and
    \begin{equation}
        \label{eq:thm.fundamental.auxW}
        W(\rho^\circ,\nu) \ge \int (f^\circ)^{C_W} \,\rmd \rho^\circ + \int f^\circ \,\rmd \nu.
    \end{equation}
    Adding \eqref{eq:thm.fundamental.auxV} and \eqref{eq:thm.fundamental.auxW} and comparing with \eqref{eq:thm.fundamental.aux2}, we see that both inequalities must in fact be equalities.

    Conversely, assume that $\rho^\circ \in \cP_p(\mathcal Y)$, $f^\circ \in L^1(\nu)$ satisfy \eqref{eq:thm.fundamental.complementary.slackness}.
    Since $C_{V \Box W}$ admits a $p$-growth bound and
    \[
        (-(f^\circ)^{C_W})^{C_V}(x) 
        =
        (f^\circ)^{C_{V \Box W}}(x) 
        \le 
        C_{V \Box W}(x,\nu) - \int f^\circ \, \rmd \nu,
    \]
    the positive part of $( -(f^\circ)^{C_W})^{C_V}$ is $\mu$-integrable.
    In particular, $\int (-(f^\circ)^{C_W})^{C_V} \, d\mu \in[-\infty,\infty)$.
    Moreover, since $W(\rho^\circ,\nu) \in (-\infty,\infty]$, the second identity in \eqref{eq:thm.fundamental.complementary.slackness} implies $\int (f^\circ)^{C_W} \, \rmd \rho^\circ \in (-\infty,\infty]$.
    Similarly, since ${V(\mu,\rho^\circ) \in (-\infty,\infty]}$, the identities in \eqref{eq:thm.fundamental.complementary.slackness} ensure that $\int (-(f^\circ)^{C_W})^{C_V} \, \rmd \mu$ and $\int -(f^\circ)^{C_W} \, \rmd \rho^\circ$ are real-valued. 
    Hence the two identities may be added, and this gives \eqref{eq:thm.fundamental.aux2}.
    Together with \ref{it:thm.fundamental.3}, this proves \ref{it:thm.fundamental.4}.
\end{proof} 

\begin{remark}
    Note that the assumption $W(\rho,\nu)<+\infty$ for all $\rho\in\cP_p(\mathcal Y)$ is convenient, but not necessary. For $f \in L^1(\nu)$, it is used only to guarantee the finiteness of
    \[
        \int_\mathcal Y f^{C_W}\,\rmd\rho
    \]
    on the effective domain of $C_V$.
    For instance, \ref{it:thm.fundamental.3} and \ref{it:thm.fundamental.4} of \cref{thm:fundamental_inf_conv} remain valid if there exists $\mathcal D\subset\cP_p(\mathcal Y)$ such that, for all $x\in\mathcal X$ and $\eta\in\cP_p(\mathcal Z)$,
    \[
        \mathcal D
        =
        \{\rho\in\cP_p(\mathcal Y): C_V(x,\rho)<+\infty\}
        =
        \{\rho\in\cP_p(\mathcal Y): W(\rho,\eta)<+\infty\}.
    \]
    In this case, the infima defining the $C$-conjugates are taken over $\mathcal D$ where the required finiteness holds.
\end{remark}

\subsection{Stability}\label{subsec:stability}
An important question in (weak) optimal transport is whether optimal values and optimizers depend continuously on the prescribed marginals. Note that such continuity does not hold in general; see \cite{BrJu22}.

We address this question for infimal convolutions of weak transport problems under standard assumptions. In particular, we prove convergence of the values $\bigl(V \Box W(\mu_k,\nu_k)\bigr)_{k}$ along convergent sequences $(\mu_k)_{k}$, $(\nu_k)_{k}$.
For the analysis of \cref{alg:SB}, however, this is not sufficient. Indeed, as a key step in the convergence analysis, we need to understand when the map
\[
    (\mu,\nu) \longmapsto \rho^\circ \in \arg \min \{V(\mu,\rho) + W(\rho,\nu) :  \rho \in \cP_p(\mathcal Y)\}
\]
is continuous. For this purpose, we impose the following compactness condition.

\begin{definition}[$\Wcal_r$-stability]\label{def:Wr-stability}
Let $W : \cP_p(\mathcal Y)\times \cP_p(\mathcal Z) \to \R \cup \{+\infty\}$ be a standard weak transport problem. We say that $W$ is $\Wcal_r$-stable if the following holds: 
If $(\rho_k)_k \subset \cP_p(\mathcal Y)$ converges weakly to some $\rho \in \cP(\mathcal Y)$,
$(\nu_k)_k \subset \cP_p(\mathcal Z)$ is precompact in $(\cP_p(\mathcal Z),\Wcal_p)$, and $\sup_{k} W(\rho_k,\nu_k) < \infty$,
then $\rho_k \to \rho$ in $(\cP_p(\mathcal Y),\Wcal_r)$.
\end{definition}

Crucially for the convergence analysis in \cref{sec:convergence_SB_algorithm}, this condition holds for $W = \Wcal_p^p$.
More generally, suppose that $p>1$ and that there exist $a > 0$, $b \in \R$ and $y_0 \in \mathcal Y$, $z_0 \in \mathcal Z$ such that, for all $(\rho,\nu) \in \cP_p(\mathcal Y) \times \cP_p( \mathcal Z)$,
\begin{equation}\label{aspt:lower-p-growth}
    W(\rho, \nu) \geq a \int d_\mathcal{Y}(y_0,y)^p \, \rho(\rmd y) - b \left(1 + \int d_\mathcal{Z}(z_0,z)^p \, \nu(\rmd z)\right).
\end{equation}
Then $W$ is $\Wcal_r$-stable for all $r \in [1,p)$. Indeed, if $\sup_{k} W(\rho_k,\nu_k) < \infty$ and $(\nu_k)_k$ is precompact in
$(\cP_p(\mathcal Z),\Wcal_p)$, we have
\[
    \sup_{k \in \N} \int d_\mathcal{Y}(y_0,y)^p \, \rho_k(\rmd y) < \infty.
\]
If, in addition, $\rho_k \to \rho \in \cP(\mathcal Y)$ weakly, then $\rho_k \to \rho$ in $(\cP_p(\mathcal Y),\Wcal_r)$ for all $r \in [1,p)$ by uniform integrability. In particular, if $\mathcal Y=\mathcal Z$ and $W=\Wcal_p^p$, then \eqref{aspt:lower-p-growth} holds with $a=2^{1-p}$ and $b=1$. Hence $\Wcal_p^p$ is $\Wcal_r$-stable for all $r\in[1,p)$.

Under the assumptions of \Cref{prop:properties_inf_conv}, together with the $p$-growth bound on $C_{V\Box W}$, \Cref{lem:VBoxW.tightness.min.sequ} shows that every minimizing sequence for the infimal convolution is tight. However, a limit point in the weak topology need not have a finite $p$-moment. The $\Wcal_r$-stability condition upgrades this tightness to precompactness in $(\cP_p(\mathcal Y),\Wcal_r)$. This yields the following attainment result.

\begin{corollary}[Attainment in the infimal convolution]\label{cor:attainment.infimal.convolution}
    Let $V,W$ be standard weak transport problems such that $(C_V,W)$ satisfies \Cref{aspt:continuity} and \Cref{aspt:coercivity} for some $r \in [1,p]$. 
    Assume further that $C_{V\Box W}$ admits a $p$-growth bound and that $W$ is $\Wcal_r$-stable. Then, for all $\mu\in\cP_p(\mathcal X)$ and $\nu\in\cP_p(\mathcal Z)$, there exists $\rho\in\cP_p(\mathcal Y)$ such that
    \[
        V\Box W(\mu,\nu) = V(\mu,\rho) + W(\rho,\nu).
    \]
\end{corollary}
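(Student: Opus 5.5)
The plan is the direct method, taking a minimizing sequence and upgrading its tightness to $\Wcal_r$-compactness. Fix $\mu\in\cP_p(\mathcal X)$ and $\nu\in\cP_p(\mathcal Z)$. If $V\Box W(\mu,\nu)=+\infty$ there is nothing to prove, since any $\rho$ attains it. So assume the value is finite; by the $p$-growth bound on $C_{V\Box W}$ and \eqref{eq:VBoxW_primal} it is at most $\int C_{V\Box W}(x,\nu)\,\mu(\rmd x)<\infty$. Pick a minimizing sequence $(\rho_k)_{k\in\N}\subset\cP_p(\mathcal Y)$ with $V(\mu,\rho_k)+W(\rho_k,\nu)\to V\Box W(\mu,\nu)$.

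By \Cref{lem:VBoxW.tightness.min.sequ}, which applies because \Cref{prop:properties_inf_conv}'s hypotheses hold and $C_{V\Box W}$ has a $p$-growth bound, the sequence $(\rho_k)$ is tight. Prokhorov then gives a subsequence converging weakly to some $\rho\in\cP(\mathcal Y)$.

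The key step is to show that $\rho\in\cP_p(\mathcal Y)$ and that $\rho_k\to\rho$ in $\Wcal_r$. This is where $\Wcal_r$-stability enters. To apply it we need $\sup_k W(\rho_k,\nu)<\infty$, with the constant sequence $\nu_k=\nu$ trivially precompact. I obtain this bound from lower bounds: $C_V$ is bounded below, so $V(\mu,\rho_k)\ge \inf C_V>-\infty$. Hence
\[
W(\rho_k,\nu)\le V(\mu,\rho_k)+W(\rho_k,\nu)-\inf C_V,
\]
and the right-hand side is bounded along the minimizing sequence. Stability then yields $\rho_k\to\rho$ in $(\cP_p(\mathcal Y),\Wcal_r)$, with the limit understood in $\cP_p(\mathcal Y)$ as in \Cref{def:Wr-stability}.

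The main subtlety is exactly this point: whether the limit $\rho$ genuinely has a finite $p$-th moment, so that it is admissible in the infimum over $\cP_p(\mathcal Y)$. I read the definition of $\Wcal_r$-stability as asserting convergence within $\cP_p(\mathcal Y)$ endowed with $\Wcal_r$, which includes $\rho\in\cP_p(\mathcal Y)$. In the motivating case \eqref{aspt:lower-p-growth}, this follows from Fatou's lemma, since the $p$-th moments of $\rho_k$ are uniformly bounded.

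With $\Wcal_r$-convergence in hand, apply the second statement of \Cref{lem:coercivity_gives_lsc}. It gives lower semicontinuity of $(\mu,\rho,\nu)\mapsto V(\mu,\rho)+W(\rho,\nu)$ on $(\cP_p(\mathcal X),\tau_w)\times(\cP_p(\mathcal Y),\Wcal_r)\times\cP_p(\mathcal Z)$. Using it with fixed $\mu$ and $\nu$,
\[
V(\mu,\rho)+W(\rho,\nu)\le\liminf_{k\to\infty}\bigl\{V(\mu,\rho_k)+W(\rho_k,\nu)\bigr\}=V\Box W(\mu,\nu).
\]
The reverse inequality holds by definition of the infimum, so $\rho$ attains $V\Box W(\mu,\nu)$. \Cref{aspt:continuity} is not needed beyond guaranteeing that the setting of the preceding results is in force.
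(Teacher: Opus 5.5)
Your proposal is correct and follows the paper's proof step for step: take a minimizing sequence, get tightness from \Cref{lem:VBoxW.tightness.min.sequ}, extract a weakly convergent subsequence, upgrade to $\Wcal_r$-convergence (with $\rho\in\cP_p(\mathcal Y)$) via $\Wcal_r$-stability, and conclude with the lower semicontinuity of \Cref{lem:coercivity_gives_lsc}. You also derive $\sup_k W(\rho_k,\nu)<\infty$ from the lower bound on $C_V$ and the finiteness of the value, a detail the paper simply builds into its choice of minimizing sequence.
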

\begin{proof}
    Let $(\rho_k)_{k \in \N} \subset \cP_p(\mathcal Y)$ such that 
    \[
        V(\mu,\rho_k) + W(\rho_k,\nu) \longrightarrow V\Box W(\mu,\nu),
        \qquad
        \sup_{k \in \N} W(\rho_k,\nu) < \infty. 
    \]
    By \cref{lem:VBoxW.tightness.min.sequ}, this sequence is tight. Passing to a subsequence, there exists $\rho \in \cP(\mathcal Y)$ such that $\rho_k \to \rho$ weakly. Since $W$ is $\Wcal_r$-stable, it follows that $\rho_k \to \rho$ in $(\cP_p(\mathcal Y),\Wcal_r)$. In particular, $\rho \in \cP_p(\mathcal Y)$. By \Cref{lem:coercivity_gives_lsc},
    \[
        V(\mu,\rho) + W(\rho,\nu) \le \lim_{k \to \infty} V(\mu,\rho_k) + W(\rho_k,\nu) = V\Box W(\mu,\nu). \qedhere
    \]
\end{proof}

Finally, we turn to stability under perturbations of the marginals. The following theorem proves convergence of the values and, under $\Wcal_r$-stability, compactness and stability of minimizers for the infimal convolution.

\begin{theorem}[Stability of the infimal convolution]\label{thm:stability}
    Let $V,W$ be standard weak transport problems such that $(C_V,W)$ satisfies \Cref{aspt:continuity} and \Cref{aspt:coercivity} for some $r \in [1,p]$. 
    Assume further that $C_{V\Box W}$ admits a $p$-growth bound.
    Let $(\mu_k)_k \subset \cP_p(\mathcal X)$ and $(\nu_k)_k \subset \cP_p(\mathcal Z)$ such that $\mu_k \to \mu$ in $\cP_p(\mathcal X)$ and $\nu_k \to \nu$ in $\cP_p(\mathcal Z)$. Then the following hold:
    \begin{enumerate}[label = (\roman*)]
        \item \label{it:stability.1} We have $V \Box W(\mu_k,\nu_k) \to V\Box W(\mu,\nu)$. Moreover, any sequence of minimizers
        \begin{equation}\label{eq:thm.stability.1:VBoxW.attained}
            \rho_k \in \argmin\left\{V(\mu_k,\rho) + W(\rho,\nu_k) : \rho \in \cP_p(\mathcal Y)\right\}
        \end{equation}
        is tight.

        \item \label{it:stability.2} Suppose, in addition, that $W$ is $\Wcal_r$-stable. Then there exists a sequence $(\rho_k)_{k \in \N}$ satisfying \eqref{eq:thm.stability.1:VBoxW.attained}, and every such sequence is precompact in $(\cP_p(\mathcal{Y}),\Wcal_r)$. Moreover, all its limit points belong to 
        \begin{equation}\label{eq:thm.stability.argmin}
            \argmin\{V(\mu,\rho) + W(\rho,\nu) : \rho \in \cP_p(\mathcal Y) \}.
        \end{equation}
        In particular, if the minimizer in \eqref{eq:thm.stability.argmin} is unique for every $\mu \in \cP_p(\mathcal X)$, $\nu \in \cP_p(\mathcal Z)$, then the map
        \[
            (\mu,\nu) \longmapsto \rho^\circ \in \arg \min \{V(\mu,\rho) + W(\rho,\nu) :  \rho \in \cP_p(\mathcal Y)\},
        \]
        is continuous from $\cP_p(\mathcal X)\times\cP_p(\mathcal Z)$ to $(\cP_p(\mathcal{Y}),\Wcal_r)$.
    \end{enumerate}
\end{theorem}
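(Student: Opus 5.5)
The plan has three stages: first convergence of values, then tightness of minimizers, then precompactness in $\Wcal_r$ and identification of limit points.

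\emph{Convergence of values.} By \Cref{prop:properties_inf_conv}, $V\Box W$ is the standard weak transport problem with cost $C_{V\Box W}$. Under \Cref{aspt:continuity} this cost is continuous, and by assumption it satisfies a $p$-growth bound. For such costs, values are continuous in the marginals with respect to $\Wcal_p$. I would quote the stability result of \textcite{backhoff2019existence}, or argue directly as follows.
\begin{itemize}
    \item For the lower bound, take optimal couplings $\pi^k\in\Cpl(\mu_k,\nu_k)$, which exist by \Cref{thm:fundamental_inf_conv}\ref{it:thm.fundamental.1}. Pass to a limit of the embedded measures $(x\mapsto(x,\pi^k_x))_\#\mu_k$, which are precompact in $\cP_p(\mathcal X\times\cP_p(\mathcal Z))$ by \cite[Lemma 2.4]{backhoff2019existence}. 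Then apply lower semicontinuity and convexity of $C_{V\Box W}$ to obtain $\liminf_k V\Box W(\mu_k,\nu_k)\ge V\Box W(\mu,\nu)$.
    \item For the upper bound, use the $p$-growth bound together with continuity of $C_{V\Box W}$. Build couplings of $(\mu_k,\nu_k)$ by gluing an optimal $\pi$ for $(\mu,\nu)$ with $\Wcal_p$-optimal couplings $\mu_k\leftrightarrow\mu$ and $\nu\leftrightarrow\nu_k$. Uniform integrability from the growth bound then lets us pass to the limit.
\end{itemize}

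\emph{Tightness of minimizers.} Let $\rho_k$ be minimizers, and repeat the proof of \Cref{lem:VBoxW.tightness.min.sequ} uniformly in $k$. Take $\xi^k\in\Cpl(\mu_k,\rho_k)$ and $\chi^k\in\Cpl(\rho_k,\nu_k)$ optimal, glue them to obtain $\pi^k\in\Cpl(\mu_k,\nu_k)$, and form $\zeta^k=(x\mapsto(x,\xi^k_x,\pi^k_x))_\#\mu_k$. Since $V(\mu_k,\rho_k)+W(\rho_k,\nu_k)=V\Box W(\mu_k,\nu_k)$, the chain \eqref{ineq:proof:VBoxW.tightness.min.sequ} gives
\[
    \int \bigl(C_V(x,\xi^k_x)+W(\xi^k_x,\pi^k_x)-C_{V\Box W}(x,\pi^k_x)\bigr)\,\mu_k(\rmd x)\le V\Box W(\mu_k,\nu_k)-\int C_{V\Box W}(x,\pi^k_x)\,\mu_k(\rmd x)\le 0.
\]
So the integrand, which is nonnegative, vanishes $\mu_k$-a.e. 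This is stronger than what the lemma needed, and it means the bad set with excess larger than $\delta$ is null.

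The families $(\mu_k)$ and $(x\mapsto\pi^k_x)_\#\mu_k$ are precompact, the latter by \cite[Lemma 2.4]{backhoff2019existence} since $(\nu_k)$ is precompact in $\cP_p(\mathcal Z)$. This yields compact sets $K_1,K_3$ carrying most of the mass uniformly in $k$. The sublevel set $K_2$ from \Cref{aspt:coercivity}, with level $\sup_{K_1\times K_3}C_{V\Box W}+1$, is $\Wcal_r$-compact; the supremum is finite because $C_{V\Box W}$ is continuous and $K_1\times K_3$ is compact. Hence $(\zeta^k)$ is tight, and \cite[Lemma 2.3]{backhoff2019existence} transfers tightness to the intensities $\rho_k$. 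This proves \ref{it:stability.1}.

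\emph{Part \ref{it:stability.2}.} Existence of minimizers for each $k$ follows from \Cref{cor:attainment.infimal.convolution}. By tightness, a subsequence satisfies $\rho_k\to\rho$ weakly. Next we need $\sup_k W(\rho_k,\nu_k)<\infty$. This holds because $W(\rho_k,\nu_k)\le V\Box W(\mu_k,\nu_k)-\inf C_V$, the values converge, and $C_V$ is bounded below. Then $\Wcal_r$-stability gives $\rho_k\to\rho$ in $\Wcal_r$, and in particular $\rho\in\cP_p(\mathcal Y)$. \Cref{lem:coercivity_gives_lsc} gives lower semicontinuity on $(\cP_p(\mathcal X),\tau_w)\times(\cP_p(\mathcal Y),\Wcal_r)\times\cP_p(\mathcal Z)$. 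Combined with the convergence of values from the first stage,
\[
    V(\mu,\rho)+W(\rho,\nu)\le\liminf_k V\Box W(\mu_k,\nu_k)=V\Box W(\mu,\nu),
\]
so $\rho$ is a minimizer.

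Continuity of the argmin map under uniqueness is then the standard subsequence argument: every subsequence has a further subsequence converging to the unique minimizer. The main obstacle is the value convergence in the first stage. If no off-the-shelf stability theorem for continuous WOT costs with $p$-growth applies, the lower-semicontinuity half requires careful use of convexity of $C_{V\Box W}$ when passing from limits in $\cP(\mathcal X\times\cP_p(\mathcal Z))$ back to couplings.
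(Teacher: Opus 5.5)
Your proposal is correct and follows the paper's proof essentially step by step. Value convergence comes from continuity and $p$-growth of $C_{V\Box W}$ via a stability theorem for weak transport (the paper cites \cite[Theorem 2.8]{BeJoMaPa23}). Tightness comes from rerunning the proof of \Cref{lem:VBoxW.tightness.min.sequ} with the $\mu_k$-a.e.\ equality $C_{V\Box W}(x,\pi^k_x)=C_V(x,\xi^k_x)+W(\xi^k_x,\pi^k_x)$, and part (ii) uses $\Wcal_r$-stability together with \Cref{lem:coercivity_gives_lsc}. Your explicit bound $W(\rho_k,\nu_k)\le V\Box W(\mu_k,\nu_k)-\inf C_V$ fills in a step the paper leaves implicit. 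If you carry out the gluing argument for the upper bound yourself, note that it also needs convexity of $C_{V\Box W}$ (Jensen) to control the cost of the glued kernel.
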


\begin{proof}
    By \Cref{prop:properties_inf_conv}, $V\Box W$ is a standard weak transport problem with cost function $C_{V\Box W}$. Moreover, $C_{V \Box W}$ is continuous and admits a $p$-growth bound. Thus the stability theorem for weak transport problems yields $V \Box W(\mu_k,\nu_k) \to V\Box W(\mu,\nu)$; see \cite[Theorem 2.8]{BeJoMaPa23}.
    
    Let $(\rho_k)_{k\in \N} \subset \cP_p(\mathcal Y)$ be such that, for every $k \in \N$,
    \[
        V\Box W(\mu_k,\nu_k) = V(\mu_k,\rho_k) + W(\rho_k,\nu_k).
    \]
    Then
    \begin{align*}
        V\Box W(\mu_k,\nu_k)
        &\leq
        \int C_{V\Box W}(x,\pi^k_x)\,\mu_k(\rmd x) 
        \leq 
        \int C_V(x,\xi_x^k) + W(\xi_x^k,\pi^k_x)\,\mu_k(\rmd x) 
        \\
        &\leq 
        \int \left(C_V(x,\xi_x^k) + \int C_W(y,\chi^k_y)\,\xi_x^k(\rmd y)\right)\,\mu_k(\rmd x)
        = V(\mu_k,\rho_k) + W(\rho_k,\nu_k),
    \end{align*}
    hence all inequalities are equalities. In particular, $C_{V \Box W}(x,\pi^k_x) = C_V(x,\xi^k_x) + W(\xi_x^k,\pi^k_x)$ for $\mu_k$-a.e.\ $x$.
    Since $(\mu_k)_k$ and $(\nu_k)_k$ are precompact in $\cP_p(\mathcal X)$ resp.\ $\cP_p(\mathcal Z)$, it follows as in the proof of \cref{lem:VBoxW.tightness.min.sequ} that $(\rho_k)_{k\in\N}$ is tight.

    To show \ref{it:stability.2}, assume in addition that $W$ is $\Wcal_r$-stable. Existence of a sequence satisfying \eqref{eq:thm.stability.1:VBoxW.attained} follows from \Cref{cor:attainment.infimal.convolution}. Let $(\rho_k)_{k\in\N}$ be any such sequence.
    Passing to a subsequence, let $(\rho_k)_{k \in \N}$ converge weakly to some $\rho^\circ \in \cP(\mathcal Y)$. 
    Since $\sup_k W(\rho_k,\nu_k) < \infty$ and by $\Wcal_r$-stability, $\rho_k \to \rho^\circ$ in $(\cP_p(\mathcal Y),\Wcal_r)$.
    Then \cref{lem:coercivity_gives_lsc} gives
    \[
        V(\mu,\rho^\circ) + W(\rho^\circ,\nu) 
        \le 
        \liminf_{k \to \infty} V(\mu_k,\rho_k) + W(\rho_k,\nu_k)
        = 
        \liminf_{k \to \infty} V \Box W(\mu_k,\nu_k) = V \Box W(\mu,\nu),
    \]
    and so $\rho^\circ \in \cP_p(\mathcal Y)$ attains $V\Box W(\mu,\nu)$.
\end{proof}

\subsection{Deconvolution of weak transport problems}\label{subsec:deconvolution}

We finally introduce an operation which is closely related to the infimal convolution. 
For weak transport problems $V$ and $W$, set
\[
    V \boxminus W(\mu,\nu)
    :=
    \inf_{\rho \in \Pcal_p(\mathcal Y)} \bigl\{ V(\mu,\rho) - W(\rho,\nu) \bigr\},
    \qquad
    \mu \in \Pcal_p(\mathcal X),\ \nu \in \Pcal_p(\mathcal Z),
\]
whenever the right-hand side is well-defined. We call $V\boxminus W$ the deconvolution of $V$ by $W$. The following proposition gives its dual representation.

\begin{proposition}\label{prop:deconvolution}
    Let $\mu \in \cP_p(\mathcal X)$, $\nu \in \cP_p(\mathcal Z)$ and let $V$, $W$ be weak transport problems such that $W(\rho,\nu) < +\infty$ for all $\rho \in \cP_p(\mathcal{Y})$. Then
    \[
        V\boxminus W(\mu,\nu) = \inf_{f \in C_{b,p}(\mathcal Z)} \left\{ \int (f^{C_W})^{C_V} \,\rmd\mu - \int f \, \rmd\nu \right\}.
    \]
\end{proposition}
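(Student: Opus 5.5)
The plan is to write $-W(\cdot,\nu)$ via Kantorovich--WOT duality as a supremum over $f\in C_{b,p}(\mathcal Z)$, and then exchange the infimum over $\rho$ with this inner optimisation. Since $W(\rho,\nu)<+\infty$ for every $\rho\in\cP_p(\mathcal Y)$ and $W$ is a standard weak transport problem, strong duality \cite[Theorem 3.1]{backhoff2019existence} gives
\[
    W(\rho,\nu)=\sup_{f\in C_{b,p}(\mathcal Z)}\Bigl\{\int f\,\rmd\nu+\int f^{C_W}\,\rmd\rho\Bigr\}.
\]
Hence
\[
    -W(\rho,\nu)=\inf_{f}\Bigl\{-\int f\,\rmd\nu-\int f^{C_W}\,\rmd\rho\Bigr\},
\]
and therefore
\[
    V\boxminus W(\mu,\nu)=\inf_{f\in C_{b,p}(\mathcal Z)}\Bigl\{-\int f\,\rmd\nu+\inf_{\rho\in\cP_p(\mathcal Y)}\Bigl\{V(\mu,\rho)-\int f^{C_W}\,\rmd\rho\Bigr\}\Bigr\}.
\]
Two infima always commute, so no minimax theorem is needed. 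The only care required is well-definedness: $W(\rho,\nu)<\infty$ guarantees that $V(\mu,\rho)-W(\rho,\nu)$ is not of the form $\infty-\infty$, and $f^{C_W}$ is bounded above on $\rho$-integrals by $W(\rho,\nu)-\int f\,\rmd\nu$.

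It then remains to identify the inner infimum:
\[
    \inf_{\rho}\Bigl\{V(\mu,\rho)-\int g\,\rmd\rho\Bigr\}=\int g^{C_V}\,\rmd\mu,
    \qquad g:=f^{C_W}.
\]
This is the usual fact that the Legendre transform of a weak transport problem in its second marginal is the integral of the $C$-transform, which was essentially established in Part~3 of the proof of \Cref{prop:properties_inf_conv}.

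For the inequality ``$\ge$'', take any $\pi\in\Cpl(\mu,\rho)$. Pointwise,
\[
    C_V(x,\pi_x)-\int g\,\rmd\pi_x\ge g^{C_V}(x),
\]
and integrating in $\mu$ gives the bound. The lower bound on $C_V$ and the integrability of $g$ with respect to $\rho$ justify the integration.

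For ``$\le$'', pick a universally measurable $\varepsilon$-selector $x\mapsto\rho_x$ for $g^{C_V}(x)$, exactly as in \eqref{prop:VboxW:proof:c-transform:eps_selector}. Then set $\rho:=\int\rho_x\,\mu(\rmd x)$ and $\pi:=\mu\otimes\rho_\bullet$, which yields $V(\mu,\rho)-\int g\,\rmd\rho\le\int g^{C_V}\,\rmd\mu+\varepsilon$.

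The main obstacle is ensuring that this $\rho$ lies in $\cP_p(\mathcal Y)$ and that $g$ is measurable and integrable. Note that $g=f^{C_W}$ is only upper semicontinuous/Borel and may be unbounded. I would handle this as in the proof of \Cref{prop:properties_inf_conv} by a $\delta$-perturbation. Replace $g$ by $g-\delta\,d_{\mathcal Y}(y_0,\cdot)^p$ (equivalently, add $\delta\int d^p\,\rmd\rho$ to $V$). The selector then automatically has finite $p$-th moment whenever $\int g^{C_V}\,\rmd\mu<\infty$; the case where this integral is $+\infty$ is trivial.

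Finally, let $\delta\downarrow0$ using monotone convergence. The perturbed transforms decrease to $g^{C_V}$ by monotonicity of the infimum, and they are dominated above via $C_V(x,\rho_0)-\int g\,\rmd\rho_0$ for a fixed admissible $\rho_0$. If this limit step fails without further integrability, I would instead state the identity with $\int^*$ (outer integrals), which suffices for the displayed infimum.
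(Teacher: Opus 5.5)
Your route is the paper's: strong duality for $W(\cdot,\nu)$, commuting the infimum over $\rho$ with the infimum over $f$, and identifying the inner problem by exchanging infimum and integral. The paper asserts this last exchange only by reference to \Cref{prop:properties_inf_conv,thm:fundamental_inf_conv}. You try to justify it, and your justification fails.

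The $\delta$-trick in Part~3 of \Cref{prop:properties_inf_conv} controls moments because $f_\delta\le K$, so that $-\int f_\delta\,\rmd\eta\ge -K+\delta\int d_{\mathcal Z}(z_0,\cdot)^p\,\rmd\eta$ is coercive. Your $g=f^{C_W}$ is bounded \emph{below}, not above, and typically grows like $d_{\mathcal Y}(y_0,\cdot)^p$. For example, $C_W(y,\eta)=\int\frac\beta2|y-z|^2\,\eta(\rmd z)$ and $f(z)=-|z|^2\in C_{b,2}(\R^d)$ give $g(y)=\frac{\beta}{\beta+2}|y|^2$. So for small $\delta$ the map $\rho\mapsto C_V(x,\rho)-\int g\,\rmd\rho+\delta\int d_{\mathcal Y}(y_0,\cdot)^p\,\rmd\rho$ is not coercive in the $p$-th moment. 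The selector can then have $\int\!\int d_{\mathcal Y}(y_0,y)^p\,\rho_x(\rmd y)\,\mu(\rmd x)=\infty$, in which case $\rho=\int\rho_x\,\mu(\rmd x)\notin\cP_p(\mathcal Y)$ is not admissible. Outer integrals do not help, since the issue is admissibility of $\rho$, not measurability.

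This is not merely technical: without an integrability hypothesis on $C_V$, both the exchange and the statement itself fail. Take $\mathcal X=\N$ with the discrete metric, $\mathcal Y=\mathcal Z=\R$, $p=2$, $\mu(\{k\})\propto k^{-2}$, $C_V(k,\rho)=0$ if $\rho=\delta_{\sqrt k}$ and $+\infty$ otherwise, $W=\Wcal_2^2$, and $\nu=\delta_0$. Every coupling of finite cost has second marginal $\sum_k\mu(\{k\})\delta_{\sqrt k}\notin\cP_2(\R)$. Hence $V(\mu,\cdot)\equiv+\infty$ on $\cP_2(\R)$ and $V\boxminus W(\mu,\nu)=+\infty$. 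On the other hand, $f\equiv0$ gives $(f^{C_W})^{C_V}\equiv0$, so the right-hand side is $\le0$.

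What is missing is a hypothesis such as the existence of $\rho^0\in\cP_p(\mathcal Y)$ with $C_V(\cdot,\rho^0)\in L^1(\mu)$. Your $\delta\downarrow0$ step already uses this tacitly. It holds under a $p$-growth bound on $C_V$, e.g.\ for $V=W_\beta$ or $V=-\MCov$ in the applications. You should combine it with truncation rather than perturbation:
\begin{itemize}
\item keep the unperturbed selector $\rho_x$ on $A_n:=\{x:\int d_{\mathcal Y}(y_0,\cdot)^p\,\rmd\rho_x\le n\}$ and use $\rho^0$ on $A_n^c$;
\item the resulting mixture lies in $\cP_p(\mathcal Y)$;
\item the contribution of $A_n^c$ vanishes as $n\to\infty$ by integrability of $C_V(\cdot,\rho^0)-\int g\,\rmd\rho^0$;
\item $\int_{A_n}g^{C_V}\,\rmd\mu\to\int g^{C_V}\,\rmd\mu$, since $g^{C_V}\le C_V(\cdot,\rho^0)-\int g\,\rmd\rho^0$ provides an integrable upper bound.
\end{itemize}
Separately, $g$ is in general only lower semianalytic rather than upper semicontinuous. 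So the existence of the selector needs something like continuity of $C_W$, which holds in the applications.
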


\begin{proof}
By strong duality (see \cite[Theorem 3.1]{backhoff2019existence}), $W$ admits the dual representation
\[
    W(\rho,\nu)
    =
    \sup_{f \in C_{b,p}(\mathcal Z)}
    \left\{
        \int_\mathcal{Z} f \,\rmd\nu + \int_\mathcal{Y} f^{C_W} \,\rmd\rho
    \right\}
\]
for every $\rho \in \cP_p(\mathcal Y)$. Hence
\begin{align*}
    V\boxminus W(\mu,\nu)
    &= \inf_{\rho \in \cP_p(\mathcal Y)} \bigl\{V(\mu,\rho) - W(\rho,\nu)\bigr\} \\
    &= \inf_{f \in C_{b,p}(\mathcal Z)}
    \inf_{\rho \in \cP_p(\mathcal Y)}
    \left\{
        V(\mu,\rho) - \int f \,\rmd\nu - \int f^{C_W} \,\rmd\rho
    \right\}.
\end{align*}
Moreover, by weak duality, for every $f \in L^1(\nu)$ and every $\rho \in \cP_p(\mathcal{ Y})$,
\[
    \int f^{C_W}\,\rmd\rho \le W(\rho,\nu)-\int f \,\rmd\nu < \infty.
\]

Now fix $f \in C_{b,p}(\mathcal Z)$. Then
\begin{align*}
    \inf_{\rho \in \cP_p(\mathcal Y)}
    \left\{
        V(\mu,\rho) - \int f \,\rmd\nu - \int f^{C_W} \,\rmd\rho
    \right\}
    &=
    \inf_{\pi \in \cpl_p(\mu,\ast)}
    \left\{
        \int_\mathcal{X}
        \left(
            C_V(x,\pi_x) - \int_\mathcal{Y} f^{C_W}(y)\,\pi_x(\rmd y)
        \right)
        \mu(\rmd x)
        -
        \int_\mathcal{Z} f \,\rmd\nu
    \right\} \\
    &=
    \int_\mathcal{X}
    \inf_{\rho \in \cP_p(\mathcal Y)}
    \left\{
        C_V(x,\rho) - \int_\mathcal{Y} f^{C_W}(y)\,\rho(\rmd y)
    \right\}
    \mu(\rmd x)
    -
    \int_\mathcal{Z} f \,\rmd\nu \\
    &=
    \int_\mathcal{X} (f^{C_W})^{C_V} \,\rmd\mu
    -
    \int_\mathcal{Z} f \,\rmd\nu.
\end{align*}
The first equality is the definition of $V$. The second equality follows by interchanging the infimum and the integral, as in the proofs of \Cref{prop:properties_inf_conv,thm:fundamental_inf_conv}.
Taking the infimum over $f \in C_{b,p}(\mathcal Z)$ proves the claim.
\end{proof} 

\begin{remark} \leavevmode
    \begin{enumerate}
        \item It readily follows from the proof that one may replace $C_{b,p}(\mathcal Z)$ by $L_{b,p}(\mathcal Z)$. 

        \item More generally, the preceding argument does not require $W$ itself to be given by a weak transport problem. It suffices that $W$ admits a dual representation of the form
        \[
            W(\eta,\nu)
            =
            \sup_{f \in C_{b,r}(\mathcal Z)}
            \left\{
                \int f(z)\,\nu(\rmd z) + \int \Tcal[f](y)\,\eta(\rmd y)
            \right\},
        \]
        where $\Tcal$ is an operator on $C_{b,r}(\mathcal Z)$ taking values in the set of measurable functions on $\mathcal Y$ that are bounded from below. In that case, the conclusion of \Cref{prop:deconvolution} remains valid with $f^{C_W}$ replaced by $\Tcal[f]$.
    \end{enumerate}
\end{remark}

\subsection{Applications}

\subsubsection{The Bass problem}\label{subsub:mBB_deconv}
As a first application of \Cref{prop:deconvolution}, we recover the dual formulation of the Bass problem~\eqref{eq:bass_problem}; see, e.g., \cite{BackhoffSchachTschid2025} for further details.

Let $\mathcal{X}=\mathcal{Y}=\mathcal{Z}=\R^d$ for some $d\in\N$, and let $\mu,\nu \in \cP_2(\R^d)$. Define
\[
    V(\mu,\nu) := -\MCov(\mu,\nu) =  -\sup_{\pi \in \cpl(\mu,\nu)} \int x \cdot y \, \pi(\rmd x,\rmd y).
\]
Then the associated $C_V$-transform is
\[
    f^{C_V}(x)
    =
    \inf_{y\in\R^d}\bigl\{-x\cdot y-f(y)\bigr\}
    =
    -(-f)^*(x),
\]
and thus $(f^{C_V})^{C_V} = -(-f)^{**}$.

As a first consequence, \Cref{prop:deconvolution} yields
\begin{align*}
    \inf_{\alpha\in\cP_2(\R^d)}
    \bigl\{
        \MCov(\alpha,\nu)-\MCov(\mu,\alpha)
    \bigr\}
    &=
    \inf_{f \in C_{b,2}(\R^d)}
    \left\{
        -\int (-f)^{**} \,\rmd\mu 
        -
        \int f \,\rmd\nu
    \right\} \\
    &=
    \inf_{\substack{f \in L^1(\nu),\\ \text{concave, usc}}}
    \left\{
        \int f \,\rmd\mu - \int f \, \rmd \nu
    \right\},
\end{align*}
where the last infimum runs over all concave, upper semicontinuous functions ${f\in L^1(\nu)}$. Indeed, for every ${g\in L^1(\nu)}$, its lower semicontinuous hull $g^{**}$ satisfies $g^{**}\ge g$, and so one may restrict to concave upper semicontinuous functions.

Next, define $W(\mu,\nu) := -\MCov(\mu \ast \gamma, \nu)$. Using the dual representation of $W$, we obtain
\[
    W(\mu,\nu) = \sup_{\substack{f\in L^1(\nu),\\ \text{concave, usc}}} \left\{ \int -(-f)^\ast \ast \gamma \,\rmd\mu + \int f \, \rmd\nu \right\}.
\]
Therefore
\[
    (f^{C_W})^{C_V} = (-(-f)^\ast \ast \gamma)^{C_V} = -((-f)^\ast \ast \gamma)^\ast,
\]
and hence \cref{prop:deconvolution} gives
\begin{equation}\label{eq:bass_functional.dual}
    \inf_{\alpha \in \cP_2(\R^d)} \bigl\{\MCov(\alpha \ast \gamma,\nu) - \MCov(\mu,\alpha)\bigr\}
    = 
    \inf_{\substack{f \in L^1(\nu), \\ \text{concave, usc}}} \left\{ - \int \big( (-f)^\ast \ast \gamma \big)^\ast \, \rmd\mu - \int f \, \rmd\nu \right\}.
\end{equation}
If $\mu \le_{\rm cvx} \nu$, i.e., ${\int \psi \,\rmd\mu \le \int \psi \,\rmd\nu}$ for all convex functions $\psi$,
then \cite[Theorem 1.5]{BackhoffSchachTschid2025} shows that
\[
    \inf_{\alpha \in \cP_2(\R^d)} \bigl\{\MCov(\alpha \ast \gamma,\nu) - \MCov(\mu,\alpha)\bigr\} = \frac{d}2 + \int q_1\,\rmd(\nu - \mu) - V_{\rm mBB}(\mu,\nu).
\]
Combining this identity with \eqref{eq:bass_functional.dual}, we arrive at the dual formulation of the Bass problem~\eqref{eq:bass_problem}:
\begin{equation}\label{eq:bass_problem.dual}\tag{mBBd}
    V_{\rm mBB}(\mu,\nu) 
    = 
    \sup_{\substack{\psi \in L^1(\nu),\\ \text{ convex, lsc}}} \left\{\int\!\left(\frac{d}{2} - q_1 + \big(\psi^**\gamma\big)^*\right)\rmd\mu + \int (q_1-\psi)\,\rmd\nu\right\}.
\end{equation}

\subsubsection{The Schrödinger--Bass problem} \label{subsub:SB_deconv}
As a final application, we combine the quadratic Wasserstein transport problem with the entropic transport problem through infimal convolution and deconvolution. In \Cref{sec:the_SB_problem}, the resulting variational problem is shown to coincide with the Schrödinger--Bass problem. Let $\beta>0$, and consider
\[
    W_\beta(\mu,\nu) := \frac{\beta}{2}\Wcal_2^2(\mu,\nu),
    \qquad
    V_{\rm EOT}(\mu,\nu) := \inf_{\pi \in \cpl(\mu,\nu)} \int H(\pi_x \,|\, \gamma_x)\,\mu(\rmd x).
\]
To begin with, we verify that \cref{thm:fundamental_inf_conv} and \cref{thm:stability} apply to this case.

\begin{lemma}\label{lem:SB.cost.assumptions} 
    Let $\beta>0$ and set 
    \[
        C_{V_{\rm EOT}}(x,\rho):=H(\rho\,|\,\gamma_x), 
        \qquad
        C_{W_\beta}(y,\eta):= \int \frac\beta2|y-z|^2\,\eta(\rmd z).
    \]
    Then $C_{V_{\rm EOT}}$ and $C_{W_\beta}$ are standard weak transport costs on $\Rd \times \cP_2(\Rd)$ and $W_\beta$ is $\Wcal_r$-stable for every $r \in [1,2)$.
    Moreover, $(C_{V_{\rm EOT}},W_\beta)$ satisfies \Cref{aspt:continuity} and \Cref{aspt:coercivity} for every $r\in[1,2)$, and $C_{V_{\rm EOT}\Box W_\beta}$ admits a $2$-growth bound.
\end{lemma}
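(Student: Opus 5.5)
The plan is to verify each property directly, using the explicit form of relative entropy with respect to a Gaussian. For the standard-cost properties, write
\[
H(\rho\,|\,\gamma_x) = H(\rho\,|\,\gamma) + \int \Bigl(\tfrac12|y-x|^2-\tfrac12|y|^2\Bigr)\rho(\rmd y) = H(\rho\,|\,\gamma) - x\cdot\bar\rho + \tfrac12|x|^2 ,
\]
valid on $\cP_2(\Rd)$. Convexity in $\rho$ and nonnegativity (hence boundedness from below) are classical. Lower semicontinuity on $\Rd\times\cP_2(\Rd)$ holds because the Donsker--Varadhan formula writes $H$ as a supremum of functions jointly continuous in $(x,\rho)$. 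Properness holds since $H(\gamma_x|\gamma_x)=0$. For $C_{W_\beta}$ the integrand $\frac\beta2|y-z|^2$ is continuous, nonnegative and has quadratic growth, so $\eta\mapsto\int\frac\beta2|y-z|^2\rmd\eta$ is linear and continuous on $\cP_2$; hence it is a standard cost. $\Wcal_r$-stability of $W_\beta$ for $r<2$ follows from the discussion after \Cref{def:Wr-stability}, since \eqref{aspt:lower-p-growth} holds for $\Wcal_2^2$.

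Continuity \Cref{aspt:continuity} is now direct. If $H(\rho|\gamma_{x_1})<\infty$ for some $x_1$, then $\rho\in\cP_2$ and $H(\rho|\gamma)<\infty$, so by the displayed identity $x\mapsto H(\rho|\gamma_x)$ is a finite quadratic polynomial, hence continuous. The map $\eta\mapsto\frac\beta2\Wcal_2^2(\rho,\eta)$ is continuous in $\Wcal_2$.

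Coercivity \Cref{aspt:coercivity} is the main step. Fix a compact $K\subset\Rd\times\cP_2(\Rd)$, so $|x|\le R$ and the second moments of $\eta$ are at most $M$ on $K$. Suppose $H(\rho|\gamma_x)+\frac\beta2\Wcal_2^2(\rho,\eta)\le c$ for some $(x,\eta)\in K$. Then $\int|y|^2\rmd\rho\le 2\Wcal_2^2(\rho,\eta)+2M$ is bounded, and $H(\rho|\gamma)\le c+R\,|\bar\rho|$ is bounded too. A relative entropy bound gives uniform integrability of $|y|^2$ against $\rho$: by the entropy inequality $\int g\,\rmd\rho\le H(\rho|\gamma)+\log\int e^g\rmd\gamma$ with $g=\frac14|y|^2\mathbbm 1_{|y|>L}$, the tails $\int_{|y|>L}|y|^2\rmd\rho$ are small uniformly as $L\to\infty$. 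This is actually stronger than needed: sublevel sets are then relatively compact even in $\Wcal_2$, hence in $\Wcal_r$. For closedness in $\Wcal_r$, the infimum over compact $K$ of a lower semicontinuous function is lower semicontinuous. Here I use that $(x,\rho,\eta)\mapsto H(\rho|\gamma_x)+W_\beta(\rho,\eta)$ is lsc on $\Rd\times(\cP_2,\Wcal_r)\times\cP_2$. This holds because $H$ is weakly lsc and $\Wcal_2^2$ is lsc under weak convergence. Compactness of $K$ then lets minimizing $(x,\eta)$ converge along subsequences. The care needed here, deciding which topology each lsc statement uses, is where I expect the main obstacle. The entropy tail bound resolves it.

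For the $2$-growth bound, bound $C_{V_{\rm EOT}\Box W_\beta}(x,\eta)$ by the choice $\rho=\gamma_x$:
\[
C(x,\eta)\le 0+\tfrac\beta2\Wcal_2^2(\gamma_x,\eta)\le \beta\Bigl(\int|y|^2\gamma_x(\rmd y)+\int|z|^2\eta(\rmd z)\Bigr)=\beta\Bigl(d+|x|^2+\int|z|^2\rmd\eta\Bigr).
\]
This is of the form \eqref{def:p-growth.bound} with $p=2$.
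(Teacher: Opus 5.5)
Your structure (the identity $H(\rho\,|\,\gamma_x)=H(\rho\,|\,\gamma)-x\cdot\bar\rho+q_1(x)$, the second-moment bound from $\Wcal_2^2(\rho,\eta)$, closedness by weak lower semicontinuity, and the test choice $\rho=\gamma_x$ for the growth bound) matches the paper's. Your Donsker--Varadhan argument for joint lower semicontinuity is a valid alternative. However, one step in your coercivity argument is false: a relative entropy bound does \emph{not} give uniform integrability of $|y|^2$. With $g=\frac14|y|^2\mathbbm 1_{\{|y|>L\}}$ the entropy inequality yields only
\[
\tfrac14\int_{\{|y|>L\}}|y|^2\,\rho(\rmd y)\le H(\rho\,|\,\gamma)+\log\Bigl(1+\int_{\{|y|>L\}}\bigl(e^{|y|^2/4}-1\bigr)\,\gamma(\rmd y)\Bigr).
\]
The right-hand side tends to $H(\rho\,|\,\gamma)$ as $L\to\infty$, not to $0$.

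In fact the sublevel sets are not $\Wcal_2$-compact, so \Cref{aspt:coercivity} fails for $r=2$. Take $K=\{(0,\gamma)\}$ and $\rho_n:=(1-n^{-2})\gamma+n^{-2}\gamma_{ne_1}$. By convexity, $H(\rho_n\,|\,\gamma)\le n^{-2}\cdot\frac{n^2}{2}=\frac12$. Shifting only the small component gives $\Wcal_2^2(\rho_n,\gamma)\le 1$. So all $\rho_n$ lie in the sublevel set at level $\frac12+\frac\beta2$. Yet $\rho_n\to\gamma$ weakly while $\int|y|^2\,\rmd\rho_n=d+1\neq d$, so no subsequence converges in $\Wcal_2$. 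This is why the lemma, and the framework of \Cref{def:coerc_cont_aspts} and \Cref{def:Wr-stability}, is formulated only for $r<p$.

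The fix is immediate and is what the paper does. Drop the entropy tail bound and use the second-moment bound you already derived. A bound $\sup\int|y|^2\,\rmd\rho\le M'$ on the sublevel set gives tightness by Markov's inequality. It also gives uniform integrability of $|y|^r$ for $r<2$, since $\int_{\{|y|>L\}}|y|^r\,\rmd\rho\le L^{r-2}M'$. Hence the sublevel set is relatively compact in $(\cP_2(\Rd),\Wcal_r)$, and limits stay in $\cP_2(\Rd)$ by weak lower semicontinuity of second moments. Your closedness argument (joint weak lower semicontinuity plus compactness of $K$) then completes the proof. Your remark that the entropy tail bound resolves the topology issue should also go: closedness in $\Wcal_r$ requires only weak lower semicontinuity, not tail control.
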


\begin{proof}
    The cost $C_{W_\beta}$ is finite, continuous, bounded from below, and linear in $\eta$. Moreover, since $W_\beta$ satisfies \eqref{aspt:lower-p-growth}, it is $\Wcal_r$-stable for all $r \in [1,2)$.
    For $C_{V_{\rm EOT}}$, non-negativity gives the lower bound. Convexity in $\rho$ follows from convexity of $H$, and using
    \[
        H(\rho\,|\,\gamma_x)
        =
        H(\rho\,|\,\gamma) - x\cdot\bar\rho + q_1(x).
    \]
    lower semicontinuity follows from the weak lower semicontinuity of $\rho \mapsto H(\rho\,|\,\gamma)$.
    
    Moreover, \Cref{aspt:continuity} is satisfied, since $x\mapsto H(\rho\,|\,\gamma_x)$ is continuous whenever $\rho\ll\gamma$, while $\eta\mapsto\Wcal_2^2(\rho,\eta)$ is continuous on $\cP_2(\Rd)$ if $\rho \in \cP_2(\Rd)$.
    To prove \Cref{aspt:coercivity}, let $K\subseteq\Rd\times\cP_2(\Rd)$ be compact and suppose that
    \[
        \inf_{(x,\eta)\in K}\left\{ H(\rho\,|\,\gamma_x) + \frac{\beta}{2}\Wcal_2^2(\rho,\eta) \right\} \le R.
    \]
    Then there exists $(x,\eta)\in K$ such that $\Wcal_2^2(\rho,\eta)\le \frac{2R}\beta+1$, and so the second moments of $\eta \in K$ are uniformly bounded on $K$. Hence
    \[
        \int |y|^2\,\rho(\rmd y)
        \le
        2\Wcal_2^2(\rho,\eta)
        +
        2\int |z|^2\,\eta(\rmd z)
    \]
    is uniformly bounded. Since the sublevel set is closed by weak lower semicontinuity, it is compact in $(\cP_2(\Rd),\Wcal_r)$ for every $r<2$.
    Finally,
    \[ 
        C_{V_{\rm EOT}\Box W_\beta}(x,\eta) 
        \le 
        \frac{\beta}{2}\Wcal_2^2(\gamma_x,\eta)    
        \le 
        c\left(1+|x|^2+\int |z|^2\,\eta(\rmd z)\right), 
    \] 
    for some $c>0$. Thus $C_{V_{\rm EOT}\Box W_\beta}$ admits a $2$-growth bound. 
\end{proof}

In particular, $V_{\rm EOT} \Box W_\beta$ is a standard weak transport problem, and its $C$-transform is explicit: Since
\begin{align*}
    f^{C_{V_{\rm EOT}}}(x)
    &= \inf_{\rho \in \cP_2(\R^d)}
    \left\{
        H(\rho \,|\, \gamma_x) - \int f \,\rmd \rho
    \right\}
    = -\log\bigl(\exp(f)\ast\gamma(x)\bigr), \\
    f^{C_{W_\beta}}(x)
    &= \inf_{y \in \R^d}
    \left\{
        \frac{\beta}{2}|x-y|^2 - f(y)
    \right\}
    = q_\beta \Box (-f)(x),
\end{align*}
we have, by \cref{thm:fundamental_inf_conv},
\[
    f^{C_{V_{\rm EOT}\Box W_\beta}}
    =
    \bigl(-f^{C_{W_\beta}}\bigr)^{C_{V_{\rm EOT}}}
    =
    -\log\bigl(\exp(-q_\beta \Box (-f))\ast\gamma\bigr).
\]

Moreover, set $\Tcal^\beta_1[f]:= -\log\bigl( \exp(-q_\beta \Box (-f)) \ast \gamma \bigr)$.
Applying \Cref{prop:deconvolution}, we obtain
\begin{gather}
    \label{eq:SB.variational}\tag{SB$\beta$v}
    \sup_{\alpha \in \cP_2(\Rd)}
    \inf_{\rho \in \cP_2(\Rd)}
    \bigl\{
        - W_\beta(\mu,\alpha)
        + V_{\rm EOT}(\alpha,\rho)
        + W_\beta(\rho,\nu)
    \bigr\}
    \\
    \label{eq:SB.dual}\tag{SB$\beta$d}
    = \sup_{f \in C_{b,2}(\R^d)}
    \left\{
        \int f \,\rmd\nu
        -
        \int q_\beta \Box \bigl(-\Tcal^\beta_1[f]\bigr)\,\rmd\mu
    \right\}.
\end{gather}
Note that, if we define 
\[
    g := (f^{C_{W_\beta}})^{C_{W_\beta}},
\]
then $g \ge f$ and $g^{C_{W_\beta}} = f^{C_{W_\beta}}$. It follows that the dual problem in \eqref{eq:SB.dual} may be restricted to functions satisfying
\[
    (f^{C_{W_\beta}})^{C_{W_\beta}} = f,
\]
that is, to $\beta$-semiconcave functions $f \in L^1(\nu)$. 
In \Cref{sec:the_SB_problem}, it is further shown that the variational problem~\eqref{eq:SB.variational} is in fact a standard weak transport problem and coincides with the Schrödinger--Bass problem~\eqref{eq:main.SB.dynamic}.

We conclude the present section by proving that both \eqref{eq:SB.variational} and \eqref{eq:SB.dual} are attained.

\begin{proposition}\label{prop:SB_deconv_attained}
    Let $\mu, \nu \in \cP_2(\Rd)$ and $\beta >0$. 
    Then there exist a $\beta$-semiconcave potential $f^\circ$ and ${\alpha^\circ \in \cP_2(\Rd)}$, such that \eqref{eq:SB.variational} and \eqref{eq:SB.dual} are attained, that is,
    \begin{equation}\label{eq:SB.variational_attained}
        - W_\beta(\mu,\alpha^\circ) +
        \inf_{\rho \in \cP_2(\Rd)}
        \bigl\{
            V_{\rm EOT}(\alpha^\circ,\rho)
            + W_\beta(\rho,\nu)
        \bigr\}
        =
        \int f^\circ \,\rmd\nu
        -
        \int q_\beta \Box \bigl(-\Tcal^\beta_1[f^\circ]\bigr)\,\rmd\mu.
    \end{equation}
\end{proposition}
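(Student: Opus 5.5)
The plan is to obtain both optimizers from a single dual optimizer. First I construct a maximizer $f^\circ$ of \eqref{eq:SB.dual} by the direct method. Then I build $\alpha^\circ$ explicitly from $f^\circ$, in the way \cref{alg:SB} suggests, and check the chain of inequalities that forces equality in \eqref{eq:SB.variational_attained}. Throughout I use that the values of \eqref{eq:SB.variational} and \eqref{eq:SB.dual} coincide, by \Cref{prop:deconvolution} applied with $V=V_{\rm EOT}\Box W_\beta$. This is admissible because $W_\beta(\rho,\nu)<\infty$ on $\cP_2(\Rd)$. I also use the weak-duality inequality
\[
    V_{\rm EOT}\Box W_\beta(\alpha,\nu)\ \ge\ \int f\,\rmd\nu+\int \Tcal^\beta_1[f]\,\rmd\alpha ,
\]
valid for every $\alpha$ and every $\beta$-semiconcave $f\in L^1(\nu)$. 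It follows from \eqref{eq:VBoxW_conjugate} and the identity $f^{C_{V_{\rm EOT}\Box W_\beta}}=\Tcal^\beta_1[f]$ computed above.

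\emph{Step 1: dual attainment.} Take a maximizing sequence $(f_n)$ of $\beta$-semiconcave functions. Since the dual functional is invariant under adding constants, I normalize each $f_n$, for instance so that $\int f_n\,\rmd\nu=0$ or so that $f_n(y_0)=0$ at a fixed point of $\operatorname{ri}\operatorname{co}\supp\nu$.

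Write $f_n=q_\beta-\beta\psi_n$ with $\psi_n$ convex. The uniform lower bound on the dual values, combined with the trivial estimates $-q_\beta\Box(-\Tcal^\beta_1[f])\le \Tcal^\beta_1[f]$ and $\Tcal^\beta_1[f]\le -\log\int e^{f}\ldots$ (Jensen), is meant to give two-sided bounds on $\psi_n$ that are locally uniform on $\operatorname{ri}\operatorname{co}\supp\nu$. With these bounds, the standard compactness of convex functions yields a subsequence converging locally uniformly on that set.

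The limit $f^\circ$ is $\beta$-semiconcave; outside the set I extend it by $-\infty$, or take the usc hull. I then pass to the limit in $\int f_n\,\rmd\nu$ by Fatou, using the upper bound. The term $-\int q_\beta\Box(-\Tcal^\beta_1[f_n])\,\rmd\mu$ is upper semicontinuous along the sequence, because $\Tcal^\beta_1$ is monotone and stable under local uniform convergence through the Gaussian convolution. This is exactly the argument behind \cref{thm:fundamental_inf_conv}\ref{it:thm.fundamental.3}. Alternatively, I would simply invoke that result for $V_{\rm EOT}\Box W_\beta$, with $\alpha$ replaced by each $\mu$-dependent problem, and combine it with the deconvolution representation. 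I expect this step, namely getting locally uniform bounds from a lower bound on the dual value when $\supp\nu$ is not full-dimensional, to be the main obstacle. If necessary I would restrict to the affine hull of $\supp\nu$.

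\emph{Step 2: constructing $\alpha^\circ$.} Set $T:=\Tcal^\beta_1[f^\circ]$. This function is smooth, being $-\log$ of a Gaussian convolution, and satisfies $\nabla^2 T\le I_d$. Define
\[
    u:=q_1-\tfrac1\beta\, q_\beta\Box(-T).
\]
Since $u=\tfrac1\beta\bigl(\beta q_1 - q_\beta\Box(-T)\bigr)$ is a supremum of affine functions, it is convex. I then take $\alpha^\circ:=(\nabla u)_\#\mu$, which is defined $\mu$-a.e.\ wherever $u$ is differentiable.

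To make this legitimate, I must check that for $\mu$-a.e.\ $x$ the infimum defining $q_\beta\Box(-T)(x)$ is attained at $y=\nabla u(x)$. This is the point where the differentiability of $T$ is used, and it gives
\[
    q_\beta\Box(-T)(x)=q_\beta\bigl(x-\nabla u(x)\bigr)-T\bigl(\nabla u(x)\bigr).
\]
If $\mu$ charges sets where $u$ is not differentiable, I would instead use a measurable selection of minimizers, and this is enough for what follows. Second moments of $\alpha^\circ$ come from the growth of $T$, so that $\alpha^\circ\in\cP_2(\Rd)$.

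\emph{Step 3: equality.} Integrating the pointwise identity from Step 2 against $\mu$, and using that $\bigl(\mathrm{id},\nabla u\bigr)_\#\mu\in\Cpl(\mu,\alpha^\circ)$, gives
\[
    -\int q_\beta\Box(-T)\,\rmd\mu\ \le\ -W_\beta(\mu,\alpha^\circ)+\int T\,\rmd\alpha^\circ .
\]
Adding $\int f^\circ\,\rmd\nu$ and applying the weak-duality inequality above with $\alpha=\alpha^\circ$, the right-hand side of \eqref{eq:SB.variational_attained} is at most its left-hand side. The left-hand side is in turn at most the value of \eqref{eq:SB.variational}. That value equals the dual value, which $f^\circ$ attains by Step 1. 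Hence all inequalities are equalities, and \eqref{eq:SB.variational_attained} holds with the pair $(\alpha^\circ,f^\circ)$.
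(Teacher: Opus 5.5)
Your Steps 2 and 3 are a correct dual-to-primal argument. Once a $\beta$-semiconcave maximizer $f^\circ$ of \eqref{eq:SB.dual} is available, $\alpha^\circ:=(\nabla u)_\#\mu$ with $u=q_1-\frac1\beta q_\beta\Box(-\Tcal^\beta_1[f^\circ])$ works and your chain of inequalities closes. One repair is needed there: attainment of the infimum at $\nabla u(x)$ requires the $\frac{\beta}{1+\beta}$-semiconcavity of $\Tcal^\beta_1[f^\circ]$ from \cref{lem:concavity_regularisation}, as in \cref{cor:smooth_brenier_map}. The bound $\nabla^2T\le I_d$ alone gives no coercivity of $y\mapsto q_\beta(x-y)-T(y)$ when $\beta\le1$.

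The genuine gap is Step 1. It carries the whole weight of the proposition, and you do not prove it.

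\begin{itemize}
\item Your first ``trivial estimate'' is reversed. Taking $y=x$ in the infimum gives $q_\beta\Box(-T)\le -T$, i.e.\ $-q_\beta\Box(-T)\ge T$. So a lower bound on the dual value does not turn into bounds on $\psi_n$ this way.
\item The normalization $\int f_n\,\rmd\nu=0$ fixes only the additive constant. Nothing in your sketch prevents, say, the affine part of $\psi_n$ from degenerating.
\item Even granting locally uniform convergence on $\operatorname{ri}(\operatorname{co}(\supp\nu))$, the limit passage fails on three counts:
\begin{itemize}
\item $-q_\beta\Box(-f_n)$ and the Gaussian convolution see $f_n$ on all of $\Rd$, not only on that set.
\item Pointwise $\liminf$ bounds on $-\Tcal^\beta_1[f_n]$ do not pass through the outer infimum $\inf_y\{q_\beta(x-y)-\Tcal^\beta_1[f_n](y)\}$ in the direction you need, unless you have compactness of near-minimizers.
\item $\nu$ may carry all its mass on the relative boundary of $\operatorname{co}(\supp\nu)$, e.g.\ $\nu=\frac12(\delta_{-1}+\delta_1)$ in $d=1$. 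Convergence on the relative interior then says nothing about $\int f_n\,\rmd\nu$.
\end{itemize}
\item Your fallback, invoking \cref{thm:fundamental_inf_conv}\ref{it:thm.fundamental.3}, is circular. It gives a dual maximizer of $V_{\rm EOT}\Box W_\beta(\alpha,\nu)$ for a \emph{fixed} $\alpha$. Turning that into a maximizer of \eqref{eq:SB.dual} requires already knowing an optimal $\alpha^\circ$, which your Step 2 was meant to derive from $f^\circ$.
\end{itemize}

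The missing idea is to do the compactness on the primal side, which is what the paper does.

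\begin{enumerate}
\item Take a maximizing sequence $(f_n)$ with $\int f_n\,\rmd\nu=0$. The functions $u_n=q_1-\frac1\beta q_\beta\Box(-\Tcal^\beta_1[f_n])$ are convex and uniformly $\frac{1+\beta}{\beta}$-smooth. The sequence $\int u_n\,\rmd\mu$ can be taken nondecreasing, and the $u_n$ obey a uniform quadratic upper bound.
\item \cref{lem:tightness} then gives $\alpha_n:=(\nabla u_n)_\#\mu\to\alpha^\circ$ in $\cP_2(\Rd)$ along a subsequence, regardless of the geometry of $\supp\nu$.
\item Since $\Tcal^\beta_1[f_n]$ is an optimal dual potential for $W_\beta(\mu,\alpha_n)$, one has $\mathcal D_\beta[f_n]\le -W_\beta(\mu,\alpha_n)+V_{\rm EOT}\Box W_\beta(\alpha_n,\nu)\le s^\circ$, where $s^\circ$ is the common value of \eqref{eq:SB.variational} and \eqref{eq:SB.dual}. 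By \cref{thm:stability}, $\alpha^\circ$ attains \eqref{eq:SB.variational}.
\item Only then is \cref{thm:fundamental_inf_conv}\ref{it:thm.fundamental.3} applied at this $\alpha^\circ$ to produce $f^\circ$, replaced by its $\beta$-semiconcave envelope.
\item Maximality of $s^\circ$ forces $\Tcal^\beta_1[f^\circ]$ to be optimal for $W_\beta(\mu,\alpha^\circ)$, which is exactly \eqref{eq:SB.variational_attained}.
\end{enumerate}

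This way, dual attainment is inherited from the fundamental theorem of weak transport for the inner problem. The direct method on $f$, with all the boundary and semicontinuity issues above, is never needed.
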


\begin{remark}\label{rem:SB_deconv_attained.optimizers}
    For every $\alpha \in \Pcal_2(\Rd)$, the infimum in \eqref{eq:SB.variational_attained} is attained because it is an infimal convolution of standard weak transport problems and \Cref{thm:fundamental_inf_conv} applies: for the optimizers $f^\circ$ and $\alpha^\circ$ from \Cref{prop:SB_deconv_attained}, there exists $\rho^\circ \in \cP_2(\Rd)$ such that
    \[
        - W_\beta(\mu,\alpha^\circ) + V_{\rm EOT}(\alpha^\circ,\rho^\circ) + W_\beta(\rho^\circ,\nu)
        =
        \int f^\circ \,\rmd\nu
        -
        \int q_\beta \Box \bigl(-\Tcal^\beta_1[f^\circ]\bigr)\,\rmd\mu.
    \]
    Applying the complementary slackness condition from \Cref{thm:fundamental_inf_conv} to $V_{\rm EOT}\Box W_\beta(\alpha^\circ,\nu)$ we obtain
    \begin{align*}
        V_{\rm EOT}(\alpha^\circ,\rho^\circ)
        &=
        \int -q_\beta\Box(-f^\circ) \,\rmd\rho^\circ + \int \Tcal^\beta_1[f^\circ]\,\rmd\alpha^\circ, \\
        W_\beta(\rho^\circ,\nu)
        &=
        \int q_\beta\Box(-f^\circ) \,\rmd\rho^\circ + \int f^\circ\,\rmd\nu.
    \end{align*}
    In particular, the unique primal optimizer $\chi^\circ \in \Cpl(\alpha^\circ,\rho^\circ)$ of $V_{\rm EOT}(\alpha^\circ,\rho^\circ)$ is given by
    \[
        \frac{\rmd \chi_y^\circ}{\rmd\gamma_y} 
        := 
        \frac{\exp\bigl(-q_\beta \Box(-f^\circ)\bigr)}{\exp\bigl(-q_\beta \Box(-f^\circ)\bigr) * \gamma (y)}
        \qquad \text{ for }\alpha^\circ\text{-a.e. }y,
    \]
    and with $f^\circ = q_\beta - \beta v^\circ$, where $v^\circ:\Rd \to \R$ is a convex function, we have 
    \[
        (\nabla (v^\circ)^\ast)_\# \rho^\circ = \nu.
    \]
    Finally, $u:=q_1-\frac{1}{\beta}q_\beta\Box(-\Tcal^\beta_1[f^\circ])$ satisfies
    \[
        (\nabla u)_\#\mu = \alpha^\circ.
    \]
\end{remark}

\begin{proof}[Proof of \cref{prop:SB_deconv_attained}]
    Let $(f_n)_{n \in \N}$ be a maximizing sequence of $\beta$-semiconcave functions for \eqref{eq:SB.dual}, i.e.,
    \[
        s^\circ:= \sup_{f \in C_{b,2}(\R^d)}
        \left\{
            \int f \,\rmd\nu
            -
            \int q_\beta \Box \bigl(-\Tcal^\beta_1[f]\bigr)\,\rmd\mu
        \right\}
        =
        \lim_{n\to\infty}
        \left\{
            \int f_n \,\rmd\nu
            -
            \int q_\beta \Box \bigl(-\Tcal^\beta_1[f_n]\bigr)\,\rmd\mu
        \right\}.
    \]
    Since $\Tcal^\beta_1[f+c]=\Tcal^\beta_1[f]+c$ for every $c\in\R$, the value of the functional is invariant under addition of constants:
    \[
        \int (f+c)\,\rmd\nu -
        \int q_\beta \Box \bigl(-\Tcal^\beta_1[f+c]\bigr)\,\rmd\mu
        =
        \int f \,\rmd\nu
        -
        \int q_\beta \Box \bigl(-\Tcal^\beta_1[f]\bigr)\,\rmd\mu.
    \]
    We may therefore assume that $\int f_n \,\rmd \nu = 0$ for all $n\in\N$. For each $n\in\N$, define
    \[
        u_n := q_1 - \frac{1}{\beta}q_\beta\Box(-\Tcal^\beta_1[f_n]) \in L^1(\mu).
    \]
    By \Cref{cor:smooth_brenier_map}, each $u_n$ is convex and $\frac{1+\beta}{\beta}$-smooth, and by construction
    \[
        \int u_n\,\rmd\mu \ \mathrel{\Big\uparrow} \ \frac{s^\circ}{\beta} + \int q_1\,\rmd\mu.
    \]        
    By \Cref{lem:tightness}, there exist a constant $c(\beta,d)>0$ and a subsequence $(u_{n_k})_{k\in\N}$ which converges locally uniformly on $\Rd$ to a convex, $\frac{1+\beta}{\beta}$-smooth function $u$ such that
    \[
        \sup_{n \in \N}|u_n(x)| \le c(\beta,d)\left(1+|x|^2\right) \ \text{ for all } x\in\Rd, \qquad \alpha_{n_k} := (\nabla u_{n_k})_\#\mu \longrightarrow \alpha^\circ :=(\nabla u)_\#\mu \ \text{ in } \cP_2(\Rd).
    \]
    For every $k \in \N$, complementary slackness from \Cref{thm:fundamental_inf_conv} yields
    \[
        - W_\beta(\mu,\alpha_{n_k}) + V_{\rm EOT}\Box W_\beta(\alpha_{n_k},\nu)
        =
        \int f_{n_k}\,\rmd\nu
        -
        \int q_\beta \Box \bigl(-\Tcal^\beta_1[f_{n_k}]\bigr)\,\rmd\mu = \int(\beta u_{n_k}-q_1)\,\rmd\mu.
    \]
    By \Cref{thm:stability}, and since $W_\beta(\mu,\alpha_{n_k}) \to W_\beta(\mu,\alpha^\circ)$, it follows that
    \[
        - W_\beta(\mu,\alpha_{n_k}) + V_{\rm EOT}\Box W_\beta(\alpha_{n_k},\nu)
        \longrightarrow
        - W_\beta(\mu,\alpha^\circ) + V_{\rm EOT}\Box W_\beta(\alpha^\circ,\nu),
    \]
    while dominated convergence gives $\int(\beta u_{n_k}-q_1)\,\rmd\mu \to \int (\beta u-q_1)\,\rmd\mu = s^\circ$. 
    Consequently,
    \[
        - W_\beta(\mu,\alpha^\circ) + V_{\rm EOT}\Box W_\beta(\alpha^\circ,\nu)
        =
        \int(\beta u-q_1)\,\rmd\mu.
    \]
    By \Cref{thm:fundamental_inf_conv}, there exists $f^\circ \in L^1(\nu)$ such that
    \[
        V_{\rm EOT}\Box W_\beta(\alpha^\circ,\nu)
        =
        \int f^\circ\,\rmd\nu + \int \Tcal^\beta_1[f^\circ]\,\rmd\alpha^\circ.
    \]
    Let
    \[
        g:=\bigl((f^\circ)^{C_{W_\beta}}\bigr)^{C_{W_\beta}}.
    \]
    Then $g$ is $\beta$-semiconcave, $g\ge f^\circ$, and $\Tcal^\beta_1[g]=\Tcal^\beta_1[f^\circ]$. 
    Thus we may choose $f^\circ$ to be $\beta$-semiconcave.
    
    Since $\Tcal^\beta_1[f^\circ]$ is an admissible dual candidate for $W_\beta(\mu,\alpha^\circ)$, we have
    \[
        W_\beta(\mu,\alpha^\circ) \geq \int q_\beta\Box(-\Tcal^\beta_1[f^\circ])\,\rmd \mu + \int \Tcal^\beta_1[f^\circ] \,\rmd\alpha^\circ.
    \]
    If the inequality were strict, then
    \[
        - W_\beta(\mu,\alpha^\circ) + V_{\rm EOT}\Box W_\beta(\alpha^\circ,\nu) < \int f^\circ\,\rmd\nu - \int q_\beta\Box(-\Tcal^\beta_1[f^\circ])\,\rmd \mu,
    \]
    contradicting the definition of $s^\circ$. Hence
    \[
        W_\beta(\mu,\alpha^\circ) = \int q_\beta\Box(-\Tcal^\beta_1[f^\circ])\,\rmd \mu + \int \Tcal^\beta_1[f^\circ] \,\rmd\alpha^\circ,
    \]
    and \eqref{eq:SB.variational_attained} follows.
\end{proof}

\section{The Schrödinger--Bass problem}\label{sec:the_SB_problem}

We now turn to the Schrödinger--Bass problem given by
\begin{equation}\label{eq:SB.dynamic}\tag{SB$\beta$}
    V_{\rm SB}^\beta(\mu,\nu) 
    =
    \inf_{\substack{X_0 \sim\mu, \, X_1\sim \nu, \\ \rmd X_t = a_t \rmd t + b_t \rmd B_t}} \mathbb E\!\left[ \int_0^1 \frac12|a_t|^2 + \frac\beta2|b_t - I_d|_{\rm HS}^2 \, \rmd t\right]
\end{equation}
for $\beta > 0$. The purpose of this section is twofold. First, we prove that \eqref{eq:SB.dynamic} is in fact a standard weak transport problem and coincides with the variational problem~\eqref{eq:SB.variational} studied in \cref{subsub:SB_deconv}. Second, we identify the \emph{Schrödinger--Bass system}, which uniquely determines the law of the optimal semimartingale; see \cref{fig:SB_system} for a schematic illustration.

We begin by defining $C_{\rm SB}^\beta: \Rd \times \cP_2(\Rd) \to \R$ by
\begin{equation}\label{eq:SB.cost.dynamic}
    C_{\rm SB}^\beta(x,\eta)
    :=
    \inf_{\substack{X_0=x, \, X_1\sim\eta,\\\rmd X_t = a_t \rmd t + b_t \rmd B_t}} \mathbb E\!\left[\int_0^1 \frac12 |a_t|^2 + \frac{\beta}{2}|b_t-I_d|_{\mathrm{HS}}^2 \,\rmd t\right].
\end{equation}
It is shown in \cref{lem:SB.cost} that $C_{\rm SB}^\beta$ is a continuous standard weak transport cost and satisfies a quadratic growth bound. Moreover, $\eta \mapsto C_{\rm SB}^\beta(x,\eta)$ is strictly convex for every $x \in \Rd$. It follows then from the general theory of weak optimal transport that the associated WOT problem
\begin{equation}\label{eq:SB.wot}\tag{SB$\beta$w}
    \inf_{\pi \in \Cpl(\mu,\nu)} \int C_{\rm SB}^\beta(x,\pi_x)\,\mu(\rmd x),
\end{equation}
admits a unique optimizer $\pi^\circ \in \Cpl(\mu,\nu)$.
Moreover, conditioning on $X_0$ immediately gives
\begin{equation}\label{inequ:SB.easy}
    V_{\rm SB}^\beta(\mu,\nu)
    \ge
    \inf_{\substack{X_0 \sim\mu, \, X_1\sim \nu, \\ \rmd X_t = a_t \rmd t + b_t \rmd B_t}} \mathbb E\!\left[ C_{\rm SB}^\beta\bigl(X_0,\mathrm{Law}(X_1\,|\,X_0)\bigr)\right] 
    =
    \int C_{\rm SB}^\beta(x,\pi^\circ_x)\,\mu(\rmd x).
\end{equation}
A major goal of this section is therefore to prove the converse inequality.
Equivalently, starting from the unique optimizer $\pi^\circ$ of the static weak transport problem~\eqref{eq:SB.wot}, we construct a semimartingale $X^\circ$ with ${\mathrm{Law}(X^\circ_0,X^\circ_1)=\pi^\circ}$ and which, conditional on $X^\circ = x$, attains $C_{\rm SB}^\beta(x,\pi^\circ_x)$ for $\mu$-a.e.\ $x$.

We achieve this by relating \eqref{eq:SB.wot} to the variational problem~\eqref{eq:SB.variational} and its dual formulation~\eqref{eq:SB.dual}.
This perspective gives rise to the Schrödinger--Bass system and, in particular, allows us to construct from a dual optimizer the optimal semimartingale $X^\circ$.
The following theorem summarizes the main results of this section.

\begin{theorem}[Existence and uniqueness of the Schrödinger--Bass system]\label{thm:SB}
    Let $\beta > 0$ and $\mu,\nu \in \cP_2(\Rd)$. Then, $C_{\rm SB}^\beta: \R^d\times\cP_2(\Rd) \to \R$ is a continuous standard weak transport cost function, and
    \begin{align} 
        V_{\rm SB}^\beta(\mu,\nu) 
        &=
        \inf_{\substack{X_0 \sim\mu, \, X_1\sim \nu, \\ \rmd X_t = a_t \rmd t + b_t \rmd B_t}} \mathbb E\!\left[ \int_0^1 \frac12|a_t|^2 + \frac\beta2|b_t - I_d|_{\rm HS}^2 \, \rmd t\right] 
        \label{eq:SB0}\\
        &= 
        \min_{\pi \in \cpl(\mu,\nu)} \int C_{\rm SB}^\beta(x,\pi_x) \, \mu(\rmd x)
        \label{eq:SB1} \\
        &= \max_{\alpha \in \cP_2(\R^d)} \left\{-W_\beta(\mu,\alpha) + \inf_{\rho \in \cP_2(\R^d)} \bigl\{ V_{\rm EOT}(\alpha,\rho) + W_\beta(\rho,\nu)\bigr\}\right\} 
        \label{eq:SB2} \\
        &= \max_{\substack{f\in L^1(\nu), \\ \text{ $\beta$-semiconcave}}} \left\{ \int f \, \rmd \nu - \int q_\beta \Box(-\Tcal^\beta_1[f]) \, \rmd \mu\right\},
        \label{eq:SB3}
    \end{align}
    where $\Tcal^\beta_1[f] := -\log\bigl(\exp(-q_\beta\Box (-f))\ast\gamma)\bigr)$ for $f \in L^1(\nu)$.
    
    Moreover, the weak transport problem~\eqref{eq:SB1} admits a unique optimizer $\pi^\circ \in \cpl(\mu,\nu)$, and $V_{\rm SB}^\beta(\mu,\nu)$ is attained by a semimartingale $X^\circ = (X_t^\circ)_{t \in [0,1]}$, unique in law. The variational problem~\eqref{eq:SB2} admits unique optimizers ${\alpha^\circ,\rho^\circ \in \cP_2(\Rd)}$. Finally, the dual problem~\eqref{eq:SB3} admits a $\beta$-semiconcave maximizer $f^\circ \in L^1(\nu)$, unique $\nu$-a.e.\ up to an additive constant.

    Let $g_1 := \exp\bigl(-q_\beta\Box(-f^\circ)\bigr)$, $g_t := g_1 * \gamma_{0;1-t}$ for $t \in [0,1]$.
    The Schrödinger--Bass system is characterized by
    \begin{gather*}
        u^\circ = q_1-\frac{1}{\beta} q_\beta \Box \bigl(-\Tcal^\beta_1[f^\circ]), 
        \qquad
        \alpha^\circ = (\nabla u^\circ)_\# \mu, 
        \\
        \chi_y^\circ(\rmd z) = \frac{g_1(z)}{g_0(y)} \gamma_y(\rmd z) \quad \alpha^\circ\text{-a.e.}, 
        \qquad
        \rho^\circ(\rmd z) = \int \chi_y^\circ(\rmd z) \, \alpha^\circ(\rmd y),
        \\
        v^\circ := q_1 -\tfrac{1}{\beta}f^\circ,
        \qquad
        \nu = (\nabla (v^\circ)^*)_\# \rho^\circ,
    \end{gather*}
    Moreover, $X^\circ = (X_t^\circ)_{t \in [0,1]}$ is given by
    \begin{align*}
        \rmd X_t^\circ
        &= \nabla \log g_t(Y_t^\circ) \,\rmd t
        + \bigl(I_d + \tfrac{1}{\beta}\nabla^2 \log  g_t(Y_t^\circ)\bigr)\,\rmd B_t,
        \quad X_0^\circ \sim \mu,
        \\
        \rmd Y_t^\circ
        &= \nabla \log g_t(Y_t^\circ)\,\rmd t + \rmd B_t,
        \quad Y^\circ_0 = \nabla u^\circ(X_0^\circ).
    \end{align*}
\end{theorem}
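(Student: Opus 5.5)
The plan is to close a chain of inequalities. The chain starts from the easy bound \eqref{inequ:SB.easy}, $V^\beta_{\rm SB}\ge\int C^\beta_{\rm SB}(x,\pi^\circ_x)\,\mu(\rmd x)$, and ends with an explicit semimartingale built from the optimizers of \Cref{prop:SB_deconv_attained}.

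\textbf{Step 1: properties of the cost.} First I establish the properties of $C^\beta_{\rm SB}$ announced in \cref{lem:SB.cost}. The key identity is the static representation
\[
C^\beta_{\rm SB}(x,\eta)=\max_{\alpha}\bigl\{-W_\beta(\delta_x,\alpha)+V_{\rm EOT}\Box W_\beta(\alpha,\eta)\bigr\},
\]
that is, \eqref{eq:SB.variational} with $\mu=\delta_x$. Together with \Cref{lem:SB.cost.assumptions} and \Cref{thm:stability}, this identity gives continuity and the quadratic growth bound. Strict convexity in $\eta$ then yields existence and uniqueness of $\pi^\circ$ for \eqref{eq:SB1}, via \cite[Theorem 2.9]{backhoff2019existence}.

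To prove the identity, I handle the two directions separately.
\begin{itemize}
\item For "$\le$" I use duality. For any $\beta$-semiconcave $f$, the process $\psi_t=q_\beta\Box(-\Tcal^\beta_{1-t}[f])$ is a subsolution of the HJB equation \eqref{eq:HJB}. Applying Itô's formula to $\psi_t(X_t)$ along any admissible control gives $C^\beta_{\rm SB}(x,\eta)\ge\int f\,\rmd\eta-\psi_0(x)$. Taking the supremum over $f$ and using \eqref{eq:SB.dual}, this yields $C^\beta_{\rm SB}\ge$ the variational value. Applying the same inequality under a general initial law $\mu$ gives $V_{\rm SB}^\beta\ge$ \eqref{eq:SB3}.
\item The other direction ("$\le$" for the dynamic value) comes from the explicit construction in Step 2.
\end{itemize}

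\textbf{Step 2: the explicit construction.} Take $f^\circ$, $\alpha^\circ$, $\rho^\circ$, $u^\circ$, $v^\circ$ from \Cref{prop:SB_deconv_attained} and \Cref{rem:SB_deconv_attained.optimizers}. Define the Föllmer process $Y^\circ$ with drift $\nabla\log g_t$ started at $Y_0=\nabla u^\circ(X_0)$, $X_0\sim\mu$. Set $X_t:=Y_t+\frac1\beta\nabla\log g_t(Y_t)=\nabla v_t^*(Y_t)$. Then:
\begin{itemize}
\item At $t=0$, $\nabla v_0^*=(\nabla u^\circ)^{-1}$ by Fenchel duality (using $u^\circ=v_0$), so $X_0$ is consistent.
\item At $t=1$, $X_1=\nabla(v^\circ)^*(Y_1)\sim\nu$, since $Y_1\sim\rho^\circ$ by the Schrödinger structure of $\chi^\circ$.
\end{itemize}
Itô's formula gives drift $a_t=\nabla\log g_t(Y_t)$ and diffusion $b_t-I_d=\frac1\beta\nabla^2\log g_t(Y_t)$. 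The cost therefore splits as
\[
\mathbb E\int\tfrac12|\nabla\log g_t(Y_t)|^2\,\rmd t+\tfrac{1}{2\beta}\,\mathbb E\int|\nabla^2\log g_t(Y_t)|_{\rm HS}^2\,\rmd t.
\]
The first term equals $V_{\rm EOT}(\alpha^\circ,\rho^\circ)$. For the second term, I compute $\frac{\rmd}{\rmd t}\,\mathbb E|X_t-Y_t|^2$ via Itô; the $|\nabla^2\log g_t|^2$ term should appear as the time derivative of $\beta^{-2}\,\mathbb E|\nabla\log g_t(Y_t)|^2$ (the martingale identity for score processes). This should yield
\[
\tfrac\beta2\,\mathbb E|X_1-Y_1|^2-\tfrac\beta2\,\mathbb E|X_0-Y_0|^2=W_\beta(\rho^\circ,\nu)-W_\beta(\mu,\alpha^\circ),
\]
where both couplings are optimal because they are given by gradients of convex maps. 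The total cost then equals \eqref{eq:SB2}, which closes the chain
\[
\eqref{eq:SB2}\ge V^\beta_{\rm SB}\ge\eqref{eq:SB1}\ge\eqref{eq:SB3}=\eqref{eq:SB2}.
\]

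\textbf{Step 3: uniqueness.} Since all inequalities are equalities, the law of any optimal $X$ is determined by $\pi^\circ$ together with the conditional optimizers, and these are unique by strict convexity. For $\alpha^\circ,\rho^\circ$: complementary slackness from \Cref{thm:fundamental_inf_conv} forces any optimal pair to satisfy $\alpha=(\nabla u)_\#\mu$ for any dual optimizer. Uniqueness of $f^\circ$ up to constants then follows from strict convexity of $H$ (uniqueness of the Schrödinger potential on the support of $\rho^\circ$, which has full support) and Brenier uniqueness.

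\textbf{Main obstacle.} The hardest part is the Itô/HJB verification in Steps 1 and 2: showing rigorously that the cost of the constructed $X^\circ$ matches the variational value, and that $\psi_t$ is a viscosity or classical subsolution with enough integrability to justify the Itô expansions near $t=1$, where $g_t$ may degenerate. I would first handle $t<1$, using smoothness of $g_t=g_1*\gamma_{0;1-t}$ and the semiconvexity bounds from \Cref{cor:smooth_brenier_map}, and then pass to the limit $t\uparrow1$ by monotone convergence.
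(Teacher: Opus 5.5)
Your chain $\eqref{eq:SB2}\ge V^\beta_{\rm SB}\ge\eqref{eq:SB1}\ge\eqref{eq:SB3}=\eqref{eq:SB2}$ has the right architecture. Your Step 2 is the paper's construction (Claim 3 in \Cref{lem:SB.cost} and the end of the proof of \Cref{thm:SB}), and running it directly with $X_0\sim\mu$, $Y_0=\nabla u^\circ(X_0)$ is fine.

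For the lower bound you take a different and much heavier route: an HJB verification for $\psi_t$. That route needs an It\^o--Tanaka formula for semiconvex, time-dependent functions and control as $t\uparrow1$. The paper needs no regularity of $\psi_t$ at all. For any admissible $X$ from $x$ and any $y\in\Rd$, let $Y$ have the same drift, unit diffusion and $Y_0=y$. It\^o's isometry gives $\frac\beta2\mathbb E\int_0^1|b_t-I_d|^2_{\rm HS}\,\rmd t=\frac\beta2\bigl(\mathbb E|X_1-Y_1|^2-|x-y|^2\bigr)\ge W_\beta(\mathrm{Law}(Y_1),\eta)-q_\beta(x-y)$. F\"ollmer's bound gives $\frac12\mathbb E\int_0^1|a_t|^2\,\rmd t\ge H(\mathrm{Law}(Y_1)\,|\,\gamma_y)$. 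Hence $C^\beta_{\rm SB}\ge C^\beta_{\rm stat}$, and the elementary $\sup\inf\le\inf\sup$ of \Cref{lem:stat.cost.dual.inequ} yields exactly your inequality $C^\beta_{\rm SB}(x,\eta)\ge\int f\,\rmd\eta-q_\beta\Box(-\Tcal^\beta_1[f])(x)$.

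\textbf{Regularity of $C^\beta_{\rm SB}$.} This is not established by what you cite.
\begin{itemize}
\item A supremum over $\alpha$ of functions that are continuous by \Cref{thm:stability} is lower semicontinuous and convex. That suffices for existence of $\pi^\circ$, but it is not upper semicontinuous without compactness of the maximizers.
\item The bound $C_{V_{\rm EOT}\Box W_\beta}(y,\eta)\le\frac\beta2\Wcal_2^2(\gamma_y,\eta)$ from \Cref{lem:SB.cost.assumptions} is useless here. In $-q_\beta(x-y)+\frac\beta2\Wcal_2^2(\gamma_y,\eta)$ the quadratic terms in $y$ cancel, so the resulting bound on the supremum over $y$ is $+\infty$ unless $x=\bar\eta$.
\end{itemize}
The missing ingredient is the mean-splitting of \Cref{lem:stat.cost}. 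Optimizing over the mean of $\rho$ replaces $\frac\beta2|y-\bar\eta|^2$ by $\frac{\beta}{2(1+\beta)}|y-\bar\eta|^2$. The problem in $y$ then becomes strongly concave with maximizer $y^\circ=x+\frac{x-\bar\eta}{\beta}$, and $C^\beta_{\rm SB}(x,\eta)=\inf_{\bar\kappa=\bar\eta}\{H(\kappa\,|\,\gamma_x)+W_\beta(\kappa,\eta)\}$. This representation gives continuity, the growth bound, and the strict convexity in $\eta$ that you assert without proof.

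\textbf{Uniqueness of the optimal process (Step 3).} Strict convexity of $l$ alone cannot give uniqueness in law. If $X,X'$ are optimizers on the same Brownian motion, $\frac12(X+X')$ uses the averaged controls, but $\frac12(X_1+X_1')$ need not have law $\nu$. Strict convexity of $\eta\mapsto C^\beta_{\rm SB}(x,\eta)$ only identifies $\pi^\circ$, not the conditional path laws.

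The paper first pins down an arbitrary optimizer $X$. Let $Y$ have the same drift, unit diffusion and $Y_0=\nabla u^\circ(X_0)$. Then $V^\beta_{\rm SB}(\mu,\nu)\ge-W_\beta(\mu,\alpha^\circ)+V_{\rm EOT}(\alpha^\circ,\mathrm{Law}(Y_1))+W_\beta(\mathrm{Law}(Y_1),\nu)$. Equality forces three things: $\mathrm{Law}(Y_1)=\rho^\circ$; $Y$ is the Schr\"odinger bridge with drift $\nabla\log g_t(Y_t)$; and $X_1=\nabla(v^\circ)^*(Y_1)$ by Brenier. Only then do $X$ and $\tilde X_t:=Y_t+\frac1\beta\nabla\log g_t(Y_t)$ share their drift and both endpoints pathwise. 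Their average is therefore admissible, and strict convexity forces $b=\tilde b$.

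\textbf{Uniqueness of $f^\circ,\alpha^\circ,\rho^\circ$.} Your argument hinges on the claim that any optimal $\alpha$ equals $(\nabla u_f)_\#\mu$ for \emph{every} dual optimizer $f$. Complementary slackness does not give this. The pair \eqref{eq:SB2}--\eqref{eq:SB3} is a joint supremum over $(\alpha,f)$, not a saddle problem, so you only get the claim for \emph{some} $f$, namely an inner dual optimizer of $V_{\rm EOT}\Box W_\beta(\alpha,\nu)$. Without it the reasoning is circular: uniqueness of Schr\"odinger potentials needs the marginals $(\alpha^\circ,\rho^\circ)$ fixed first.

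The paper breaks the circle through the unique $\pi^\circ$. For $\mu$-a.e.\ $x$, $\nabla u^\circ(x)=x+\frac{x-\bar\pi^\circ_x}{\beta}$. Moreover, $\chi^\circ_{\nabla u^\circ(x)}$ is the unique minimizer of $\inf_\rho\{H(\rho\,|\,\gamma_{\nabla u^\circ(x)})+W_\beta(\rho,\pi^\circ_x)\}$. Both objects are determined by $\pi^\circ$ alone. Hence the Gibbs density $g_1$, and with it $q_\beta\Box(-f^\circ)$ and $f^\circ$, are fixed up to a constant, after which $\alpha^\circ$ and $\rho^\circ$ follow.
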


Before turning to the proof of \cref{thm:SB}, we study the semimartingale transport problem $C_{\rm SB}^\beta$ defined by~\eqref{eq:SB.cost.dynamic}, that is, the Schrödinger--Bass problem with initial law $\delta_x$. The next lemma shows that $C_{\rm SB}^\beta$ is a continuous standard weak transport cost and that, for every $(x,\eta) \in \Rd \times \cP_2(\Rd)$, the infimum defining $C_{\rm SB}^\beta(x,\eta)$ is attained.

\begin{lemma}\label{lem:SB.cost}
    For $x \in \Rd$ and $\eta \in \cP_2(\Rd)$, define
    \begin{equation}\label{eq:SB.wot.cost}
        C_{\rm stat}^\beta(x,\eta)
        := 
        \sup_{y \in \Rd}
        \left\{-q_\beta(x-y) + 
        \inf_{\rho \in \cP_2(\Rd)} 
        \bigl\{H(\rho \,|\, \gamma_y)+ W_\beta(\rho,\eta)\bigr\}
        \right\}.
    \end{equation}
    Then the following hold:
    \begin{enumerate}[label = (\roman*)]
        \item\label{it:SB.cost:1}
        For every $(x,\eta) \in \Rd \times \cP_2(\Rd)$, $C_{\rm SB}^\beta(x,\eta) = C_{\rm stat}^\beta(x,\eta)$.
        In particular, $C_{\rm SB}^\beta$ is a continuous standard weak transport cost and admits a $2$-growth bound.
        Moreover, for every $x \in \Rd$, the map $\eta \mapsto C_{\rm SB}^\beta(x,\eta)$ is strictly convex.

        \item\label{it:SB.cost:2}
        Let $(x,\eta) \in \Rd \times \cP_2(\Rd)$, and let $f^\circ \in L^1(\eta)$ attain \eqref{eq:SB.dual} for the pair ${(\delta_x,\eta)}$. Then,
        \[
            C_{\rm SB}^\beta(x,\eta) = \int f^\circ\,\rmd\eta - q_\beta\Box(-\Tcal^\beta_1[f^\circ])(x).
        \]
        Moreover, set $g_1 := \exp\bigl(-q_\beta \Box(-f^\circ)\bigr)$ and $g_t := g_1 * \gamma_{0;1-t}$.
        Then the infimum in \eqref{eq:SB.cost.dynamic} defining $C_{\rm SB}^\beta(x,\eta)$ is attained by $X^\circ = (X^\circ_t)_{t \in [0,1]}$ satisfying
        \begin{align*}
            \rmd X^\circ_t
            &= \nabla \log g_t(Y^\circ_t) \,\rmd t + \bigl(I_d + \tfrac{1}{\beta}\nabla^2 \log g_t(Y^\circ_t) \bigr) \, \rmd B_t, 
            \quad X^\circ_0 = x,
            \\
            \rmd Y^\circ_t &= \nabla \log g_t(Y^\circ_t) \,\rmd t + \rmd B_t,
            \quad Y^\circ_0 = x - \tfrac{1}{\beta}\bigl(\nabla q_\beta \Box(-\Tcal^\beta_1[f^\circ])\bigr)(x).
        \end{align*}
    \end{enumerate}
\end{lemma}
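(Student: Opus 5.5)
The plan is to prove the two inequalities $C_{\rm SB}^\beta\ge C_{\rm stat}^\beta$ and $C_{\rm SB}^\beta\le C_{\rm stat}^\beta$ separately. The regularity properties then follow from the static side.

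First observe that $C_{\rm stat}^\beta(x,\eta)$ is exactly the variational problem \eqref{eq:SB.variational} with $\mu=\delta_x$. Indeed, $W_\beta(\delta_x,\alpha)=\int q_\beta(x-y)\,\alpha(\rmd y)$, and the map $\alpha\mapsto -W_\beta(\delta_x,\alpha)+V_{\rm EOT}\Box W_\beta(\alpha,\eta)$ is affine minus an infimal convolution. Since $V_{\rm EOT}\Box W_\beta(\alpha,\eta)=\int C_{V_{\rm EOT}\Box W_\beta}(y,\pi_y)\,\alpha(\rmd y)$ is concave in $\alpha$ in the appropriate sense, its supremum over $\alpha$ equals the supremum over Dirac masses $\alpha=\delta_y$. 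So \Cref{prop:SB_deconv_attained} gives the dual representation
\[
C_{\rm stat}^\beta(x,\eta)=\int f^\circ\,\rmd\eta-q_\beta\Box(-\Tcal^\beta_1[f^\circ])(x)
\]
with a $\beta$-semiconcave maximizer $f^\circ$.

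\emph{Lower bound.} Take any admissible semimartingale with $X_0=x$, $X_1\sim\eta$ and any $f$ that is $\beta$-semiconcave with $f\in C_{b,2}$. I will show
\[
\mathbb E\Bigl[\int_0^1 l(a_t,b_t)\,\rmd t\Bigr]\ge \mathbb E[f(X_1)]-\psi_0(x), \qquad \psi_t:=q_\beta\Box(-\Tcal^\beta_{1-t}[f]).
\]
This is a verification argument: check that $\psi_t$ is a (viscosity or sub-) solution of the HJB equation \eqref{eq:HJB}. To do so, write $-\psi_t=-q_\beta\Box(\log(g_1*\gamma_{0;1-t}))$, i.e.\ a Gaussian heat flow composed with a Moreau envelope. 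Pointwise minimization over $(a,b)$ then yields $a=-\nabla\psi$ and $b=(I-\beta^{-1}\nabla^2\psi)^{-1}$-type expressions. Apply Itô to $\psi_t(X_t)$, using smoothing by mollification (or working with $\Tcal^\beta_{1-t}$ for $t<1$, where everything is smooth thanks to the semiconvexity provided by \Cref{cor:smooth_brenier_map}). Taking the supremum over $f$ gives $C_{\rm SB}^\beta\ge$ the dual value, which equals $C_{\rm stat}^\beta$.

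\emph{Upper bound and attainment.} Use the optimizers from \Cref{rem:SB_deconv_attained.optimizers} with $\mu=\delta_x$. The optimal $\alpha^\circ=\delta_{y^\circ}$ with $y^\circ=\nabla u(x)$, and $\chi^\circ_{y^\circ}=\frac{g_1}{g_0(y^\circ)}\gamma_{y^\circ}$. The Föllmer process $Y^\circ$ started at $y^\circ$ realizes $\chi^\circ$ at cost $H(\chi^\circ_{y^\circ}\mid\gamma_{y^\circ})$. Define $X_t:=Y_t+\beta^{-1}\nabla\log g_t(Y_t)$. Then $X_0=x$ by the Brenier identity $\nabla u^*$, and $X_1=\nabla (v^\circ)^*(Y_1)\sim\eta$. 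By Itô, using that $\log g_t$ solves $\partial_t h+\frac12\Delta h+\frac12|\nabla h|^2=0$, the drift of $X$ equals $\nabla\log g_t(Y_t)$ and the diffusion equals $I+\beta^{-1}\nabla^2\log g_t$. Hence the cost is
\[
\mathbb E\Bigl[\int_0^1 \tfrac12|\nabla\log g_t|^2+\tfrac1{2\beta}|\nabla^2\log g_t|^2\,\rmd t\Bigr].
\]
The first term is the entropy. For the second, use $\rmd\nabla\log g_t(Y_t)=\nabla^2\log g_t\,\rmd B_t$ and Itô isometry, which gives
\[
\frac{1}{2\beta}\,\mathbb E\bigl|\nabla\log g_1(Y_1)-\nabla\log g_0(y^\circ)\bigr|^2=\frac\beta2\,\mathbb E|X_1-Y_1|^2-\frac\beta2|x-y^\circ|^2.
\]
By complementary slackness, this equals $W_\beta(\rho^\circ,\eta)-W_\beta(\delta_x,\alpha^\circ)$. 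Summing recovers $C_{\rm stat}^\beta(x,\eta)$, proving both (i) and the attainment in (ii).

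\emph{Consequences.} Continuity and the $2$-growth bound follow from the static formula via \Cref{lem:SB.cost.assumptions} and \Cref{prop:properties_inf_conv}; the sup over $y$ preserves continuity because the sup is locally attained on compacts. Convexity in $\eta$ follows from the mixing argument in the dynamic definition. Strict convexity I would derive from the uniqueness of the dual-induced structure: if $\eta_1\ne\eta_2$ had equality at the midpoint, the optimal $Y$-processes would coincide, and hence so would $Y_1$ and the maps $\nabla(v^\circ)^*$ (by Brenier uniqueness for absolutely continuous $\rho^\circ$), forcing $\eta_1=\eta_2$.

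The main obstacle is the rigorous Itô/verification step: it needs integrability of $\nabla^2\log g_t$ and justification at $t\to1$, where $g_1$ is only semiconvex/semiconcave-Lipschitz. I would handle this by working on $[0,1-\epsilon]$ and passing to the limit using the uniform Hessian bounds $-\beta I\le\nabla^2\log g_t\le$ const.
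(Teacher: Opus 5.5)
\textbf{Upper bound and attainment.} These match the paper's Claim~3: the Föllmer process from $y^\circ=\nabla u(x)$, $X_t=Y_t+\frac1\beta\nabla\log g_t(Y_t)$, Itô isometry and Brenier optimality. Your derivation of $X_0=x$ from $u^*=q_1+\frac1\beta\log g_0$ and $\nabla u^*(y^\circ)=x$ is slightly more direct than the paper's expectation computation combined with \Cref{rem:stat.cost.optimizers}.

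\textbf{Lower bound: a different, heavier route.} The paper never passes through the dual. For an arbitrary admissible $X$ and arbitrary $y\in\Rd$, it runs $\rmd Y_t=a_t\,\rmd t+\rmd B_t$, $Y_0=y$, with the \emph{same} drift. Itô's isometry turns $\frac\beta2\mathbb E\int_0^1|b_t-I_d|_{\rm HS}^2\,\rmd t$ into $\frac\beta2\mathbb E|X_1-Y_1|^2-q_\beta(x-y)\ge W_\beta(\mathrm{Law}(Y_1),\eta)-q_\beta(x-y)$. Föllmer's drift representation gives $H(\mathrm{Law}(Y_1)\,|\,\gamma_y)\le\frac12\mathbb E\int_0^1|a_t|^2\,\rmd t$. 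Taking the supremum over $y$ yields $C_{\rm stat}^\beta\le C_{\rm SB}^\beta$ without any regularity of potentials.

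Your HJB verification does close, and it buys more: it identifies $-\psi_t$ as the value function. As written, though, the decisive step is only announced, and your proposed fix at $t\uparrow1$ is off. Here is what is needed.
\begin{itemize}
\item For $t<1$, set $h_t:=\log g_t$ and $\psi_t(x)=\inf_y\{q_\beta(x-y)+h_t(y)\}$. The first-order condition $\nabla h_t(y^*)=\beta(x-y^*)$ and an envelope computation give $\nabla^2(-\psi_t)+\beta I_d=\beta^2(\nabla^2h_t+\beta I_d)^{-1}$, with $h_t$ evaluated at $y^*$.
\item With this, the HJB for $-\psi_t$ reduces exactly to $\partial_th_t+\frac12\Delta h_t+\frac12|\nabla h_t|^2=0$. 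Positivity requires $\nabla^2h_t\succeq-\frac{\beta}{1+\beta(1-t)}I_d$, which is \Cref{lem:concavity_regularisation}.
\item As $t\uparrow1$ there is no uniform upper bound on $\nabla^2\log g_t$, since $\log g_1$ may have convex kinks. None is needed: Jensen's inequality and $\beta$-semiconcavity of $q_\beta\Box(-f)$ give $\psi_t\ge f-\frac{\beta d}{2}(1-t)$.
\item This bound, continuity of $f\in C_{b,2}$, and domination by $C(1+\sup_t|X_t|^2)$ (after localizing the stochastic integral) pass the limit.
\end{itemize}

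\textbf{The genuine gap: the regularity assertions of (i).} Continuity of $\sup_y\{-q_\beta(x-y)+C_{V_{\rm EOT}\Box W_\beta}(y,\eta)\}$ requires maximizers that stay locally bounded in $(x,\eta)$. The $2$-growth bound needs the same coercivity in $y$. Neither follows from \Cref{lem:SB.cost.assumptions}. Its bound $C_{V_{\rm EOT}\Box W_\beta}(y,\eta)\le\frac\beta2\Wcal_2^2(\gamma_y,\eta)=\frac\beta2|y-\bar\eta|^2+\mathrm{const}(\eta)$ cancels the quadratic part of $-q_\beta(x-y)$ and leaves $\beta\,y\cdot(x-\bar\eta)$, which is unbounded in $y$.

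The missing idea is the mean splitting of \Cref{lem:stat.cost}. Optimizing the mean of $\rho$ separately gives $\inf_\rho\{H(\rho\,|\,\gamma_y)+W_\beta(\rho,\eta)\}=\frac{\beta|y-\bar\eta|^2}{2(1+\beta)}+(\text{terms depending only on }\eta)$. The supremum over $y$ then becomes explicit, attained at $y^\circ=x+\frac{x-\bar\eta}{\beta}$, and yields \eqref{eq:SB.cost.rep2}. Continuity and growth follow from that representation.

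Your strict-convexity argument also fails as stated. Equality at the midpoint only makes the mixture of the two optimal processes optimal for $\frac12(\eta_0+\eta_1)$. Nothing proved in this lemma forces the two $Y$-processes to coincide. The paper instead uses \eqref{eq:SB.cost.rep2} with minimizers $\kappa_0,\kappa_1$. Either $\kappa_0\neq\kappa_1$, and strict convexity of entropy applies, or $\kappa_0=\kappa_1\ll$ Lebesgue, and $\eta\mapsto\Wcal_2^2(\kappa_0,\eta)$ is strictly convex.

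\textbf{A small slip.} Reducing the $\delta_x$-variational problem to Dirac masses is valid because the objective is \emph{convex} in $\alpha$, not concave. The product coupling gives $V_{\rm EOT}\Box W_\beta(\alpha,\eta)\le\int C_{V_{\rm EOT}\Box W_\beta}(y,\eta)\,\alpha(\rmd y)$.
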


\begin{proof}
    Fix $x \in \Rd$ and $\eta \in \Pcal_2(\Rd)$.

    \emph{Claim 1: $C_{\rm stat}^\beta(x,\eta) \le C_{\rm SB}^\beta(x,\eta)$.}

    To prove \emph{Claim 1}, fix $y \in \Rd$, and let $X = (X_t)_{t \in [0,1]}$ and $Y = (Y_t)_{t \in [0,1]}$ be semimartingales such that
    \begin{align*}
        \rmd X_t &= a_t \, \rmd t + b_t \, \rmd B_t, \quad X_0 = x, \ X_1 \sim \eta, 
        \\
        \rmd Y_t &= a_t \, \rmd t + \rmd B_t, \quad Y_0 = y,
    \end{align*}
    where $(a_t)_{t \in [0,1]}$ and $(b_t)_{t \in [0,1]}$ are progressively measurable satisfying
    \[
        \mathbb E\!\left[\int_0^1|a_t|^2 + |b_t-I_d|_{\rm HS}^2 \,\rmd t\right] < \infty.
    \]
    Set $\rho := \mathrm{Law}(Y_1)$. Then $\rho \in \cP_2(\Rd)$.
    Moreover, It\^o's isometry yields
    \begin{align*}
        \mathbb E\!\left[ \int_0^1 |a_t|^2 + \beta |b_t - I_d|_{\rm HS}^2 \, \rmd t \right] &=
        \mathbb E\!\left[ \int_0^1 |a_t|^2 \, \rmd t \right] + \beta\,\mathbb E\!\left[ \left|\int_0^1 b_t \, \rmd B_t - B_1\right|^2\right] \\
        &= \mathbb E\!\left[ \int_0^1 |a_t|^2 \, \rmd t \right] + \beta\,\mathbb E\!\left[ |X_1 - Y_1|^2 -|x-y|^2\right].
    \end{align*}
    Since $\mathrm{Law}(Y_1,X_1) \in \Cpl(\rho,\eta)$, we have $\Wcal_2^2(\rho,\eta) \le \mathbb E[|X_1-Y_1|^2]$.
    Moreover, by Föllmer's drift representation (see e.g.~\cite[Proposition~1]{Lehec2013}),
    \[
        H(\rho \,|\, \gamma_y) \le \frac12 \,\mathbb E\!\left[ \int_0^1 |a_t|^2 \, \rmd t \right],
    \]
    so that
    \[
        \inf_{\rho \in \cP_2(\R^d)} \bigl\{  -q_\beta(x-y) + H(\rho\,|\,\gamma_y) + W_\beta(\rho,\eta) \bigr\} \le 
        \frac12 \,\mathbb E\!\left[ \int_0^1 |a_t|^2 + \beta |b_t - I_d|_{\rm HS}^2 \, \rmd t \right].
    \]
    Taking the supremum over $y \in \Rd$ yields \emph{Claim 1}.
    
    By \Cref{prop:SB_deconv_attained}, there exists a $\beta$-semiconcave function $f^\circ \in L^1(\eta)$ such that
    \begin{align*}
        \int f^\circ \,\rmd\eta - q_\beta \Box \bigl(-\Tcal^\beta_1[f^\circ]\bigr)(x)
        &=
        \max_{\substack{f \in L^1(\eta), \\ \beta\text{-semiconcave}}}
        \left\{
            \int f \,\rmd\eta - q_\beta \Box \bigl(-\Tcal^\beta_1[f]\bigr)(x)
        \right\} \\
        &=
        \max_{\alpha \in \cP_2(\Rd)}
        \bigl\{
            -W_\beta(\delta_x,\alpha) + V_{\rm EOT} \Box W_\beta(\alpha,\eta)
        \bigr\}.
    \end{align*}

    \emph{Claim 2: \quad 
    $\displaystyle C_{\rm stat}^\beta(x,\eta)
        =
        \int f^\circ \,\rmd\eta - q_\beta \Box \bigl(-\Tcal^\beta_1[f^\circ]\bigr)(x).$
    }

    To show \emph{Claim 2}, set $u:=q_1 - \frac{1}{\beta}q_\beta \Box(-\Tcal^\beta_1[f^\circ])$.
    By \Cref{cor:smooth_brenier_map}, the function $u$ is convex and $\frac{1+\beta}{\beta}$-smooth. Let $y^\circ = \nabla u(x)$. Then, by the Fenchel--Legendre duality,
    \[
        W_\beta(\delta_x,\delta_{y^\circ}) = q_\beta(x-y^\circ) = q_\beta \Box(-\Tcal^\beta_1[f^\circ])(x) + \Tcal^\beta_1[f^\circ](y^\circ),
    \]
    so that, by duality from \Cref{thm:fundamental_inf_conv},
    \begin{align*}
        \int f^\circ\,\rmd\eta - q_\beta \Box(-\Tcal^\beta_1[f^\circ])(x) 
        &=
        - q_\beta(x-y^\circ) + \int f^\circ\,\rmd\eta + \Tcal^\beta_1[f^\circ](y^\circ) \\
        &\le 
        - q_\beta(x-y^\circ) + \inf_{\rho \in \cP_2(\Rd)}\bigl\{H(\rho\,|\,\gamma_{y^\circ}) + W_\beta(\rho,\eta)\bigr\} \\
        &\le C_{\rm stat}^\beta(x,\eta).
    \end{align*}
    On the other hand, by the optimality of $f^\circ$,
    \begin{align*}
        \int f^\circ\,\rmd\eta - q_\beta \Box(-\Tcal^\beta_1[f^\circ])(x) 
        &\ge \sup_{y \in \Rd} \bigl\{-W_\beta(\delta_x,\delta_{y}) + \inf_{\rho \in \cP_2(\Rd)}\left\{H(\rho\,|\,\gamma_y) + W_\beta(\rho,\eta)\right\}\bigr\} \\
        &= C_{\rm stat}^\beta(x,\eta).
    \end{align*}
    Therefore,
    \[
        C_{\rm stat}^\beta(x,\eta)
        =
        \int f^\circ \,\rmd\eta - q_\beta \Box \bigl(-\Tcal^\beta_1[f^\circ]\bigr)(x).
    \]
    This proves \emph{Claim 2}.

    By \Cref{thm:fundamental_inf_conv}, there exists $\rho^\circ \in \cP_2(\Rd)$ such that
    \[
        \inf_{\rho \in \cP_2(\Rd)}
        \bigl\{
            H(\rho\,|\,\gamma_{y^\circ}) + W_\beta(\rho,\eta)
        \bigr\}
        =
        H(\rho^\circ\,|\,\gamma_{y^\circ}) + W_\beta(\rho^\circ,\eta).
    \]
    Moreover, by \Cref{rem:SB_deconv_attained.optimizers}, $\rho^\circ$ admits density
    \[
        \frac{\rmd \rho^\circ}{\rmd \gamma_{y^\circ}}(\rmd z) = \frac{g_1(z)}{g_0(y^\circ)},
    \]
    where $g_1 := \exp\bigl(-q_\beta \Box(-f^\circ)\bigr)$ and $g_t := g_1 * \gamma_{0;1-t}$.
    Furthermore, $v^* := q_1 + \frac{1}{\beta}\log g_1$ satisfies $(\nabla v^*)_\#\rho^\circ = \eta$. In particular, $v^*$ is a real-valued convex function, and hence differentiable a.e.

    \emph{Claim 3: There exist semimartingales $X = (X_t)_{t \in [0,1]}$ and $Y = (Y_t)_{t \in [0,1]}$ with
    \begin{align*}
        \rmd X_t &= a_t \rmd t + b_t \rmd B_t, \quad X_0 = x, \ X_1 \sim \eta, \\
        \rmd Y_t &= a_t \rmd t + \rmd B_t, \quad Y_0 = y^\circ, \ Y_1 \sim \rho^\circ
    \end{align*}
    such that
    \[
        \frac12 \mathbb E\!\left[\int_0^1 |a_t|^2 + \beta |b_t-I_d|_{\mathrm{HS}}^2 \,\rmd t\right]
        = C_{\rm stat}^\beta(x,\eta).
    \]
    In particular, $C_{\rm SB}^\beta(x,\eta) \le C_{\rm stat}^\beta(x,\eta)$.}

    To prove \emph{Claim 3}, let $\mathbb Q$ be a probability measure on $C([0,1];\Rd)$ under which the canonical process $Y$ is a standard Wiener process started at $y^\circ$. 
    Set $a_t := \nabla \log g_t(Y_t)$, and define $\mathbb P \sim \mathbb Q$ by
    \[
        \frac{\rmd\mathbb P}{\rmd \mathbb Q}
        :=
        \frac{g_1(Y_1)}{g_0(y^\circ)}
        =
        \exp\left(
            \int_0^1 a_t \,\rmd Y_t
            - \frac12 \int_0^1 |a_t|^2 \,\rmd t
        \right).
    \]
    By Girsanov's theorem, $B_t := Y_t - \int_0^t a_s \,\rmd s$ is a Brownian motion under $\mathbb P$, and so $\rmd Y_t = a_t \rmd t + \rmd B_t$.
    Applying It\^o's formula to $\log g_t(Y_t)$, we obtain
    \[
        \rmd \log g_t(Y_t) 
        =
        a_t \,\rmd Y_t - \frac12 |a_t|^2 \,\rmd t
        =
        a_t \,\rmd B_t + \frac12 |a_t|^2 \,\rmd t.
    \]
    Taking expectations under $\mathbb P$, it follows that
    \[
        \frac12 \mathbb E_\mathbb P\!\left[\int_0^1 |a_t|^2 \,\rmd t\right]
        =
        \mathbb E_\mathbb P\!\left[\log g_1(Y_1) - \log g_0(y^\circ)\right]
        =
        \mathbb E_\mathbb Q\!\left[\log\!\left(\frac{g_1(Y_1)}{g_0(y^\circ)}\right) \frac{g_1(Y_1)}{g_0(y^\circ)}\right]
        =
        H(\rho^\circ \,|\, \gamma_{y^\circ}).
    \]
    This is F\"ollmer's construction; see, e.g., \textcite{Foellmer1985,Lehec2013}. 
    
    Since $(g_t)_{t \in[0,1]}$ solves the backward heat equation, $\bigl(\nabla g_t(Y_t)\bigr)_{t \in (0,1]}$ is a $\mathbb Q$-martingale. Hence
    \[
        \nabla \log g_t(Y_t) = \frac{\nabla g_t(Y_t)}{g_t(Y_t)}
    \]
    is a $\mathbb P$-martingale. 
    For $t \in (0,1]$, define $X_t := Y_t + \frac1\beta \nabla \log g_t(Y_t)$ and set $X_0 := \lim_{t \downarrow 0} X_t$. Then It\^o's formula yields
    \[
        \rmd X_t
        =
        \rmd Y_t + \tfrac1\beta \nabla^2 \log g_t(Y_t) \,\rmd B_t
        =
        a_t\,\rmd t + \left(I_d + \tfrac1\beta \nabla^2 \log g_t(Y_t)\right)\rmd B_t.
    \]
    Since $X_1 = \nabla v^*(Y_1)$ and $Y_1 \sim \rho^\circ$ under $\mathbb P$, we have $X_1 \sim \eta$.
    Moreover,
    \[
        X_0 
        = 
        Y_0 + \frac1\beta \mathbb E_\mathbb P\!\left[\nabla \log g_t(Y_t) \right], \qquad t \in (0,1].
    \]
    Since $g_1$ is differentiable a.e.\ and $\nabla g_1$ is its weak derivative, integration by parts gives
    \[
        \mathbb E_\mathbb Q[\nabla g_1(Y_1)] = \mathbb E_\mathbb Q[(Y_1-Y_0)g_1(Y_1)] = 
        \mathbb E_\mathbb P[Y_1] - Y_0.
    \]
    In particular,
    \begin{align*}
        \mathbb E_\mathbb P\!\left[\nabla \log g_t(Y_t) \right] 
        &= 
        \mathbb E_{\mathbb Q}[\nabla g_t(Y_t)] 
        = 
        \mathbb E_\mathbb Q[\nabla g_t(Y_1)]
        = 
        \mathbb E_\mathbb P[Y_1] - Y_0,
    \end{align*}
    and therefore
    \[
        X_0 = y^\circ + \frac{1}{\beta}(\bar\rho^\circ-y^\circ) = x,
    \]
    where the last identity follows from \Cref{rem:stat.cost.optimizers}. 
    Finally, It\^o's isometry yields
    \begin{align*}
        \frac{\beta}{2} \mathbb E_\mathbb P\!\left[\int_0^1 |b_t - I_d|^2 \,\rmd t\right] 
        &= 
        \frac{\beta}{2}\mathbb E_\mathbb P\!\left[\left|\int_0^1 b_t \,\rmd B_t - B_1\right|^2\right] = \frac{\beta}{2}\mathbb E_\mathbb P\!\left[|X_1-Y_1|^2 - |x-y^\circ|^2\right]
        \\
        &= 
        -q_\beta(x-y^\circ) + \frac{\beta}{2}\mathbb E_\mathbb P\!\left[|\nabla v^*(Y_1)-Y_1|^2\right] = -q_\beta(x-y^\circ) + W_\beta(\rho^\circ,\eta),
    \end{align*}
    and hence
    \[
        \frac{1}{2} \mathbb E_\mathbb P\!\left[\int_0^1 |a_t|^2 + \beta |b_t - I_d|^2 \,\rmd t\right] = -q_\beta(x-y^\circ) + H(\rho^\circ \,|\, \gamma_{y^\circ}) + W_\beta(\rho^\circ,\eta) = C_{\rm stat}^\beta(x,\eta).
    \]
    This proves \emph{Claim 3}.
    
    Therefore, $C_{\rm SB}^\beta(x,\eta) = C_{\rm stat}^\beta(x,\eta)$ for all $x \in \Rd$, $\eta \in \cP_2(\Rd)$. The asserted properties of $C_{\rm SB}^\beta$ now follow directly from \Cref{lem:stat.cost}.
\end{proof}

With \cref{lem:SB.cost} at hand, we can now prove \cref{thm:SB}.

\begin{proof}[Proof of \Cref{thm:SB}]
    Equality of \eqref{eq:SB2} and \eqref{eq:SB3}, as well as existence of optimizers $\alpha^\circ,\rho^\circ \in \cP_2(\Rd)$ and a $\beta$-semiconcave maximizer $f^\circ \in L^1(\nu)$ in \eqref{eq:SB3}, follow from \Cref{prop:SB_deconv_attained}. The relation between the optimizers $f^\circ$, $\alpha^\circ$ and $\chi^\circ$ is given in \Cref{rem:SB_deconv_attained.optimizers}.

    By duality for the standard weak transport problem, see \cite[Theorem~3.1]{backhoff2019existence}, we obtain
    \begin{equation}\label{eq:proof:thm:SB:primal_dual_WOT}
        \inf_{\pi \in \Cpl(\mu,\nu)} \int C_{\rm SB}^\beta(x,\pi_x)\,\mu(\rmd x)
        =
        \sup_{f \in C_{b,2}(\Rd)} \left\{
            \int f \,\rmd \nu + \int f^{C_{\rm SB}^\beta} \,\rmd \mu
        \right\}.
    \end{equation}
    Since $C_{\rm SB}^\beta$ is continuous and satisfies a $2$-growth bound, \cite[Theorem~2.9]{backhoff2019existence} yields existence of a primal optimizer. Moreover, for every $x \in \Rd$, the map $\eta \mapsto C_{\rm SB}^\beta(x,\eta)$ is strictly convex by \Cref{lem:SB.cost}. Hence the weak transport problem admits a unique optimizer $\pi^\circ \in \Cpl(\mu,\nu)$.
    
    We claim that $C_{\rm SB}^\beta(x,\pi^\circ_x) = \int f^\circ\,\rmd\pi^\circ_x - q_\beta\Box(-\Tcal^\beta_1[f^\circ])(x)$ for $\mu$-a.e.\ $x$. Set $u^\circ := q_1 - \frac{1}{\beta} q_\beta\Box(-\Tcal^\beta_1[f^\circ])$, so that by the Fenchel--Legendre duality, for all $x \in \Rd$,
    \[
        q_\beta\bigl(x - \nabla u^\circ(x)\bigr) 
        =
        q_\beta\Box(-\Tcal^\beta_1[f^\circ])(x) + \Tcal^\beta_1[f^\circ](\nabla u^\circ(x)).
    \]
    Fix $x \in \Rd$. By duality of the infimal convolution, see \Cref{thm:fundamental_inf_conv}, and since $f^\circ$ is an admissible dual candidate, we obtain
    \[  
        \inf_{\rho \in \cP_2(\Rd)} \left\{H\bigr(\rho \,|\,\gamma_{\nabla u^\circ(x)}\bigr) + W_\beta(\rho,\pi^\circ_x)\right\}
        \geq
        \int f^\circ\,\rmd\pi^\circ_x +\Tcal^\beta_1[f^\circ](\nabla u^\circ(x)).
    \]
    Therefore,
    \begin{align*}
        \int f^\circ\,\rmd\nu + \int \Tcal^\beta_1[f^\circ]\,\rmd\alpha^\circ
        &\leq 
        \int \inf_{\rho \in \cP_2(\Rd)}\left\{H\bigr(\rho \,|\,\gamma_{\nabla u^\circ(x)}\bigr) + W_\beta(\rho,\pi^\circ_x)\right\}\,\mu(\rmd x) \\
        &\leq
        \int H\bigr(\chi^\circ_{\nabla u^\circ(x)} \,|\,\gamma_{\nabla u^\circ(x)}\bigr) + W_\beta(\chi^\circ_{\nabla u^\circ(x)},\pi^\circ_x) \, \mu(\rmd x) \\
        &= \int \left(\int f^\circ\,\rmd\pi^\circ_x +\Tcal^\beta_1[f^\circ](\nabla u^\circ(x))\right)\, \mu(\rmd x),
    \end{align*}
    where the last identity follows from complementary slackness; see \Cref{thm:fundamental_inf_conv}. Hence all inequalities are equalities.
    For $x \in \Rd$, since the map $\rho \mapsto H\bigl(\rho\,|\,\gamma_{\nabla u^\circ(x)}\bigr)$ is strictly convex, it follows that $\chi_{\nabla u^\circ(x)}$ is the unique minimizer of
    \[
        \inf_{\rho \in \cP_2(\Rd)} \left\{H\bigr(\rho \,|\,\gamma_{\nabla u^\circ(x)}\bigr) + W_\beta(\rho,\pi^\circ_x)\right\}.
    \]
    Moreover,
    \begin{align*}
        \int f^\circ\,\rmd\pi^\circ_x - q_\beta\Box(-\Tcal^\beta_1[f^\circ])(x) 
        &= 
        - q_\beta\bigl(x - \nabla u^\circ(x)\bigr) 
        + 
        \inf_{\rho \in \cP_2(\Rd)}\left\{H\bigr(\rho \,|\,\gamma_{\nabla u^\circ(x)}\bigr) + W_\beta(\rho,\pi^\circ_x)\right\} \\
        &=
        \sup_{\alpha \in \cP_2(\Rd)}
        \left\{-W_\beta(\delta_x,\alpha) 
        + \inf_{\rho \in \cP_2(\Rd)}\left\{V_{\rm EOT}(\alpha,\rho) + W_\beta(\rho,\pi^\circ_x)\right\}\right\},
    \end{align*}
    where the last identity follows from \Cref{prop:SB_deconv_attained}. 
    In particular, for $\mu$-a.e.\ $x$ the function $f^\circ$ attains \eqref{eq:SB.variational} for the pair $(\delta_x,\pi^\circ_x)$. We may therefore apply \Cref{lem:SB.cost} to conclude that, for $\mu$-a.e.\ $x$,
    \begin{equation}\label{eq:proof:thm:pw.dual.equal}
        C^\beta_{\rm SB}(x,\pi^\circ_x) = \int f^\circ\,\rmd\pi^\circ_x - q_\beta\Box(-\Tcal^\beta_1[f^\circ])(x).
    \end{equation}
    By \Cref{lem:stat.cost.dual.inequ} and by definition of $f^\circ$, we also have
    \begin{align*}
        \sup_{f\in C_{b,2}(\Rd)}\left\{\int f \,\rmd\nu + \int f^{C_{\rm SB}^\beta} \,\rmd\mu\right\} 
        &\geq 
        \sup_{f\in C_{b,2}(\Rd)}\left\{\int f \,\rmd\nu - \int q_\beta\Box(-\Tcal^\beta_1[f]) \,\rmd\mu\right\}, \\
        &= \int f^\circ \,\rmd\nu - \int q_\beta\Box(-\Tcal^\beta_1[f^\circ]) \,\rmd\mu.
    \end{align*}
    Combined with \eqref{eq:proof:thm:SB:primal_dual_WOT}, this yields
    \[
        \inf_{\pi \in \Cpl(\mu,\nu)} \int C^\beta_{\rm SB}(x,\pi_x)\,\mu(\rmd x) = \int f^\circ \,\rmd\nu - \int q_\beta\Box(-\Tcal^\beta_1[f^\circ]) \,\rmd\mu,
    \]
    and so \eqref{eq:SB1} equals \eqref{eq:SB2} and \eqref{eq:SB3}.

    Furthermore, set $g_1 := \exp\bigl(-q_\beta \Box(-f^\circ)\bigr)$, $g_t := g_1 * \gamma_{0;1-t}$, and define $X^\circ = (X^\circ_t)_{t\in[0,1]}$ by
    \begin{align*}
        \rmd X^\circ_t &= 
        \nabla \log g_t(Y^\circ_t) \, \rmd t  +  \bigl(I_d + \tfrac{1}{\beta}\nabla^2 \log g_t(Y_t^\circ) \bigr) \, \rmd B_t,
        \quad X^\circ_0 \sim \mu, 
        \\
        \rmd Y^\circ_t  &= 
        \nabla \log g_t(Y^\circ_t) \,\rmd t  +  \, \rmd B_t, 
        \quad Y^\circ_0 = \nabla u^\circ(X^\circ_0).
    \end{align*}
    By \Cref{lem:SB.cost}, for $\mu$-a.e.\ $x$, the conditional law of $X^\circ$ given $X_0^\circ=x$ attains $C_{\rm SB}^\beta(x,\pi_x^\circ)$. Hence,
    \begin{align*}
        \int C_{\rm SB}^\beta(x,\pi^\circ_x)\, \mu(\rmd x)
        &=\mathbb E\!\left[
        \mathbb E\!\left[
        \int_0^1 \frac12 \left|\nabla \log g_t(Y^\circ_t) \right|^2 + \frac{1}{2\beta}\left|\nabla^2 \log g_t(Y^\circ_t)\right|_{\mathrm{HS}}^2 \,\rmd t
        \, \bigg|\, X^\circ_0\right]\right] \\
        &\ge V_{\rm SB}^\beta(\mu,\nu).
    \end{align*}
    By \eqref{inequ:SB.easy}, equality holds. In particular, \eqref{eq:SB0} equals \eqref{eq:SB1}, \eqref{eq:SB2}, and \eqref{eq:SB3}.
    
    We now turn to the characterization of the Schrödinger--Bass system. It suffices to show that $f^\circ$ is $\nu$-a.e.\ unique up to additive constants. 
    To this end, let $v^\circ:\Rd\to \R$ be the convex function defined by $f^\circ = q_\beta - \beta v^\circ$, so that $(\nabla (v^\circ)^*)_\# \rho^\circ = \nu$.
    By the above observations, for $\mu$-a.e.\ $x$, the measure $\pi^\circ_x$ satisfies \eqref{eq:proof:thm:pw.dual.equal}.
    Moreover, by \Cref{rem:stat.cost.optimizers}, for $\mu$-a.e.\ $x$, the point $\nabla u^\circ(x) = x - \frac{1}{\beta} \nabla q_\beta \Box(-\Tcal^\beta_1[f^\circ])(x)$ is the unique solution of
    \[
        C_{\rm SB}^\beta(x,\pi^\circ_x) = \sup_{y \in \Rd}\left\{-q_\beta(x-y) + \inf_{\rho \in \cP_2(\Rd)}\bigl\{H(\rho \, |\, \gamma_y) + W_\beta(\rho,\pi^\circ_x)\bigr\}\right\},
    \]
    and $\chi^\circ_{\nabla u^\circ(x)}$ uniquely attains
    \[
        \inf_{\rho \in \cP_2(\Rd)}\left\{H\bigl(\rho \, |\, \gamma_{\nabla u^\circ(x)}\bigr) + W_\beta(\rho,\pi^\circ_x)\right\}.
    \]
    Since $\chi^\circ_{\nabla u^\circ(x)}$ is characterized by its density
    \[
        \frac{\rmd \chi^\circ_{\nabla u^\circ(x)}}{\rmd \gamma_{\nabla u^\circ(x)}} \propto \exp\bigl(-q_\beta \Box(-f^\circ)\bigr),
    \]
    which is unique up to Lebesgue-null sets, the function $-q_\beta\Box(-f^\circ)$ is uniquely determined up to an additive constant, which is in fact independent of $x$. Since $f^\circ$ is $\beta$-semiconcave, it follows that $f^\circ$ is determined $\nu$-a.e.\ up to an additive constant.

    Finally, we prove uniqueness of $\operatorname{Law}(X^\circ)$ in $\cP\bigl(\mathcal C([0,1];\Rd)\bigr)$. Let $(\Omega,\mathbb P)$ support a $d$-dimensional Brownian motion $B$ and an initial draw from $\mu$, independent of $B$, and let $X = (X_t)_{t\in [0,1]}$ with $\rmd X_t = a_t\,\rmd t+b_t\,\rmd B_t$ be any admissible optimizer for \eqref{eq:SB0}. Define $Y = (Y_t)_{t \in [0,1]}$ by
    \[
        \rmd Y_t = a_t\, \rmd t + \rmd B_t, \quad Y_0 = \nabla u^\circ(X_0),
    \]
    and set $\rho := \operatorname{Law}(Y_1)$. By It\^o's isometry,
    \[
        \mathbb E\!\left[\int_0^1 \frac\beta2|b_t-I_d|^2_{\rm HS} \, \rmd t\right]
        =
        \mathbb E\!\left[\frac{\beta}{2}\left(|X_1-Y_1|^2-|X_0-Y_0|^2\right)\right] \ge W_\beta(\rho,\nu) - W_\beta(\mu,\alpha^\circ).
    \]
    Moreover,
    \[
        V_{\rm EOT}(\alpha^\circ,\rho) \leq \mathbb E\!\left[\int_0^1\frac{1}{2}|a_t|^2 \,\rmd t\right].
    \]
    Combining the previous two inequalities yields
    \[
        V_{\rm SB}^\beta(\mu,\nu) \ge - W_\beta(\mu,\alpha^\circ) + W_\beta(\rho,\nu) + V_{\rm EOT}(\alpha^\circ,\rho).
    \]
    Since \eqref{eq:SB0} equals \eqref{eq:SB2}, it follows that $\rho=\rho^\circ$. Hence all inequalities are equalities, and $Y$ is a Schrödinger bridge from $\alpha^\circ$ to $\rho^\circ$. Since $\rho^\circ$ is absolutely continuous and $\operatorname{Law}(Y_1,X_1)$ attains $\Wcal_2^2(\rho^\circ,\nu)$, Brenier's theorem gives $X_1= \nabla (v^\circ)^*(Y_1)$ a.s.

    Now define $\tilde X = (\tilde X_t)_{t \in [0,1]}$ by $\tilde X_t := Y_t + \frac{1}{\beta}\nabla \log g_t(Y_t)$ for $t \in (0,1]$ and set $\tilde X_0 := \lim_{t \downarrow 0} \tilde X_t$. By construction, $\operatorname{Law}(\tilde X) = \operatorname{Law}(X^\circ)$. Writing $\tilde b_t := I_d + \frac1\beta \nabla^2\log g_t(Y_t)$, we have
    \[
        \rmd \tilde X_t = a_t \,\rmd t + \tilde b_t \,\rmd B_t,
        \qquad 
        \tilde X_0 = \nabla (u^\circ)^*(Y_0) = X_0 \ \text{ a.s.},
        \qquad
        \tilde X_1 = \nabla (v^\circ)^* (Y_1) = X_1 \ \text{ a.s.}
    \]
    Set $\hat X_t := \frac{X_t + \tilde X_t}{2}$ and $\hat b_t := \frac{b_t+\tilde b_t}{2}$. Then
    \[
        \hat X_t = a_t \, \rmd t + \hat b_t \, \rmd B_t, \qquad 
        \hat X_0 = X_0 \ \text{ a.s.}, 
        \qquad \hat X_1 = X_1 \ \text{ a.s.},
    \]
    and hence $\hat X$ is admissible for \eqref{eq:SB0}. By convexity of the Hilbert--Schmidt norm,
    \[
        \frac\beta2|\hat b_t-I_d|^2_{\rm HS} \le \frac\beta4| b_t-I_d|^2_{\rm HS} + \frac\beta4|\tilde b_t-I_d|^2_{\rm HS}
    \]
    and thus
    \[
        \mathbb E\!\left[\int_0^1 \frac{1}{2}|a_t|^2+ \frac\beta2|\hat b_t-I_d|^2_{\rm HS} \, \rmd t\right] \leq V_{\rm SB}^\beta(\mu,\nu).
    \]
    If $b \neq \tilde b$ on a set of positive $\mathbb P$-measure, the inequality is strict. Thus $b = \tilde b \,$ $\mathbb P$-a.e., and so $X=\tilde X$ up to indistinguishability. In particular, $\operatorname{Law}(X) = \operatorname{Law}(X^\circ)$.
\end{proof}

\begin{remark}\label{rem:SB.c_transform}
    For general $\beta$-semiconcave $f$, \cref{lem:stat.cost.dual.inequ} yields only
    \[
        f^{C_{\rm SB}^\beta}(x) \ge -q_\beta \Box (-\Tcal^\beta_1[f])(x).
    \]
    If $f^\circ$ is an optimizer for \eqref{eq:SB.dual}, however, the preceding proof shows that, for $\mu$-a.e.\ $x$,
    \[
        (f^\circ)^{C_{\rm SB}^\beta}(x)
        =
        \inf_{\eta \in \cP_2(\Rd)}
        \left\{
            C_{\rm SB}^\beta(x,\eta) - \int f^\circ\,\rmd\eta
        \right\}
        =
        -q_\beta \Box \bigl(-\Tcal^\beta_1[f^\circ]\bigr)(x).
    \]
    If $\beta > 1$, the identity can also be obtained by a min--max argument, since for every $x \in \Rd$ and $\eta \in \cP_2(\Rd)$,
    \[
        y \longmapsto \Bigl(-q_\beta(x-y) + \inf_{\rho \in \cP_2(\Rd)}
        \bigl\{
            H(\rho \mid \gamma_y) + W_\beta(\rho,\eta)
        \bigr\}\Bigr)
    \]
    is strongly concave. Remarkably, despite the loss of concavity, the identity continues to hold for $\beta \in (0,1]$.
\end{remark}

We conclude this section with an application of the characterization provided by the Schrödinger--Bass system. In the Gaussian-to-Gaussian case, it allows us to determine the Schrödinger-Bass bridge explicitly.

\begin{example}[The Schrödinger--Bass bridge between Gaussians]\label{ex:SB.bridge.Gaussians}
    Let $\beta>0$, $m_0,m_1\in\Rd$. Let $\Sigma_0, \Sigma_1 \in \R^{d\times d}$ be symmetric and positive semidefinite. Set $\mu=\mathcal N(m_0,\Sigma_0)$, $\nu=\mathcal N(m_1,\Sigma_1)$.
    We write $\Sigma\succ0$ and $\Sigma\succeq0$ if $\Sigma$ is positive definite and positive semidefinite, respectively.
    
    Let $v^\circ:=q_1-\frac{1}{\beta}f^\circ$ and consider the ansatz $(v^\circ)^*(y) = \frac12 y^TCy+d^Ty$ where $C \in \R^{d\times d}$ is symmetric and positive semidefinite and $d\in\Rd$. Then
    \[
        q_\beta\Box(-f^\circ)(y) = q_\beta(y)-\beta (v^\circ)^*(y) = \frac{\beta}{2}y^T(I_d-C)y-\beta d^Ty.
    \]
    Suppose that $P:=(\beta+1)I_d-\beta C\succ0$.
    Convolution with $\gamma$ yields
    \[
        -\Tcal^\beta_1[f^\circ](y) 
        = 
        \frac12 y^T(P^{-1}-I_d)y + \beta d^TP^{-1}y + c, \qquad c\in\R.
    \]
    Since $C\succeq0$ and $P\succ0$, the matrix $Q:=\beta I_d-(\beta-1)C$ is positive definite. A further infimal convolution gives
    \[
        q_\beta\Box\bigl(-\Tcal^\beta_1[f^\circ]\bigr)(x)
        =
        -\frac{\beta}{2}x^T(I_d-C)Q^{-1}x + \beta d^TQ^{-1}x + \tilde c, \qquad \tilde c\in\R.
    \]
    Consequently, 
    \[ 
        \nabla u^\circ(x) = \bigl(I_d+(I_d-C)Q^{-1}\bigr)x-Q^{-1}d = PQ^{-1}x-Q^{-1}d. 
    \] 
    Since $\alpha^\circ=(\nabla u^\circ)_\#\mu$ and $\chi^\circ_y = \mathcal N\bigl(P^{-1}(y+\beta d),P^{-1}\bigr)$, the measure $\rho^\circ= \int \chi^\circ_y \, \alpha^\circ(\rmd y)$ is Gaussian with mean and covariance 
    \[ 
        m_{\rho^\circ} = Q^{-1}\bigl(m_0+(\beta-1)d\bigr), 
        \qquad 
        \Sigma_{\rho^\circ} = Q^{-1}\Sigma_0Q^{-1}+P^{-1}. 
    \]
    Since $\nu = (\nabla (v^\circ)^*)_\#\rho^\circ = \mathcal{N}(C m_{\rho^\circ}+d, C\Sigma_{\rho^\circ}C^T)$, comparison of means and covariances yields
    \begin{equation}\label{eq:ex:SB.bridge.Gaussians:1st.order.condition}
        d = \frac1\beta\left(Qm_1-Cm_0\right),
        \qquad
        \Sigma_1 = CQ^{-1}\Sigma_0Q^{-1}C+CP^{-1}C.
    \end{equation}

    It remains to solve the covariance equation in \eqref{eq:ex:SB.bridge.Gaussians:1st.order.condition}. 
    Suppose first that $\Sigma_1\succ0$, and set $R:=\beta C^{-1}-(\beta-1)I_d$ so that $C=\beta\bigl(R+(\beta-1)I_d\bigr)^{-1}$.
    Then $P\succ0$ is equivalent to $(\beta+1)R-I_d\succ0$, and, since $QC^{-1}=R$, the covariance equation in \eqref{eq:ex:SB.bridge.Gaussians:1st.order.condition} becomes
    \begin{equation}\label{eq:ex:SB.bridge.Gaussians:1st.order.condition.R}
        \Sigma_1 = R^{-1}\Sigma_0R^{-1} + \beta^2 \bigl(R+(\beta-1)I_d\bigr)^{-1}\bigl((\beta+1)R-I_d\bigr)^{-1}.
    \end{equation}
    On the open convex set $\mathcal D := \left\{ R\in\mathbb R^{d\times d}: R=R^T,\, (\beta+1)R-I_d\succ0 \right\}$, consider
    \[
        \mathcal J(R) := \tr(\Sigma_1R) + \tr(\Sigma_0R^{-1}) + \log\det\bigl(R+(\beta-1)I_d\bigr) - \log\det\bigl((\beta+1)R-I_d\bigr),
    \]
    The first-order condition $\nabla\mathcal J(R)=0$ is precisely \eqref{eq:ex:SB.bridge.Gaussians:1st.order.condition.R}. 
    Moreover, $\mathcal J$ is strictly convex on $\mathcal D$ and tends to $+\infty$ both at the boundary of $\mathcal D$ and as $|R|_{\mathrm{HS}}\to \infty$. Hence $\mathcal J$ admits a unique minimizer $R^\circ\in\mathcal D$, and
    \[
        C^\circ = \beta\bigl(R^\circ+(\beta-1)I_d\bigr)^{-1}
    \]
    is the unique positive-definite solution of the covariance equation in \eqref{eq:ex:SB.bridge.Gaussians:1st.order.condition} with $(\beta+1)I_d-\beta C^\circ\succ0$.
    
    If $\Sigma_1$ is singular, let $E:=\operatorname{im}(\Sigma_1)$, so that $E^\perp=\ker(\Sigma_1)$.
    Since $\ker(\Sigma_1)=\ker(C)$, any solution is of the form $C=C_E\oplus0_{E^\perp}$ with $C_E\succ0$.
    Compressing the covariance equation in \eqref{eq:ex:SB.bridge.Gaussians:1st.order.condition} to $E$ yields an equation of the same form as in the positive-definite case.
    Applying the preceding argument on $E$ and extending the resulting solution by zero on $E^\perp$ yields the required positive-semidefinite solution on $\Rd$.

    Set 
    \[ 
        P^\circ := (\beta+1)I_d-\beta C^\circ, 
        \qquad 
        Q^\circ := \beta I_d-(\beta-1)C^\circ,
        \qquad 
        d^\circ := \frac1\beta\bigl(Q^\circ m_1-C^\circ m_0\bigr).
    \]
    We can now state the dynamics of the Schrödinger--Bass bridge $X^\circ$ explicitly. For $t\in[0,1]$, let
    \[
        P^\circ_t := tI_d+(1-t)P^\circ,
        \qquad
        Q^\circ_t := tC^\circ+(1-t)Q^\circ.
    \]
    Convolution with $\gamma_{0;1-t}$ gives
    \[
        \nabla\log g_t(y) = \beta(P_t^\circ)^{-1}\bigl((C^\circ-I_d)y+d^\circ\bigr),
        \qquad
        \nabla^2\log g_t(y) = \beta(P_t^\circ)^{-1}(C^\circ-I_d).
    \]
    Hence, by \cref{thm:SB},
    \begin{align*}
        \rmd X_t^\circ &= a_t^\circ\,\rmd t + b_t^\circ\,\rmd B_t, \quad X_0^\circ \sim \mathcal{N}(m_0,\Sigma_0).
    \end{align*}
    Since $X_t^\circ = Y_t^\circ + \frac1\beta\nabla \log g_t(Y_t^\circ)$, we have
    \[
        a_t^\circ(X_t^\circ) = \beta (Q_t^\circ)^{-1}\bigl((C^\circ-I_d)X_t^\circ+d^\circ\bigr),
        \qquad
        b_t^\circ = I_d+\frac1\beta\nabla^2\log g_t = Q_t^\circ(P_t^\circ)^{-1}.
    \]
    With $\frac{\rmd}{\rmd t} Q_t^\circ = \beta (C^\circ - I_d)$, variation of constants yields
    \[
        X_t^\circ = Q_t^\circ (Q^\circ)^{-1}X_0^\circ + \beta Q^\circ_t \int_0^t (Q_s^\circ)^{-2}d^\circ\,\rmd s + Q_t^\circ \int_0^t (P_s^\circ)^{-1} \,\rmd B_s.
    \]
    In particular, $X_t^\circ$ is Gaussian with mean and covariance
    \[
        m_t^\circ = (1-t)\,m_0 + t\,m_1,
        \qquad
        \Sigma_t^\circ = Q_t^\circ(Q^\circ)^{-1}\Sigma_0 (Q^\circ)^{-1}Q_t^\circ + Q_t^\circ\left(\int_0^t (P_s^\circ)^{-2}\,\rmd s\right)Q_t^\circ.
    \]
\end{example}

\section{Convergence of the Schrödinger--Bass algorithm}\label{sec:convergence_SB_algorithm}

Throughout this section, we denote by $(f_i)_{i \in \N} \subset L^1(\nu)$ the iterates of \Cref{alg:SB}. By arguments analogous to those in \Cref{subsub:SB_deconv}, these functions may be chosen $\beta$-semiconcave. Moreover, for each $i \in \N$, we set
\begin{equation}\label{def:obj4conv}
    \Tcal^\beta_1[f_i] := -\log\bigl(\exp(-q_\beta\Box(-f_i))*\gamma\bigr), \qquad u_i := q_1-\frac1\beta q_\beta\Box(-\Tcal^\beta_1[f_i]).
    \qquad
    \alpha_i:= (\nabla u_i)_\#\mu,
\end{equation}
By \Cref{rem:smooth_brenier_map}, we have $\alpha_i \in \cP_2(\Rd)$ for all $i \in \N$.

The Schrödinger--Bass system, see \Cref{fig:SB_system}, together with its uniqueness established in \Cref{thm:SB}, naturally leads to an alternating optimization scheme, namely \Cref{alg:SB}, which is studied in detail in the present section. In particular, the main result of this section, \Cref{thm:convergence_SB}, shows that $(f_i)_{i \in \N}$ converges, up to normalization and under suitable assumptions on the target measure $\nu$, to the dual optimizer of
\begin{equation}\label{eq:convergence:SB.dual}\tag{SB$\beta$d}
    V_{\rm SB}^\beta(\mu,\nu)=\max_{\substack{f\in L^1(\nu), \\ \text{ $\beta$-semiconcave}}} \left\{ \int f \, \rmd \nu - \int q_\beta \Box(-\Tcal^\beta_1[f]) \, \rmd \mu\right\},
\end{equation}
which is $\nu$-a.e.\ uniquely determined up to additive constants by \Cref{thm:SB}. To this end, we study the dual objective of the Schrödinger--Bass problem
\[
    \mathcal{D}_\beta[f] := \int f \,\rmd\nu - \int q_\beta \Box (-\Tcal^\beta_1[f]) \,\rmd\mu.
\]

The following result, \Cref{lem:strict_ascent}, shows that \Cref{alg:SB} increases the value of $\mathcal{D}_\beta$ as long as the Schrödinger--Bass system has not yet been attained.

\begin{lemma}[Strict ascent]\label{lem:strict_ascent}
    \Cref{alg:SB} strictly increases $\mathcal{D}_\beta$ at every step $i \in \N$ as long as $f_i$ does not solve the Schrödinger--Bass system, that is,
    \[
        \mathcal{D}_\beta[f_{i-1}] < \mathcal{D}_\beta[f_i]
    \]
    if and only if $\alpha_i \neq \alpha_{i+1}$.
\end{lemma}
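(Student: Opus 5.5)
We will read the algorithm as block-coordinate ascent on the variational functional
\[
    \Phi(\alpha,f) := -W_\beta(\mu,\alpha) + \int f\,\rmd\nu + \int \Tcal^\beta_1[f]\,\rmd\alpha,
\]
with $f$ ranging over $\beta$-semiconcave functions in $L^1(\nu)$ and $\alpha\in\cP_2(\Rd)$. The proof will show that $\mathcal D_\beta[f] = \max_\alpha \Phi(\alpha,f)$, that the $\alpha$-step picks the unique maximizer $\alpha_i=\alpha(f_{i-1})$, and that the $f$-step maximizes $\Phi(\alpha_i,\cdot)$. The index conventions are that $\alpha_i$ is produced from $f_{i-1}$ and $f_i$ is produced from $\alpha_i$.

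\emph{Step 1 (the $\alpha$-step).} For fixed $f$, Kantorovich duality for $W_\beta$ with the $\beta$-concave pair $\bigl(q_\beta\Box(-\Tcal^\beta_1[f]),\Tcal^\beta_1[f]\bigr)$ gives
\[
    -W_\beta(\mu,\alpha)+\int\Tcal^\beta_1[f]\,\rmd\alpha \le -\int q_\beta\Box(-\Tcal^\beta_1[f])\,\rmd\mu .
\]
Equality holds exactly when the optimal coupling is supported on the $\beta$-subdifferential. By \Cref{cor:smooth_brenier_map}, $u=q_1-\frac1\beta q_\beta\Box(-\Tcal^\beta_1[f])$ is convex and smooth, so this subdifferential is the graph of $\nabla u$. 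Hence equality holds if and only if $\alpha=(\nabla u)_\#\mu$, exactly as in the proof of \Cref{lem:SB.cost}. This gives $\Phi(\alpha_i,f_{i-1})=\mathcal D_\beta[f_{i-1}]$, with strict inequality $\Phi(\alpha,f_{i-1})<\mathcal D_\beta[f_{i-1}]$ for every $\alpha\ne\alpha_i$.

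\emph{Step 2 (the $f$-step).} By \Cref{thm:fundamental_inf_conv}\ref{it:thm.fundamental.3} applied to $V_{\rm EOT}\Box W_\beta(\alpha_i,\nu)$, the maximizer $f_i$ exists, and it can be chosen $\beta$-semiconcave as in \Cref{subsub:SB_deconv}. It satisfies $\Phi(\alpha_i,f_i)=V_{\rm EOT}\Box W_\beta(\alpha_i,\nu)\ge\Phi(\alpha_i,f_{i-1})$. Combining with Step 1 yields the chain
\[
    \mathcal D_\beta[f_{i-1}] = \Phi(\alpha_i,f_{i-1}) \le \Phi(\alpha_i,f_i) \le \Phi(\alpha_{i+1},f_i)=\mathcal D_\beta[f_i].
\]
This already proves monotonicity.

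\emph{Step 3 (characterizing strictness).} If $\alpha_{i+1}\ne\alpha_i$, Step 1 applied to $f_i$ makes the second inequality strict, so the ascent is strict. Conversely, suppose $\alpha_{i+1}=\alpha_i$. Then $f_i$ is dual optimal for $V_{\rm EOT}\Box W_\beta(\alpha_i,\nu)$, and $\alpha_i=(\nabla u_{i+1})_\#\mu$ with $u_{i+1}$ built from $f_i$. So $(\alpha_i,f_i)$ satisfies the Schrödinger--Bass system, and the task is to show that no ascent occurred in the step from $f_{i-1}$ to $f_i$. This is the main obstacle, because equality in the middle inequality, $\Phi(\alpha_i,f_{i-1})=\Phi(\alpha_i,f_i)$, is not automatic.

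The plan for this step is to argue that when the system is solved, $\Phi(\alpha_i,\cdot)$ and $\mathcal D_\beta$ share the optimizer. The pair $(\alpha_i,f_i)$ is then a saddle point, so $f_i$ maximizes $\mathcal D_\beta$ by \Cref{thm:SB}. Conversely, $f_{i-1}$ is itself dual optimal for $\alpha_i$ exactly when $f_{i-1}$ already solves the system. In that case uniqueness up to constants in \Cref{thm:SB} gives $f_i=f_{i-1}$ $\nu$-a.e. modulo constants, hence equal values and $\alpha_{i+1}=\alpha_i$.

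The statement should therefore be read with this index matching: strict increase fails precisely when the iterate is a fixed point of the system. I would verify the precise off-by-one equivalence using the uniqueness of optimizers in \Cref{thm:SB} together with the strict inequality from Step 1.
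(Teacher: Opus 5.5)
Your chain
\[
    \mathcal D_\beta[f_{i-1}]=\Phi(\alpha_i,f_{i-1})\le\Phi(\alpha_i,f_i)\le\Phi(\alpha_{i+1},f_i)=\mathcal D_\beta[f_i]
\]
is the paper's proof in different notation. The paper's terms (I) and (II) are your two inequalities. The $f$-step bounds (I), and Kantorovich duality for $W_\beta(\mu,\alpha_i)$ bounds (II). Equality in (II) makes $\Tcal^\beta_1[f_i]$ a dual optimizer, which forces $\alpha_{i+1}=\alpha_i$ because $u_{i+1}$ is differentiable. This gives monotonicity and strict ascent whenever $\alpha_i\neq\alpha_{i+1}$. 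That is everything the paper's proof establishes; its closing sentence does not address the reverse implication.

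You were right not to force that reverse implication, because it is false: strict ascent does not imply $\alpha_i\neq\alpha_{i+1}$. Take $\mu=\delta_0$, $\nu=\mathcal N(0,\sigma^2I_d)$ with $\sigma\neq1$, and $f_0=0$.
\begin{itemize}
\item Then $\Tcal^\beta_1[f_0]=0$ and $u_1=q_1$, so $\alpha_1=\delta_0$ and $\mathcal D_\beta[f_0]=0$.
\item The minimizer $\rho_1$ of $H(\rho\,|\,\gamma)+W_\beta(\rho,\nu)$ has mean $\frac{\beta\bar\nu}{1+\beta}=0$, as in the proof of \cref{lem:stat.cost}.
\item By complementary slackness, $\rho_1\propto g_1\gamma$ with $g_1=\exp(-q_\beta\Box(-f_1))$. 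Hence $\nabla\Tcal^\beta_1[f_1](0)=-\bar\rho_1=0$.
\item Since $u_2=(q_1-\frac1\beta\Tcal^\beta_1[f_1])^*$, this gives $\nabla u_2(0)=0$, i.e.\ $\alpha_2=\delta_0=\alpha_1$.
\end{itemize}
Nevertheless,
\[
    \mathcal D_\beta[f_1]=\int f_1\,\rmd\nu+\Tcal^\beta_1[f_1](0)=\min_{\rho\in\cP_2(\Rd)}\bigl\{H(\rho\,|\,\gamma)+W_\beta(\rho,\nu)\bigr\}>0=\mathcal D_\beta[f_0].
\]
So the ``if and only if'' is false as written, and no argument can close your Step 3 in that form.

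The correct equivalence is the one you guessed, and your chain proves it without any uniqueness argument.
\begin{itemize}
\item If $\mathcal D_\beta[f_{i-1}]=\mathcal D_\beta[f_i]$, the first inequality is an equality. Then $f_{i-1}$ is dual optimal for $V_{\rm EOT}\Box W_\beta(\alpha_i,\nu)$ with $\alpha_i=(\nabla u_i)_\#\mu$, which says exactly that $f_{i-1}$ solves the Schrödinger--Bass system.
\item Conversely, any solution of the system maximizes $\mathcal D_\beta$. Run the bridge construction from the proof of \Cref{thm:SB} starting from $(\alpha_i,f_{i-1})$. It yields an admissible semimartingale with cost $\Phi(\alpha_i,f_{i-1})=\mathcal D_\beta[f_{i-1}]$, so $\mathcal D_\beta[f_i]\le V^\beta_{\rm SB}(\mu,\nu)\le\mathcal D_\beta[f_{i-1}]$.
\end{itemize}
The lemma should therefore read: $\mathcal D_\beta[f_{i-1}]\le\mathcal D_\beta[f_i]$, strictly if $\alpha_i\neq\alpha_{i+1}$, with equality if and only if $f_{i-1}$ (not $f_i$) solves the system, in which case also $\alpha_{i+1}=\alpha_i$. \Cref{thm:convergence_SB} uses only the direction ``equality $\Rightarrow$ fixed point'', so nothing downstream depends on the false half.
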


\begin{proof}
    Let $f_{i-1} \in L^1(\nu)$ be $\beta$-semiconcave.
    By Brenier's theorem we have that
    \[
        \int q_\beta \Box (-\mathcal T^\beta_1[f_{i-1}]) \, \rmd \mu + \int \mathcal T^\beta_1 [f_{i-1}] \, \rmd \alpha_i = \frac\beta2\mathcal W_2^2(\mu,\alpha_i) < \infty,
    \]
    hence, $\mathcal T^\beta_1 [f_{i-1}] \in L^1(\alpha_i)$.
    Therefore, we can write
    \[
         \int f_{i-1} \,\rmd\nu - \int q_\beta \Box (-\Tcal^\beta_1[f_{i-1}]) \,\rmd\mu = \underbrace{\left(\int f_{i-1} \,\rmd\nu + \int \Tcal^\beta_1[f_{i-1}] \,\rmd\alpha_{i}\right)}_{\displaystyle \rm (I)} - \underbrace{\left(\int q_\beta \Box (-\Tcal^\beta_1[f_{i-1}]) \,\rmd\mu + \int \Tcal^\beta_1[f_{i-1}] \,\rmd\alpha_{i}\right)}_{\displaystyle \rm (II)}.
    \]
    Observe that by \Cref{thm:fundamental_inf_conv}
    \[
        V_{\rm EOT}\Box W_\beta(\alpha_{i},\nu) = \max_{\varphi \in L^1(\nu)} \left\{ \int \varphi \, \rmd\nu + \int \mathcal \Tcal^\beta_1[\varphi] \, \rmd\alpha_{i}\right\},
    \]
    and thus by construction of $f_{i}$, which is the maximizer to the right-hand side, satisfies
    \[
        {\rm (I)} \leq \int f_{i} \,\rmd\nu + \int \Tcal^\beta_1[f_{i}] \,\rmd\alpha_{i}.
    \]
    Likewise, $\Tcal^\beta_1[f_{i-1}]$ achieves $W_\beta(\mu,\alpha_{i})$, so that
    \[
        {\rm (II)} = W_\beta(\mu,\alpha_{i}) \geq \int q_\beta \Box (-\Tcal^\beta_1[f_{i}]) \,\rmd\mu + \int \Tcal^\beta_1[f_{i}] \,\rmd\alpha_i.
    \]
    Combining these two inequalities, we obtain
    \[
        {\rm (I) - (II)} \leq \int f_{i} \,\rmd\nu - \int q_\beta \Box (-\Tcal^\beta_1[f_{i}]) \,\rmd\mu.
    \]
    In case of equality, we have 
    \[
        W_\beta(\mu,\alpha_{i}) = {\rm (II)} = \int q_\beta \Box (-\mathcal T^\beta_1[f_i]) \, \rmd \mu + \int \mathcal T^\beta_1[f_i] \, \rmd \alpha_i,
    \]
    hence, $\mathcal T^\beta_1[f_i]$ is also a dual optimizer of $\frac\beta2 \Wcal_2^2(\mu,\alpha_i)$.
    Since $q_1 - \frac1\beta q_\beta \Box (-\mathcal T^\beta_1[f_i])$ is differentiable, we conclude that $\alpha_i = \alpha_{i + 1}$.
    In this case, $f_i$ already solves the Schrödinger--Bass system by \Cref{thm:SB}. 
\end{proof}

Since $\mathcal D_\beta[f+c] = \mathcal D_\beta[f]$ for every $c \in \R$, we must fix a normalization in order to obtain convergence of the sequence $(f_i)_{i \in \N}$. For the convergence analysis in \Cref{thm:convergence_SB}, we normalize the functions $f_i$ by imposing
\[
    \int f_i \,\rmd \nu = 0
\]
for every $i \in \N$. It is therefore convenient to suppress the dependence of the dual objective on $f$ in the notation and to consider instead the functional
\[
    \mathcal{E}_\beta[u] := \int u \,\rmd\mu,
\]
which we evaluate at $u_i = q_1 - \frac{1}{\beta} q_\beta \Box\bigl(-\Tcal^\beta_1[f_i]\bigr)$.
It follows that, for all $i \in \N$,
\[
    \mathcal{E}_\beta[u_i] = \mathcal{D}_\beta[f_i] + \int q_1 \,\rmd\mu.
\]
In particular, \Cref{lem:strict_ascent} may be restated as follows.

\begin{corollary}[Strict ascent]\label{cor:strict_ascent}
    \Cref{alg:SB} strictly increases $\Ecal_\beta$ at every step $i \in \N$ as long as $u_i$ does not solve the Schrödinger--Bass system, that is,
    \[
        \Ecal_\beta[u_{i-1}] < \Ecal_\beta[u_i]
    \]
    if and only if $\alpha_i \neq \alpha_{i+1}$.
\end{corollary}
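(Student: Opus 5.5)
This is a direct translation of \Cref{lem:strict_ascent}, so the plan is to rewrite $\Ecal_\beta$ in terms of $\mathcal D_\beta$ and then transport the lemma across. The key identity is
\[
    \Ecal_\beta[u_i]=\int q_1\,\rmd\mu-\frac1\beta\int q_\beta\Box(-\Tcal^\beta_1[f_i])\,\rmd\mu .
\]
It comes from integrating the definition of $u_i$ in \eqref{def:obj4conv} against $\mu$. The normalization $\int f_i\,\rmd\nu=0$ then lets us replace $-\int q_\beta\Box(-\Tcal^\beta_1[f_i])\,\rmd\mu$ by $\mathcal D_\beta[f_i]$. We conclude that $\Ecal_\beta[u_i]$ equals $\frac1\beta\mathcal D_\beta[f_i]+\int q_1\,\rmd\mu$. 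The display preceding the corollary omits the factor $\frac1\beta$; since $\beta>0$, this positive rescaling does not affect any strict inequality.

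Next I would check that the normalization is harmless. The algorithm only determines $f_i$ up to an additive constant. Indeed, $\Tcal^\beta_1[f+c]=\Tcal^\beta_1[f]+c$, so the maximization in step 5 of \cref{alg:SB} is invariant under constants. The same shift sends $q_\beta\Box(-\Tcal^\beta_1[f])$ to itself minus $c$, so $u_i$ changes only by an additive constant. Consequently $\alpha_{i+1}=(\nabla u_{i+1})_\#\mu$ is unaffected, and $\mathcal D_\beta[f_i]$ is unchanged. Normalizing every iterate to $\int f_i\,\rmd\nu=0$ therefore changes neither the sequence $(\alpha_i)_i$ nor the values of $\mathcal D_\beta$.

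With these two facts in hand, I would apply \Cref{lem:strict_ascent} directly. It states that $\mathcal D_\beta[f_{i-1}]<\mathcal D_\beta[f_i]$ if and only if $\alpha_i\neq\alpha_{i+1}$, and the affine relation above converts this into $\Ecal_\beta[u_{i-1}]<\Ecal_\beta[u_i]$ if and only if $\alpha_i\neq\alpha_{i+1}$.

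The final point is the phrase ``$u_i$ solves the Schrödinger--Bass system.'' Suppose $\alpha_i=\alpha_{i+1}$. Then the equality case in the proof of \Cref{lem:strict_ascent} shows that $f_i$ solves the system. By \Cref{thm:SB}, the corresponding $u$ is unique up to constants, and it is determined by $f_i$ through the map $u=q_1-\frac1\beta q_\beta\Box(-\Tcal^\beta_1[f])$. So saying that $f_i$ solves the system is the same as saying that $u_i$ does. I do not expect a real obstacle; the only care needed is bookkeeping of additive constants and the factor $\beta$.
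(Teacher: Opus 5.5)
Your proposal is correct and follows the paper's own one-line argument. The corollary is \Cref{lem:strict_ascent} transported through the affine identity $\Ecal_\beta[u_i]=\frac1\beta\mathcal D_\beta[f_i]+\int q_1\,\rmd\mu$, and you are right that the display preceding the corollary drops this harmless factor $\frac1\beta$. There is one sign slip: as for any $C$-transform, $\Tcal^\beta_1[f+c]=\Tcal^\beta_1[f]-c$, not $+c$ (the ``$+c$'' in the proof of \Cref{prop:SB_deconv_attained} is a typo), and it is this sign that makes the step-5 objective and $\mathcal D_\beta$ invariant under constants, whereas with $+c$ both would shift by $2c$.
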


To show convergence of \Cref{alg:SB}, we recall the notion of epi-convergence.
\begin{definition}[Epi-convergence]
    Let ${\psi_n,\psi:\Rd \to \mathbb{R}\cup\{+\infty\}}$, and let $E \subset \Rd$. We say that $\psi_n\to \psi$ in \emph{epi-convergence} on $E$ if, for every $x\in E$,
    \begin{align*}
        \psi(x)
        &\le
        \liminf_{n\to\infty} \psi_n(x_n)
        \quad\text{for every sequence }x_n\to x\text{ in }E,\\
        \psi(x)
        &\ge
        \limsup_{n\to\infty} \psi_n(x_n)
        \quad\text{for some sequence }x_n\to x\text{ in }E.
    \end{align*}
    Moreover, for ${\varphi_n,\varphi:\Rd \to \mathbb{R}\cup\{-\infty\}}$ we say that $\varphi_n\to \varphi$ in \emph{hypo-convergence} on $E$ if $-\varphi_n\to -\varphi$ in epi-convergence on $E$.
\end{definition}
We will repeatedly use the following standard consequence of convex analysis; see, for instance, \cite{Rock97,Rock98}. Let $\psi_n$ and $\psi$ be proper lower semicontinuous convex functions on $\R^d$. Let $A\subset \R^d$ be an affine subspace, and let $E\subset A$ be non-empty, convex, and open relative to $A$. Assume that $E\subset \operatorname{ri}(\operatorname{dom}(\psi_n))$ for all $n \in \N$ and $E\subset \operatorname{ri}(\operatorname{dom}(\psi))$.
Then the following are equivalent:
\begin{enumerate}
    \item $\psi_n\to \psi$ in epi-convergence on $E$;
    \item $\psi_n\to \psi$ uniformly on every compact set $K\subset E$.
\end{enumerate}
In the full-dimensional case, let $C\subset \R^d$ be bounded, open, and convex. If $C\subset \operatorname{int}(\operatorname{dom}(\psi_n))$ for all $n \in \N$ and $C\subset \operatorname{int}(\operatorname{dom}(\psi))$
then $\psi_n$ and $\psi$ are locally Lipschitz on $C$, hence differentiable almost everywhere on $C$ by Rademacher's theorem. In particular, epi-convergence on $C$ implies $\nabla \psi_n(x)\to \nabla \psi(x)$ for a.e.\ $x \in C$.

As a first step towards convergence, we show that the iteration of \cref{alg:SB} is continuous with respect to epi-convergence.

\begin{lemma}[Continuity of the iteration]\label{lem:continuity_iteration_map}
    Suppose that $\nu$ has all exponential moments, that is, $\int e^{t|y|}\,\nu(\rmd y) < \infty$ for all $t > 0$.
    Let $L > 0$, and let $(u_n)_{n\in\N}$ be $L$-smooth convex functions such that $u_n$ epi-converges on $\Rd$ to a proper convex function $u$. 
    Denote $(u_n^+)_{n \in \N}$ and $u^+$ the successors of $(u_n)_{n \in \N}$ resp.\ $u$ after one step of \Cref{alg:SB}.
    Then $u_n^{+}\to u^{+}$ in epi-convergence on $\Rd$.
\end{lemma}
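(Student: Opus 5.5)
The successor map is the composition $u_n \mapsto \alpha_n := (\nabla u_n)_\#\mu \mapsto f_n^+ \mapsto u_n^+ := q_1-\frac1\beta q_\beta\Box(-\Tcal^\beta_1[f_n^+])$. We show that each arrow is continuous in a suitable topology and then convert back to epi-convergence.

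\emph{Step 1: $\alpha_n\to\alpha$ in $\cP_2(\Rd)$.} The $u_n$ are convex, $L$-smooth and epi-converge on $\Rd$ to $u$. The epi-limit is then finite everywhere: a uniform Lipschitz bound on $\nabla u_n$ together with pointwise convergence at one point gives local uniform bounds. Hence $u_n\to u$ locally uniformly, $u$ is convex and $L$-smooth, and $\nabla u_n\to\nabla u$ pointwise everywhere, since gradients of convex functions converge at points of differentiability of the limit. Moreover $|\nabla u_n(x)|\le |\nabla u_n(0)|+L|x|$ with $\sup_n|\nabla u_n(0)|<\infty$. Dominated convergence with the envelope $c(1+|x|^2)\in L^1(\mu)$ then yields $\alpha_n=(\nabla u_n)_\#\mu\to(\nabla u)_\#\mu=\alpha$ in $\Wcal_2$.

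\emph{Step 2: continuity of the dual optimizer $f_n^+$.} By \Cref{thm:fundamental_inf_conv} and \Cref{rem:SB_deconv_attained.optimizers}, $f_n^+$ is the ($\beta$-semiconcave, normalized $\int f_n^+\rmd\nu=0$) dual optimizer of $V_{\rm EOT}\Box W_\beta(\alpha_n,\nu)$. It determines the optimal $\rho_n$ via $\rho_n=\int\chi^n_y\,\alpha_n(\rmd y)$ with $\frac{\rmd\chi^n_y}{\rmd\gamma_y}=g^n_1/g^n_0(y)$, $g^n_1=\exp(-q_\beta\Box(-f_n^+))$, and $(\nabla (v^n)^*)_\#\rho_n=\nu$. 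By \Cref{lem:SB.cost.assumptions} and \Cref{thm:stability}, and since $\rho\mapsto H(\rho|\gamma_x)+W_\beta(\rho,\eta)$ is strictly convex so that the minimizer is unique, we get $\rho_n\to\rho$ in $\Wcal_r$, $r<2$, together with convergence of values. The function $\psi_n:=q_1-\frac1\beta\bigl(q_\beta\Box(-f_n^+)\bigr)=(v^n)^*$ is convex and is the Brenier potential from $\rho_n$ to $\nu$, with normalization inherited from $\int f_n^+\rmd\nu=0$.

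We then extract a subsequence with $\psi_n$ epi-converging by compactness of convex functions. The needed bounds come from normalization and the value bound: $\int f_n^+\rmd\nu=0$, and $\int \Tcal^\beta_1[f_n^+]\rmd\alpha_n$ is controlled. Stability of optimal transport potentials then identifies the limit as a Brenier potential $\psi$ from $\rho$ to $\nu$. Since $\rho$ has a positive density (it is a Gaussian mixture), this potential is unique up to constants on $\Rd$, so the whole sequence converges.

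\emph{Main obstacle.} The hard part is controlling $\psi_n$ uniformly at infinity, where $\rho_n$ has full support but $\nu$ may be unbounded. The exponential-moment hypothesis on $\nu$ enters here. The Gibbs density $g^n_1\propto\exp(\beta\psi_n-q_\beta)$ must be integrable against Gaussians, and $\Tcal^\beta_1[f_n^+]$ must stay finite. I would bound $\nabla\psi_n$, which takes values in $\operatorname{co}(\supp\nu)$, using exponential tail estimates from $\nu$. Specifically, I would show that $\sup_n\int e^{t|\nabla\psi_n|}\rmd\rho_n<\infty$, which gives uniform growth bounds $\psi_n(y)\le a+ b|y|$ on average. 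This makes $\exp(\beta\psi_n)*\gamma$ converge by dominated convergence, so that $\Tcal^\beta_1[f_n^+]\to\Tcal^\beta_1[f^+]$ pointwise.

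\emph{Step 3: back to $u^+$.} The last update is $u_n^+=q_1-\frac1\beta q_\beta\Box(-\Tcal^\beta_1[f_n^+])$. By \Cref{cor:smooth_brenier_map} it is convex and $\frac{1+\beta}{\beta}$-smooth, and it is a (Legendre-type) conjugate of $q_1-\frac1\beta\Tcal^\beta_1[f_n^+]$. Epi-convergence is preserved under Legendre conjugation (Wijsman) and under the addition of $q_1$. Hence pointwise, locally uniform convergence of the convex functions $q_1-\frac1\beta\Tcal^\beta_1[f_n^+]$ yields $u_n^+\to u^+$ in epi-convergence on $\Rd$.
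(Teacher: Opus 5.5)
Your architecture matches the paper's:
\begin{itemize}
\item $\Wcal_2$-continuity of $\alpha_n=(\nabla u_n)_\#\mu$;
\item convergence $\rho_n\to\rho$ of the unique minimizers via \Cref{thm:stability};
\item identification of $\psi_n=q_1-\frac1\beta q_\beta\Box(-f_n^+)$ as the Brenier potential from $\rho_n$ to $\nu$, and its epi-convergence because $\rho$ has an everywhere positive density;
\item transfer of epi-convergence through conjugation (equivalently, infimal convolution).
\end{itemize}
A small fix in Step 1: convergence of $u_n$ at one point is not enough. You need $\sup_n|\nabla u_n(x_n)|<\infty$ along a recovery sequence $x_n\to x_0\in\operatorname{dom}u$. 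This holds because otherwise, with $e_n=\nabla u_n(x_n)/|\nabla u_n(x_n)|$, the bound $u_n(x_n-te_n)\le u_n(x_n)-t|\nabla u_n(x_n)|+\frac L2t^2\to-\infty$ forces $u=-\infty$ somewhere.

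The genuine gap is the step you flag as the main obstacle. It is the only place the exponential-moment hypothesis enters: proving $\exp(\beta\psi_n-q_\beta)*\gamma(x)\to\exp(\beta\psi-q_\beta)*\gamma(x)$ at every $x\in\Rd$. Your proposed mechanism fails for three reasons.
\begin{itemize}
\item The bound $\sup_n\int e^{t|\nabla\psi_n|}\,\rmd\rho_n<\infty$ is automatic, since it equals $\int e^{t|z|}\,\nu(\rmd z)$. It carries no pointwise information about $\psi_n$.
\item A growth bound $\psi_n(y)\le a+b|y|$ is false in general. For convex $\psi_n$ it would force $|\nabla\psi_n|\le b$, which is impossible when $\supp(\nu)$ is unbounded (in \Cref{ex:SB.bridge.Gaussians} the potential is quadratic).
\item No integrable majorant of $z\mapsto\exp(\beta\psi_n(z)-q_\beta(z))\gamma_x(z)$ is available, so dominated convergence is unjustified.
\end{itemize}

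What is actually needed is uniform integrability of these functions. The paper obtains it through a chain of steps absent from your plan.
\begin{enumerate}
\item Couple $X_n\sim\alpha_n$ and $X\sim\alpha$ so that $X_n\to X$ and $\pi^n_{X_n}\to\pi_X$ a.s.
\item Use convergence of $V_{\rm EOT}(\alpha_n,\rho_n)$ and $W_\beta(\rho_n,\nu)$, together with Fatou's lemma, to get a.s.\ convergence of $\log g^n_0(X_n)$.
\item Apply Egorov's theorem to restrict to an event on which $X$ lies in a compact set and the normalizing constants $g_0^n$ are bounded above uniformly and below by some $e^I>0$.
\item Apply \Cref{lem:exponential_moments} to transfer the exponential moments of $\nu$ to the restricted measures $\tilde\rho_n$. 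On $\{\beta\psi_n-q_\beta\le I\}$ the Gibbs density is at most $1$, so $\tilde\rho_n$ is dominated there by $\tilde\alpha*\gamma$. On the complement, Fenchel--Young gives $|y|^2\le 4\psi_n(0)+4|\nabla\psi_n(y)|^2-\frac4\beta I$, hence $e^{t|y|}\le e^{2t\sqrt c}\,e^{2t|\nabla\psi_n(y)|}$.
\item Pass from $\gamma_x$, with $x$ in the compact set, to $\gamma_{x_n}$ with arbitrary bounded $x_n$. This costs a factor $e^{(|x_n|+s)|z|}$, which the exponential moments absorb, and it is what yields convergence at points outside $\supp(\alpha)$ as well.
\end{enumerate}

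Without a uniform lower bound on the normalizing constants and a transfer of moments from $\nu$ to $\rho_n$, the pointwise convergence of $\Tcal^\beta_1[f_n^+]$ that your Step 3 takes as input is unsupported.
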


\begin{proof}
    Let $\alpha := (\nabla u)_\#\mu$ and $\alpha_n := (\nabla u_n)_\#\mu$ for $n \in \N$. Then $\alpha, \alpha_n \in \cP_2(\Rd)$ for $n \in \N$ and $\alpha_n \to \alpha$ in $\cP_2(\Rd)$.
    By \Cref{thm:stability}, for every $n \in \N$, there exists a unique optimizer $\rho_n$ of $V_{\rm EOT} \Box W_\beta(\alpha_n,\nu)$, and $\rho_n\to\rho$ in $(\cP_2(\Rd),\Wcal_r)$ for all $r\in[1,2)$, where $\rho$ denotes the unique minimizer of $V_{\rm EOT}\Box W_\beta(\alpha,\nu)$.
    In particular,
    \[
        \lim_{n \to \infty }V_{\rm EOT}(\alpha_n,\rho_n) 
        =
        V_{\rm EOT}(\alpha,\rho) 
        \quad\text{and}\quad
        \lim_{n \to \infty }W_\beta(\rho_n,\nu)
        =
        W_\beta(\rho,\nu).
    \]
    For each $n \in \N$ let $v_n^*$ be the Brenier potential satisfying ${(\nabla v_n^*)_\#\rho_n = \nu}$ and ${\int \beta v_n -q_\beta\,\rmd\nu = 0}$.
    Since $\rho$ has a.e.\ positive density with respect to the Lebesgue measure on $\Rd$, $v_n^\ast \to v^\ast$ in epi-convergence on $\Rd$ where $v^*$ is the Brenier potential with ${(\nabla v^*)_\#\rho = \nu}$ and ${\int \beta v - q_\beta \,\rmd\nu = 0}$; see, e.g., \cite[Theorem 2.8]{BaLo19}. Moreover,
    \[
        u_n^+ = q_1 - \frac1\beta q_\beta\Box \Bigl(\log\bigl(\exp(\beta v_n^*-q_\beta)*\gamma\bigr)\Bigr)\quad (n\in \N), 
        \qquad
        u^+ = q_1 - \frac1\beta q_\beta\Box \Bigl(\log\bigl(\exp(\beta v^*-q_\beta)*\gamma\bigr)\Bigr).
    \]
    
    To prove $u_n^+ \to u^+$ in epi-convergence on $\Rd$, recall that
    \[
        V_{\rm EOT}(\alpha_n,\rho_n)
        = 
        \inf_{\pi \in \mathrm{Cpl}(\alpha_n,\rho_n)} \int H(\pi_x \,|\, \gamma_x)\,\rmd \alpha_n(x).
    \]
    Since $\eta \mapsto H(\eta\,|\,\gamma_x)$ is strictly convex, $V_{\rm EOT}(\alpha_n,\rho_n)$ admits a unique minimizer ${\pi^n\in \Cpl(\alpha_n,\rho_n)}$ for all $n \in \N$. As ${(\alpha_n,\rho_n) \to (\alpha,\rho)}$ weakly and ${V_{\rm EOT}(\alpha_n,\rho_n) \to V_{\rm EOT}(\alpha,\rho)}$, we have $\pi^n \to \pi$ where $\pi$ is the minimizer of $V_{\rm EOT}(\alpha,\rho)$.
    By strict convexity, we even have that ${(\operatorname{id},\pi_\cdot^n)_\# \alpha_n \to (\operatorname{id},\pi_\cdot)_\# \alpha}$ weakly.
    
    Hence there exists a probability space with random variables ${X_n \sim \alpha_n}$, ${X \sim \alpha}$ such that ${X_n \to X}$ and ${\pi_{X_n}^n \to \pi_X}$ almost surely. We claim that 
    \[
        \lim_{n \to \infty}\int \beta v_n^* -q_\beta\, \rmd\pi_{X_n}^n = \int \beta v^*-q_\beta \, \rmd \pi_{X}  \quad \text{a.s.}
    \]
    Convexity of $(v_n^*)_{n\in\N}$ yields $\beta v_n(\bar\nu) - \int q_\beta\,\rmd\nu \leq \int \beta v_n - q_\beta\,\rmd\nu = 0$, and therefore
    \[
        \sup_{n \in \N}v_n(\bar\nu) \leq \int q_1\,\rmd\nu.
    \]
    By the Fenchel--Young inequality, $v_n^*(y) \ge y\cdot \bar\nu - \int q_1\,\rmd\nu$ for all $y \in \Rd$ and all $n \in \N$. Set
    \[
        g_n(y):=v_n^*(y)-y\cdot\bar\nu+\int q_1\,\rmd\nu,
        \qquad
        g(y):=v^*(y)-y\cdot\bar\nu+\int q_1\,\rmd\nu.
    \]
    Then $g_n\ge0$ for all $n$, and $g_n\to g$ in epi-convergence on $\Rd$, hence
    \begin{align*}
        \liminf_{n \to \infty } \int g_n \, \rmd\pi_{X_n}^n 
        \ge 
        \int g \, \rmd \pi_X.
    \end{align*}
    Therefore
    \[
        \liminf_{n \to \infty} \int \beta v^*_n - q_\beta\,\rmd \pi_{X_n}^n \ge \int \beta v^*-q_\beta \, \rmd \pi_{X}  \quad \text{a.s.}
    \]
    Since $W_\beta(\rho_n,\nu) \to W_\beta(\rho,\nu)$, or equivalently,
    \[
         \lim_{n\to\infty}\int \beta v_n^* - q_\beta \,\rmd\rho_n = \int \beta v^* - q_\beta \,\rmd\rho,
    \]
    it follows that
    \[
        \lim_{n \to \infty}\int \beta v_n^* -q_\beta\, \rmd\pi_{X_n}^n = \int \beta v^*-q_\beta \, \rmd \pi_{X}  \quad \text{a.s.}
    \]
    
    Since $v_n^* \to v^*$ in epi-convergence, for every $z \in \Rd$,
    \[
        \liminf_{n \to \infty} \bigl(\beta v_n^*(X_n-z)-q_\beta(X_n-z)\bigr)
        \ge
        \beta v^*(X-z) - q_\beta(X-z) \quad \text{a.s.}
    \]
    Fatou's lemma therefore gives
    \[
        \liminf_{n \to \infty} \log\bigl(\exp(\beta v_n^*-q_\beta)\ast \gamma(X_n)\bigr) \ge \log\bigl(\exp(\beta v^*-q_\beta)\ast \gamma(X)\bigr) \quad \text{a.s.}
    \]
    On the other hand, since $V_{\rm EOT}(\alpha_n,\rho_n) \to V_{\rm EOT}(\alpha,\rho)$,
    \[
        \lim_{n \to \infty} \left(\int  \beta v_n^*-q_\beta \,\rmd\pi_{X_n}^n-\log\bigl(\exp(\beta v_n^*-q_\beta)\ast \gamma(X_n)\bigr)\right)
        =
        \int \beta v^* -q_\beta\,\rmd\pi_{X}-\log\bigl(\exp(\beta v^*-q_\beta)\ast \gamma(X)\bigr)
    \]
    almost surely. In particular,
    \[
        \lim_{n \to \infty} \log\bigl(\exp(\beta v_n^*-q_\beta)\ast \gamma(X_n)\bigr) = \log\bigl(\exp(\beta v^*-q_\beta)\ast \gamma(X)\bigr) \quad \text{a.s.}
    \]
    Let $\varepsilon\in(0,1)$. By Egorov's theorem, there exists a set $\tilde\Omega$ with $\mathbb P[\tilde\Omega]\ge 1-\varepsilon$ such that the convergence is uniform on $\tilde\Omega$. After possibly shrinking $\tilde\Omega$, we moreover assume that, on $\tilde\Omega$, ${X\in K}$ for some compact set ${K\subset\Rd}$ and
    \[
        m \le \exp(\beta v^*-q_\beta)\ast\gamma(X) \le M
    \]
    for some $0 < m < M$.
    Set
    \[
        \tilde \alpha_n := {\rm law}\bigl(X_n \,|\, \tilde \Omega\bigr),
        \quad
        \tilde \rho_n := \int \pi_x^n \, \rmd \tilde \alpha_n(x), 
        \qquad
        \tilde \alpha := {\rm law}\bigl(X \,|\, \tilde \Omega\bigr),
        \quad
        \tilde \rho:= \int\pi_x \, \rmd \tilde \alpha(x).
    \]
    Then $\tilde\alpha$ is supported on $K$, and $(\nabla v_n^*)_\# \tilde\rho_n \le \frac{\nu}{1-\varepsilon}$, $(\nabla v^*)_\# \tilde\rho \le \frac{\nu}{1-\varepsilon}$.
    Moreover,
    \[
        e^I
        :=
        \tilde\alpha\text{-}\mathrm{ess\,inf}\bigl(\exp(\beta v^*-q_\beta)\ast\gamma\bigr)
        \ge m,
        \quad \text{ and } \quad
        \liminf_{n\to\infty}
        \tilde\alpha_n\text{-}\mathrm{ess\,inf}\bigl(\exp(\beta v_n^*-q_\beta)\ast\gamma\bigr)
        = e^I.
    \]
    With $c := \frac1\beta |I|+ \sup_{n \in \N} |v_n^*(0)|$, \Cref{lem:exponential_moments} yields, for every $t \ge 0$ and every Borel set $B \subseteq \Rd$,
    \begin{equation}\label{ineq:proof:continuity_iteration}
        \limsup_{n \to \infty} \int_B e^{t|y|} \, \tilde \rho_n(\rmd y) \\
        \le 
        \frac1{1-\varepsilon} \left( \int_B e^{t |y|} \, \tilde \alpha \ast \gamma(\rmd y) + e^{2t \sqrt{c}} \int_B e^{2t|y|} \, \nu(\rmd y) \right) < \infty.
    \end{equation}
    
    Let $(x_n)_{n \in \N}$ be a bounded sequence in $\Rd$. We claim that $(f_n)_{n \in \N}$ defined by
    \[
        f_n(z) := \exp\bigl(\beta v_n^*(z) -q_{\beta}(z) - q_1(x_n) +x_n\cdot z\bigr)
    \]
    is uniformly integrable in $L^1(\gamma)$.
    To see this, set $t := \sup_{n \in \N} |x_n|$ and $s := \sup_{x \in K} |x|$. Then, for every Borel set $B \subseteq \Rd$ and every $n \in \N$,
    \begin{align*}
        \int_B |f_n(z)|\, \gamma(\rmd z) 
        &\leq 
         M e^{\frac12 s^2} \int \!\int_B (2\pi)^{-\frac{d}{2}}e^{(t+s) |z|} \frac{e^{\beta v_n^*(z)-\frac{1+\beta}{2}|z|^2 - \frac12 |x|^2 + x\cdot z}}{e^{\beta v_n^*-q_\beta}*\gamma(x)}\, \rmd z\, \tilde\alpha(\rmd x), \\
        &=M e^{\frac12 s^2} \int_B e^{(t+s) |y|} \, \tilde \rho_n(\rmd y),
    \end{align*}
    so that uniform integrability follows from \eqref{ineq:proof:continuity_iteration}. 
    Finally, let $x\in\Rd$ and choose any sequence $x_n\to x$. By uniform integrability and pointwise convergence $v_n^*\to v^*$ on $\Rd$,
    \[
        \lim_{n \to \infty} \exp(\beta v_n^* - q_\beta)*\gamma(x_n) = \exp(\beta v^* - q_\beta)*\gamma(x),
    \]
    and hence
    \[
        \lim_{n \to \infty }-\log\bigl( \exp(\beta v_n^* - q_\beta)*\gamma\bigr) = -\log\bigl( \exp(\beta v^* - q_\beta) * \gamma \bigr)
    \]
    in epi-convergence on $\Rd$.
    Since the infimal convolution is stable under epi-convergence, $u_n^+ \to u^+$ in epi-convergence on $\Rd$.
\end{proof}

We are now in a position to prove convergence of \Cref{alg:SB}.

\begin{theorem}[Convergence of the Schrödinger--Bass algorithm]\label{thm:convergence_SB}
    Let $\beta > 0$ and let $\mu, \nu \in \Pcal_2(\Rd)$ be such that $\nu$ has all exponential moments. Let $(f_i)_{i\in\N} \subset L^1(\nu)$ be the $\beta$-semiconcave functions generated by \Cref{alg:SB}, normalized so that, for every $i \in \N$,
    \[
        \int f_i \,\rmd\nu = 0.
    \]
    Then $(f_i)_{i\in\N}$ hypo-converges on $I_\nu:=\operatorname{ri}\bigl(\operatorname{co}(\operatorname{supp}(\nu))\bigr)$ to the dual optimizer of the Schrödinger--Bass problem~\eqref{eq:convergence:SB.dual}.
\end{theorem}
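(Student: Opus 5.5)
The argument combines monotonicity of the dual objective with compactness of the iterates $u_i$ and continuity of the update map. Throughout I work with $u_i = q_1 - \frac1\beta q_\beta\Box(-\Tcal^\beta_1[f_i])$ and the objective $\Ecal_\beta$.

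\emph{Step 1: compactness of the $u_i$.} By \Cref{cor:strict_ascent}, the sequence $\Ecal_\beta[u_i]$ is nondecreasing and bounded above by $V_{\rm SB}^\beta(\mu,\nu)/\beta + \int q_1\,\rmd\mu$. The normalization $\int f_i\,\rmd\nu=0$ together with \Cref{cor:smooth_brenier_map} shows that each $u_i$ is convex and $\frac{1+\beta}{\beta}$-smooth. Exactly as in the proof of \Cref{prop:SB_deconv_attained}, \Cref{lem:tightness} then gives the quadratic bound $\sup_i|u_i(x)|\le c(\beta,d)(1+|x|^2)$. Consequently every subsequence has a further subsequence converging locally uniformly, hence in epi-convergence, to some convex $\frac{1+\beta}{\beta}$-smooth $u$. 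Dominated convergence yields $\Ecal_\beta[u_{i_k}]\to\Ecal_\beta[u]=\lim_i\Ecal_\beta[u_i]=:E^\ast$.

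\emph{Step 2: every limit point is a fixed point.} Let $u_{i_k}\to u$. By \Cref{lem:continuity_iteration_map}, which is where the exponential moments of $\nu$ are used, we get $u_{i_k+1}=u_{i_k}^+\to u^+$ in epi-convergence. The quadratic bounds persist along this sequence, so $\Ecal_\beta[u^+]=\lim_k\Ecal_\beta[u_{i_k+1}]=E^\ast=\Ecal_\beta[u]$. This equality must be transferred to the one-step inequality for $u$ itself. Write $u=q_1-\frac1\beta q_\beta\Box(-\Tcal^\beta_1[f])$, where $f$ is the $\beta$-semiconcave potential produced at the limit. That such an $f$ exists follows from the stability argument inside the proof of \Cref{lem:continuity_iteration_map}, which constructs $v^\ast$ and hence $f=q_\beta-\beta v$ from the limiting $\rho$. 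Then $\mathcal D_\beta[f]=\mathcal D_\beta[f^+]$, and \Cref{lem:strict_ascent} gives $\alpha=(\nabla u)_\#\mu=(\nabla u^+)_\#\mu$. Hence $f$ solves the Schrödinger--Bass system.

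\emph{Step 3: identification of the limit.} By the uniqueness part of \Cref{thm:SB}, $f$ coincides with the dual optimizer $f^\circ$ $\nu$-a.e. up to an additive constant. The constant is fixed by $\int f\,\rmd\nu=0$, which passes to the limit by the convergence $\int\beta v_n-q_\beta\,\rmd\nu=0$ built into the proof of \Cref{lem:continuity_iteration_map}. Since every subsequence has a further subsequence with the same limit, the whole sequence $u_i$ converges.

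\emph{Step 4: convergence of the $f_i$.} The $f_i$ correspond to the potentials $v_i=q_1-\frac1\beta f_i$, and $v_i^\ast$ is the Brenier potential from $\rho_i$ to $\nu$. Since $\alpha_i\to\alpha^\circ$, \Cref{thm:stability} gives $\rho_i\to\rho^\circ$, and then, as in \Cref{lem:continuity_iteration_map}, $v_i^\ast\to (v^\circ)^\ast$ in epi-convergence on $\Rd$. Conjugation is continuous for epi-convergence (Wijsman), so $v_i\to v^\circ$ in epi-convergence. The $v_i$ are finite convex functions whose domains contain $I_\nu$ in their relative interior, and by the standard equivalence recalled before \Cref{lem:continuity_iteration_map} this gives locally uniform convergence there. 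Since $f_i=q_\beta-\beta v_i$, we obtain hypo-convergence of $f_i$ to $f^\circ$ on $I_\nu$.

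\emph{Main obstacle.} The delicate point is Step 4, together with the identification of the normalizing constant. Off $\operatorname{co}(\operatorname{supp}\nu)$ the functions $v_i$ may be $+\infty$ or poorly controlled, so convergence can only be asserted on $I_\nu$ and in the restricted sense of the stated equivalence. I would first try to prove the required bound $\sup_i v_i(\bar\nu)\le\int q_1\,\rmd\nu$ on $I_\nu$, obtained from the normalization as in the proof of \Cref{lem:continuity_iteration_map}, and derive uniform local boundedness from it.
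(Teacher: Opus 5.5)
Your proposal is correct and follows the paper's proof essentially step for step: compactness of $(u_i)$ via \Cref{lem:tightness}, continuity of the update via \Cref{lem:continuity_iteration_map}, monotonicity forcing $\Ecal_\beta[u]=\Ecal_\beta[u^+]$ at every accumulation point, strict ascent together with the uniqueness in \Cref{thm:SB} to identify the limit, and then transfer to $(f_i)$ on $I_\nu$, where your use of Wijsman's theorem is a more explicit version of the paper's final step. One point to tidy in Step 2: the potential that generates the accumulation point $u$ itself should come from applying \Cref{lem:continuity_iteration_map} to a further convergent subsequence $u_{i_k-1}\to w$, which gives $u=w^+$; the limiting $\rho$ associated with $(\nabla u)_\#\mu$ produces $u^+$, not $u$.
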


\begin{proof}
    Let $(u_i)_{i \in \N}$ and $(\alpha_i)_{i \in \N}$ be as in \eqref{def:obj4conv}. By \Cref{lem:tightness}, there exist a constant $c(\beta,d)>0$ and a subsequence $(u_{i_j})_{j\in\N}$ converging locally uniformly on $\Rd$ to a convex, $\frac{1+\beta}{\beta}$-smooth function $u$ such that
    \[
        \sup_{i \in \N}|u_i(x)| \le c(\beta,d)\left(1+|x|^2\right) \ \text{ for all } x\in\Rd,
        \qquad
        \alpha_{i_j} := (\nabla u_{i_j})_\#\mu \longrightarrow \alpha^\circ :=(\nabla u)_\#\mu
        \ \text{ in } \cP_2(\Rd).
    \]
    In particular, $(u_i)_{i \in \N}$ admits at least one accumulation point with respect to local uniform convergence on $\Rd$, or equivalently, with respect to epi-convergence.

    Let $u$ be an epi-accumulation point of $(u_i)_{i \in \N}$, and let $(u_{i_j})_{j\in\N}$ be a subsequence which attains $u$.
    Then dominated convergence yields
    \[
        \lim_{j \to \infty} \Ecal_\beta[u_{i_j}] = \Ecal_\beta[u].
    \]
    By \Cref{lem:strict_ascent}, the sequence $(\Ecal_\beta[u_i])_{i\in\N}$ is monotonically increasing, so that
    \[
        \lim_{i\to\infty} \Ecal_\beta[u_i] = \lim_{j\to\infty}\Ecal_\beta[u_{i_j}] = \Ecal_\beta[u].
    \]
    By \Cref{lem:continuity_iteration_map}, we have $u_{i_j+1} \to u^+$ in epi-convergence on $\Rd$, where $u^+$ denotes the next iterate of $u$ in \Cref{alg:SB}. As above, dominated convergence gives
    \[
        \lim_{j\to\infty} \Ecal_\beta[u_{i_j+1}] = \Ecal_\beta[u^+].
    \]
    On the other hand, by monotonicity of the iterations, for all $j \in \N$,
    \[
        \Ecal_\beta[u_{i_j}] \leq \Ecal_\beta[u_{i_j+1}] \leq \Ecal_\beta[u_{i_{j+1}}]
    \]
    Taking $j\to\infty$ in this chain and invoking the convergence of the three terms, we obtain $\Ecal_\beta[u] = \Ecal_\beta[u^+]$. 
    Finally, by \Cref{lem:strict_ascent}, equality $\Ecal_\beta[u]=\Ecal_\beta[u^+]$ can only occur if $u$ solves the Schrödinger--Bass system. 
    
    By \Cref{thm:SB}, the Schrödinger--Bass system is uniquely attained, and therefore all accumulation points of $(u_i)_{i \in \N}$ coincide. 
    In particular, $u_i \to u$ in epi-convergence on $\Rd$. Let $f \in L^1(\nu)$ be the $\beta$-semiconcave optimizer of the dual formulation of \eqref{eq:main.SB.dynamic}, and set $v := q_1 - \frac1\beta f$. For $i \in \N$, set $v_i := q_1 - \frac1\beta f_i$.
    
    We conclude by showing that $(v_i)_{i \in \N}$ epi-converges to $v$ on $I_\nu$. 
    By arguments analogous to those in the proof of \Cref{lem:continuity_iteration_map}, $\nabla v_i^* \to \nabla v^*$ up to Lebesgue-null sets. Since $(v_i)_{i \in \N} \subset L^1(\nu)$ and each $v_i$ is convex, we have $I_\nu \subset \operatorname{dom}(v_i)$ for all $i \in \N$. In particular, since the additive constant is fixed by the normalization $\int f_i \,\rmd\nu = 0$, we obtain by epi-convergence that $v_i \to v$ on $I_\nu$.
\end{proof}

\section{From Schrödinger to Bass and Brenier--Strassen} \label{sec:SB_limits}

We conclude by identifying the limiting regimes of the Schrödinger--Bass problem
\begin{equation}\label{eq:SB_limits.SB}\tag{SB$\beta$w}
    V_{\rm SB}^\beta(\mu,\nu) 
    =
    \inf_{\pi\in\Cpl(\mu,\nu)} \int C_{\rm SB}^\beta(x,\pi_x)\,\mu(\rmd x),
\end{equation}
as motivated in \cref{sec:introduction}. 
For $\mu, \nu \in \cP_2(\Rd)$, the Schrödinger problem takes the form
\begin{equation}\label{eq:SB_limits.EOT}\tag{SB$\infty$}
    V_{\rm EOT}(\mu,\nu)
    =
    \inf_{\pi\in\Cpl(\mu,\nu)} \int H(\pi_x\,|\,\gamma_x)\,\mu(\rmd x),
\end{equation}
and one expects that $V_{\rm SB}^\beta(\mu,\nu) \to V_{\rm EOT}(\mu,\nu)$ as ${\beta \uparrow \infty}$; see \cref{sec:introduction}.
On the other hand, after rescaling by $\beta^{-1}$ and letting $\beta\downarrow0$, the dynamic formulation of the Schrödinger--Bass problem~\eqref{eq:SB.dynamic} suggests convergence to the Bass problem
\begin{equation}\label{eq:SB_limits.mBB}\tag{mBB}
    V_{\rm mBB}(\mu,\nu)
    =
    \inf_{\pi\in\Cpl_M(\mu,\nu)} \int \frac12 \Wcal_2^2(\pi_x,\gamma_x)\,\mu(\rmd x).
\end{equation}
To show this, we first identify the unrescaled limit $\beta\downarrow0$ which leads to the Brenier--Strassen problem
\begin{equation}\label{eq:BStr}\tag{SB$0$}
    V_{\rm BStr}(\mu,\nu) := \inf_{\pi \in \Cpl(\mu,\nu)} \int \frac12 \bigl|x- \bar \pi_x \bigr|^2 \,\mu(\rmd x).
\end{equation}
This is itself a notable instance of weak optimal transport; see, e.g., \cite{gozlan2017kantorovich,gozlan2020mixture,AlCoJo20,GuNiWi25}.
We remark that, in general, the minimizers of \eqref{eq:BStr} are not unique. However, we show in the following that the Schrödinger--Bass problem selects in the limit $\beta\downarrow0$ a distinguished optimizer among them.
Moreover, the Brenier--Strassen problem admits the equivalent metric projection formulation
\begin{equation}\label{eq:SB_limits.BStr2}
    V_{\rm BStr}(\mu,\nu) = \inf_{\substack{\eta \in \Pcal_2(\Rd),\\ \mu \le_{\rm cvx} \eta}} \Wcal_2^2(\eta,\nu),
\end{equation}
see, for instance, \cite{alfonsi2020sampling,gozlan2020mixture,KiRu24}. Here, $\mu \le_{\rm cvx} \eta$ denotes convex order, that is,
\[
    \mu \le_{\rm cvx} \eta \quad\Longleftrightarrow\quad \int \psi \,\rmd\mu \le \int \psi \,\rmd\eta \quad\text{for all convex } \psi:\R^d\to\R.
\]
In particular, \eqref{eq:SB_limits.BStr2} recovers a famous result by \textcite{Strassen1965}: $\mu \leq_{\rm cvx} \nu$ if and only if $V_{\rm BStr}(\mu,\nu) = 0$. Equivalently, $\Cpl_M(\mu,\nu)$ is non-empty if and only if $\mu \leq_{\rm cvx} \nu$.

\begin{theorem}[Limits of the Schrödinger--Bass problem]\label{thm:SB.limits}
    For $\beta > 0$, let $\pi^\beta \in \Cpl(\mu,\nu)$ be the primal optimizer of the Schrödinger--Bass problem~\eqref{eq:SB_limits.SB}.
    \begin{enumerate}
        \item \label{it:SB.limits:schrödinger}
        Suppose there exists $\pi \in \Cpl(\mu,\nu)$ such that $H(\pi\,|\,\mu\otimes\gamma_{\bullet}) < \infty$.
        Then
        \[
            \lim_{\beta \uparrow \infty} V_{\rm SB}^\beta(\mu,\nu) = V_{\rm EOT}(\mu,\nu),
        \]
        and $\pi^\beta \to \pi^\infty$ weakly as $\beta \uparrow \infty$, where $\pi^\infty$ is the unique minimizer of $V_{\rm EOT}(\mu,\nu)$.

        \item \label{it:SB.limits:BStr}
        Let $\Cpl_{\rm BStr}(\mu,\nu)$ denote the set of minimizers of the Brenier--Strassen problem~\eqref{eq:BStr}. Then
        \[
            \lim_{\beta \downarrow 0} V_{\rm SB}^\beta(\mu,\nu) = V_{\rm BStr}(\mu,\nu)
        \]
        and $\pi^\beta \to \pi^0$ weakly as $\beta \downarrow 0$, where $\pi^0$ is the unique minimizer of 
        \[
            \inf_{\pi \in \cpl_{\text{\rm BStr}}(\mu,\nu)} \int \frac12 \Wcal_2^2(\pi_x,\gamma_{\bar\pi_x}) \, \mu(\rmd x).
        \]

        \item \label{it:SB.limits:mBB} Suppose that $\mu \le_{\rm cvx} \nu$. Then
        \[
            \lim_{\beta \downarrow 0} \frac1\beta V_{\rm SB}^\beta(\mu,\nu) = V_{\rm mBB}(\mu,\nu),
        \]
        and $\pi^\beta \to \pi^0$ weakly as $\beta \downarrow 0$, where $\pi^0$ is the unique minimizer of $V_{\rm mBB}(\mu,\nu)$.
    \end{enumerate}
\end{theorem}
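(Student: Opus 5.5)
The plan is a $\Gamma$-convergence argument at the level of the static cost $C_{\rm SB}^\beta$, exploiting the dynamic representation \eqref{eq:SB.cost.dynamic} and the static one $C_{\rm stat}^\beta$ from \Cref{lem:SB.cost}. In each regime I will prove a pointwise upper bound (limsup) for the cost along fixed couplings, and a matching lower bound (liminf) along converging couplings. These two bounds, together with tightness of $\Cpl(\mu,\nu)$ and uniqueness of the selected limit, give convergence of values and optimizers.

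\emph{Case \eqref{it:SB.limits:schrödinger}.} Fixing $b=I_d$ in \eqref{eq:SB.cost.dynamic} gives $C_{\rm SB}^\beta(x,\eta)\le H(\eta\,|\,\gamma_x)$, so $V_{\rm SB}^\beta\le V_{\rm EOT}<\infty$. Note also that $\beta\mapsto C_{\rm SB}^\beta$ is nondecreasing. For the lower bound I would use $C_{\rm SB}^\beta=C_{\rm stat}^\beta\ge \inf_\rho\{H(\rho\,|\,\gamma_x)+W_\beta(\rho,\eta)\}$, obtained by taking $y=x$. The minimizers $\rho^\beta$ of this quantity satisfy $\Wcal_2^2(\rho^\beta,\eta)\le \frac2\beta H(\eta\,|\,\gamma_x)\to0$. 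Lower semicontinuity of $H$ then gives $\liminf_\beta C_{\rm SB}^\beta(x_\beta,\eta_\beta)\ge H(\eta\,|\,\gamma_x)$. At the level of the optimizers $\pi^\beta$, take a weak limit $\pi$. Since $C_{\rm SB}^\beta\ge C_{\rm SB}^{\beta_0}$ for $\beta\ge\beta_0$ and $C_{\rm SB}^{\beta_0}$ is a continuous standard cost, the WOT functional is lower semicontinuous \cite[Theorem 2.9]{backhoff2019existence}. This gives $\liminf V^\beta_{\rm SB}\ge \int C_{\rm SB}^{\beta_0}(x,\pi_x)\mu(\rmd x)$. Letting $\beta_0\uparrow\infty$ and using monotone convergence $C^{\beta_0}_{\rm SB}\uparrow H(\cdot\,|\,\gamma_x)$, which is the pointwise lower bound above, yields $\liminf V^\beta_{\rm SB}\ge\int H(\pi_x\,|\,\gamma_x)\mu(\rmd x)\ge V_{\rm EOT}$. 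So $\pi$ is optimal, and uniqueness gives $\pi=\pi^\infty$.

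\emph{Case \eqref{it:SB.limits:BStr}.} Here $C_{\rm SB}^\beta$ is nonincreasing as $\beta\downarrow0$. For the lower bound, restrict to $y=x-\bar\eta+\ldots$ in $C_{\rm stat}^\beta$; more cleanly, take the mean of $X_1$ in the dynamic formulation. Jensen gives $\frac12\mathbb E\int|a_t|^2\ge\frac12|x-\bar\eta|^2$, so $C_{\rm SB}^\beta(x,\eta)\ge\frac12|x-\bar\eta|^2$, and hence $V_{\rm SB}^\beta\ge V_{\rm BStr}$. The refined expansion I expect is
\[
C_{\rm SB}^\beta(x,\eta)=\tfrac12|x-\bar\eta|^2+\beta\,\tfrac12\Wcal_2^2(\eta,\gamma_{\bar\eta})+o(\beta).
\]
The upper bound comes from a deterministic drift $a_t=\bar\eta-x$ plus the Bass martingale from $\delta_{\bar\eta}$ to $\eta$ with diffusion cost $\frac\beta2\cdot\Wcal_2^2(\eta,\gamma_{\bar\eta})$, using the Bass static formula in the one-point case. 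A matching lower bound uses Itô's isometry: $a$ contributes at least $\frac12|x-\bar\eta|^2$ plus the variance of $\int a$, and the residual is handled through $C_{\rm stat}^\beta$ with $y$ chosen near $x-\frac1\beta(\ldots)$. I expect this first-order lower bound to be the main obstacle. I would try it via duality, plugging the Bass dual potential as a test function $f=q_\beta-\beta\psi$ into \eqref{eq:SB.dual} for $(\delta_x,\eta)$. Given both bounds, a standard two-level $\Gamma$-convergence argument works. First, any limit $\pi$ of $\pi^\beta$ is Brenier--Strassen optimal, by the zeroth-order lower bound and lower semicontinuity. Second, dividing $V^\beta_{\rm SB}-V_{\rm BStr}$ by $\beta$ selects the minimizer of $\int\frac12\Wcal_2^2(\pi_x,\gamma_{\bar\pi_x})\,\mu(\rmd x)$ over $\Cpl_{\rm BStr}$, which is unique by strict convexity on the convex set $\Cpl_{\rm BStr}$ since the means $\bar\pi_x$ are fixed there.

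\emph{Case \eqref{it:SB.limits:mBB}.} This follows as a special case. When $\mu\le_{\rm cvx}\nu$ we have $V_{\rm BStr}=0$ and $\Cpl_{\rm BStr}=\Cpl_M$. Dividing the expansion by $\beta$ then gives $\frac1\beta V^\beta_{\rm SB}\to V_{\rm mBB}$, and the selected limit is the unique Bass optimizer.
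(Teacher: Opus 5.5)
\textbf{Assessment.} Part (1) and the zeroth-order half of part (2) are fine and essentially follow the paper, modulo a line for the case $H(\eta\,|\,\gamma_x)=\infty$ in your pointwise limit. Part (1) rests on monotonicity in $\beta$, lower semicontinuity of the WOT functional for the fixed convex cost $C^{\beta_0}_{\rm SB}$, and monotone convergence. Your Jensen bound $C^\beta_{\rm SB}(x,\eta)\ge\frac12|x-\bar\eta|^2$ and your drift-plus-Bass-martingale competitor are exactly the paper's two bounds; the latter is the choice $\zeta=\gamma$ in \eqref{lem:stat.cost.repr3}.

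\textbf{The gap.} It is the step you flag yourself: the first-order lower bound
\[
    \liminf_{\beta\downarrow0}\tilde C^\beta(\eta)\ge\tfrac12\Wcal_2^2(\eta,\gamma_{\bar\eta}),
    \qquad
    \tilde C^\beta(\eta):=\tfrac1\beta\bigl(C^\beta_{\rm SB}(x,\eta)-\tfrac12|x-\bar\eta|^2\bigr),
\]
is never proved. Without it, neither the selection claim in (2) nor anything in (3) follows. The upper bound only yields $\limsup_{\beta\downarrow0}\frac1\beta(V^\beta_{\rm SB}-V_{\rm BStr})\le\min_{\Cpl_{\rm BStr}}\int\frac12\Wcal_2^2(\pi_x,\gamma_{\bar\pi_x})\,\mu(\rmd x)$. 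It says nothing about which element of $\Cpl_{\rm BStr}$ the $\pi^\beta$ approach. Your duality test with $f=q_\beta-\beta\psi$ is unnecessary and would be delicate: you would have to linearize $\log(\exp(\beta g)*\gamma)$ for unbounded $g$ and also recover the zeroth-order term.

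\textbf{How to close it.} The paper separates the mean. By \eqref{lem:stat.cost.repr3}, $\tilde C^\beta(\eta)=\inf_{\bar\zeta=0}\{\frac1\beta H(\zeta\,|\,\gamma)+\frac12(\Wcal_2^2(\zeta,\eta)-|\bar\eta|^2)\}$. This is monotone in $\beta$, its minimizers are forced to $\gamma$ as $\beta\downarrow0$, and lower semicontinuity of $\Wcal_2^2$ concludes. Your own It\^o-isometry idea also closes it in two lines. For admissible $(a,b)$ set $D:=\int_0^1(a_t-\mathbb E a_t)\,\rmd t$ and $M:=\int_0^1(b_t-I_d)\,\rmd B_t$. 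Then $\mathbb E\int_0^1|a_t|^2\,\rmd t\ge|x-\bar\eta|^2+\mathbb E|D|^2$, and $X_1-(\bar\eta+B_1)=D+M$ with $\bar\eta+B_1\sim\gamma_{\bar\eta}$. Cauchy--Schwarz then gives $\frac1{1+\beta}\cdot\frac12\Wcal_2^2(\eta,\gamma_{\bar\eta})\le\tilde C^\beta(\eta)\le\frac12\Wcal_2^2(\eta,\gamma_{\bar\eta})$ for every $\beta>0$.

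\textbf{A second hidden issue: convexity.} Your ``standard two-level $\Gamma$-convergence'' hides the fact that at first order the costs are not convex in $\eta$. Neither $\eta\mapsto\frac12\Wcal_2^2(\eta,\gamma_{\bar\eta})$ nor $\tilde C^\beta$ is convex. Compare $\delta_a$, $\delta_{-a}$ and their midpoint for large $|a|$: $\tilde C^\beta(\delta_{\pm a})\le\frac d2$, while the midpoint value grows like $|a|^2$. So lower semicontinuity of $\pi\mapsto\int\tilde C^\beta(\pi_x)\,\mu(\rmd x)$ under weak convergence cannot be quoted from \cite{backhoff2019existence}. The paper's proof also passes over this point when it calls $\tilde C^\beta_{\rm SB}$ a standard weak transport cost.

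\textbf{Repair.} On the convex set $\{x\mapsto\bar\pi_x:\pi\in\Cpl(\mu,\nu)\}\subset L^2(\mu)$, the functional $T\mapsto\frac12\int|x-T(x)|^2\,\mu(\rmd x)$ is strongly convex. This gives $\int|\bar\pi^\beta_x-m(x)|^2\,\mu(\rmd x)\le4\bigl(V^\beta_{\rm SB}-V_{\rm BStr}\bigr)\to0$, where $m$ is the barycenter map common to all of $\Cpl_{\rm BStr}$. The same computation proves the uniqueness of means that you assert. Since $|\Wcal_2(\eta,\gamma_y)-\Wcal_2(\eta,\gamma_{y'})|\le|y-y'|$, you may replace $\gamma_{\bar\pi^\beta_x}$ by $\gamma_{m(x)}$ at cost $o(1)$. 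The cost $\frac12\Wcal_2^2(\eta,\gamma_{m(x)})$ is convex in $\eta$. Its merely measurable dependence on $x$ is harmless because the first marginal $\mu$ is fixed. So the liminf and the identification of $\pi^0$ go through.
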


\begin{remark}\label{rem:bass_limit}
    We call a pair $(\mu,\nu)\in\cP(\Rd)\times\cP(\Rd)$ \emph{irreducible} if, for all measurable sets $A,B\subset\Rd$ with $\mu(A)>0$ and $\nu(B)>0$, there exists $\pi\in\Cpl_M(\mu,\nu)$ such that $\pi(A\times B)>0$. 
    Note that every irreducible pair $(\mu,\nu)$ is necessarily in convex order, but the converse fails in general; see, for instance, \cite{beiglbock2016problem,ObSi17,DeTo19}.
    
    In the setting of \cref{thm:SB.limits}, if $(\mu,\nu)$ is moreover irreducible, then it follows from the theory of the Bass problem~\eqref{eq:SB_limits.mBB} that the unique optimizer $\pi^0$ is induced by a \emph{Bass martingale}; see, for instance, \cite{backhoff2023existence}. 
    More precisely, there exist ${\alpha\in\cP(\Rd)}$ and a convex, lower semicontinuous potential $\psi$ (not necessarily $\nu$-integrable) such that the process ${M=(M_t)_{t\in[0,1]}}$ defined by ${M_t := \mathbb E[\nabla \psi^*(B_1^\alpha)\,|\, B_t^\alpha]}$ satisfies ${\pi^0 = \mathrm{Law}(M_0,M_1)}$.
    Here ${B^\alpha=(B_t^\alpha)_{t\in[0,1]}}$ denotes standard Brownian motion with initial law $\alpha$. 
    Conversely, if such a potential $\psi$ exists, then $(\mu,\nu)$ is necessarily irreducible; see, for instance, \cite[Proposition 3.3]{hasenbichler2025martingalesinkhornalgorithm}.
    We refer to $\alpha$ and $\psi$ as a \emph{Bass measure} and \emph{Bass potential}, respectively. 
    In particular, \cref{thm:SB.limits} implies that, in this case, $\pi^\beta$ converges weakly to the Bass martingale coupling $\pi^0$ as ${\beta \downarrow 0}$.
\end{remark}

Before proving \cref{thm:SB.limits}, we first identify the limiting regimes at the level of the weak transport costs.

\begin{lemma}\label{lem:SB.cost.limit}    
    Let $(x,\eta) \in \R^d \times \cP_2(\R^d)$. Then 
    \[
        \beta \longmapsto C_{\rm SB}^\beta(x,\eta),
    \]
    is non-decreasing whereas 
    \[
        \beta \longmapsto \frac1\beta C_{\rm SB}^\beta(x,\eta), 
        \qquad 
        \beta \longmapsto \frac1\beta \left( C_{\rm SB}^\beta(x,\eta) - \frac12 |x - \bar\eta|^2 \right)
    \]
    are non-increasing. Moreover,
    \begin{gather*}
        \lim_{\beta \uparrow \infty} C_{\rm SB}^\beta(x,\eta) = H(\eta \,|\, \gamma_x), 
        \qquad 
        \lim_{\beta \downarrow 0} C_{\rm SB}^\beta(x,\eta) = \frac12 |x - \bar\eta|^2, \\
        \qquad \lim_{\beta \downarrow 0} \frac1\beta \Big( C_{\rm SB}^\beta(x,\eta) - \frac12 |x - \bar\eta|^2 \Big) = \frac12 \Wcal_2^2(\eta,\gamma_{\bar\eta}).
    \end{gather*}
\end{lemma}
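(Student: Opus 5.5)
The proof works directly with the dynamic definition \eqref{eq:SB.cost.dynamic} of $C_{\rm SB}^\beta(x,\eta)$. For every admissible pair $(a,b)$ with $X_0=x$, $X_1\sim\eta$, we compare integrands for different $\beta$ and then take infima; the static formula of \cref{lem:SB.cost} is used only for the limit $\beta\uparrow\infty$.

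\emph{Monotonicity.} For fixed admissible $(a,b)$, the integrand $\frac12|a_t|^2+\frac\beta2|b_t-I_d|_{\rm HS}^2$ is non-decreasing in $\beta$. Likewise $\frac1\beta(\cdot)=\frac1{2\beta}|a_t|^2+\frac12|b_t-I_d|_{\rm HS}^2$ is non-increasing in $\beta$. An infimum over a $\beta$-independent admissible class preserves both monotonicities.

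For the third map, set $A:=\int_0^1a_t\,\rmd t$ and $N:=\int_0^1 b_t\,\rmd B_t$. Since $N$ has mean zero, $\mathbb E[A]=\bar\eta-x$. Jensen's inequality in $t$ gives $\mathbb E\int_0^1|a_t|^2\,\rmd t\ge \mathbb E|A|^2$. Hence
\[
\frac1\beta\Bigl(\mathbb E\!\int_0^1 \tfrac12|a_t|^2+\tfrac\beta2|b_t-I_d|^2_{\rm HS}\,\rmd t-\tfrac12|x-\bar\eta|^2\Bigr)
=\frac{1}{2\beta}\Bigl(\mathbb E\!\int_0^1|a_t|^2\,\rmd t-|\mathbb E A|^2\Bigr)+\tfrac12\mathbb E\!\int_0^1|b_t-I_d|^2_{\rm HS}\,\rmd t .
\]
The first bracket is $\ge\operatorname{Var}(A)\ge0$ and its prefactor $\frac1{2\beta}$ decreases in $\beta$. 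Taking infima gives the third monotonicity. It also yields the a priori bounds
\[
C_{\rm SB}^\beta(x,\eta)\ge\tfrac12|x-\bar\eta|^2
\quad\text{and}\quad
\tfrac12\operatorname{Var}(A)\le C_{\rm SB}^\beta(x,\eta)-\tfrac12|x-\bar\eta|^2+o(1)
\]
along $\varepsilon$-optimizers.

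\emph{Limit $\beta\uparrow\infty$.} For the upper bound, take $b\equiv I_d$ and Föllmer's drift, which gives $C_{\rm SB}^\beta\le H(\eta\,|\,\gamma_x)$. For the lower bound, use $C_{\rm SB}^\beta=C^\beta_{\rm stat}$ from \cref{lem:SB.cost} with the choice $y=x$:
\[
C_{\rm SB}^\beta(x,\eta)\ge H(\rho_\beta\,|\,\gamma_x)+W_\beta(\rho_\beta,\eta),
\]
where $\rho_\beta$ is the minimizer. The left-hand side is bounded by $H(\eta\,|\,\gamma_x)$ if that is finite; if it is infinite, argue along a subsequence with bounded values. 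In either case $\Wcal_2^2(\rho_\beta,\eta)\le 2C/\beta\to0$, so $\rho_\beta\to\eta$ weakly. Weak lower semicontinuity of $H(\cdot\,|\,\gamma_x)$ then gives $\liminf_\beta C_{\rm SB}^\beta\ge H(\eta\,|\,\gamma_x)$.

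\emph{Limits $\beta\downarrow0$.} For the upper bound, let $\tilde\eta$ be $\eta$ translated to mean zero. Take the constant drift $a_t\equiv\bar\eta-x$ and, independently of it, a martingale $\rmd M_t=b_t\rmd B_t$ with $M_0=0$ and $M_1\sim\tilde\eta$, chosen optimal for $V_{\rm mBB}(\delta_0,\tilde\eta)$. Since $\Cpl_M(\delta_0,\tilde\eta)=\{\delta_0\otimes\tilde\eta\}$, its cost is $\frac12\Wcal_2^2(\tilde\eta,\gamma)=\frac12\Wcal_2^2(\eta,\gamma_{\bar\eta})$. This gives
\[
C_{\rm SB}^\beta(x,\eta)\le\tfrac12|x-\bar\eta|^2+\tfrac\beta2\Wcal_2^2(\eta,\gamma_{\bar\eta}).
\]
Combined with the Jensen lower bound, this yields the second limit and the $\limsup$ half of the third.

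The main obstacle is the matching $\liminf$ for the third limit. Take $\varepsilon\beta$-optimizers $(a^\beta,b^\beta)$. By the displayed identity and the upper bound, $\operatorname{Var}(A^\beta)\le 2\beta K\to0$ and $\frac12\mathbb E\int_0^1|b^\beta_t-I_d|^2_{\rm HS}\,\rmd t\le K$. Write $X_1-\bar\eta=N^\beta+(A^\beta-\mathbb EA^\beta)$. Then $\operatorname{Law}(N^\beta)\to\tilde\eta$ in $\Wcal_2$, because the second summand tends to $0$ in $L^2$. Since $N^\beta$ is a martingale terminal value from $\delta_0$, the static Bass formula gives
\[
\tfrac12\mathbb E\!\int_0^1|b^\beta_t-I_d|^2_{\rm HS}\,\rmd t\ge V_{\rm mBB}\bigl(\delta_0,\operatorname{Law}(N^\beta)\bigr)=\tfrac12\Wcal_2^2\bigl(\operatorname{Law}(N^\beta),\gamma\bigr),
\]
and the right-hand side converges to $\frac12\Wcal_2^2(\eta,\gamma_{\bar\eta})$ by continuity of $\Wcal_2$. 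Dropping the nonnegative drift term in the identity then gives $\liminf_{\beta\downarrow0}\frac1\beta\bigl(C_{\rm SB}^\beta-\frac12|x-\bar\eta|^2\bigr)\ge\frac12\Wcal_2^2(\eta,\gamma_{\bar\eta})$.
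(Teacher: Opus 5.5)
Your proof is correct, but it takes a different route from the paper's.

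\textbf{The paper's route.} The paper does not use controls here. It works with the static, mean-constrained representation \eqref{eq:SB.cost.rep2}, $C_{\rm SB}^\beta(x,\eta)=\inf_{\bar\kappa=\bar\eta}\{H(\kappa\,|\,\gamma_x)+\frac\beta2\Wcal_2^2(\kappa,\eta)\}$, and its centred form \eqref{lem:stat.cost.repr3}, and reads all three monotonicities off these formulas. The lower bound $\frac12|x-\bar\eta|^2$ comes from $H(\kappa\,|\,\gamma_x)\ge H(\gamma_{\bar\eta}\,|\,\gamma_x)$ under the mean constraint. The upper bounds come from the test measures $\kappa=\eta$, $\kappa=\gamma_{\bar\eta}$ and $\zeta=\gamma$. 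Both lim-inf statements follow from the behaviour of the static optimizers $\alpha_\beta$ ($\alpha_\beta\to\eta$ as $\beta\uparrow\infty$, $\alpha_\beta\to\gamma_{\bar\eta}$ as $\beta\downarrow0$), combined with lower semicontinuity of $H$ and of $\Wcal_2^2$.

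\textbf{Your route.} You argue control by control, and each step has a dynamic counterpart in the paper's argument:
\begin{itemize}
\item The monotonicities hold for each fixed $(a,b)$.
\item Your Jensen/variance splitting of $A=\int_0^1a_t\,\rmd t$ replaces the mean constraint and the Gaussian entropy bound.
\item Your explicit controls replace the paper's test measures: the F\"ollmer drift with $b\equiv I_d$, and the constant drift $\bar\eta-x$ plus the Bass martingale $\mathbb E[T(B_1)\,|\,\mathcal F_t]$, where $T$ is the Brenier map from $\gamma$ to the centred $\eta$.
\item The final lim-inf uses $\operatorname{Var}(A^\beta)\to0$ and It\^o's isometry instead of weak convergence of $\alpha_\beta$.
\end{itemize}

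\textbf{What each buys.} Your route needs only the elementary inequality $C_{\rm SB}^\beta\ge C_{\rm stat}^\beta$ (Claim~1 in the proof of \cref{lem:SB.cost}, with $y=x$). It does not need the full identification $C_{\rm SB}^\beta=C_{\rm stat}^\beta$, whose hard direction rests on the dual attainment of \cref{prop:SB_deconv_attained}. It also shows explicitly how the optimal controls degenerate as $\beta\downarrow0$. The paper's route is shorter once \cref{lem:stat.cost} is available, and it stays within the static weak-transport framework used in the proof of \cref{thm:SB.limits}.

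\textbf{Two minor remarks.} First, $\mathbb E[A]=\bar\eta-x$ uses that $N$ is a true martingale, so restrict to finite-cost controls from the outset. Second, the bound $\frac12\mathbb E\int_0^1|b^\beta_t-I_d|^2_{\rm HS}\,\rmd t\ge\frac12\Wcal_2^2(\operatorname{Law}(N^\beta),\gamma)$ does not need the static Bass theorem. It\^o's isometry gives $\mathbb E\int_0^1|b^\beta_t-I_d|^2_{\rm HS}\,\rmd t=\mathbb E|N^\beta-B_1|^2$ directly.
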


\begin{proof}
    The monotonicity assertions follow immediately from \cref{lem:SB.cost,lem:stat.cost}, specifically \eqref{eq:SB.cost.rep2} and \eqref{lem:stat.cost.repr3}.
    
    For $\beta>0$, let $\alpha_\beta \in \cP_2(\R^d)$ with $\bar\alpha_\beta=\bar\eta$ satisfy
    \[
        C^\beta_{\rm SB}(x,\eta) 
        = 
        \inf_{\substack{\kappa \in \cP_2(\Rd), \\ \bar\kappa = \bar\eta}} \bigl\{H(\kappa \,|\, \gamma_x) + W_\beta(\kappa,\eta)\bigr\}
        =
        H(\alpha_\beta \,|\, \gamma_x) + W_\beta(\alpha_\beta,\eta).
    \]
    
    We first treat the case $\beta\uparrow\infty$. 
    If ${\sup_{\beta > 0}C^\beta_{\rm SB}(x,\eta) < \infty}$, then ${\sup_{\beta > 0} H(\alpha_\beta \,|\, \gamma_x) < \infty}$ and hence ${(\alpha_\beta)_{\beta > 0}}$ is tight.
    Moreover, since $W_\beta = \frac\beta2\Wcal_2^2$, this gives $\alpha_\beta\to\eta$ in $\cP_2(\Rd)$ and in particular weakly.
    Consequently,
    \[
        \sup_{\beta>0}\bigl\{H(\alpha_\beta \,|\, \gamma_x) + W_\beta(\alpha_\beta,\eta)\bigr\}
        \leq 
        H(\eta\,|\,\gamma_x)
        \leq 
        \liminf_{\beta\to\infty} H(\alpha_\beta \,|\, \gamma_x).
    \]
    Hence all inequalities are in fact equalities, and therefore
    \[
        \lim_{\beta\uparrow\infty} C_{\rm SB}^\beta(x,\eta)
        =
        H(\eta\,|\,\gamma_x).
    \]
    If, on the other hand, ${\sup_{\beta>0} C_{\rm SB}^\beta(x,\eta)=\infty}$, then necessarily ${H(\eta\,|\,\gamma_x)=\infty}$, so the same conclusion still holds.
    
    In the case $\beta \downarrow 0$, note that ${H(\kappa \,|\,\gamma_x) \ge H(\gamma_{\bar\eta} \,|\,\gamma_x)}$ for all $\kappa \in \cP(\Rd)$ with $\bar \kappa = \bar\eta$. Therefore
    \[
        C_{\rm SB}^\beta(x,\eta) \ge H(\gamma_{\bar\eta}\,|\,\gamma_x) = \frac12 |x - \bar\eta|^2.
    \]
    At the same time, ${C_{\rm SB}^\beta(x,\eta) \le H(\gamma_{\bar\eta}\,|\,\gamma_x) + W_\beta(\gamma_{\bar\eta},\eta)}$ so that
    \[
        \lim_{\beta \downarrow 0} C_{\rm SB}^\beta(x,\eta) = \frac12 |x - \bar\eta|^2.
    \]
    
    Finally, by \eqref{lem:stat.cost.repr3},
    \[
        C_{\rm SB}^\beta(x,\eta) - \frac12|x - \bar\eta|^2 = \inf_{\substack{\zeta \in \Pcal_2(\Rd),\\ \bar\zeta = 0}}\left\{ H(\zeta\,|\,\gamma) + \frac\beta2\left(\Wcal_2^2(\zeta,\eta) - |\bar\eta|^2\right)\right\},
    \]
    and hence, choosing $\zeta=\gamma$,
    \[
        \sup_{\beta > 0} \frac{1}{\beta}\left(C_{\rm SB}^\beta(x,\eta) - \frac12|\bar\eta-x|^2\right) 
        \le 
        \frac12 \left(\Wcal_2^2(\gamma,\eta) - |\bar\eta|^2\right) 
        = 
        \frac12 \Wcal_2^2(\eta,\gamma_{\bar\eta}).
    \]
    In particular, since 
    \[
        \frac1\beta \left(C_{\rm SB}^\beta(x,\eta) - \frac12|\bar\eta-x|^2\right) 
        = 
        \frac1\beta H(\alpha_\beta \,|\, \gamma_{\bar\eta}) + \frac12\Wcal_2^2(\alpha_\beta,\eta),
    \]
    we must have $\alpha_\beta \to \gamma_{\bar{\eta}}$ weakly as $\beta \downarrow 0$. Since $\Wcal_2^2$ is lower semicontinuous, we obtain
    \[
        \liminf_{\beta \downarrow 0} \frac1\beta \left(C_{\rm SB}^\beta(x,\eta) - \frac12 |\bar\eta -x|^2\right)\ge \liminf_{\beta \downarrow 0} \frac12 \Wcal_2^2(\alpha_\beta,\eta) \ge \frac12 \Wcal_2^2(\eta,\gamma_{\bar\eta}),
    \]
    and thus equality.
\end{proof}

With the cost asymptotics in hand, we now pass to the corresponding weak transport problems and their primal optimizers.

\begin{proof}[Proof of \cref{thm:SB.limits}]
    To prove \eqref{it:SB.limits:schrödinger}, recall from \cref{lem:SB.cost.limit} that ${C_{\rm SB}^\beta(x,\eta) \uparrow H(\eta \,|\, \gamma_x)}$ as $\beta \uparrow \infty$ for all ${x \in \Rd}$, ${\eta \in \cP_2(\Rd)}$. Then monotone convergence yields
    \begin{equation}\label{eq:proof:SB.limits.inequ}
        \lim_{\beta \uparrow \infty} V_{\rm SB}^\beta(\mu,\nu) 
        \leq  
        V_{\rm EOT}(\mu,\nu).
    \end{equation}
    Since $\pi^\beta \in \Cpl(\mu,\nu)$ for all $\beta > 0$, the family $(\pi^\beta)_{\beta > 0}$ is tight. Let therefore $(\pi^{\beta_n})_{n\in\N}$ be a subsequence with $\beta_n\uparrow\infty$ and ${\pi^{\beta_n} \to \pi^\infty}$ in $\cP_2(\Rd\times\Rd)$ for some ${\pi^\infty \in \Cpl(\mu,\nu)}$.
    Fix $\beta' > 0$. Since $\beta_n \uparrow \infty$, we have ${C_{\rm SB}^{\beta'}\le C_{\rm SB}^{\beta_n}}$ for all sufficiently large $n$. By \cref{lem:SB.cost}, $C_{\rm SB}^{\beta'}$ is a continuous standard weak transport cost. Therefore, by \cite[Proposition 2.8]{backhoff2019existence},
    \[
        \int C_{\rm SB}^{\beta'}(x,\pi^\infty_x)\,\mu(\rmd x)
        \le
        \liminf_{n\to\infty} \int C_{\rm SB}^{\beta'}(x,\pi^{\beta_n}_x)\,\mu(\rmd x)
        \le
        \lim_{n\to\infty} V_{\rm SB}^{\beta_n}(\mu,\nu)
        \le
        V_{\rm EOT}(\mu,\nu).
    \]
    By monotone convergence, this gives
    \[  
        \int H(\pi^\infty_x \,|\, \gamma_x) \, \mu(\rmd x)
        =
        \lim_{\beta' \uparrow \infty}\int C_{\rm SB}^{\beta'}(x,\pi^\infty_x)\,\mu(\rmd x)
        \le
        V_{\rm EOT}(\mu,\nu),
    \]
    and so $\pi^\infty$ is an optimizer of the Schrödinger problem~\eqref{eq:SB_limits.EOT}. In particular, we have equality in \eqref{eq:proof:SB.limits.inequ}.
    Since $V_{\rm EOT}(\mu,\nu) < \infty$, $V_{\rm EOT}(\mu,\nu)$ admits a unique optimizer, and hence $(\pi^\beta)_{\beta > 0}$ converges weakly to $\pi^\infty$ as $\beta \uparrow \infty$.

    We now turn to \eqref{it:SB.limits:BStr}. By \cref{lem:SB.cost.limit}, ${C_{\rm SB}^\beta(x,\eta) \downarrow \frac12|x-\bar\eta|^2}$ as $\beta \downarrow 0$ for all ${x \in \Rd}$, ${\eta \in \cP_2(\Rd)}$. Monotonicity of the infimum therefore yields
    \[
        \lim_{\beta\downarrow 0} V_{\rm SB}^\beta(\mu,\nu) = V_{\rm BStr}(\mu,\nu).
    \]
    Since $(\pi^\beta)_{\beta > 0}$ is tight, we choose a subsequence $(\pi^{\beta_n})_{n \in \N}$ with $\beta_n \downarrow 0$ such that $\pi^{\beta_n} \to \pi^0$ in $\cP_2(\Rd\times\Rd)$ for some ${\pi^0 \in \Cpl(\mu,\nu)}$.
    Since $(x,\eta) \mapsto \frac12 |x - \bar\eta|^2$ is a continuous standard weak transport cost, it follows from \cite[Proposition 2.8]{backhoff2019existence} that
    \[
        V_{\rm BStr}(\mu,\nu) 
        = 
        \lim_{n \to \infty} V_{\rm SB}^{\beta_n}(\mu,\nu) 
        \ge 
        \liminf_{n \to \infty} \int \frac12 |x - \bar\pi^{\beta_n}_x|^2 \, \mu(\rmd x) 
        \ge 
        \int \frac12 |x - \bar\pi^0_x|^2 \, \mu(\rmd x).
    \]
    Hence $\pi^0$ is an optimizer of the Brenier--Strassen problem~\eqref{eq:BStr}. 
    Fix $\beta > 0$. 
    If $\pi \in \Cpl_{\rm BStr}(\mu,\nu)$ is another optimizer of the Brenier--Strassen problem~\eqref{eq:BStr}, then \cite[Theorem 1.2]{gozlan2020mixture} yields $\bar\pi_x = \bar\pi^0_x$. 
    Set ${m(x) := \bar\pi^0_x}$.
    For $x \in \Rd$ and $\eta \in \cP_2(\Rd)$, define
    \begin{align*}
        \tilde C_{\rm SB}^\beta(\eta)
        :=  
        \frac1\beta\left(C_{\rm SB}^\beta(x,\eta) - \frac12|x - \bar\eta|^2\right).
    \end{align*}
    By \cref{lem:SB.cost,lem:stat.cost}, $\tilde C_{\rm SB}^\beta$ is a continuous standard weak transport cost.
    Since $\pi$ attains $V_{\rm BStr}(\mu,\nu)$, we obtain
    \begin{equation}\label{eq:proof:thm:SB.limits.mBB.inequ1}
        \int \tilde C_{\rm SB}^\beta(\pi_x) \, \mu(\rmd x)
        \ge 
        \frac1\beta \left(V_{\rm SB}^\beta(\mu,\nu) - V_{\rm BStr}(\mu,\nu)\right)
        \ge 
        \int \tilde C_{\rm SB}^\beta(\pi_x^\beta) \, \mu(\rmd x).
    \end{equation}
    Moreover, \cref{lem:SB.cost.limit} shows that $\tilde C_{\rm SB}^\beta(\eta) \uparrow \frac12\Wcal_2^2(\eta,\gamma_{\bar\eta})$ as $\beta \downarrow 0$ for all ${\eta \in \cP_2(\Rd)}$. Hence, by monotone convergence,
    \begin{align*}
        \int \frac12\Wcal_2^2(\pi_x, \gamma_{m(x)}) \, \mu(\rmd x) 
        &= 
        \lim_{n \to \infty} \int \tilde C_{\rm SB}^{\beta_n}(\pi_x) \, \mu(\rmd x)
        \ge
        \liminf_{n \to \infty} \int \tilde C_{\rm SB}^{\beta_n}(\pi_x^{\beta_n}) \, \mu(\rmd x).
    \end{align*}
    Fix $\beta' > 0$. Then, $\tilde C_{\rm SB}^{\beta'} \le \tilde C_{\rm SB}^{\beta_n}$ for all sufficiently large $n \in \N$, so that
    \[
        \liminf_{n\to\infty} \int \tilde C_{\rm SB}^{\beta_n}(\pi_x^{\beta_n})\,\mu(\rmd x)
        \ge
        \liminf_{n\to\infty} \int \tilde C_{\rm SB}^{\beta'}(\pi_x^{\beta_n})\,\mu(\rmd x)
        \ge 
        \int \tilde C_{\rm SB}^{\beta'}(\pi^0_x)\,\mu(\rmd x),
    \]
    where the last inequality follows from \cite[Proposition 2.8]{backhoff2019existence}.
    By monotone convergence, this gives
    \begin{equation}\label{eq:proof:thm:SB.limits.mBB.inequ2}
        \int \frac12\Wcal_2^2(\pi_x, \gamma_{m(x)}) \, \mu(\rmd x) 
        \ge 
        \lim_{\beta' \downarrow 0}\int \tilde C_{\rm SB}^{\beta'}(\pi^0_x)\,\mu(\rmd x)
        =
        \int \frac12\Wcal_2^2(\pi^0_x, \gamma_{m(x)}) \, \mu(\rmd x).
    \end{equation}
    Thus $\pi^0$ attains
    \[
        \inf_{\pi \in \Cpl_{\rm BStr}(\mu,\nu)} \int \frac12 \Wcal_2^2(\pi_x,\gamma_{m(x)}) \, \mu(\rmd x).
    \]
    Since, for each $x \in \R^d$, the map $\eta \mapsto \Wcal_2^2(\eta,\gamma_{m(x)})$ is strictly convex, $\pi^0$ is the unique such minimizer.
    In particular, $\pi^\beta \to \pi^0$ weakly as $\beta \downarrow 0$.

    If $\mu \leq_{\rm cvx}\nu$, then $V_{\rm BStr}(\mu,\nu) = 0$ and its minimizers are precisely $\Cpl_M(\mu,\nu)$. In addition, ${m(x) = x}$ for $\mu$-a.e.\ $x$. Therefore $\pi^0$ is, in this case, the unique minimizer of \eqref{eq:SB_limits.mBB}. Moreover, it follows from \eqref{eq:proof:thm:SB.limits.mBB.inequ1} and \eqref{eq:proof:thm:SB.limits.mBB.inequ2} that, for all $\pi \in \Cpl_M(\mu,\nu)$,
    \[
        \int \frac{1}{2} \Wcal_2^2(\pi_x,\gamma_x)\,\mu(\rmd x) \ge \lim_{\beta \downarrow 0} \frac1\beta V_{\rm SB}^\beta(\mu,\nu) \geq \int \frac12\Wcal_2^2(\pi^0_x, \gamma_x) \, \mu(\rmd x).
    \]
    Hence $\frac1\beta V_{\rm SB}^\beta(\mu,\nu) \uparrow V_{\rm mBB}(\mu,\nu)$ as $\beta \downarrow 0$. This proves \eqref{it:SB.limits:mBB}.
\end{proof}

\Cref{thm:SB.limits} shows that the optimal values and optimizers of the Schrödinger--Bass problem~\eqref{eq:SB.wot} converge to those of the Schrödinger problem~\eqref{eq:SB_limits.EOT} as $\beta\uparrow\infty$ and, if $\mu\le_{\rm cvx}\nu$ and after rescaling by $\beta^{-1}$, to those of the Bass problem~\eqref{eq:SB_limits.mBB} as $\beta\downarrow0$. In the latter case, this yields the first-order expansion
\[
    V_{\rm SB}^\beta(\mu,\nu) 
    = 
    V_{\rm BStr}(\mu,\nu) + \beta\,V_{\rm mBB}(\mu,\nu) + o(\beta).
\]
We conclude by showing that the dual potentials, rescaled by $\beta^{-1}$, converge to a Bass potential as $\beta \downarrow 0 $ whenever such a potential exists; see also \cref{rem:bass_limit}.

\begin{proposition}\label{prop:sb_tobass_dual}
    Let $\mu, \nu \in \cP_2(\Rd)$ be such that $(\mu,\nu)$ is irreducible. For $\beta > 0$, let $f_\beta$ denote the dual optimizer of the Schrödinger--Bass problem~\eqref{eq:SB.dual}. Then, as $\beta \downarrow 0$, $\bigl(q_1-\frac{1}{\beta}f_\beta\bigr)_{\beta > 0}$ epi-converges, up to affine normalization, to a Bass potential on $\operatorname{ri}\bigl(\operatorname{co}(\operatorname{supp}(\nu))\bigr)$.
\end{proposition}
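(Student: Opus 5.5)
Set $v_\beta := q_1 - \frac1\beta f_\beta$, which is convex since $f_\beta$ is $\beta$-semiconcave. By \cref{thm:SB}, $\nu = (\nabla v_\beta^*)_\#\rho_\beta$, where $\rho_\beta = \alpha_\beta * \gamma$ is the Schrödinger endpoint. We aim to show three things: $\alpha_\beta$ converges to a Bass measure $\alpha$, $\rho_\beta \to \alpha*\gamma$, and the Brenier potentials $v_\beta^*$ converge to $\psi^*$ for a Bass potential $\psi$.

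\emph{Step 1: structure as $\beta\downarrow0$.} Write $g_1^\beta = \exp(\beta v_\beta^* - q_\beta)$ and $\chi^\beta_y \propto g_1^\beta\,\gamma_y$. To first order in $\beta$, $\chi^\beta_y \approx \gamma_y$, so $\rho_\beta \approx \alpha_\beta*\gamma$. The map $\nabla u_\beta$ is close to the identity, with $\nabla u_\beta(x) - x = O(\beta)$ after rescaling, and $X_0 = \nabla u_\beta^*(Y_0)$ becomes the barycenter relation $x = \int \nabla v_\beta^*\,\rmd\chi^\beta_{\nabla u_\beta(x)}$. We expect the limit to be $x = (\nabla v^* * \gamma)(y)$, which is exactly the Bass structure $\nabla\psi^* * \gamma$ pushing $\alpha$ to $\mu$.

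\emph{Step 2: compactness.} We normalize $v_\beta$ by an affine function; for example, fix $v_\beta$ and $\nabla v_\beta$ at an interior point of $I_\nu := \operatorname{ri}(\operatorname{co}(\operatorname{supp}\nu))$. We then want uniform bounds on $v_\beta$ over compact subsets of $I_\nu$. The value bound is $\frac1\beta V_{\rm SB}^\beta \le V_{\rm mBB} < \infty$ by \cref{thm:SB.limits}. Combined with the rescaled dual identity $\frac1\beta \mathcal D_\beta[f_\beta] = \int q_1\,\rmd\nu - \int v_\beta\,\rmd\nu + \dots$, this should control $\int v_\beta\,\rmd\nu$ relative to $\int (v_\beta^* * \gamma)^*\,\rmd\mu$. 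The expected rescaled dual is \eqref{eq:bass_problem.dual}. Irreducibility is what turns these integral bounds into local boundedness on $I_\nu$. We would argue as in existence proofs for Bass potentials (\cite{backhoff2023existence}): if the convex functions blow up along a direction, irreducibility forces a positive cost through every martingale coupling, which contradicts the bound. Convex functions that are locally bounded on $I_\nu$ are precompact in epi-convergence there.

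\emph{Step 3: identification of the limit.} Take a subsequential epi-limit $v$ and set $\psi := v$, extended lower semicontinuously. Using the monotone convergence of $\tilde C^\beta_{\rm SB}$ from \cref{lem:SB.cost.limit} and the convergence $\pi^\beta \to \pi^0$ (the Bass coupling), we pass to the limit in the complementary slackness relations, as in the proof of \cref{lem:continuity_iteration_map}. Tightness of $\alpha_\beta$ follows from $\frac\beta2\Wcal_2^2(\mu,\alpha_\beta)$ being $O(\beta)$ after rescaling. The result is $(\nabla\psi^*)_\#(\alpha*\gamma)=\nu$ and $(\nabla\psi^**\gamma)_\#\alpha=\mu$, so $\psi$ is a Bass potential. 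By uniqueness of Bass potentials up to affine normalization for irreducible pairs (\cite{backhoff2023existence}), the limit does not depend on the subsequence, and the whole family converges.

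The main obstacle is Step 2. The available integral bounds only control $v_\beta$ in $L^1(\nu)$-type senses, and Bass potentials need not be $\nu$-integrable. Our first attempt would be the irreducibility argument from the Bass existence theory, applied uniformly in $\beta$.
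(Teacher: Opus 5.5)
Your Step~2 is not a detail that can be filled in later. It is the entire content of the proposition, and as you set it up it cannot be closed. The only quantitative input you extract is the upper bound $\frac1\beta V_{\rm SB}^\beta(\mu,\nu)\le V_{\rm mBB}(\mu,\nu)$, which places no constraint on $v_\beta$. Compactness requires a \emph{lower} bound on the Bass dual functional evaluated at $v_\beta$. ``Arguing as in the existence theory, uniformly in $\beta$'' is a pointer, not an argument.

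The missing observation is that $v_\beta=q_1-\frac1\beta f_\beta$ is itself an admissible candidate for \eqref{eq:bass_problem.dual} and is asymptotically optimal there. By \cref{lem:SB.cost.limit}, $\frac1\beta C^\beta_{\rm SB}\le C_{\rm mBB}$, where $C_{\rm mBB}(x,\eta)=\frac12\Wcal_2^2(\eta,\gamma_x)$ if $\bar\eta=x$ and $+\infty$ otherwise. Set $\varphi_\beta:=\frac1\beta f_\beta$. Its $C_{\rm mBB}$-transform dominates its $\frac1\beta C^\beta_{\rm SB}$-transform, which in turn dominates $-\frac1\beta q_\beta\Box(-\Tcal^\beta_1[f_\beta])$ by \cref{lem:stat.cost.dual.inequ}. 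Since $\varphi_\beta^{C_{\rm mBB}}=\frac d2-q_1+(v_\beta^{*}*\gamma)^*$, integrating and using weak duality for the Bass problem against a martingale coupling gives
\[
    \tfrac1\beta V_{\rm SB}^\beta(\mu,\nu)
    =\tfrac1\beta\mathcal D_\beta[f_\beta]
    \le
    \int\Bigl(\tfrac d2-q_1+(v_\beta^{*}*\gamma)^*\Bigr)\,\rmd\mu+\int(q_1-v_\beta)\,\rmd\nu
    \le
    V_{\rm mBB}(\mu,\nu).
\]
The left-hand side tends to $V_{\rm mBB}(\mu,\nu)$ by \cref{thm:SB.limits}, since irreducibility implies $\mu\le_{\rm cvx}\nu$. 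So $(v_\beta)$ is a maximizing family for \eqref{eq:bass_problem.dual}.

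This survives any affine normalization. The middle expression is invariant under adding an affine $\ell$ to $\varphi_\beta$, because $\bar\mu=\bar\nu$ and $(\varphi+\ell)^{C_{\rm mBB}}=\varphi^{C_{\rm mBB}}-\ell$. The compactness and identification you try to do by hand are then exactly the stability of the Bass dual: suitably normalized maximizing sequences epi-converge on $\operatorname{ri}(\operatorname{co}(\operatorname{supp}(\nu)))$ to a Bass potential when $(\mu,\nu)$ is irreducible \cite[Proposition~3.12]{hasenbichler2025martingalesinkhornalgorithm}. This reduction is how the paper argues.

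Independently of Step~2, Step~3 would not go through as written.
\begin{itemize}
\item Tightness of $(\alpha_\beta)$ does not follow from the value bound, because $W_\beta(\mu,\alpha)$ enters \eqref{eq:SB2} with a minus sign.
\item Passing to the limit in the complementary slackness relations needs integrability of $v_\beta$ and $v_\beta^*$, and convergence of $\nabla v_\beta^*$ on all of $\Rd$. You control none of these, and the limiting Bass potential need not even be $\nu$-integrable. You only aim at epi-convergence of $v_\beta$ on $I_\nu$.
\item $\rho_\beta\neq\alpha_\beta*\gamma$ for $\beta>0$. By \cref{thm:SB}, $\rho_\beta$ is the mixture of the tilted kernels $\chi^\circ_y\propto g_1\,\gamma_y$.
\end{itemize}
Step~1 is heuristic. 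Once the maximizing-sequence property is in place, Steps~1 and~3 become unnecessary.
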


\begin{proof}
    Set $\bar{C}^\beta_{\rm SB} := \frac{1}{\beta} C^\beta_{\rm SB}.$ 
    By \Cref{lem:SB.cost.limit}, we have ${\bar{C}^\beta_{\rm SB} \uparrow C_{\rm mBB}}$ as $\beta \downarrow 0$, where for $\eta \in \Pcal_2(\R^d)$ and $x \in \R^d$,
    \[
        C_{\rm mBB}(x,\eta) = 
        \begin{cases}
            \displaystyle \frac{1}{2} \Wcal_2^2(\eta,\gamma_x) & \bar{\eta}=x,\\
            +\infty & \text{otherwise}.
        \end{cases}
    \]
    Fix $\beta>0$. Let $\varphi_\beta:=\frac{1}{\beta}f_\beta$ a dual optimizer for $\frac1\beta V^\beta_{\rm SB}(\mu, \nu)$, and denote the corresponding $C$-transform by
    \[
        \varphi_\beta^{\bar{C}^\beta_{\rm SB}}(x) 
        := 
        \inf_{\rho \in \Pcal_2(\R^d)} \left\{ \bar{C}^\beta_{\rm SB}(x,\rho) - \rho(\varphi_\beta) \right\}.
    \]
    By optimality,
    \[
        \frac1\beta V^\beta_{\rm SB} 
        = 
        \int \varphi_\beta^{\bar{C}^\beta_{\rm SB}}\,\rmd\mu + \int \varphi_\beta\,\rmd\nu.
    \]
    Let $\pi^0 \in \Cpl_M(\mu,\nu)$ be a Bass--martingale coupling; see \cref{rem:bass_limit}. 
    Since ${\bar{C}^\beta_{\rm SB} \leq C_{\rm mBB}}$, we have for $\mu$-a.e.\ $x$,
    \[
        \varphi_\beta^{\bar{C}^\beta_{\rm SB}}(x) \leq \varphi_\beta^{C_{\rm mBB}}(x),
    \]
    where
    \[
        \varphi_\beta^{C_{\rm mBB}}(x) = \frac{d}{2}-q_1+\big((q_1-\varphi_\beta)^**\gamma\big)^*.
    \]
    Disintegrating $\pi^0(\rmd x,\rmd y)=\mu(\rmd x)\pi^0_x(\rmd y)$, we obtain
    \begin{equation} \label{eq:SB_to_B_maximizing_sequence}
        \frac1\beta V^\beta_{\rm SB} 
        = \int \!\left(\int\varphi_\beta\,\rmd\pi^0_x + \varphi_\beta^{\bar{C}^\beta_{\rm SB}}(x)\right)\mu(\rmd x) 
        \le 
        \int \!\left(\int\varphi_\beta\,\rmd\pi^0_x + \varphi_\beta^{C_{\rm mBB}}(x)\right)\mu(\rmd x)
        \le 
        V_{\rm mBB}(\mu,\nu).
    \end{equation}
    The last inequality follows from the dual formulation of the Bass problem~\eqref{eq:bass_problem.dual}.
    
    For $\beta > 0$ set $\hat\varphi_\beta :=\varphi_\beta+\ell_\beta$, where $\ell_\beta$ is an affine function such that $\hat\varphi_\beta\leq 0$ with equality at $\bar{\mu}$; see \cite[Lemma 3.2]{hasenbichler2025martingalesinkhornalgorithm}. Recall also that for $\ell(x)=a\cdot x+b$ one has ${(\varphi+\ell)^{C_{\rm mBB}}=\varphi^{C_{\rm mBB}}-\ell}$, and hence, for $\mu$-a.e.\ $x$,
    \[
        \int\hat\varphi_\beta\,\rmd\pi^0_x + \hat\varphi_\beta^{C_{\rm mBB}}(x) = \int\varphi_\beta\,\rmd\pi^0_x + \varphi_\beta^{C_{\rm mBB}}(x).
    \]
    Let $(\beta_n)_{n \in \N}$ be an arbitrary null-sequence. By \Cref{thm:SB.limits}, 
    \[
        \lim_{n \to \infty }\frac1{\beta_n} V^{\beta_n}_{\rm SB}(\mu, \nu) 
        =
        V_{\rm mBB}(\mu,\nu). 
    \]
    In view of \eqref{eq:SB_to_B_maximizing_sequence}, $(q_1-\hat\varphi_{\beta_n})_{n \in \N}$ is a maximizing sequence for \eqref{eq:bass_problem.dual}. By \cite[Proposition 3.12]{hasenbichler2025martingalesinkhornalgorithm}, the sequence ${(q_1-\hat\varphi_{\beta_n})_{n \in \N}}$ epi-converges to a Bass potential on ${\operatorname{ri}\bigl(\operatorname{co}(\operatorname{supp}(\nu))\bigr)}$.
\end{proof}

\appendix

\section{Auxiliary results and postponed proofs} \label{sec:aux_results}

\begin{lemma} \label{lem:convexity_WOT}
    Let ${W:\Pcal_p(\mathcal{X})\times\Pcal_p(\mathcal{Y})\to\R\cup\{+\infty\}}$ be a standard weak transport problem. Then $W$ is jointly convex.
\end{lemma}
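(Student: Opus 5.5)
The strategy is to take convex combinations of marginals and glue the corresponding near-optimal couplings together, using convexity of the cost in its measure argument. Fix $(\mu_1,\nu_1),(\mu_2,\nu_2)\in\Pcal_p(\mathcal X)\times\Pcal_p(\mathcal Y)$ and $\lambda\in(0,1)$. Set $\mu:=\lambda\mu_1+(1-\lambda)\mu_2$ and $\nu:=\lambda\nu_1+(1-\lambda)\nu_2$. We must show
\[
W(\mu,\nu)\le \lambda W(\mu_1,\nu_1)+(1-\lambda)W(\mu_2,\nu_2).
\]
We may assume both terms on the right are finite. Choose couplings $\pi^i\in\Cpl(\mu_i,\nu_i)$ with $\int C(x,\pi^i_x)\,\mu_i(\rmd x)\le W(\mu_i,\nu_i)+\varepsilon$; these are optimizers if primal attainment holds, but near-optimal ones suffice. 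Then define $\pi:=\lambda\pi^1+(1-\lambda)\pi^2\in\Cpl(\mu,\nu)$.

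The key step is to identify the disintegration of $\pi$ with respect to $\mu$. Each $\mu_i$ is absolutely continuous with respect to $\mu$, with densities $h_1:=\frac{\rmd\mu_1}{\rmd\mu}$ and $h_2:=\frac{\rmd\mu_2}{\rmd\mu}$ satisfying $\lambda h_1+(1-\lambda)h_2=1$ $\mu$-a.e. A direct check against test functions $\phi(x)\psi(y)$ gives, for $\mu$-a.e. $x$,
\[
\pi_x=\lambda h_1(x)\,\pi^1_x+(1-\lambda)h_2(x)\,\pi^2_x .
\]
This is a convex combination with weights $\lambda h_1(x)$ and $(1-\lambda)h_2(x)$, which sum to one. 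On the set where some $h_i=0$, the kernel $\pi^i_x$ is irrelevant, so choosing an arbitrary version of it causes no trouble.

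Convexity of $\rho\mapsto C(x,\rho)$ then yields
\[
C(x,\pi_x)\le \lambda h_1(x)C(x,\pi^1_x)+(1-\lambda)h_2(x)C(x,\pi^2_x).
\]
The convention $0\cdot(+\infty)=0$ does no harm here, since on $\{h_i=0\}$ we can redefine $\pi^i_x$ to be a point where $C$ is finite, or simply note that the term drops out. Integrating against $\mu$, and using $h_i\,\mu=\mu_i$, gives
\[
W(\mu,\nu)\le\int C(x,\pi_x)\,\mu(\rmd x)\le\lambda W(\mu_1,\nu_1)+(1-\lambda)W(\mu_2,\nu_2)+\varepsilon .
\]
The integrals are well defined because $C$ is bounded from below. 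Letting $\varepsilon\downarrow0$ concludes the proof.

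The main technical point, though a mild one, is justifying the disintegration formula together with measurability. This amounts to verifying $\int\phi(x)\int\psi\,\rmd\pi_x\,\mu(\rmd x)=\int\phi\psi\,\rmd\pi$ for the proposed kernel, which follows immediately from the definitions of $h_i$. Uniqueness of disintegrations $\mu$-a.e. then identifies $\pi_x$.
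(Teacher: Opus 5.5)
Your argument is correct, but it takes a different route from the paper. The paper never works with disintegrations. It invokes the lifted formulation of \cite[Lemma~2.1]{backhoff2019existence},
$W(\mu,\nu)=\inf_{P\in\Lambda(\mu,\nu)}\int C_W(x,\rho)\,P(\rmd x,\rmd\rho)$, where $\Lambda(\mu,\nu)$ consists of those $P\in\Pcal(\mathcal X\times\Pcal(\mathcal Y))$ whose first marginal is $\mu$ and whose second marginal has intensity $\nu$. Since $\lambda\Lambda(\mu_1,\nu_1)+(1-\lambda)\Lambda(\mu_2,\nu_2)\subseteq\Lambda(\lambda\mu_1+(1-\lambda)\mu_2,\lambda\nu_1+(1-\lambda)\nu_2)$ and the objective is linear in $P$, joint convexity is immediate. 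Convexity of $C_W(x,\cdot)$ enters only inside the cited lemma, through a Jensen step showing that the lifted problem is not smaller than the original.

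You instead stay at the level of couplings. You identify the kernel of the mixture as $\lambda h_1(x)\pi^1_x+(1-\lambda)h_2(x)\pi^2_x$, with weights depending on $x$, and apply convexity of $C(x,\cdot)$ pointwise. Your version is self-contained: it needs only uniqueness of disintegrations and the change of measure $h_i\,\mu=\mu_i$ under the convention $0\cdot(+\infty)=0$. The paper's version is shorter given the citation, and it is the same lifting the paper reuses in \cref{lem:appendix:coercivity_gives_lsc}.

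One small correction. Your suggestion to redefine $\pi^i_x$ on $\{h_i=0\}$ ``to be a point where $C$ is finite'' is unnecessary and may be impossible for a given $x$, because properness of $C$ only guarantees a finite value somewhere on $\mathcal X\times\Pcal_p(\mathcal Y)$. The convention $0\cdot(+\infty)=0$, together with $\mu_i(\{h_i=0\})=0$, already handles that set.
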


\begin{proof}
    For $\mu\in \Pcal_p(\mathcal{X})$ and $\nu \in \Pcal_p(\mathcal{Y})$, we set
    \[
        \Lambda(\mu,\nu) := \left\{P \in \Pcal(\mathcal{X} \times \Pcal(\mathcal{Y}))\colon {\rm pr}^\mathcal{X}_\# P=\mu, \, I({\rm pr}^{\Pcal (\mathcal{Y})}_\# P) = \nu\right\}.
    \]
    In \cite[Lemma~2.1]{backhoff2019existence} it is shown that
    \[
        W(\mu,\nu) = \inf_{P \in \Lambda(\mu,\nu)} \int_{\mathcal{X} \times \Pcal(\mathcal{Y})} C_W(x,\rho) \, P(\rmd x, \rmd \rho).
    \]
    Let $\lambda \in [0,1]$, choose $\mu_1,\mu_2\,\in \Pcal_p(\mathcal{X})$ and $\nu_1,\nu_2 \in \Pcal_p(\mathcal{Y})$, and set $\mu:=\lambda\mu_1+(1-\lambda)\mu_2$, $\nu:=\lambda\nu_1+(1-\lambda)\nu_2$. Then, for any $P_1 \in \Lambda(\mu_1,\nu_1)$, $P_2 \in \Lambda(\mu_2,\nu_2)$, we have $\lambda P_1+(1-\lambda)P_2 \in \Lambda(\mu,\nu)$. Hence,
    \[
        \inf_{P \in \Lambda(\mu,\nu)} \int_{\mathcal{X} \times \Pcal(\mathcal{Y})} C_W(x,\rho) \, P(\rmd x, \rmd \rho) \leq \lambda\int_{\mathcal{X} \times \Pcal(\mathcal{Y})} C_W(x,\rho) \, P_1(\rmd x, \rmd \rho) + (1-\lambda)\int_{\mathcal{X} \times \Pcal(\mathcal{Y})} C_W(x,\rho) \, P_2(\rmd x, \rmd \rho).
    \]  
    Since this inequality holds for all $P_1 \in \Lambda(\mu_1,\nu_1)$ and $P_2 \in \Lambda(\mu_2,\nu_2)$, taking the infimum over $P_1$ and $P_2$ yields the claim.
\end{proof}

\begin{lemma}\label{lem:appendix:coercivity_gives_lsc} Let ${V:\Pcal_p(\mathcal{X})\times\Pcal_p(\mathcal{Y})\to\R\cup\{+\infty\}}$ and ${W:\Pcal_p(\mathcal{Y})\times\Pcal_p(\mathcal{Z})\to\R\cup\{+\infty\}}$ be standard weak transport problems
    such that $(C_V,W)$ satisfies the coercivity assumption~\Cref{aspt:coercivity} for some $r \in [1,p]$. Then
    \[
        (x,\rho,\eta) \longmapsto C_V(x,\rho) + W(\rho,\eta)
    \]
    is lower semicontinuous on $\mathcal X \times (\mathcal{P}_p(\mathcal Y), \Wcal_r) \times \cP_p(\mathcal Z)$. Moreover,
    \[
        (\mu,\rho,\nu) \longmapsto V(\mu,\rho) + W(\rho,\nu)
    \]
    is lower semicontinuous on $(\cP_p(\mathcal X), \tau_w) \times (\mathcal{P}_p(\mathcal Y), \Wcal_r) \times \cP_p(\mathcal Z)$.
\end{lemma}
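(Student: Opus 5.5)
For the first claim, I will take a sequence $(x_k,\rho_k,\eta_k)\to(x,\rho,\eta)$ in $\mathcal X\times(\cP_p(\mathcal Y),\Wcal_r)\times\cP_p(\mathcal Z)$ and show
\[
C_V(x,\rho)+W(\rho,\eta)\le\liminf_k \bigl\{C_V(x_k,\rho_k)+W(\rho_k,\eta_k)\bigr\}.
\]
If the liminf is $+\infty$ there is nothing to prove. Otherwise I pass to a subsequence along which the sum converges to some finite $c$, so that it is eventually at most $c+1$. The set $K:=\{(x,\eta)\}\cup\{(x_k,\eta_k):k\in\N\}$ is compact in $\mathcal X\times\cP_p(\mathcal Z)$. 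Since $(x_k,\eta_k)\in K$, each $\rho_k$ (for large $k$) lies in the sublevel set
\[
\Bigl\{\rho:\inf_{(x',\eta')\in K}\{C_V(x',\rho)+W(\rho,\eta')\}\le c+1\Bigr\},
\]
which is compact in $(\cP_p(\mathcal Y),\Wcal_r)$ by \Cref{aspt:coercivity}.

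The key step is to upgrade convergence of $(\rho_k)$ to convergence in $\Wcal_p$. Compactness in $\Wcal_r$ alone does not give this, so I will use that $C_V$ is lower semicontinuous on $\mathcal X\times(\cP_p(\mathcal Y),\Wcal_p)$ and $W$ is lower semicontinuous with respect to $\tau_w$ in $\rho$. The difficulty is that $C_V$ need not be lsc for $\Wcal_r$.

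I see two routes around this. The first exploits that the sublevel set defined by an infimum over a compact set is itself closed. The second argues via the closure of the graph: if some subsequence had $p$-moments escaping, compactness would fail. Concretely, I will use that a sublevel set which is compact in $\Wcal_r$ and contains the $\rho_k$ also contains every $\Wcal_r$-limit point, here $\rho$. Compactness forces the $\Wcal_r$-topology and the $\Wcal_p$-topology to agree on it: the identity map from $(S,\Wcal_p)$ to $(S,\Wcal_r)$ is a continuous bijection. This does not immediately reverse, since it would require compactness in $\Wcal_p$, and that is the main obstacle.

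I will first try a direct argument. The $\Wcal_p$-lsc of $C_V$ on $\mathcal X\times\cP_p$ means that its epigraph is closed in the $\Wcal_p$ topology. On sets where the $p$-th moments are uniformly integrable, $\Wcal_r$ and $\Wcal_p$ convergence coincide. Where they are not uniformly integrable, I would use the representation of $C_V$ as a convex, weakly lsc function together with the lower bound from \cite[Theorem 2.9]{backhoff2019existence}. If this works, it gives $C_V(x,\rho)\le\liminf C_V(x_k,\rho_k)$ and $W(\rho,\eta)\le\liminf W(\rho_k,\eta_k)$ (the latter holds since $W$ is $\tau_w$-lsc), and summing yields the claim.

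For the second claim, I take $\mu_k\to\mu$ weakly, $\rho_k\to\rho$ in $\Wcal_r$ and $\nu_k\to\nu$, with the sum bounded. I will use optimal couplings $\pi^k\in\Cpl(\mu_k,\rho_k)$ for $V$ and lift them to $P^k:=(x\mapsto(x,\pi^k_x))_\#\mu_k$. The family $(P^k)$ is tight, by \cite[Lemma 2.4]{backhoff2019existence} in $\Wcal_r$. I then apply the lifted version of the first claim to the integrand $C_V(x,\pi_x)+W(\pi_x,\cdot)$. To avoid splitting $W$, I will instead use the joint lsc of $W$ on $\tau_w$ together with the lsc of $V$ via \cite[Proposition 2.8]{backhoff2019existence}. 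The coercivity enters exactly as above: it ensures that limits of the kernels stay in $\cP_p$, and that $V(\mu,\rho)\le\liminf V(\mu_k,\rho_k)$ on the bounded sublevel set.
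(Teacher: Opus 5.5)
There is a genuine gap, and it sits exactly at what you call the key step. Neither the upgrade of $\rho_k\to\rho$ from $\Wcal_r$ to $\Wcal_p$ nor a separate inequality $C_V(x,\rho)\le\liminf_k C_V(x_k,\rho_k)$ follows from the hypotheses. Take $\mathcal X=\{x_0\}$, $\mathcal Y=\mathcal Z=\R$, $p=2$, $r=1$, $W=\Wcal_2^2$, and $C_V(x_0,\rho):=\max\{0,M-m_2(\rho)\}$ with $m_2(\rho):=\int y^2\,\rho(\rmd y)$ and $M>0$. This is a standard weak transport cost, and
\[
    C_V(x_0,\rho)+\Wcal_2^2(\rho,\eta)=\max\{m_2(\rho),M\}+m_2(\eta)-2\sup_{\pi\in\Cpl(\rho,\eta)}\int yz\,\pi(\rmd y,\rmd z).
\]
The last term is continuous in $(\rho,\eta)$ along $\Wcal_1\times\Wcal_2$-convergent sequences with bounded second moments, and from this one checks that the coercivity assumption \Cref{aspt:coercivity} holds with $r=1$. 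Now take $\rho_k:=(1-\varepsilon_k)\delta_0+\varepsilon_k\delta_{\sqrt{M/\varepsilon_k}}$ with $\varepsilon_k\downarrow0$. It converges to $\delta_0$ in $\Wcal_1$ but not in $\Wcal_2$, and $C_V(x_0,\rho_k)=0<M=C_V(x_0,\delta_0)$. Meanwhile $C_V(x_0,\cdot)+W(\cdot,\delta_0)$ equals $M$ both along the sequence and at the limit. So $\Wcal_r$-compact sublevel sets need not be $\Wcal_p$-compact, and lower semicontinuity is a property of the sum, not of its summands. Any argument that splits the sum therefore cannot succeed, whether in the first claim or via $V(\mu,\rho)\le\liminf_k V(\mu_k,\rho_k)$ in the second.

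The idea you need is the germ of your `first route', pushed one step further. Closedness of the compact sublevel sets means that $h_K(\rho'):=\inf_{(x',\eta')\in K}\{C_V(x',\rho')+W(\rho',\eta')\}$ is itself $\Wcal_r$-lower semicontinuous, so $h_K(\rho)\le\liminf_k h_K(\rho_k)\le c$. But this only controls the infimum over $K$, not the value at $(x,\eta)$. The paper localizes by using the shrinking compacts $K_n:=\{(x,\eta)\}\cup\{(x_k,\eta_k):k\ge n\}$. Set $F:=C_V(\cdot,\rho)+W(\rho,\cdot)$ for the \emph{frozen} limit $\rho$. Then $\min\{F(x,\eta),\inf_{k\ge n}F(x_k,\eta_k)\}=h_{K_n}(\rho)\le c$. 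Since $F$ is lower semicontinuous on $\mathcal X\times\cP_p(\mathcal Z)$ (no $\Wcal_r$ issue arises once $\rho$ is fixed), letting $n\to\infty$ gives $F(x,\eta)\le c$.

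For the second claim, your lift $(x,\pi^k_x)$ is too small, since it does not carry the $\mathcal Z$-side. The paper lifts to $P\in\cP(\mathcal X\times\cP_p(\mathcal Y)\times\cP_p(\mathcal Z))$ with first marginal $\mu$ and intensities $\rho$ and $\nu$. It proves $V(\mu,\rho)+W(\rho,\nu)=\inf_P\int[C_V(x,p)+W(p,q)]\,P(\rmd x,\rmd p,\rmd q)$. One inequality follows by Jensen, using convexity of $C_V(x,\cdot)$, joint convexity of $W$ (\Cref{lem:convexity_WOT}) and its lower semicontinuity. The other follows via the glued kernel $\hat\pi_x=\int\pi'_y\,\pi_x(\rmd y)$. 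It then passes to the limit along tight optimal $P^k$, applying the first claim to the integrand $C_V+W$ as a whole.
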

\begin{proof}
    Let $(x_k)_k \subset \mathcal X$, $(\rho_k)_k \in \cP_p(\mathcal Y)$ and $(\eta_k)_k \subset \cP_p(\mathcal Z)$ be such that $x_k \to x^\circ$, $\rho_k \to \rho^\circ$ in $(\cP_p(\mathcal Y),\Wcal_r)$ and $\eta_k \to \eta^\circ$ in $\cP_p(\mathcal Z)$. After passing to a subsequence, we assume that 
    \[
        L = \lim_{k \to \infty} C_V(x_k,\rho_k) + W(\rho_k,\eta_k) < \infty,
    \]
    as otherwise there is nothing to show. 
    For $n \in \N$, set $K_n := \{(x^\circ,\eta^\circ)\} \cup \{(x_k,\eta_k): k \geq n\}$.
    By~\Cref{aspt:coercivity}, the map 
    \[
        h_n(\rho) := \inf_{(x,\eta) \in K_n} \bigl\{C_V(x,\rho) + W(\rho,\eta)\bigr\}
    \]
    has compact, hence closed, sublevel sets in $(\cP_p(\mathcal Y),\Wcal_r)$. In particular, $h_n$ is lower semicontinuous, and so
    \[
        h_n(\rho^\circ) \le \liminf_{k \to \infty} h_n(\rho_k) \le L.
    \]
    Moreover,
    \[
        h_n(\rho^\circ) 
        = 
        \min \Bigl\{ C_V(x^\circ,\rho^\circ) + W(\rho^\circ,\eta^\circ), \, \inf_{k \geq n} \bigl\{C_V(x_k,\rho^\circ) + W(\rho^\circ,\eta_k)\bigr\}\Bigr\}.
    \]
    Since $(x,\eta) \mapsto C_V(x,\rho^\circ)+W(\rho^\circ,\eta)$ is lower semicontinuous on $\mathcal{X}\times\cP_p(\mathcal Z)$, we conclude
    \[
        C_V(x^\circ,\rho^\circ) + W(\rho^\circ,\eta^\circ) = \lim_{n \to \infty} h_n(\rho^\circ) \leq L.
    \]
    
    We now turn to the second claim. Fix $\mu \in \cP_p(\mathcal X)$, $\rho \in \cP_p(\mathcal Y)$ and $\nu \in \cP_p(\mathcal{Z})$. We first show that
    \begin{equation}\label{eq:proof:coercivity_gives_lsc:lifted_V+W}
        V(\mu,\rho) + W(\rho,\nu) = \inf_{P \in \Lambda(\mu,\rho,\nu)} \int C_V(x,p) + W(p,q) \,P(\rmd x, \rmd p, \rmd q),
    \end{equation}
    where
    \[
        \Lambda(\mu,\rho,\nu) = \left\{P \in \cP(\mathcal{X} \times \cP_p(\mathcal Y)\times \cP_p(\mathcal Z)): \ \operatorname{pr}^\mathcal{X}_\#P = \mu, \ I\!\left(\operatorname{pr}^{\cP_p(\mathcal Y)}_\#P\right) = \rho, \ I\!\left(\operatorname{pr}^{\cP_p(\mathcal Z)}_\#P\right) = \nu\right\}.
    \]
    Let $P \in \Lambda(\mu,\rho,\nu)$, denote $(P_x)_x$ its disintegration with respect to $\mu$, and set 
    \[
        \bar p_x := \int p \, P_x(\rmd p, \rmd q), \qquad \bar q_x := \int q \, P_x(\rmd p, \rmd q).
    \]
    Since $\rho \mapsto C_V(x,\rho)$ is convex for each $x \in \mathcal X$ and $W$ is jointly convex by \cref{lem:convexity_WOT} as well as lower semicontinuous on $\cP_p(\mathcal Y)\times \cP_p(\mathcal Z)$ by \cite[Theorem 2.9]{backhoff2019existence}, Jensen's inequality gives
    \begin{align*}
        \int C_V(x,p) + W(p,q)\, P(\rmd x, \rmd p,\rmd q) 
        &= 
        \iint C_V(x,p) + W(p,q)\, P_x(\rmd p,\rmd q) \,\mu(\rmd x)
        \\
        &\ge 
        \int C_V(x,\bar p_x) + W(\bar p_x, \bar q_x) \, \mu(\rmd x)
        \\
        &\ge 
        \int C_V(x,\bar p_x) \, \mu(\rmd x) + W(\rho, \nu)
        \ge
        V(\mu,\rho) + W(\rho,\nu).
    \end{align*}
    The last inequality holds because $\mu \otimes \bar p_\bullet \in \Cpl(\mu,\rho)$. 
    Conversely, let $\pi \in \Cpl(\mu, \rho)$ and $\pi' \in \Cpl(\rho,\nu)$ be optimal for $V(\mu,\rho)$ resp.\ $W(\rho,\nu)$. Set $\hat\pi_x := \int_\mathcal{Y} \pi_y' \pi_x(\rmd y)$ and define
    \[
        P^\circ := \bigl(x \mapsto (x, \pi_x, \hat\pi_x)\bigr)_\# \mu.
    \]
    Then $P^\circ \in \Lambda(\mu,\rho,\nu)$. Moreover, Jensen's inequality gives
    \begin{align*}
        \int W(\pi_x,\hat\pi_x)\,\mu(\rmd x) 
        \ge 
        W(\rho,\nu) 
        = 
        \int C_W(y,\pi'_y) \, \rho(\rmd y) 
        = \iint C_W(y, \pi'_y) \, \pi_x(\rmd y) \, \mu(\rmd x) \ge \int W(\pi_x,\hat\pi_x)\,\mu(\rmd x),
    \end{align*}    
    hence $W(\rho,\nu) = \int W(\pi_x,\hat\pi_x)\,\mu(\rmd x)$. Consequently,
    \begin{align*}
        \int C_V(x,p) + W(p,q)\, P^\circ(\rmd x, \rmd p,\rmd q) 
        &= 
        \int C_V(x,\pi_x)\,\mu(\rmd x) + \int W(\pi_x,\hat\pi_x)\,\mu(\rmd x) = V(\mu, \rho) + W(\rho, \nu).
    \end{align*}
    This proves \eqref{eq:proof:coercivity_gives_lsc:lifted_V+W}. 
    
    Finally, let $(\mu_k)_k \subset \cP_p(\mathcal X)$, $(\rho_k)_k \subset \cP_p(\mathcal Y)$ and $(\nu_k)_k \subset \cP_p(\mathcal Z)$ such that 
    \[
        \mu_k \to \mu \in \cP_p(\mathcal X) \ \text{ weakly}, 
        \qquad 
        \rho_k \to \rho \in \cP_p(\mathcal Y) \ \text{ in } \Wcal_r, 
        \qquad 
        \nu_k \to \nu \in \cP_p(\mathcal Z) \ \text{ in } \Wcal_p. 
    \]
    For each $k$, choose $P^k \in \Lambda(\mu_k,\rho_k,\nu_k)$ attaining the infimum in \eqref{eq:proof:coercivity_gives_lsc:lifted_V+W}. 
    The first marginals $\bigl(\operatorname{pr}^\mathcal{X}_\# P^k\bigr)_k$ are tight. 
    Since $(\rho_k)_k$ and $(\nu_k)_k$ are precompact in $(\cP_p(\mathcal Y), \Wcal_r)$ resp.\ $\cP_p(\mathcal Z)$, we have that $\bigl(\operatorname{pr}^{\cP_p(\mathcal Y)}_\# P^k\bigr)_k$ and $\bigl(\operatorname{pr}^{\cP_p(\mathcal Z)}_\# P^k\bigr)_k$ are precompact in $\cP((\cP_p,\Wcal_r))$ resp.\ $\cP(\cP_p(\mathcal Z))$; see \cite[Lemma 2.4]{backhoff2019existence}. 
    Because all marginals of $(P^k)_k$ are tight, $(P^k)_k$ is tight. Passing to a subsequence, there exists $P \in \Lambda(\mu,\rho,\nu)$ such that $P^k \to P$ weakly.
    Since the map $(x,p,q)\mapsto C_V(x,p)+W(p,q)$ is lower semicontinuous on $\mathcal X \times (\cP_p(\mathcal Y),\Wcal_r) \times \cP_p(\mathcal Z)$ and bounded from below, \cite[Theorem 2.9]{backhoff2019existence} yields
    \[
        \liminf_{k \to \infty} \int C_V(x,p) + W(p,q)\, P^k(\rmd x, \rmd p, \rmd q) 
        \ge
        \int C_V(x,p) + W(p,q)\, P(\rmd x, \rmd p, \rmd q) 
        \ge 
        V(\mu,\rho) + W(\rho,\nu).
    \]
    Thus $(\mu,\rho,\nu) \mapsto V(\mu,\rho) + W(\rho,\nu)$ is lower semicontinuous on $(\cP_p(\mathcal X),\tau_w) \times (\cP_p(\mathcal Y),\Wcal_r) \times \cP_p(\mathcal Z)$.
\end{proof}

\begin{lemma} \label{lem:concavity_regularisation}
    Let $\beta, \sigma^2> 0$, and let $f: \Rd \to \R$ be $\beta$-semiconcave. Then $-\log\bigl(\exp(-f)\ast\gamma_{0;\sigma^2}\bigr)$ is $\frac{\beta}{1+\beta\sigma^2}$-semiconcave and upper semicontinuous (but not necessarily proper).
\end{lemma}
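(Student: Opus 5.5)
Write $f = q_\beta - \beta\varphi$ with $\varphi$ convex; this is what $\beta$-semiconcavity of $f$ means, i.e.\ $q_\beta - f$ is convex. The goal is to show that $h := -\log\bigl(\exp(-f)\ast\gamma_{0;\sigma^2}\bigr)$ satisfies: $q_{\beta'} - h$ is convex, where $\beta' := \frac{\beta}{1+\beta\sigma^2}$. Equivalently, $\log\bigl(\exp(-f)\ast\gamma_{0;\sigma^2}\bigr) + q_{\beta'}$ is convex.

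Expand the convolution:
\[
    \exp(-f)\ast\gamma_{0;\sigma^2}(x) = c_\sigma\int \exp\Bigl(\beta\varphi(y) - \tfrac{\beta}{2}|y|^2 - \tfrac{1}{2\sigma^2}|x-y|^2\Bigr)\,\rmd y.
\]
The exponent, apart from $\beta\varphi(y)$, is a quadratic in $(x,y)$. Complete the square in $y$ with $\lambda := \beta + \sigma^{-2}$:
\[
    \tfrac{\beta}{2}|y|^2 + \tfrac{1}{2\sigma^2}|x-y|^2 = \tfrac{\lambda}{2}\Bigl|y - \tfrac{x}{\lambda\sigma^2}\Bigr|^2 + \tfrac12\Bigl(\tfrac1{\sigma^2} - \tfrac{1}{\lambda\sigma^4}\Bigr)|x|^2 .
\]
A short computation gives
\[
    \tfrac1{\sigma^2} - \tfrac1{\lambda\sigma^4} = \tfrac{\beta}{1+\beta\sigma^2} = \beta'.
\]
Next expand $\tfrac{\lambda}{2}\bigl|y - \tfrac{x}{\lambda\sigma^2}\bigr|^2$. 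This produces $\tfrac{\lambda}{2}|y|^2$, the cross term $\sigma^{-2}x\cdot y$, and a further $x$-quadratic $\tfrac{1}{2\lambda\sigma^4}|x|^2$. Regrouping the terms,
\[
    \log\bigl(\exp(-f)\ast\gamma_{0;\sigma^2}\bigr)(x) + q_{\beta'}(x) = \log c_\sigma + \log\int \exp\Bigl(\sigma^{-2}x\cdot y + \beta\varphi(y) - \tfrac{\lambda}{2}|y|^2\Bigr)\rmd y .
\]
The $x$-quadratics cancel: the coefficient from the square completion plus the one from expanding the square equals $\sigma^{-2}$, and this is exactly what remains after adding $q_{\beta'}$ to the $-\tfrac{1}{2\sigma^2}|x|^2$ coming from $|x-y|^2$. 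I will check this cancellation carefully.

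The right-hand side is a log-Laplace transform $x\mapsto \log\int e^{\sigma^{-2}x\cdot y}\,m(\rmd y)$ of the nonnegative measure $m(\rmd y) = e^{\beta\varphi(y)-\frac{\lambda}{2}|y|^2}\,\rmd y$. By Hölder's inequality, such a function is convex as an extended-valued function, possibly $+\infty$. It is also lower semicontinuous by Fatou's lemma, since the integrand is positive and continuous in $x$. Hence $h = q_{\beta'} - (\text{convex, lsc})$. This means $h$ is $\beta'$-semiconcave and upper semicontinuous, and it may equal $-\infty$ somewhere, which is why it need not be proper.

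The main obstacle is keeping the algebra in the exponent correct, in particular confirming that the leftover coefficient is exactly $\beta'$. A quick sanity check is the case $\varphi$ affine, where everything is Gaussian.
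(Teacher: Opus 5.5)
There is a genuine gap, and it is at the step you flagged: the $x$-quadratics do \emph{not} cancel. The only $x$-quadratic in the exponent is $-\frac{1}{2\sigma^2}|x|^2$, coming from $|x-y|^2$. Completing the square and then re-expanding $\frac{\lambda}{2}|y-\frac{x}{\lambda\sigma^2}|^2$ just undoes itself. So after adding $q_{\beta'}$ you are left with $-\frac12(\sigma^{-2}-\beta')|x|^2=-\frac{1}{2\lambda\sigma^4}|x|^2$, and the correct identity is
\[
\log\bigl(\exp(-f)\ast\gamma_{0;\sigma^2}\bigr)(x)+q_{\beta'}(x)=\log c_\sigma-\frac{|x|^2}{2\lambda\sigma^4}+\log\int\exp\Bigl(\sigma^{-2}x\cdot y+\beta\varphi(y)-\frac{\lambda}{2}|y|^2\Bigr)\,\rmd y .
\]
Your own sanity check exposes the error. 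For $\varphi=0$, i.e.\ $f=q_\beta$, we have $\exp(-q_\beta)\ast\gamma_{0;\sigma^2}\propto\exp(-q_{\beta'})$, so the left-hand side is constant. Your version of the right-hand side instead equals a constant plus $\frac{|x|^2}{2\lambda\sigma^4}$.

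A structural warning sign is that your argument never uses convexity of $\varphi$: H\"older applied to a log-Laplace transform works for any nonnegative measure $m$. What your argument actually proves is only that $h$ is $\sigma^{-2}$-semiconcave. That holds for every $f$ and is strictly weaker than the claim, since $\beta'<\sigma^{-2}$.

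The fix is to stop after completing the square and translate rather than expand. Substituting $y\mapsto y+\frac{x}{1+\beta\sigma^2}$ (note $\frac{1}{\lambda\sigma^2}=\frac{1}{1+\beta\sigma^2}$) gives
\[
\log\bigl(\exp(-f)\ast\gamma_{0;\sigma^2}\bigr)(x)+q_{\beta'}(x)=\log c_\sigma+\log\int\exp\Bigl(\beta\varphi\bigl(y+\tfrac{x}{1+\beta\sigma^2}\bigr)\Bigr)e^{-\frac{\lambda}{2}|y|^2}\,\rmd y .
\]
Now $x$ enters only as a shift of the argument of the convex function $\varphi$, against a fixed Gaussian weight. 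Convexity of $\varphi$ gives $\beta\varphi(y+z_t)\le(1-t)\beta\varphi(y+z_0)+t\beta\varphi(y+z_1)$ pointwise in $y$. H\"older with exponents $\frac{1}{1-t}$ and $\frac1t$ then yields convexity in $x$, and lower semicontinuity follows from Fatou as you said. This is precisely the paper's argument.

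Equivalently, in your log-Laplace formulation you would need $\theta\mapsto\log\int e^{\theta\cdot y}\,m(\rmd y)-\frac{1}{2\lambda}|\theta|^2$ to be convex. That amounts to the tilted measures having covariance at least $\lambda^{-1}I_d$. It is false for general $m$, and it is exactly where the convexity of $\varphi$ has to enter.
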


\begin{proof}
    Let $g := q_\beta - f$, which is convex, and set $\phi(y):= \exp\bigl(-\tfrac{1+\beta\sigma^2}{2\sigma^2}|y|^2\bigr)$.
    We have
    \begin{align*}
        \left(2\pi\sigma^2\right)^\frac{d}{2} \bigl(\exp(-f)\ast\gamma_{0;\sigma^2}\bigr)(x) &= \int \exp\!\left(g(y)-\tfrac{\beta}{2}|y|^2 - \tfrac1{2\sigma^2} |x-y|^2 \right) \, \rmd y
        \\
        &= \exp\!\left( -\tfrac{\beta}{2(1+\beta\sigma^2)}|x|^2\right) \int \exp\!\left(g(y) - \tfrac{1 + \beta\sigma^2}{2\sigma^2}\left| y -\tfrac{1}{1+\beta\sigma^2}x \right|^2 \right) \, \rmd y
        \\
        &= \exp\!\left( -\tfrac{\beta}{2(1+\beta\sigma^2)}|x|^2\right) \int \exp\bigl(g\bigl( y + \tfrac{1}{1+\beta\sigma^2}x \bigr)\bigr) \phi(y) \, \rmd y.
    \end{align*}
    Now, fix $x_0,x_1 \in \R^d$ and write $z_t := \frac{(1-t)x_0 + tx_1}{1 + \beta\sigma^2}$.
    Therefore, using convexity of $g$ and Hölder's inequality,
    \begin{align*}
        \int \exp\big(g(y + z_t)\big) \phi(y) \, \rmd y &\le
        \int \exp\big( (1-t) g( y + z_0)  + t g( y + z_1) \big) \phi(y) \, \rmd y 
        \\
        &\le \left(\int \exp\big(g(y + z_0)) \phi(y) \, \rmd y\right)^{1-t} \left( \int \exp\big( g(y + z_1) \big) \phi(y) \, \rmd y \right)^t,
    \end{align*}
    and so $q_{\frac{\beta}{1+\beta\sigma^2}} \! + \log\bigl( \exp(-f) \ast\gamma_{0;\sigma^2} \bigr)$ is convex and lower semicontinuous by Fatou, but not necessarily proper. Equivalently, $-\log\bigl(\exp(-f)\ast\gamma_{0;\sigma^2}\bigr)$ is $\frac{\beta}{1+\beta\sigma^2}$-semiconcave and upper semicontinuous.
\end{proof}

\begin{corollary} \label{cor:smooth_brenier_map}
    Let $\beta,\sigma^2 > 0$, and let $f: \Rd \to \R$ be $\beta$-semiconcave. Set 
    \[
        \Tcal^\beta_{\sigma^2}[f] := - \log\bigl(\exp(-q_\beta\Box (-f))*\gamma_{0;\sigma^2}\bigr).
    \]
    If $\Tcal^\beta_{\sigma^2}[f]$ is proper, the map $q_1 - \frac{1}{\beta}q_\beta \Box(-\Tcal^\beta_{\sigma^2}[f])$ is proper convex and $\frac{1+\beta\sigma^2}{\beta\sigma^2}$-smooth.
\end{corollary}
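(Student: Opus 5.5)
The plan is to build the function from three operations and follow semiconvexity/semiconcavity through each one using \Cref{lem:concavity_regularisation}. Write $h := q_\beta\Box(-f)$. Since $f$ is $\beta$-semiconcave, $q_\beta - f$ is convex, and the standard identity
\[
    q_\beta\Box(-f)(y) = q_\beta(y) - \sup_z\bigl\{\beta y\cdot z - (q_\beta - f)(z)\bigr\}
\]
gives $h = q_\beta - \beta(q_1 - \tfrac1\beta f)^*(\cdot)$, with the conjugate taken after rescaling. In particular $h$ is $\beta$-semiconcave (it is $q_\beta$ minus a convex function). It is real-valued as long as $\Tcal^\beta_{\sigma^2}[f]$ is proper; if $h\equiv-\infty$ somewhere the convolution degenerates, and properness of $\Tcal^\beta_{\sigma^2}[f]$ rules out the bad cases.

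\textbf{Step 1.} Apply \Cref{lem:concavity_regularisation} to $h$. This shows that $T := \Tcal^\beta_{\sigma^2}[f] = -\log(\exp(-h)*\gamma_{0;\sigma^2})$ is $\kappa$-semiconcave with $\kappa := \frac{\beta}{1+\beta\sigma^2}$. Hence $q_\kappa - T$ is convex, and it is proper by assumption.

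\textbf{Step 2.} Compute $q_\beta\Box(-T)$ and the resulting $u := q_1 - \frac1\beta q_\beta\Box(-T)$. Expanding,
\[
    u(x) = \tfrac1\beta\sup_y\Bigl\{\beta x\cdot y - \bigl(q_\beta(y) - T(y)\bigr)\Bigr\},
\]
that is, $u(x) = \frac1\beta\,\phi^*(\beta x)$ with $\phi := q_\beta - T$. The function $\phi$ is proper, and we can write it as
\[
    \phi = q_{\beta-\kappa} + (q_\kappa - T),
\]
a sum of a convex function and a quadratic with $\beta - \kappa = \frac{\beta^2\sigma^2}{1+\beta\sigma^2} > 0$. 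Therefore $\phi$ is $(\beta-\kappa)$-strongly convex. A supremum of affine functions in $x$ is convex, so $u$ is convex; it is proper because $\phi$ is proper and strongly convex, which makes $\phi^*$ finite everywhere.

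\textbf{Step 3.} Obtain smoothness by duality. Since $\phi$ is $m$-strongly convex with $m = \beta-\kappa$, its conjugate $\phi^*$ is differentiable with $\frac1m$-Lipschitz gradient. Then $\nabla u(x) = \nabla\phi^*(\beta x)$ has Lipschitz constant
\[
    \frac{\beta}{m} = \frac{1+\beta\sigma^2}{\beta\sigma^2},
\]
which is exactly the claimed constant.

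The main obstacle is the degenerate and properness bookkeeping. We must make sure $\phi$ is lower semicontinuous and proper, so that conjugation theory applies. Upper semicontinuity of $T$ from \Cref{lem:concavity_regularisation} makes $\phi$ lower semicontinuous, and properness is assumed; we also need $T > -\infty$ on a nonempty set, which the properness hypothesis supplies. The strong convexity to smoothness duality is standard (Rockafellar--Wets).
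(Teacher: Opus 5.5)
Your proposal is correct and follows essentially the same route as the paper. You apply \Cref{lem:concavity_regularisation} to obtain $\frac{\beta}{1+\beta\sigma^2}$-semiconcavity of $\Tcal^\beta_{\sigma^2}[f]$, write $q_1-\frac1\beta q_\beta\Box(-\Tcal^\beta_{\sigma^2}[f])$ as a rescaled convex conjugate of the $\frac{\beta^2\sigma^2}{1+\beta\sigma^2}$-strongly convex function $q_\beta-\Tcal^\beta_{\sigma^2}[f]$, and read off the gradient's Lipschitz constant $\frac{1+\beta\sigma^2}{\beta\sigma^2}$; your extra bookkeeping, that $q_\beta\Box(-f)$ is $\beta$-semiconcave and finite so the lemma applies, is a point the paper leaves implicit.
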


\begin{proof}
    By \Cref{lem:concavity_regularisation}, $\Tcal^\beta_{\sigma^2}[f]$ is $\frac{\beta}{1+\beta\sigma^2}$-semiconcave. 
    Thus, $g := q_{\frac{\beta}{1+\beta\sigma^2}}\!-\Tcal^\beta_{\sigma^2}[f]$ is proper convex.
    We have
    \begin{align*}
        q_\beta \Box(-\Tcal^\beta_{\sigma^2}[f])(x) 
        &= 
        \inf_{y \in \R^d} \left\{ \tfrac{\beta}{2}|x-y|^2 - \Tcal^\beta_{\sigma^2}[f](y) \right\}
        = \tfrac{\beta}{2}|x|^2 - \sup_{y \in \R^d} \left\{\beta \, x \cdot y - \bigl( \tfrac{\beta}{2}|y|^2 + g(y) - \tfrac{\beta}{2(1+\beta\sigma^2)}|y|^2  \bigr)\right\}\\
        &= \tfrac{\beta}{2}|x|^2 - \sup_{y \in \R^d} \left\{ \beta \, x \cdot y - \left( \tfrac{\beta^2\sigma^2}{2(1+\beta\sigma^2)} |y|^2 + g(y) \right)\right\}.
    \end{align*}
    Hence, $q_1 - \frac{1}{\beta}q_\beta \Box(-\Tcal^\beta_{\sigma^2}[f])$ is $\frac{1 + \beta\sigma^2}{\beta\sigma^2}$-smooth as the convex conjugate of a $\frac{\beta\sigma^2}{1+\beta\sigma^2}$-strongly convex function.
    In particular, this means that the induced Brenier map is $\frac{1 + \beta\sigma^2}{\beta\sigma^2}$-Lipschitz.
\end{proof}

\begin{remark} \label{rem:smooth_brenier_map}
    Let $\beta > 0$, and let $f: \Rd \to \R$ be $\beta$-semiconcave such that $\Tcal^\beta_1[f]$ is proper.
    Let $\mu \in \cP_2(\Rd)$ and set $\alpha := \bigl(\operatorname{id} - \frac{1}{\beta}\nabla q_\beta \Box(-\Tcal^\beta_1[f])\bigr)_\#\mu$. By \Cref{{cor:smooth_brenier_map}}, $v := q_1 - \frac{1}{\beta}q_\beta \Box(-\Tcal^\beta_1[f])$ is a Brenier potential satisfying $(\nabla v)_\# \mu = \alpha$. Moreover,
    \begin{align*}
        \int |z|^2 \, \alpha(\rmd z) = \int |\nabla v|^2 \, \rmd\mu &\leq \int \left(|\nabla v(\bar{\mu})| + \tfrac{(1+\beta)}{\beta}|x-\bar{\mu}|\right)^2 \, \mu(\rmd x)\\
        &\leq 2 |\nabla v(\bar{\mu})|^2 + 2\tfrac{(1+\beta)^2}{\beta^2} \int |x-\bar{\mu}|^2 \, \mu(\rmd x) < \infty,
    \end{align*}
    and so $\alpha \in \cP_2(\Rd)$.
\end{remark}

\begin{lemma}\label{lem:stat.cost}
    The function $C_{\rm stat}^\beta\colon \R^d \times \cP_2(\R^d) \to \R$, defined by
    \begin{align}
        \label{eq:SB.cost.rep}
        C_{\rm stat}^\beta(x,\eta) &:= \sup_{y \in \R^d} \left\{-q_\beta(x-y) + \inf_{\rho  \in \cP_2(\R^d)}\left\{ H(\rho \,|\, \gamma_y) + W_\beta(\rho,\eta)\right\}\right\} \\
        \label{eq:SB.cost.rep2}
        &= \inf_{\substack{\kappa \in \cP_2(\R^d),\\ \bar \kappa = \bar \eta}} \bigl\{H(\kappa\,|\,\gamma_x) + W_\beta(\kappa,\eta)\bigr\}
    \end{align}
    is a continuous standard weak transport cost and admits a $2$-growth bound. 
    Moreover, $\eta \mapsto C_{\rm stat}^\beta(x,\eta)$ is strictly convex for every $x \in\Rd$.
\end{lemma}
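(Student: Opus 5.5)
The plan is to reduce $C_{\rm stat}^\beta$ to an explicit closed form by separating means from centered parts. Every property in the statement can then be read off from that formula.

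\emph{Step 1: splitting off the mean.} Write $\eta_c := (\,\cdot\,-\bar\eta)_\#\eta$ for the centered version of $\eta$, and decompose any $\rho\in\cP_2(\Rd)$ as $\rho=(\,\cdot\,+m)_\#\zeta$ with $m=\bar\rho$ and $\bar\zeta=0$. Two elementary identities do the work:
\[
    H(\rho\,|\,\gamma_y)=H(\zeta\,|\,\gamma)+\tfrac12|m-y|^2,
    \qquad
    \Wcal_2^2(\rho,\eta)=\Wcal_2^2(\zeta,\eta_c)+|m-\bar\eta|^2 .
\]
The first is the Gaussian shift formula. The second is the usual mean/centered decomposition of $\Wcal_2^2$. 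Set
\[
    F(\eta):=\inf_{\bar\zeta=0}\bigl\{H(\zeta\,|\,\gamma)+W_\beta(\zeta,\eta_c)\bigr\}.
\]
The constraint $\bar\zeta=0$ is harmless: recentering any $\zeta$ decreases both terms by the same two identities, so $F(\eta)=C_{V_{\rm EOT}\Box W_\beta}(0,\eta_c)$. With this notation, the inner infimum in \eqref{eq:SB.cost.rep} becomes
\[
    F(\eta)+\inf_m\bigl\{q_1(m-y)+q_\beta(m-\bar\eta)\bigr\}=F(\eta)+q_{\beta/(1+\beta)}(y-\bar\eta).
\]

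\emph{Step 2: the sup over $y$.} The map $y\mapsto q_{\beta/(1+\beta)}(y-\bar\eta)-q_\beta(x-y)$ is strictly concave, because $\beta/(1+\beta)<\beta$. Its supremum is the quadratic deconvolution $q_d(x-\bar\eta)$, where $d$ solves $d\beta/(d+\beta)=\beta/(1+\beta)$, that is, $d=1$. Hence
\[
    C^\beta_{\rm stat}(x,\eta)=\tfrac12|x-\bar\eta|^2+F(\eta).
\]
This closed form also gives \eqref{eq:SB.cost.rep2}. For $\bar\kappa=\bar\eta$, the same identities give $H(\kappa\,|\,\gamma_x)+W_\beta(\kappa,\eta)=\tfrac12|x-\bar\eta|^2+H(\zeta\,|\,\gamma)+W_\beta(\zeta,\eta_c)$, with $\zeta$ the centered version of $\kappa$.

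\emph{Step 3: regularity, growth and convexity.} The closed form settles most of the claims directly.
\begin{itemize}
    \item \textbf{Standard cost.} Both terms are $\ge 0$, so $C^\beta_{\rm stat}$ is bounded below.
    \item \textbf{Growth bound.} Taking $\zeta=\gamma$ gives $F(\eta)\le W_\beta(\gamma,\eta_c)$, which yields the $2$-growth bound.
    \item \textbf{Continuity.} $\eta\mapsto(\bar\eta,\eta_c)$ is $\Wcal_2$-continuous, and $C_{V_{\rm EOT}\Box W_\beta}$ is continuous by \Cref{prop:properties_inf_conv} together with \Cref{lem:SB.cost.assumptions}. Hence $C^\beta_{\rm stat}$ is continuous.
    \item \textbf{Convexity in $\eta$.} Use \eqref{eq:SB.cost.rep2}: the map $(\kappa,\eta)\mapsto H(\kappa\,|\,\gamma_x)+W_\beta(\kappa,\eta)$ is jointly convex by \Cref{lem:convexity_WOT}, and the constraint $\bar\kappa=\bar\eta$ is linear. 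A partial infimum of a jointly convex function over a convex constraint set is convex.
\end{itemize}

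\emph{Step 4: strict convexity, which I expect to be the main obstacle.} Take $\eta_0\neq\eta_1$ and $t\in(0,1)$. The infimum in \eqref{eq:SB.cost.rep2} is attained by the same compactness argument as in \Cref{prop:properties_inf_conv}, since the mean constraint is closed under the relevant convergence; call the minimizers $\kappa_0,\kappa_1$. The mixture $\kappa_t:=(1-t)\kappa_0+t\kappa_1$ is admissible for $\eta_t:=(1-t)\eta_0+t\eta_1$. Two cases arise.
\begin{itemize}
    \item If $\kappa_0\neq\kappa_1$, strict convexity of $H(\cdot\,|\,\gamma_x)$ gives strict inequality at once.
    \item If $\kappa_0=\kappa_1=:\kappa$, then $\kappa\ll$ Lebesgue because $H(\kappa\,|\,\gamma_x)<\infty$. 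Suppose $\Wcal_2^2(\kappa,\eta_t)=(1-t)\Wcal_2^2(\kappa,\eta_0)+t\,\Wcal_2^2(\kappa,\eta_1)$. Then the mixture of the two Brenier plans is optimal for $(\kappa,\eta_t)$, and by Brenier's theorem it must be induced by a map. This forces the two Brenier maps to agree $\kappa$-a.e., and hence $\eta_0=\eta_1$, a contradiction. So the inequality is strict in this case as well.
\end{itemize}
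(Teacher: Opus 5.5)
Your proposal is correct and follows essentially the same route as the paper. Both arguments separate means via the Gaussian shift formula and the mean/centred decomposition of $\Wcal_2^2$, compute the infimum over $m$ and the supremum over $y$ explicitly to reach $C^\beta_{\rm stat}(x,\eta)=\tfrac12|x-\bar\eta|^2+F(\eta)$, test with $\zeta=\gamma$ for the growth bound, and prove strict convexity by the same two-case analysis on minimisers $\kappa_0,\kappa_1$. The only differences are minor: you obtain continuity by writing $F(\eta)=C_{V_{\rm EOT}\Box W_\beta}(0,\eta_c)$ and invoking \Cref{prop:properties_inf_conv} with \Cref{lem:SB.cost.assumptions} instead of redoing the lsc/usc argument under the mean constraint, and you spell out the Brenier-uniqueness argument for strict convexity of $\eta\mapsto\Wcal_2^2(\kappa,\eta)$, which the paper only asserts.
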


\begin{remark}[Characterization of optimizers]\label{rem:stat.cost.optimizers}
    Denote the unique optimizers of \eqref{eq:SB.cost.rep} by $(y^\circ,\rho^\circ) \in \R^d \times \cP_2(\R^d)$ and the unique optimizer of \eqref{eq:SB.cost.rep2} by ${\kappa^\circ \in \cP_2(\R^d)}$.
    They are related as follows:
    \begin{align*}
        \kappa^\circ = (\operatorname{id} + \bar \eta - \bar \rho^\circ)_\# \rho^\circ, \quad
        \bar \rho^\circ= \frac{y^\circ + \beta \bar \eta}{1 + \beta},\quad
        y^\circ = x + \frac{x - \bar\eta}{\beta},
    \end{align*}
    see  \eqref{eq:lem.stat.cost.mstar} and \eqref{eq:lem.stat.cost.ystar}.
    In particular, we have by combining the last two equalities
    \[
        \beta(y^\circ - x) = x - \bar \eta = x - \frac{(1 + \beta) \bar \rho^\circ - y^\circ}{\beta} = \frac{1 + \beta}{\beta}(x - \bar \rho^\circ) + \frac1\beta(y^\circ - x),
    \]
    from where it follows that $ (\beta - 1) (y^\circ - x) = x - \bar \rho^\circ$ and consequently
    \begin{equation}
        \label{eq:rem.optimizers}
        \beta(y^\circ - x) = y^\circ - \bar \rho^\circ.
    \end{equation} 
\end{remark}

\begin{proof}[Proof of \Cref{lem:stat.cost}]
    Let $x\in \R^d$ and $\rho \in \cP_2(\R^d)$.
    First, we pertain to the alternative representation \eqref{eq:SB.cost.rep2}. Let $\tau^m(x) := x - m$. Then,
    \begin{align*}
        H(\rho \,|\, \gamma_y) &= H(\tau^{m}_\#\rho \,|\,\gamma) + \frac12 |m - y|^2, \qquad \Wcal_2^2(\rho,\eta) = \Wcal_2^2(\tau^m_\#\rho,\eta) - 2 \, m \cdot \bar\eta +|m|^2,
    \end{align*}
    when $m = \bar \rho$.
    It follows that
    \begin{multline*}
        \inf_{\rho \in \cP_2(\R^d)} \left\{H(\rho \,|\, \gamma_y) + \frac\beta2 \Wcal_2^2(\rho,\eta)\right\} \\
        = \inf_{\substack{\zeta \in \cP_2(\R^d),\\ \bar\zeta = 0}} \left\{H(\zeta\,|\,\gamma) + \frac\beta2 \Wcal_2^2(\zeta,\eta) + \inf_{m \in \R^d} \left\{\frac1{2} |m - y|^2 - \beta \,m\cdot\bar\eta + \frac\beta2 |m|^2\right\}\right\}, \\
        = \frac{\beta|y - \bar\eta|^2}{2(1 + \beta)} - \frac\beta2 |\bar\eta|^2+\inf_{\substack{\zeta \in \cP_2(\R^d),\\ \bar\zeta = 0}} \left\{H(\zeta\,|\,\gamma) + \frac\beta2 \Wcal_2^2(\zeta,\eta)\right\},
    \end{multline*}
    where the last equality follows from
    \begin{equation}
        \label{eq:lem.stat.cost.mstar}
        \inf_{m \in \R^d} \left\{\frac12 |m - y|^2 - \beta \, m \cdot \bar\eta + \frac\beta2 |m|^2\right\} = \frac{\beta|y - \bar\eta|^2}{2(1 + \beta)} - \frac\beta2 |\bar\eta|^2,
    \end{equation}
    which is uniquely attained at $m^\circ = \frac{y+\beta \bar \eta}{1 + \beta}$.
    Furthermore, we have
    \begin{equation}
        \label{eq:lem.stat.cost.ystar}
        \sup_{y \in \R^d} \left\{-\frac\beta2|x-y|^2 + \frac{\beta|y - \bar\eta|^2}{2(1 + \beta)} \right\} = \frac12 |x - \bar\eta|^2,
    \end{equation}
    that is uniquely achieved at $y^\circ = x + \frac{x - \bar\eta}{\beta}$.
    This allows us to separate $\inf$ and $\sup$ in \eqref{eq:SB.cost.rep} and we get
    \begin{align}
        \tag*{}
        C_{\rm stat}^\beta(x,\eta) &= \sup_{y \in \R^d}\left\{-q_\beta(x-y) + \frac{\beta|y-\bar\eta|^2}{2(1+\beta)}\right\} - \frac\beta2 |\bar\eta|^2 + \inf_{\substack{\zeta \in \cP_2(\R^d),\\ \bar\zeta = 0}} \left\{H(\zeta|\gamma) + W_\beta(\zeta,\eta)\right\}
        \\
        \label{lem:stat.cost.repr3}
        &= \frac12 |x-\bar\eta|^2  + \inf_{\substack{\zeta \in \cP_2(\R^d),\\ \bar\zeta = 0}} \left\{H(\zeta\,|\,\gamma) + \frac{\beta}{2}\Big( \Wcal_2^2(\zeta,\eta) - |\bar\eta|^2\Big)\right\}
        \\
        \tag*{}
        &= \inf_{\substack{\zeta \in \cP_2(\R^d),\\ \bar\zeta = 0}} \left\{\Big( H(\tau^{-\bar\eta}_\# \zeta\,|\,\gamma_{\bar\eta}) + \frac12 |x-\bar\eta|^2 \Big) + \frac\beta2 \Wcal_2^2(\tau^{-\bar\eta}_\# \zeta,\eta)\right\}
        \\
        \label{lem:stat.cost.repr4}
        &= \inf_{\substack{\kappa \in \cP_2(\R^d),\\ \bar{\kappa} = \bar{\eta}}} \left\{H(\kappa\,|\,\gamma_x) + \frac\beta2 \Wcal_2^2(\kappa,\eta)\right\}.
    \end{align}
    In particular, $C_{\rm stat}^\beta \geq 0$. By the same arguments as in the proof of \cref{lem:SB.cost.assumptions}, \eqref{lem:stat.cost.repr4} is attained. Moreover, testing the infimum in \eqref{lem:stat.cost.repr3} with $\zeta = \gamma$ yields, for all $(x,\eta)\in \Rd\times \cP_2(\Rd)$,
    \[
        C_{\rm stat}^\beta(x,\eta) 
        \leq 
        \frac12 |x-\bar\eta|^2 + \frac{\beta}{2}\Big( \Wcal_2^2(\gamma,\eta) - |\bar\eta|^2\Big),
    \]
    hence $C_{\rm stat}^\beta$ satisfies a $2$-growth bound.
    
    Fix $x \in \Rd$. To see that $\eta \mapsto C_{\rm stat}^\beta(x,\eta)$ is strictly convex, let $\eta_0,\eta_1\in\cP_2(\Rd)$ with $\eta_0\neq\eta_1$ and $t \in (0,1)$. Denote by $\kappa_0,\kappa_1\in\cP_2(\Rd)$ minimizers in \eqref{lem:stat.cost.repr4} for $(x,\eta_0)$ resp.\ $(x,\eta_1)$.
    Set 
    \[
        \eta_t := t \eta_0+(1-t)\eta_1
        \qquad
        \kappa_t := t \kappa_0+(1-t)\kappa_1.
    \]
    Then $\bar \kappa_t = \bar \eta_t$. 
    Since $\rho \mapsto H(\rho \,|\, \gamma_x)$ is strictly convex and $\Wcal_2^2$ is jointly convex (see \cref{lem:convexity_WOT}), we obtain
    \begin{align*}
        C_{\rm stat}^\beta(x,\eta_t) 
        &\le
        H(\kappa_t\,|\,\gamma_x) + \frac\beta2 \Wcal_2^2(\kappa_t,\eta_t) 
        \le
        t \,C_{\rm stat}^\beta(x,\eta_0) + (1-t) \,C_{\rm stat}^\beta(x,\eta_1).
    \end{align*}
    If $\kappa_0\neq\kappa_1$, then the entropy inequality is strict, and hence the last inequality is strict. If $\kappa_0=\kappa_1$, then $\kappa_t=\kappa_0$. Since $H(\kappa_0\,|\,\gamma_x)<\infty$, we have $\kappa_0\ll\gamma$. Hence $\eta\mapsto\Wcal_2^2(\kappa_0,\eta)$ is strictly convex, and the last inequality is again strict. Thus $\eta\mapsto C_{\rm stat}^\beta(x,\eta)$ is strictly convex.

    Finally, we show that $C_{\rm stat}^\beta$ is continuous on $\Rd\times\cP_2(\Rd)$. By \eqref{lem:stat.cost.repr3} and since
    \[
        (x,\eta)\longmapsto \frac12|x-\bar\eta|^2-\frac\beta2|\bar\eta|^2
    \]
    is continuous on $\Rd\times(\cP_2(\Rd),\Wcal_2)$, it remains to prove continuity of
    \[
        \eta \longmapsto \inf_{\substack{\zeta \in \cP_2(\R^d),\\ \bar\zeta = 0}} \left\{ H(\zeta\,|\,\gamma) + \frac{\beta}{2}\Wcal_2^2(\zeta,\eta) \right\}.
    \]
    Lower semicontinuity follows exactly as in the first part of the proof of \Cref{prop:properties_inf_conv}, using the weak lower semicontinuity of $\zeta \mapsto H(\zeta \,|\, \gamma)$ and of $\Wcal_2^2$. 
    Upper semicontinuity follows by fixing a minimizer and using the continuity of $\eta\mapsto\Wcal_2^2(\zeta,\eta)$ on $\cP_2(\Rd)$. Thus $C_{\rm stat}^\beta$ is a continuous standard weak transport cost.
\end{proof}

\begin{lemma} \label{lem:stat.cost.dual.inequ}
    Let $\beta > 0$ and denote $C_{\rm stat}^\beta$ as in \Cref{lem:stat.cost}. Let $f$ be a $\beta$-semiconcave function.
    Then,
    \[
        - q_\beta \Box(-\Tcal^\beta_1[f])(x) \leq \inf_{\eta \in \cP_2(\Rd)}\left\{C_{\rm stat}^\beta(x,\eta) - \int f \,\rmd\eta\right\} = f^{C_{\rm stat}^\beta}(x),
    \]
    where $\Tcal^\beta_1[f]:=-\log\bigl(\exp(-q_\beta \Box (-f))\ast\gamma\bigr)$.
\end{lemma}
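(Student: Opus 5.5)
Fix $x\in\Rd$ and $\eta\in\cP_2(\Rd)$; the equality $\inf_\eta\{\cdots\}=f^{C_{\rm stat}^\beta}(x)$ is just the definition of the $C$-transform. So it suffices to prove, for each such $\eta$ with $f\in L^1(\eta)$,
\[
    C_{\rm stat}^\beta(x,\eta) - \int f\,\rmd\eta \;\ge\; -q_\beta\Box(-\Tcal^\beta_1[f])(x),
\]
and then take the infimum over $\eta$. The plan is to use the first representation \eqref{eq:SB.cost.rep} of $C_{\rm stat}^\beta$ as a supremum over $y$. We bound it from below by the value at any fixed $y$, and then apply weak duality for each of the two inner transport problems separately.

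\emph{Step 1 (inner problem).} Fix $y\in\Rd$ and $\rho\in\cP_2(\Rd)$. Weak duality for $W_\beta(\rho,\eta)$ with dual pair $(f,\,q_\beta\Box(-f))$ gives
\[
    W_\beta(\rho,\eta)\ge \int f\,\rmd\eta + \int q_\beta\Box(-f)\,\rmd\rho .
\]
This is just $\frac\beta2|z-w|^2\ge f(w)+q_\beta\Box(-f)(z)$ integrated against an optimal coupling. Integrability is harmless here: $q_\beta\Box(-f)$ is bounded above by $q_\beta(z-w_0)-f(w_0)$, so its integral is well-defined in $[-\infty,\infty)$.

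Next, the Gibbs variational principle (Donsker--Varadhan) gives
\[
    H(\rho\,|\,\gamma_y) + \int q_\beta\Box(-f)\,\rmd\rho \ge -\log\int e^{-q_\beta\Box(-f)}\,\rmd\gamma_y = \Tcal^\beta_1[f](y).
\]
Taking the infimum over $\rho$ yields
\[
    \inf_{\rho}\{H(\rho\,|\,\gamma_y)+W_\beta(\rho,\eta)\}\ge \int f\,\rmd\eta + \Tcal^\beta_1[f](y),
\]
which is exactly the easy direction \eqref{prop:VboxW:proof:c-conjugate:easy-dir} combined with the entropic $C$-transform computed in \cref{subsub:SB_deconv}.

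\emph{Step 2 (outer supremum).} Inserting Step 1 into \eqref{eq:SB.cost.rep} gives
\[
    C_{\rm stat}^\beta(x,\eta)-\int f\,\rmd\eta \ge \sup_{y}\{-q_\beta(x-y)+\Tcal^\beta_1[f](y)\} = -\inf_y\{q_\beta(x-y)-\Tcal^\beta_1[f](y)\} = -q_\beta\Box(-\Tcal^\beta_1[f])(x).
\]
Taking the infimum over $\eta$ concludes the proof.

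\emph{Main obstacle.} The only delicate point is measure-theoretic bookkeeping: $\Tcal^\beta_1[f]$ might take the value $-\infty$, and $\int q_\beta\Box(-f)\,\rmd\rho$ might equal $-\infty$. In these degenerate cases the inequalities must still be read correctly. If $\Tcal^\beta_1[f]\equiv-\infty$, the claim is trivial, since the left-hand side is then $-\infty$. Otherwise, \Cref{lem:concavity_regularisation} shows that $\Tcal^\beta_1[f]$ is semiconcave and upper semicontinuous, so every quantity appearing above is well-defined in $[-\infty,\infty)$ and the chain of inequalities holds.

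Finally, the Gibbs inequality $H(\rho\,|\,\gamma_y)\ge -\int g\,\rmd\rho-\log\int e^{-g}\,\rmd\gamma_y$ for $g$ bounded above is standard. We apply it with $g=q_\beta\Box(-f)$; if $H(\rho\,|\,\gamma_y)=\infty$, the inequality is trivially true.
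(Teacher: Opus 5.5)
Your proof is correct and is essentially the paper's argument: bounding $C_{\rm stat}^\beta(x,\eta)$ from below at each fixed $y$ by a quantity independent of $\eta$ is exactly the paper's $\inf_\eta\sup_y\ge\sup_y\inf_\eta$ swap. Your Step~1 (weak duality for $W_\beta$ followed by the Gibbs inequality) directly proves the easy inequality $f^{C_{V_{\rm EOT}\Box W_\beta}}\ge\bigl(-f^{C_{W_\beta}}\bigr)^{C_{V_{\rm EOT}}}$, which the paper imports from \Cref{prop:properties_inf_conv}; there it is only stated for $f\in L_{b,2}(\Rd)$, which a $\beta$-semiconcave $f$ such as $q_\beta$ need not satisfy, so your direct route is the cleaner one. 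One small correction: $q_\beta\Box(-f)$ is in general bounded above only by a quadratic, not by a constant (e.g.\ $f=-q_\beta$ gives $q_{\beta/2}$), so you should invoke the Gibbs inequality for measurable $g$ with $g^+\in L^1(\rho)$; this holds because $\rho\in\cP_2(\Rd)$, and the inequality follows from Jensen or from the bounded-above case applied to $g\wedge n$ with $n\to\infty$.
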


\begin{proof}
    For $x \in \Rd$ and $\eta \in \cP_2(\Rd)$ consider 
    \[
        C_{\rm stat}^\beta(x,\eta) = \sup_{y \in \R^d} \left\{-q_\beta(x-y) + \inf_{\rho  \in \cP_2(\R^d)}\bigl\{ H(\rho \,|\, \gamma_y) + W_\beta(\rho,\eta)\bigr\}\right\},
    \]
    where $W_\beta(\rho,\eta) =W_\beta(\rho,\eta)$.
    By \Cref{lem:stat.cost}, this is a standard weak transport cost, and the corresponding $C$-transform fulfills, for all $x \in \Rd$,
    \begin{align*}
        f^{C_{\rm stat}^\beta}(x) 
        &= \inf_{\eta \in \cP_2(\Rd)} \sup_{y \in \R^d} \left\{-q_\beta(x-y) + \inf_{\rho  \in \cP_2(\R^d)}\left\{ H(\rho \,|\, \gamma_y) + W_\beta(\rho,\eta) - \int f \, \rmd\eta\right\}\right\}\\
        &\ge \sup_{y \in \R^d} \inf_{\eta \in \cP_2(\Rd)} \left\{-q_\beta(x-y) + \inf_{\rho  \in \cP_2(\R^d)}\left\{ H(\rho \,|\, \gamma_y) + W_\beta(\rho,\eta) - \int f \, \rmd\eta\right\}\right\}.
    \end{align*}
    For $g \in C_{b,2}(\Rd)$ and with
    \[
        W_\beta(\mu,\nu) := \frac{\beta}{2}\Wcal_2^2(\mu,\nu),
        \qquad
        V_{\rm EOT}(\mu,\nu) := \inf_{\pi \in \cpl(\mu,\nu)} \int H(\pi_x \,|\, \gamma_x)\,\mu(\rmd x),
    \]
    which admit the $C$-transforms
    \begin{align*}
        g^{C_{V_{\rm EOT}}}(x)
        &= \inf_{\rho \in \cP_2(\R^d)}
        \left\{
            H(\rho \,|\, \gamma_x) - \int g \,\rmd \rho
        \right\}
        = -\log\bigl( \exp(g)\ast\gamma(x)\bigr), \\
        g^{C_{W_\beta}}(x)
        &= \inf_{y \in \R^d}
        \left\{
            \frac{\beta}{2}|x-y|^2 - g(y)
        \right\}
        = q_\beta \Box (-g)(x),
    \end{align*}
    the $C$-transform of the infimal convolution $V_{\rm EOT}\Box W_\beta$ is given by
    \[
        g^{C_{V_{\rm EOT}\Box W_\beta}}(y)=\inf_{\eta \in \cP_2(\Rd)}\inf_{\rho  \in \cP_2(\R^d)}\left\{H(\rho \,|\, \gamma_y) + W_\beta(\rho,\eta) - \int g \, \rmd\eta\right\}.
    \]
    Hence, by \Cref{prop:properties_inf_conv},
    \[
        \Tcal^\beta_1[f] :=
        f^{C_{V_{\rm EOT}\Box W_\beta}}
        =
        \bigl(-f^{C_{W_\beta}}\bigr)^{C_{V_{\rm EOT}}}
        =
        -\log\bigl(\exp(-q_\beta \Box (-f))\ast\gamma\bigr),
    \]
    so that
    \[
        f^{C_{\rm stat}^\beta}(x) \ge \sup_{y \in \R^d} \bigl\{-q_\beta(x-y) + \Tcal^\beta_1[f](x) \bigr\} = -q_\beta\Box(-\Tcal^\beta_1[f]).\qedhere
    \]
\end{proof}

\begin{lemma}[Tightness] \label{lem:tightness}
    Let $\beta>0$ and $\mu, \nu \in \cP_2(\Rd)$. Let $(f_n)_{n \in \N}$ be a sequence of $\beta$-semiconcave functions such that $\int f_n \,\rmd\nu = 0$ for all $n \in \N$, and define
    \[
        u_n := q_1 - \frac{1}{\beta}q_\beta \Box(-\Tcal^\beta_1[f_n]), \qquad \alpha_n := (\nabla u_n)_\#\mu,
    \]
    where $\Tcal^\beta_1[f_n]:=-\log\bigl(\exp(-q_\beta \Box (-f_n))\ast\gamma\bigr)$.
    Assume that 
    \[
        \int u_n\,\rmd\mu \leq \int u_{n+1}\,\rmd\mu \ \text{ for all } n \in \N.
    \]
    Then the following hold:
    \begin{enumerate}[label=(\roman*)]
        \item\label{it:tightness:uniform_bnd} There exists $c(\beta,d) \in \R$ such that, for all $x \in \Rd$,
        \[
            \sup_{n\in\N} |u_n(x)| \le c(\beta,d)\left(1 + |x|^2\right)
        \]
        
        \item\label{it:tightness:convergence} The sequence $(\nabla u_n)_{n\in\N}$ is uniformly bounded on compact subsets of $\Rd$ and equi-Lipschitz, and $(\alpha_n)_{n \in \N}$ is precompact in $\Pcal_2(\Rd)$. In particular, there exists a subsequence $(u_{n_k})_{k\in\N}$ and a convex, $\frac{1+\beta}{\beta}$-smooth function $u$ such that 
        \[
            u_{n_k} \longrightarrow u \ \text{ locally uniformly on } \Rd, \qquad \alpha_{n_k} \longrightarrow (\nabla u)\#\mu \ \text{ in } \Pcal_2(\Rd).
        \]
    \end{enumerate}
\end{lemma}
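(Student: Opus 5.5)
The bound in (i) rests on two pointwise facts: a lower bound coming from convexity and the normalisation, and an upper bound coming from smoothness. Monotonicity of $\int u_n\,\rmd\mu$ then ties the two together.

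\emph{Smoothness.} By \Cref{cor:smooth_brenier_map}, each $u_n$ is convex and $\frac{1+\beta}{\beta}$-smooth. Hence $\nabla u_n$ is $\frac{1+\beta}{\beta}$-Lipschitz, and
\[
u_n(x)\le u_n(\bar\mu)+\nabla u_n(\bar\mu)\cdot(x-\bar\mu)+\tfrac{1+\beta}{2\beta}|x-\bar\mu|^2 .
\]
So it suffices to bound $u_n(\bar\mu)$ and $|\nabla u_n(\bar\mu)|$ uniformly in $n$.

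\emph{Lower bound.} Since $u_n$ is convex, $u_n(x)\ge u_n(\bar\mu)+\nabla u_n(\bar\mu)\cdot(x-\bar\mu)$. Integrating against $\mu$ gives
\[
\int u_n\,\rmd\mu\ge u_n(\bar\mu).
\]
Together with the upper bound this yields
\[
\int u_n\,\rmd\mu\le u_n(\bar\mu)+\tfrac{1+\beta}{2\beta}\operatorname{Var}(\mu),
\]
so $u_n(\bar\mu)$ is bounded below by $\int u_1\,\rmd\mu-\tfrac{1+\beta}{2\beta}\operatorname{Var}(\mu)$, using the assumed monotonicity.

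\emph{Upper bound on $u_n(\bar\mu)$.} Here the normalisation $\int f_n\,\rmd\nu=0$ enters. I use the duality inequalities from the proof of \Cref{lem:strict_ascent} and \Cref{thm:fundamental_inf_conv}. By construction, $\beta\int u_n\,\rmd\mu-\int q_1\,\rmd\mu=\mathcal D_\beta[f_n]\le V_{\rm SB}^\beta(\mu,\nu)<\infty$, which follows from weak duality or the value of \eqref{eq:SB3}. Hence $u_n(\bar\mu)\le\int u_n\,\rmd\mu$ is bounded above uniformly.

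\emph{Gradient bound.} To control $\nabla u_n(\bar\mu)$, I bound $u_n$ from below pointwise, e.g.\ $u_n(x)\ge -c(1+|x|^2)$. A uniform quadratic lower bound on $u_n$, combined with the smoothness upper bound at $\bar\mu$ and the two-sided bound on $u_n(\bar\mu)$, forces $|\nabla u_n(\bar\mu)|$ to be bounded. To see this, evaluate at $x=\bar\mu\pm e\,t$ and compare with convexity.

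For the pointwise lower bound, I use $u_n=q_1-\frac1\beta q_\beta\Box(-\Tcal^\beta_1[f_n])$ and $q_\beta\Box(-\Tcal)(x)\le -\Tcal^\beta_1[f_n](x)$, so that
\[
u_n\ge q_1+\tfrac1\beta\Tcal^\beta_1[f_n].
\]
I then need a lower bound on $\Tcal^\beta_1[f_n]$, i.e.\ an upper bound on $\exp(-q_\beta\Box(-f_n))*\gamma$. Since $f_n$ is $\beta$-semiconcave with $\int f_n\,\rmd\nu=0$, one has $f_n\le q_\beta+\ell_n$ for an affine $\ell_n$. The same argument at the level of $v_n=q_1-\frac1\beta f_n$, which is convex with $\int v_n\,\rmd\nu=\int q_1\,\rmd\nu$, gives $v_n(\bar\nu)\le\int q_1\,\rmd\nu$, as in the proof of \Cref{lem:continuity_iteration_map}. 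Taking conjugates, $v_n^*(y)\ge y\cdot\bar\nu-\int q_1\,\rmd\nu$. Since $-q_\beta\Box(-f_n)=\beta v_n^*-q_\beta$, this gives $\exp(-q_\beta\Box(-f_n))\ge$ an explicit Gaussian-type function. That is the wrong direction, so this is the main obstacle.

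An alternative is to argue by contradiction. If $|\nabla u_n(\bar\mu)|\to\infty$ along a subsequence, then $\alpha_n=(\nabla u_n)_\#\mu$ escapes to infinity, because $\nabla u_n$ is equi-Lipschitz. Then $W_\beta(\mu,\alpha_n)\to\infty$, while
\[
\mathcal D_\beta[f_n]=-W_\beta(\mu,\alpha_n)+\bigl[\text{dual value for }V_{\rm EOT}\Box W_\beta(\alpha_n,\nu)\bigr]\le -W_\beta(\mu,\alpha_n)+C_{V\Box W}\text{-bound}.
\]
Using the $2$-growth bound from \Cref{lem:SB.cost.assumptions}, namely $V_{\rm EOT}\Box W_\beta(\alpha,\nu)\le\frac\beta2\Wcal_2^2(\alpha*\gamma,\nu)$, the difference behaves like $\frac\beta2\bigl(\Wcal_2^2(\alpha_n*\gamma,\nu)-\Wcal_2^2(\mu,\alpha_n)\bigr)$. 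This need not tend to $-\infty$, so I refine the argument with the exact identity $\mathcal D_\beta[f_n]\ge\mathcal D_\beta[f_1]$ and the mean relation from \Cref{rem:stat.cost.optimizers}. The mean of $\alpha_n$ is pinned down by means of $\mu,\nu$ through first-order conditions, because adding a constant vector to $\alpha_n$ changes $-W_\beta(\mu,\alpha)+V\Box W(\alpha,\nu)$ quadratically. This gives $|\int\nabla u_n\,\rmd\mu|$ bounded, and equi-Lipschitzness turns that into a bound on $|\nabla u_n(\bar\mu)|$.

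\emph{Part (ii).} Once (i) holds, the rest is routine. The gradients $\nabla u_n$ are $\frac{1+\beta}{\beta}$-Lipschitz and bounded at $\bar\mu$, hence locally bounded and equi-Lipschitz. Arzel\`a--Ascoli gives a locally uniformly convergent subsequence, for both $u_{n_k}$ and $\nabla u_{n_k}$, and the limit $u$ is convex and $\frac{1+\beta}{\beta}$-smooth. Finally, $|\nabla u_n(x)|\le C(1+|x|)$ uniformly in $n$, so dominated convergence yields $\alpha_{n_k}\to(\nabla u)_\#\mu$ in $\Pcal_2(\Rd)$.
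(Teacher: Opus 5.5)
There is a genuine gap at the uniform bound on $|\nabla u_n(\bar\mu)|$, which is the heart of (i).

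Your first route seeks a pointwise lower bound on $u_n$ and stalls, as you note. Your fallback is a heuristic, not a proof. The ``first-order conditions'' you invoke describe optimizers of the variational problem, not the iterates $\alpha_n$. What you actually have is only $\mathcal D_\beta[f_1]\le\mathcal D_\beta[f_n]\le -W_\beta(\mu,\alpha_n)+V_{\rm EOT}\Box W_\beta(\alpha_n,\nu)$. To deduce from this that $\bar\alpha_n$ stays bounded, you would need two further steps, and you carry out neither:
\begin{itemize}
\item split off the means exactly, which produces the strongly concave term $\frac\beta2\bigl(\frac{1}{1+\beta}|\bar\alpha-\bar\nu|^2-|\bar\alpha-\bar\mu|^2\bigr)$;
\item prove that the centred remainder is bounded above uniformly in $\alpha$.
\end{itemize}

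The missing idea is to reverse the direction. Bound $u_n$ from \emph{above} pointwise, and pair this with the supporting-hyperplane (convexity) inequality at $\bar\mu$ rather than the descent inequality. The paper gets the upper bound from pointwise weak duality with $\eta=\nu$. By \Cref{lem:stat.cost.dual.inequ}, the normalisation $\int f_n\,\rmd\nu=0$ and the growth bound in \Cref{lem:stat.cost}, one has $-q_\beta\Box(-\Tcal^\beta_1[f_n])(x)\le C^\beta_{\rm stat}(x,\nu)\le c\bigl(1+|x|^2+\int|y|^2\,\nu(\rmd y)\bigr)$. Hence $u_n(x)\le a|x|^2+b$ uniformly in $n$.

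The estimate you discarded as going in the wrong direction gives the same conclusion. From $\beta v_n^*-q_\beta\ge\beta\,y\cdot\bar\nu-\beta\int q_1\,\rmd\nu-q_\beta$, the Gaussian convolution yields $\Tcal^\beta_1[f_n](x)\le\frac{\beta}{2(1+\beta)}|x|^2$ plus an affine function independent of $n$. Since $\frac{\beta}{2(1+\beta)}<\frac\beta2$, the sup-convolution $-q_\beta\Box(-\Tcal^\beta_1[f_n])$ is then bounded above by a fixed quadratic, and again $u_n\le a|x|^2+b$.

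Now set $e:=\nabla u_n(\bar\mu)/|\nabla u_n(\bar\mu)|$. Convexity gives $u_n(\bar\mu)+t|\nabla u_n(\bar\mu)|\le u_n(\bar\mu+te)\le a(|\bar\mu|+t)^2+b$. Taking $t=1$ and using your lower bound on $u_n(\bar\mu)$ bounds the gradient.

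The same pointwise bound also gives the upper bound on $u_n(\bar\mu)$ directly. So you need not cite the value \eqref{eq:SB3}, which would be circular, since this lemma feeds into the proof of \Cref{thm:SB}. With this fix, the rest of your argument for (i) and your argument for (ii) go through.
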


\begin{proof}
    For $x \in \Rd$ and $\eta \in \cP_2(\Rd)$ consider 
    \[
        C_{\rm stat}^\beta(x,\eta) = \sup_{y \in \R^d} \left\{-q_\beta(x-y) + \inf_{\rho  \in \cP_2(\R^d)}\left\{ H(\rho \,|\, \gamma_y) + W_\beta(\rho,\eta)\right\}\right\}.
    \]
    By \Cref{lem:stat.cost}, this is a standard weak transport cost, and there exists $c(\beta,d) \in \R$ such that
    \begin{equation*}
        C_{\rm stat}^\beta(x,\eta) \le c(\beta,d)\left(1 + |x|^2 + \int |y|^2\,\eta(\rmd y)\right).
    \end{equation*}
    This combined with \Cref{lem:stat.cost.dual.inequ}, yields by definition of the $C$-transform, for all $n \in \N$ and $x\in\Rd$,
    \[
        -q_\beta\Box(-\Tcal^\beta_1[f_n])(x) \leq f_n^{C_{\rm stat}^\beta}(x) + \int f_n\,\rmd\nu \leq c(\beta,d)\left(1 + |x|^2 + \int |y|^2\,\nu(\rmd y)\right).
    \]
    
    Next, consider the functions $(u_n)_{n\in\N}$. By the previous display, there exist constants $a,b>0$ such that, for all $x \in \Rd$,
    \begin{equation} \label{eq:brenier_pot_q_ub}
        \sup_{n \in \N} u_n(x) \leq a|x|^2+ b.
    \end{equation}
    Moreover, it follows from \Cref{cor:smooth_brenier_map} that $(u_n)_{n\in\N}$ are convex and $L:=\frac{1+\beta}{\beta}$-smooth, hence Brenier maps, so that the Descent Lemma (from classical optimization theory) gives, for all $n \in \N$ and all $x\in\Rd$,
    \[
        u_n(x) \leq u_n(\bar\mu) + (x-\bar\mu)\cdot\nabla u_n(\bar\mu) + \frac{L}{2}|x-\bar\mu|^2.
    \]
    Consequently,
    \[
        \int u_n\,\rmd\mu \leq u_n(\bar\mu) + \frac{L}{2}\int |x-\bar\mu|^2\,\mu(\rmd x).
    \]
    By assumption, we have $\int u_n \, \rmd\mu \leq \int u_{n+1}\,\rmd\mu$ for all $n\in\N$, which together with the previous display implies a uniform lower bound on $(u_n(\bar\mu))_{n\in\N}$. Moreover, evaluating \eqref{eq:brenier_pot_q_ub} at $x=\bar\mu$ gives a uniform upper bound on $(u_n(\bar\mu))_{n \in \N}$, so $(u_n(\bar\mu))_{n \in \N}$ is uniformly bounded. For each $n \in \N$, convexity also yields
    \[
        u_n(x) \geq u_n(\bar\mu) + (x-\bar\mu)\cdot\nabla u_n(\bar\mu)
    \]
    for all $x \in \R^d$. Together with \eqref{eq:brenier_pot_q_ub}, this yields \ref{it:tightness:uniform_bnd}.    
    
    Fix $n\in \N$. If $|\nabla u_n(\bar\mu)| > 0$, let $u:= \nabla u_n(\bar\mu)/|\nabla u_n(\bar\mu)|$. Then, for all $t \in \R$,
    \[
        u_n(tu) \geq u_n(\bar \mu) + |\nabla u_n(\bar\mu)|t - |\nabla u_n(\bar\mu)||\bar \mu|,
    \]
    which also holds in the case $|\nabla u_n(\bar\mu)|=0$. Combining this with \eqref{eq:brenier_pot_q_ub} and the uniform bound on $\left(u_n(\bar\mu)\right)_{n \in \N}$, we obtain
    \[
        |\nabla u_n(\bar\mu)|\,(t- |\bar \mu|) \leq a \,t^2 + b',
    \]
    for some $b'>0$ independent of $n$. Choosing $t = |\bar \mu|+1$ yields
    \[
        \sup_{n \in \N} |\nabla u_n(\bar\mu)| \leq a\,(|\bar \mu|+1)^2 + b' < \infty.
    \]
    The maps $(\nabla u_n)_{n \in \N}$ are $L$-Lipschitz, so, for all $x \in \Rd$ and all $n \in \N$,
    \[
        |\nabla u_n(x)| \le |\nabla u_n(\bar\mu)| + L|x-\bar\mu|.
    \]
    Hence, for all $n \in \N$,
    \[
        \int |z|^2\,\alpha_n(\rmd z) = \int |\nabla u_n(x)|^2\,\mu(\rmd x) \leq 2 |\nabla u_n(\bar \mu)|^2+2L^2\int |x-\bar\mu|^2\,\mu(\rmd x),
    \]
    and therefore $\sup_{n \in \N}\int |z|^2\,\alpha_n(\rmd z) < \infty$. By Markov’s inequality, this implies that $(\alpha_n)_{n \in \N}$ is tight. 

    Moreover, the bound on $\sup_{n \in \N} |\nabla u_n(\bar\mu)|$, together with the uniform Lipschitz constant $L$, implies that $(\nabla u_n)_{n \in \N}$ is uniformly bounded on compacts and equi-Lipschitz-continuous. Let $(K_n)_{n \in \N}$ be a sequence of compacts with $K_n \uparrow \Rd$. For each $n \in \N$, Arzelà--Ascoli yields a sub-sequence converging uniformly on $K_n$ to some $L$-Lipschitz map. By a standard diagonal argument, we extract a subsequence, denoted $\big(\nabla u_{n_k}\big)_{k \in \N}$, that converges locally uniformly on $\Rd$ to an $L$-Lipschitz continuous map $T$.
    
    Set $\alpha := T_\#\mu$. Then $\big(\nabla u_{n_k}, T\big)_\# \mu \in \Cpl\big(\alpha_{n_k},\alpha\big)$, and thus, for all $n \in \N$,
    \begin{align*}
        \Wcal_2^2\big(\alpha_{n_k}, \alpha\big) \leq \int \big|\nabla u_{n_k}(x) - T(x)\big|^2\,\rmd\mu(x) \leq \int_{K_n} \big|\nabla u_{n_k} - T\big|^2\,\rmd\mu + c \int_{K^\mathsf{c}_n} \big(1+|x-\bar\mu|^2\big) \, \mu(\rmd x),
    \end{align*}
    for some constant $c>0$ independent of $k,n$. This proves that $\Wcal_2(\alpha_{n_k},\alpha)\to0$. 
    By \ref{it:tightness:uniform_bnd} and potentially passing to another subsequence, $(u_{n_k})_{k \in \N}$ converge locally uniformly to a convex, $L$-smooth function $u$. In particular, ${\nabla u = T}$ and hence \ref{it:tightness:convergence} holds.
\end{proof}

\begin{lemma} \label{lem:exponential_moments}
    Let $\beta > 0$ and let $\alpha, \rho, \nu \in \cP_2(\R^d)$. Denote $v^*$ the Brenier potential from $\rho$ to $\nu$. Assume there exists $\pi \in \Cpl(\alpha,\rho)$ such that, for $x \in \Rd$,
    \[
        \frac{\rmd \pi_x}{\rmd \gamma_x} = \frac{e^{\beta v^* - q_\beta}}{e^{\beta v^* - q_\beta}*\gamma(x)},
    \]
    and assume that $\nu$ has all exponential moments, that is, $\int e^{t |y|} \, d\nu(y) < \infty$ for all $t > 0$.
    Suppose that 
    \[
        e^I :=\alpha\text{-\rm ess}\inf \left\{ e^{\beta v^*-q_\beta}*\gamma \right\} > 0
    \]
    and set $c := \frac1\beta |I| + |v^*(0)|$.
    Then, for all $t > 0$ and for all $B \subseteq \Rd$,
    \[
        \int_B e^{t|y|} \, \rho(\rmd y) \le \int_B e^{t |y|} \, \alpha\ast\gamma(\rmd y) + e^{2 t \sqrt{c}}\int_B e^{2t|y|} \, \nu(\rmd y).
    \]
\end{lemma}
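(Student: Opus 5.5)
The approach is to compare $\rho$ pointwise with $\alpha\ast\gamma$ using the Gibbs form of $\pi$, and to split $B$ according to the sign of the exponent in that comparison.

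\emph{Step 1 (density bound).} By the assumed form of $\pi_x$, $\rho=\int\pi_x\,\alpha(\rmd x)$ has Lebesgue density
\[
\rho(y)=e^{\beta v^*(y)-q_\beta(y)}\int \frac{\gamma_x(y)}{e^{\beta v^*-q_\beta}\ast\gamma(x)}\,\alpha(\rmd x)\le e^{\beta v^*(y)-q_\beta(y)-I}\,(\alpha\ast\gamma)(y).
\]
This uses only that the denominator is at least $e^I$ for $\alpha$-a.e.\ $x$. Set
\[
B_1:=B\cap\{\beta v^*-q_\beta\le I\},\qquad B_2:=B\setminus B_1 .
\]
On $B_1$ the exponential factor is at most $1$, so $\int_{B_1}e^{t|y|}\rho(\rmd y)\le\int_{B}e^{t|y|}\,\alpha\ast\gamma(\rmd y)$. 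This gives the first term of the claim.

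\emph{Step 2 (geometry on $B_2$).} On $B_2$ we have $v^*(y)\ge \tfrac12|y|^2-\tfrac{|I|}{\beta}$. Convexity of $v^*$ gives $v^*(0)\ge v^*(y)-y\cdot\nabla v^*(y)$. Combining the two,
\[
|y|\,|\nabla v^*(y)|\ge y\cdot\nabla v^*(y)\ge \tfrac12|y|^2-c .
\]
Solving this quadratic inequality in $|y|$ yields $|y|\le 2|\nabla v^*(y)|+2\sqrt c$. Hence on $B_2$,
\[
e^{t|y|}\le e^{2t\sqrt c}\,e^{2t|\nabla v^*(y)|}.
\]

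\emph{Step 3 (transfer to $\nu$) and the main obstacle.} Since $(\nabla v^*)_\#\rho=\nu$, Step 2 gives
\[
\int_{B_2}e^{t|y|}\rho(\rmd y)\le e^{2t\sqrt c}\int \mathbbm 1_{B_2}(y)\,e^{2t|\nabla v^*(y)|}\,\rho(\rmd y).
\]
Removing the indicator bounds this by $e^{2t\sqrt c}\int e^{2t|z|}\nu(\rmd z)$, which is the statement for $B=\Rd$. To get the factor $\mathbbm 1_B(z)$ in the $\nu$-integral for a general Borel set $B$, one needs to replace $\mathbbm 1_{B_2}(y)$ by $\mathbbm 1_B(\nabla v^*(y))$.

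I expect this step to be the crux. The pushforward naturally produces the set $(\nabla v^*)^{-1}$-preimages rather than $B$ itself, and the stated form does not follow from pushforward alone. My first attempt will be to exploit more structure on $B_2$ to relate $y$ and $\nabla v^*(y)$ so that the indicator can be moved to $z$. Specifically, I will use the lower bound $y\cdot\nabla v^*(y)\ge\tfrac12|y|^2-c$, which shows that $\nabla v^*(y)$ points roughly along $y$ with comparable size. I will test whether this suffices for the sets actually used downstream in the uniform-integrability argument of \Cref{lem:continuity_iteration_map}, namely tails $B=\{|y|>R\}$ and sets of small $\rho$-mass.

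If the exact transfer for arbitrary $B$ cannot be closed, I will record precisely which additional relation between $B$ and $\nabla v^*(B)$ is used to finish Step 3.
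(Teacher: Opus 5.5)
Your Steps 1 and 2 are the paper's proof. It splits along the same set $A=\{\beta v^*-q_\beta\le I\}$ and uses $\frac{\rmd\pi_x}{\rmd\gamma_x}\le e^{\beta v^*-q_\beta-I}\le 1$ on $A$. On $A^c$ it combines $v^*(0)\ge v^*(y)-y\cdot\nabla v^*(y)$ with Young's inequality $y\cdot z\le\frac14|y|^2+|z|^2$, instead of solving your quadratic, and obtains $|y|\le 2\sqrt c+2|\nabla v^*(y)|$. For your Step 3, the paper simply asserts
\[
\int_{B\cap A^c}e^{2t|\nabla v^*(y)|}\,\rho(\rmd y)\le\int_B e^{2t|y|}\,\nu(\rmd y),
\]
which is exactly the transfer you flagged, and gives no justification.

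Your suspicion is correct: the step cannot be closed, because the lemma is false for general $B$. Take $\alpha=\delta_0$, $z_0\neq 0$, $\nu=\delta_{z_0}$ and $v^*(y)=z_0\cdot y$. This is a Brenier potential from any $\rho$ to $\nu$; an additive constant would only change $c$, which plays no role below.
\begin{itemize}
\item Then $e^{\beta v^*-q_\beta}\propto e^{-\frac\beta2|\cdot-z_0|^2}$, so $\rho=\pi_0=\mathcal N\bigl(m,\tfrac{1}{1+\beta}I_d\bigr)$ with $m=\tfrac{\beta}{1+\beta}z_0$.
\item Meanwhile $\alpha\ast\gamma=\mathcal N(0,I_d)$ and $e^I=(e^{\beta v^*-q_\beta}\ast\gamma)(0)>0$, so all hypotheses hold.
\item Let $B$ be the ball of radius $r<|z_0-m|=\tfrac{|z_0|}{1+\beta}$ around $m$. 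Then $\nu(B)=0$, and the claim reduces to $\int_B e^{t|y|}\rho(\rmd y)\le\int_B e^{t|y|}(\alpha\ast\gamma)(\rmd y)$.
\item But $\frac{\rmd\rho}{\rmd(\alpha\ast\gamma)}(m)=(1+\beta)^{d/2}e^{|m|^2/2}>1$, so the reverse strict inequality holds for small $r$.
\end{itemize}

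So your plan to move the indicator from $y$ to $\nabla v^*(y)$ using $y\cdot\nabla v^*(y)\ge\frac12|y|^2-c$ cannot work for arbitrary $B$. That relation is only large-scale: in the example it just says $|y-z_0|^2\le|z_0|^2+2c$, and it carries no information on small sets. What your argument does prove, for every Borel $B$, is
\[
\int_B e^{t|y|}\rho(\rmd y)\le\int_B e^{t|y|}(\alpha\ast\gamma)(\rmd y)+e^{2t\sqrt c}\int_{B\cap A^c}e^{2t|\nabla v^*(y)|}\rho(\rmd y).
\]
This is what you should record. It contains the stated bound for $B=\Rd$, and your proof of that case is complete. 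Since $|\nabla v^*(y)|\ge\frac12|y|-\sqrt c$ on $A^c$, for tails $B=\{|y|>R\}$ it also gives the bound with the $\nu$-integral taken over $\{|z|>\frac R2-\sqrt c\}$. That tail form, together with uniform boundedness of the integrands on compact sets, is what the uniform-integrability step in \Cref{lem:continuity_iteration_map} actually needs.
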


\begin{proof}
    Let $I := \log\bigl(\alpha\text{-\rm ess}\inf \left\{ e^{\beta v^*-q_\beta}*\gamma \right\}\bigr)$.
    We split $\R^d$ into 
    \[
        A:= \left\{y \in \R^d : \beta v^*(y) - q_\beta \le I\right\},
        \qquad
        A^c=\R^d \setminus A.
    \]
    Note that, on $A$,
    \[
        \frac{d\pi_x}{d\gamma_x} = \frac{e^{\beta v^*-q_\beta}}{e^{\beta v^*-q_\beta} * \gamma(x)} \le e^{\beta v^*-q_\beta - I} \le 1.
    \]
    Consequently,
    \begin{equation}\label{ineq:proof:exp_mom:1}
        \int_{B \cap A} e^{t|y|} \, \rho(\rmd y) 
        \le 
        \int_{B \cap A} e^{t|y|} \, \alpha\ast\gamma(\rmd y) 
        \le
        \int_B e^{t |y|} \, \alpha\ast\gamma(\rmd y).
    \end{equation}
    To bound the remaining, we let $y \in A^c$, write $z = \nabla v^*(y)$.
    For $z:=\nabla v^*(y)$ we have ${v^*(0) + v(z) \ge 0}$ and ${v^*(y) + v(z) = y \cdot z}$ so that ${v^*(0) \ge v^*(y) - y \cdot z}$.
    Young's inequality then gives
    \[
        v^*(y)
        \le 
        v^*(0) + y \cdot z
        \le 
        v^*(0)+\frac14|y|^2+|z|^2.
    \]
    Since $\beta v^*(y)-q_\beta(y) > I$, this yields
    \[
        |y|^2 < 4 v^*(0) + 4|z|^2 - \frac{4}{\beta}I.
    \]
    Hence, with $c = \frac1\beta |I| + |v^*(0)|$,
    \begin{align*}
        \int_{B\cap A^c}  e^{t|y|} \, \rho(\rmd y) 
        \le 
        e^{2t\sqrt{c}} \int_{B\cap A^c} e^{2t|\nabla v^*(y)|} \, \rho(\rmd y) 
        \le
        e^{2t\sqrt{c}} \int_B e^{2t|y|} \, \nu(\rmd y).
    \end{align*}
    This combined with \eqref{ineq:proof:exp_mom:1} yields the claim.
\end{proof}

\printbibliography

@article{GuNiWi25,
  title={Dynamic characterization of barycentric optimal transport problems and their martingale relaxation},
  author={Guo, Ivan and Nilsson, Severin and Wiesel, Johannes},
  journal={arXiv preprint arXiv:2511.21287},
  year={2025}
}

@article{AlCoJo20,
  title={Sampling of probability measures in the convex order by Wasserstein projection},
  author={Alfonsi, Aur{\'e}lien and Corbetta, Jacopo and Jourdain, Benjamin},
  year={2020}
}

@misc{alouadi2026lightsbbmbridgingschrodingerbass,
      title={LightSBB-M: Bridging Schr\"odinger and Bass for Generative Diffusion Modeling}, 
      author={Alexandre Alouadi and Pierre Henry-Labord\`ere and Grégoire Loeper and Othmane Mazhar and Huyên Pham and Nizar Touzi},
      year={2026},
      eprint={2601.19312},
      archivePrefix={arXiv},
      primaryClass={cs.LG},
      url={https://arxiv.org/abs/2601.19312}, 
}

@article{gozlan2020mixture,
  title={On a mixture of Brenier and Strassen theorems},
  author={Gozlan, Nathael and Juillet, Nicolas},
  journal={Proceedings of the London Mathematical Society},
  volume={120},
  number={3},
  pages={434--463},
  year={2020},
  publisher={Wiley Online Library}
}

@article{backhoff2019existence,
  title={Existence, duality, and cyclical monotonicity for weak transport costs},
  author={Backhoff-Veraguas, Julio and Beiglb{\"o}ck, Mathias and Pammer, Gudmund},
  journal={Calculus of Variations and Partial Differential Equations},
  volume={58},
  number={6},
  pages={203},
  year={2019},
  publisher={Springer}
}

@article{hasenbichler2025martingalesinkhornalgorithm,
  title   = {The Martingale Sinkhorn Algorithm}, 
  author  = {Hasenbichler, Manuel and Joseph, Benjamin and Loeper, Gregoire and Obloj, Jan and Pammer, Gudmund},
  journal = {arXiv preprint arXiv:2310.13797}, 
  year    = {2025},
}

@book {BertsekasShreve1978,
    AUTHOR = {Bertsekas, Dimitri P. and Shreve, Steven E.},
     TITLE = {Stochastic optimal control},
    SERIES = {Mathematics in Science and Engineering},
    VOLUME = {139},
      NOTE = {The discrete time case},
 PUBLISHER = {Academic Press, Inc. [Harcourt Brace Jovanovich, Publishers],
              New York-London},
      YEAR = {1978},
     PAGES = {xiii+323},
   %   ISBN = {0-12-093260-1},
  % MRCLASS = {93E20},
 % MRNUMBER = {511544},
%MRREVIEWER = {Vladimir\ Katkovnik},
}

@article {BeJoMaPa23,
    AUTHOR = {Beiglb\"ock, Mathias and Jourdain, Benjamin and Margheriti,
              William and Pammer, Gudmund},
     TITLE = {Stability of the weak martingale optimal transport problem},
   JOURNAL = {Ann. Appl. Probab.},
  FJOURNAL = {The Annals of Applied Probability},
    VOLUME = {33},
      YEAR = {2023},
    NUMBER = {6B},
     PAGES = {5382--5412},
      ISSN = {1050-5164,2168-8737},
   MRCLASS = {49Q22 (60G42 91G80)},
  MRNUMBER = {4677736},
MRREVIEWER = {Marc\ Henry},
       DOI = {10.1214/23-aap1950},
       URL = {https://doi.org/10.1214/23-aap1950},
}

@misc{beiglböck2025fundamentaltheoremweakoptimal,
      title={The Fundamental Theorem of Weak Optimal Transport}, 
      author={Mathias Beiglböck and Gudmund Pammer and Lorenz Riess and Stefan Schrott},
      year={2025},
      eprint={2501.16316},
      archivePrefix={arXiv},
      primaryClass={math.PR},
      url={https://arxiv.org/abs/2501.16316}, 
}

@article {MR4941918,
    AUTHOR = {Acciaio, Beatrice and Marini, Antonio and Pammer, Gudmund},
     TITLE = {Calibration of the {B}ass local volatility model},
   JOURNAL = {SIAM J. Financial Math.},
  FJOURNAL = {SIAM Journal on Financial Mathematics},
    VOLUME = {16},
      YEAR = {2025},
    NUMBER = {3},
     PAGES = {803--833},
      ISSN = {1945-497X},
   MRCLASS = {91G30 (60G44 65J15)},
  MRNUMBER = {4941918},
       DOI = {10.1137/23M1622660},
       URL = {https://doi.org/10.1137/23M1622660},
}

@article {TanTouzi2013,
    AUTHOR = {Tan, Xiaolu and Touzi, Nizar},
     TITLE = {Optimal transportation under controlled stochastic dynamics},
   JOURNAL = {Ann. Probab.},
  FJOURNAL = {The Annals of Probability},
    VOLUME = {41},
      YEAR = {2013},
    NUMBER = {5},
     PAGES = {3201--3240},
      ISSN = {0091-1798,2168-894X},
   MRCLASS = {60H30 (49Q20 60H10 65K99 93E20)},
  MRNUMBER = {3127880},
MRREVIEWER = {Vivek\ S.\ Borkar},
       DOI = {10.1214/12-AOP797},
       URL = {https://doi.org/10.1214/12-AOP797},
}

@article {Strassen1965,
    AUTHOR = {Strassen, V.},
     TITLE = {The existence of probability measures with given marginals},
   JOURNAL = {Ann. Math. Statist.},
  FJOURNAL = {Annals of Mathematical Statistics},
    VOLUME = {36},
      YEAR = {1965},
     PAGES = {423--439},
      ISSN = {0003-4851},
   MRCLASS = {60.05 (60.20)},
  MRNUMBER = {177430},
MRREVIEWER = {J.\ Wolfowitz},
       DOI = {10.1214/aoms/1177700153},
       URL = {https://doi.org/10.1214/aoms/1177700153},
}

@article {BaBeiHueKae2020,
    AUTHOR = {Backhoff-Veraguas, Julio and Beiglb\"ock, Mathias and
              Huesmann, Martin and K\"allblad, Sigrid},
     TITLE = {Martingale {B}enamou-{B}renier: a probabilistic perspective},
   JOURNAL = {Ann. Probab.},
  FJOURNAL = {The Annals of Probability},
    VOLUME = {48},
      YEAR = {2020},
    NUMBER = {5},
     PAGES = {2258--2289},
      ISSN = {0091-1798,2168-894X},
   MRCLASS = {60G42 (49Q22 60G44 91G20)},
  MRNUMBER = {4152642},
       DOI = {10.1214/20-AOP1422},
       URL = {https://doi.org/10.1214/20-AOP1422},
}

@article {Leonard2014,
    AUTHOR = {L\'eonard, Christian},
     TITLE = {A survey of the {S}chr\"odinger problem and some of its
              connections with optimal transport},
   JOURNAL = {Discrete Contin. Dyn. Syst.},
  FJOURNAL = {Discrete and Continuous Dynamical Systems},
    VOLUME = {34},
      YEAR = {2014},
    NUMBER = {4},
     PAGES = {1533--1574},
      ISSN = {1078-0947,1553-5231},
   MRCLASS = {60J25 (46N10 60F10)},
  MRNUMBER = {3121631},
MRREVIEWER = {Nicolas\ Juillet},
       DOI = {10.3934/dcds.2014.34.1533},
       URL = {https://doi.org/10.3934/dcds.2014.34.1533},
}

@article {Lehec2013,
    AUTHOR = {Lehec, Joseph},
     TITLE = {Representation formula for the entropy and functional
              inequalities},
   JOURNAL = {Ann. Inst. Henri Poincar\'e{} Probab. Stat.},
  FJOURNAL = {Annales de l'Institut Henri Poincar\'e{} Probabilit\'es et
              Statistiques},
    VOLUME = {49},
      YEAR = {2013},
    NUMBER = {3},
     PAGES = {885--899},
      ISSN = {0246-0203,1778-7017},
   MRCLASS = {60J65 (39B62 60J70)},
  MRNUMBER = {3112438},
       DOI = {10.1214/11-aihp464},
       URL = {https://doi.org/10.1214/11-aihp464},
}

@incollection {Foellmer1985,
    AUTHOR = {F\"ollmer, H.},
     TITLE = {An entropy approach to the time reversal of diffusion
              processes},
 BOOKTITLE = {Stochastic differential systems ({M}arseille-{L}uminy, 1984)},
    SERIES = {Lect. Notes Control Inf. Sci.},
    VOLUME = {69},
     PAGES = {156--163},
 PUBLISHER = {Springer, Berlin},
      YEAR = {1985},
      ISBN = {3-540-15176-1},
   MRCLASS = {60J60 (60K35)},
  MRNUMBER = {798318},
MRREVIEWER = {Mich\`ele\ Mastrangelo-Dehen},
       DOI = {10.1007/BFb0005070},
       URL = {https://doi.org/10.1007/BFb0005070},
}

@article{GuoLoeper2021,
 ISSN = {10505164, 21688737},
 URL = {https://www.jstor.org/stable/27174891},
 author = {Ivan Guo and Grégoire Loeper},
 journal = {The Annals of Applied Probability},
 number = {3},
 pages = {pp. 1232--1263},
 publisher = {Institute of Mathematical Statistics},
 title = {Path dependent optimal transport and model calibration on exotic derivatives},
 urldate = {2026-02-06},
 volume = {31},
 year = {2021}
}

@article {BeiglHenryPenkner2013,
    AUTHOR = {Beiglb\"ock, Mathias and Henry-Labord\`ere, Pierre and
              Penkner, Friedrich},
     TITLE = {Model-independent bounds for option prices---a mass transport
              approach},
   JOURNAL = {Finance Stoch.},
  FJOURNAL = {Finance and Stochastics},
    VOLUME = {17},
      YEAR = {2013},
    NUMBER = {3},
     PAGES = {477--501},
      ISSN = {0949-2984,1432-1122},
   MRCLASS = {91G20 (49Q20 91G80)},
  MRNUMBER = {3066985},
MRREVIEWER = {R\u azvan\ R\u aducanu},
       DOI = {10.1007/s00780-013-0205-8},
       URL = {https://doi.org/10.1007/s00780-013-0205-8},
}

@article {BackhoffSchachTschid2025,
    AUTHOR = {Backhoff-Veraguas, Julio and Schachermayer, Walter and
              Tschiderer, Bertram},
     TITLE = {The {B}ass functional of martingale transport},
   JOURNAL = {Ann. Appl. Probab.},
  FJOURNAL = {The Annals of Applied Probability},
    VOLUME = {35},
      YEAR = {2025},
    NUMBER = {6},
     PAGES = {4282--4301},
      ISSN = {1050-5164,2168-8737},
   MRCLASS = {60G42 (49Q22 60G44 91G20)},
  MRNUMBER = {4993803},
       DOI = {10.1214/25-AAP2221},
       URL = {https://doi.org/10.1214/25-AAP2221},
}

@article{gozlan2017kantorovich,
  title={Kantorovich duality for general transport costs and applications},
  author={Gozlan, Nathael and Roberto, Cyril and Samson, Paul-Marie and Tetali, Prasad},
  journal={Journal of Functional Analysis},
  volume={273},
  number={11},
  pages={3327--3405},
  year={2017},
  publisher={Elsevier}
}

@article{backhoff2023existence,
  title={Existence of Bass martingales and the martingale Benamou-Brenier problem in $\mathbb{R}^d$},
  author={Backhoff-Veraguas, Julio and Beiglb{\"o}ck, Mathias and Schachermayer, Walter and Tschiderer, Bertram},
  journal={Preprint, available at https://arxiv.org/abs/2306.11019 v3},
  year={2023}
}

@article{beiglbock2016problem,
  title={On a problem of optimal transport under marginal martingale constraints},
  author={Beiglb{\"o}ck, Mathias and Juillet, Nicolas},
  year={2016}
}

@article{galichon2014stochastic,
  title={A stochastic control approach to no-arbitrage bounds given marginals, with an application to lookback options},
  author={Galichon, Alfred and Henry-Labord\`ere, Pierre and Touzi, Nizar},
  year={2014}
}

@article{conze2021bass,
  title={Bass construction with multi-marginals: Lightspeed computation in a new local volatility model},
  author={Conze, Antoine and Henry-Labord\`ere, Pierre},
  journal={Available at SSRN 3853085},
  year={2021}
}

@article{henrylabordere2026bridgingschrodingerbasssemimartingale,
      title={Bridging Schr\"odinger and Bass: A Semimartingale Optimal Transport Problem with Diffusion Control}, 
      author={Pierre Henry-Labord\`ere and Grégoire Loeper and Othmane Mazhar and Huyên Pham and Nizar Touzi},
      year={2026},
      eprint={2603.27712},
      archivePrefix={arXiv},
      primaryClass={math.PR},
      url={https://arxiv.org/abs/2603.27712}, 
}

@article{MiTh6,
  title={Duality theorem for the stochastic optimal control problem},
  author={Mikami, Toshio and Thieullen, Mich{\`e}le},
  journal={Stochastic processes and their applications},
  volume={116},
  number={12},
  pages={1815--1835},
  year={2006},
  publisher={Elsevier}
}

@article{MiTh8,
  title={Optimal transportation problem by stochastic optimal control},
  author={Mikami, Toshio and Thieullen, Michele},
  journal={SIAM Journal on Control and Optimization},
  volume={47},
  number={3},
  pages={1127--1139},
  year={2008},
  publisher={SIAM}
}

@article{MaGe20,
  title={An optimal transport approach for the Schr{\"o}dinger bridge problem and convergence of Sinkhorn algorithm},
  author={Marino, Simone Di and Gerolin, Augusto},
  journal={Journal of Scientific Computing},
  volume={85},
  number={2},
  pages={27},
  year={2020},
  publisher={Springer}
}

@article{JoLoOb26,
  title={Calibration of local volatility models with stochastic interest rates using optimal transport},
  author={Joseph, Benjamin and Loeper, Gr{\'e}goire and Ob{\l}{\'o}j, Jan},
  journal={Finance and Stochastics},
  pages={1--43},
  year={2026},
  publisher={Springer}
}

@inproceedings{ElLeSh20,
  title={Stability of the logarithmic Sobolev inequality via the Follmer process},
  author={Eldan, Ronen and Lehec, Joseph and Shenfeld, Yair},
  booktitle={Annales De L Institut Henri Poincare-Probabilites Et Statistiques},
  volume={56},
  number={3},
  pages={2253--2269},
  year={2020}
}

@article{MiSh24,
  title={The Brownian transport map},
  author={Mikulincer, Dan and Shenfeld, Yair},
  journal={Probability Theory and Related Fields},
  volume={190},
  number={1},
  pages={379--444},
  year={2024},
  publisher={Springer}
}

@article{LeVe24,
  title={Eldan's stochastic localization and the KLS conjecture: Isoperimetry, concentration and mixing},
  author={Lee, Yin Tat and Vempala, Santosh S},
  journal={Annals of Mathematics},
  volume={199},
  number={3},
  pages={1043--1092},
  year={2024},
  publisher={Department of Mathematics, Princeton University Princeton, New Jersey, USA}
}

@article{KiRu24,
  title={Backward and forward Wasserstein projections in stochastic order},
  author={Kim, Young-Heon and Ruan, Yuanlong},
  journal={Journal of Functional Analysis},
  volume={286},
  number={2},
  pages={110201},
  year={2024},
  publisher={Elsevier}
}

@article{DeTo19,
  title={Irreducible convex paving for decomposition of multidimensional martingale transport plans},
  author={De March, Hadrien and Touzi, Nizar},
  journal={The Annals of Probability},
  volume={47},
  number={3},
  pages={1726--1774},
  year={2019},
  publisher={JSTOR}
}

@article{ObSi17,
  title={Structure of martingale transports in finite dimensions},
  author={Ob{\l}{\'o}j, Jan and Siorpaes, Pietro},
  journal={arXiv preprint arXiv:1702.08433},
  year={2017}
}

@inproceedings{alfonsi2020sampling,
  title={Sampling of probability measures in the convex order by Wasserstein projection},
  author={Alfonsi, Aur{\'e}lien and Corbetta, Jacopo and Jourdain, Benjamin},
  booktitle={Annales de l'Institut Henri Poincar{\'e} (B) Probabilit{\'e}s et Statistiques},
  volume={56},
  number={3},
  pages={1706--1729},
  year={2020}
}

@article{BaLo19,
  title={Central Limit Theorem for empirical transportation cost in general dimension},
  author={del Barrio, Eustasio and Loubes, Jean-Michel},
  journal={The Annals of Probability},
  volume={47},
  number={2},
  pages={926--951},
  year={2019}
}

@book{Rock97,
  title={Convex analysis},
  author={Rockafellar, R Tyrrell},
  volume={28},
  year={1997},
  publisher={Princeton university press}
}

@book{Rock98,
  title={Variational analysis},
  author={Rockafellar, R Tyrrell and Wets, Roger JB},
  year={1998},
  publisher={Springer}
}

@article{BrJu22,
  title={Instability of martingale optimal transport in dimension $d \geq 2$},
  author={Br{\"u}ckerhoff, Martin and Juillet, Nicolas},
  journal={Electronic Communications in Probability},
  volume={27},
  pages={1--10},
  year={2022},
  publisher={The Institute of Mathematical Statistics and the Bernoulli Society}
}

@article{blanchet2019quantifying,
  title={Quantifying distributional model risk via optimal transport},
  author={Blanchet, Jose and Murthy, Karthyek},
  journal={Mathematics of Operations Research},
  volume={44},
  number={2},
  pages={565--600},
  year={2019},
  publisher={INFORMS}
}

@article{gao2023distributionally,
  title   = {Distributionally Robust Stochastic Optimization with Wasserstein Distance},
  author  = {Gao, Rui and Kleywegt, Anton J.},
  journal = {Mathematics of Operations Research},
  volume  = {48},
  number  = {2},
  pages   = {603--655},
  year    = {2023},
  doi     = {10.1287/moor.2022.1275}
}

@article{jordan1998variational,
  title={The variational formulation of the Fokker--Planck equation},
  author={Jordan, Richard and Kinderlehrer, David and Otto, Felix},
  journal={SIAM journal on mathematical analysis},
  volume={29},
  number={1},
  pages={1--17},
  year={1998},
  publisher={SIAM}
}

@book{ambrosio2005gradient,
  title={Gradient flows: in metric spaces and in the space of probability measures},
  author={Ambrosio, Luigi and Gigli, Nicola and Savar{\'e}, Giuseppe},
  year={2005},
  publisher={Springer}
}
\end{document}